\numberwithin{equation}{section}
\newtheorem{theorem}{Theorem}[section]
\newtheorem{lemma}[theorem]{Lemma}
\newtheorem{proposition}[theorem]{Proposition}
\newtheorem{corollary}[theorem]{Corollary}
\newtheorem{conjecture}[theorem]{Conjecture}
\theoremstyle{definition}
\newtheorem{definition}[theorem]{Definition}
\newtheorem{remark}[theorem]{Remark}
\newtheorem{example}[theorem]{Example}
\newcommand{\init}{\mathrm{in}}
\begin{document}

\title[Geometric vertex decomposition and liaison for toric ideals of graphs]{Geometric vertex decomposition and liaison for toric ideals of graphs}

\thanks{\color{blue}The final version of this paper appeared in Algebraic Combinatorics, Vol. 6 (4), 2023, pp. 965--997. This version of the paper corrects a small error in the proof of Theorem 3.11.  The statement of the theorem remains unchanged, but the  corrected proof includes an additional case that was missing in the original version. To highlight the changes from the journal version, all changes are in BLUE.
}

\author[M. Cummings]{Mike Cummings}
\address[Mike Cummings]{Department of Mathematics and Statistics\\
McMaster University, Hamilton, ON L8S 4K1, Canada}
\email{cummim5@mcmaster.ca} 

\author[S. Da Silva]{Sergio Da Silva }
\address[Sergio Da Silva]{Deptartment of Mathematics and Economics, Virginia State University, 1 Hayden Drive, Petersburg, VA 23806, USA}
\email{sdasilva@vsu.edu }

\author[J. Rajchgot]{Jenna Rajchgot}
\address[Jenna Rajchgot]{Department of Mathematics and Statistics\\
McMaster University, Hamilton, ON L8S 4K1, Canada}
\email{rajchgot@math.mcmaster.ca }

\author[A. Van Tuyl]{Adam Van Tuyl}
\address[Adam Van Tuyl]{Department of Mathematics and Statistics\\
McMaster University, Hamilton, ON L8S 4K1, Canada}
\email{vantuyl@math.mcmaster.ca}

\keywords{geometric vertex decomposition, toric ideals of graphs, liaison} 
\subjclass[2000]{Primary: 13P10, 14M25; Secondary: 05E40, 13C04}
\date{\today}

\begin{abstract} 
The geometric vertex decomposability property for polynomial ideals is an ideal-theoretic generalization of the vertex decomposability property for simplicial complexes. 
Indeed, a homogeneous geometrically vertex decomposable ideal is radical and Cohen-Macaulay, and is in the Gorenstein liaison class of a complete intersection (glicci).

In this paper, we initiate an investigation into
when the toric ideal $I_G$ of a finite simple graph $G$ is geometrically vertex decomposable.  
We first show  
how geometric vertex
decomposability behaves under tensor products, which allows us to restrict to connected graphs.  We then 
describe a graph operation that preserves geometric vertex decomposability, thus allowing us to build
many graphs whose corresponding toric ideals are geometrically
vertex decomposable.
Using work of 
Constantinescu and Gorla, we prove that toric ideals
of bipartite graphs are geometrically vertex decomposable.
We also
propose a conjecture that all toric ideals of graphs
with a square-free degeneration with respect to a lexicographic
order are geometrically vertex decomposable.  As evidence, we prove the conjecture in the case that the
universal Gr\"obner basis of $I_G$ is a set of quadratic
binomials.  We also prove that some other families of
graphs have the property that $I_G$ is glicci.
\end{abstract}

\maketitle

\section{Introduction}

Vertex decomposable simplicial complexes are recursively defined
simplicial complexes that have been extensively studied in both
combinatorial algebraic topology and combinatorial commutative 
algebra. This family of complexes, first defined by
Provan and Billera \cite{PB} for pure simplicial
complexes and later generalized to the non-pure case by 
Bj\"orner and Wachs \cite{BW1997}, has many nice features.   
For example, they are shellable and hence Cohen-Macaulay in the pure case. 

Because of the Stanley-Reisner correspondence between 
square-free monomial ideals and simplicial complexes, the 
definition and properties of vertex decomposable simplicial complexes can be translated into 
algebraic statements about square-free monomial ideals.  For example, Moradi and Khosh-Ahang \cite[Definition 2.1]{MKA} introduced vertex splittable ideals, which are precisely the ideals of the Alexander duals of vertex decomposable simplicial complexes.
As another example, which is directly relevant to this paper, Nagel and R\"omer \cite{NR} 
showed that 
if $I_\Delta$ is the square-free monomial
ideal associated to a vertex decomposable simplicial complex $\Delta$
via the Stanley-Reisner correspondence, then the ideal 
$I_\Delta$ belongs 
to the Gorenstein liasion class of a complete intersection, i.e., 
the ideal $I_\Delta$ is {\it glicci}. 

Knutson, Miller, and Yong \cite{KMY} introduced the notion
of a {\it geometric vertex decomposition}, which is an ideal-theoretic generalization (beyond the square-free monomial ideal setting) of a vertex
decomposition of a simplicial complex.
Building on this, Klein and Rajchgot \cite{KR} gave a recursive definition of a {\it geometrically vertex decomposable} ideal which is an ideal-theoretic generalization of a vertex decomposable simplicial complex.  Indeed, when 
specialized to square-free monomial ideals, those ideals that are geometrically vertex decomposable are precisely those square-free monomial ideals whose associated simplicial complexes are vertex decomposable.  As shown by Klein and Rajchgot \cite[Theorem 4.4]{KR}, this definition captures some of the properties of
vertex decomposable simplicial complexes.  For example,
a more general version of Nagel and R\"omer's result holds; 
that is, a homogeneous 
ideal that is geometrically vertex decomposable is also glicci.
Because geometrically vertex decomposable ideals are glicci, identifying such families allows us
to give further evidence to an important open question
in liaison theory: is every arithmetically Cohen-Macaulay
subscheme of $\mathbb{P}^n$ glicci (see
\cite[Question 1.6]{KMMNP})?

Since the definition of geometrically vertex decomposable ideals is 
recent, there is a need to not only develop the corresponding theory
(e.g. which properties of Stanley-Reisner ideals of vertex decomposable simplicial complexes
also hold for geometrically vertex decomposable ideals?), but also a need to find families of concrete examples.    There has 
already been some work in these two directions.
Klein and Rajchgot \cite{KR} showed that
Schubert determinantal ideals, (homogeneous) ideals
coming from lower bound cluster algebras, and ideals defining equioriented 
type A quiver loci are all geometrically vertex decomposable.  Klein \cite{K}
used geometric vertex decomposability to
prove a conjecture of 
Hamaker, Pechenik, and Weigandt \cite{HPW} on Gr\"obner bases 
of Schubert determinantal ideals. Da Silva 
and Harada have investigated the 
geometric vertex decomposability of certain Hessenberg patch ideals which locally define regular nilpotent Hessenberg varieties \cite{DSH}.  

We contribute to this program by
further developing the theory of geometric
vertex decomposibility, and show that many families
of toric ideals of graphs are geometrically vertex
decomposable.  
Let $\mathbb{K}$ be an algebraically closed field of characteristic $0$.
If
$G = (V,E)$ is a finite simple graph
with vertex set $V = \{x_1,\ldots,x_m\}$ and edge
set $E = \{e_1,\ldots,e_n\}$,  we can define
a ring homomorphism $\varphi:\mathbb{K}[e_1,\ldots,e_n] \rightarrow \mathbb{K}[x_1,\ldots,x_m]$ by letting
$\varphi(e_i) = x_kx_l$ where the edge
$e_i = \{x_k,x_l\}$.  The {\it toric ideal of $G$}
is the ideal $I_G = {\rm ker}(\varphi)$. 
The study of toric ideals of graphs is an active
area of research (e.g. see \cite{BOVT,CG,FHKVT,GHKKPVT,GM,OH,TT,V1}), so our
work also complements the recent developments in this area.  What makes toric ideals of graphs amenable to our investigation of  geometric vertex decomposability is that their (universal) Gr\"obner bases are fairly well-understood
(see Theorem \ref{generatordescription}) and can be related to the graph's structure.

Our first main result describes how
geometric vertex decomposability behaves over tensor products:

\begin{theorem}[Theorem \ref{tensorproduct}]
 Let $I \subsetneq R =\mathbb{K}[x_1,\ldots,x_n]$ and 
 $J \subsetneq S=\mathbb{K}[y_1,\ldots,y_m]$ be proper ideals. Then $I$ and $J$ are geometrically
 vertex decomposable if and only if $I+J$ is geometrically vertex decomposable in $R \otimes S =\mathbb{K}[x_1,\ldots,x_n,y_1,\ldots,y_m]$.
\end{theorem}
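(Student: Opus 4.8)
The plan is to prove both implications simultaneously by induction on $n+m$, showing that a geometric vertex decomposition of $I+J$ with respect to a variable of either factor can be matched up, step by step, with one of $I$ or $J$. Throughout I use the recursive definition of a geometrically vertex decomposable ideal of Klein and Rajchgot, keeping in mind that in a geometric vertex decomposition of an ideal with respect to a variable $y$, the ideals $C_{y,-}$ and $N_{y,-}$ naturally live in the polynomial ring with $y$ deleted, so that the recursion decreases the number of variables.

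The argument rests on two compatibility facts. The first is flat base change: since $\mathbb{K}$ is a field, $T:=R\otimes S$ is free as a module over $R$ and over $S$, so for ideals $A,B\subseteq R$ and $J\subseteq S$ (writing $AT$, etc., for extensions to $T$) one has $(AT+JT)\cap(BT+JT)=(A\cap B)T+JT$, and $(AT+JT)\cap R=A$ whenever $J$ is proper; moreover $V(AT+JT)=V(A)\times V(J)$, so since $\mathbb{K}=\overline{\mathbb{K}}$ and $J$ is proper, $\sqrt{AT+JT}=\sqrt{BT+JT}$ if and only if $\sqrt A=\sqrt B$. The second is a Gröbner basis computation: given a $y$-compatible term order on $R$ with $y=x_i$, extend it to the block order on $T$ putting the variables of $R$ above those of $S$; this order is again $y$-compatible, and a Gröbner basis of $I+J$ is obtained as the union of Gröbner bases of $I$ and of $J$, because the $S$-polynomials between generators in disjoint sets of variables reduce to zero. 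Since the generators contributed by $J$ involve no $x_i$, reading the relevant data off this Gröbner basis gives, inside $T$,
\[
\operatorname{in}_y(I+J)=\operatorname{in}_y(I)+J,\qquad C_{y,I+J}=C_{y,I}+J,\qquad N_{y,I+J}=N_{y,I}+J .
\]
Substituting these into the first fact shows that $\operatorname{in}_y(I+J)=C_{y,I+J}\cap(N_{y,I+J}+\langle y\rangle)$ is a geometric vertex decomposition of $I+J$ precisely when $\operatorname{in}_y(I)=C_{y,I}\cap(N_{y,I}+\langle y\rangle)$ is one of $I$, and then one decomposition is nondegenerate exactly when the other is; the symmetric statements hold for a variable of $S$.

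Now the induction on $n+m$. If $I$ and $J$ are both base cases of the definition (each the unit ideal or generated by a subset of variables), then so is $I+J$; conversely if $I+J$ is a base case then $I=(I+J)\cap R$ and $J=(I+J)\cap S$ are base cases, by the first fact. Otherwise, in the forward direction we may assume $I$ is not a base case, and pick a nondegenerate geometric vertex decomposition of $I$ with respect to some $y=x_i$ with $C_{y,I}$ and $N_{y,I}$ geometrically vertex decomposable ideals of a ring in $n-1$ variables; by the compatibility facts, $I+J$ then has a nondegenerate geometric vertex decomposition with respect to $y$ whose associated ideals are $C_{y,I}+J$ and $N_{y,I}+J$, and since these involve only $(n-1)+m<n+m$ variables the inductive hypothesis makes them geometrically vertex decomposable, hence so is $I+J$. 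In the reverse direction, $I+J$ not being a base case has a nondegenerate geometric vertex decomposition with respect to some variable, which after interchanging the roles of $I$ and $J$ if necessary we take to be a variable $y$ of $R$; then $C_{y,I+J}=C_{y,I}+J$ and $N_{y,I+J}=N_{y,I}+J$ are geometrically vertex decomposable ideals in $(n-1)+m$ variables, so the inductive hypothesis yields that $C_{y,I}$, $N_{y,I}$, and $J$ are all geometrically vertex decomposable, and since $I$ inherits a nondegenerate geometric vertex decomposition with respect to $y$ it follows that $I$ is geometrically vertex decomposable, completing the induction.

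The step I expect to be most delicate is establishing the three identities $\operatorname{in}_y(I+J)=\operatorname{in}_y(I)+J$, $C_{y,I+J}=C_{y,I}+J$, and $N_{y,I+J}=N_{y,I}+J$: this requires knowing that the union of Gröbner bases is a Gröbner basis for the block order, that the block order is $y$-compatible, and that the $x_i$-free generators coming from $J$ are accounted for correctly in $C_{y,I+J}$ and $N_{y,I+J}$ (it is essential here that $N_{y,-}$ is generated by the Gröbner basis elements \emph{not} involving the distinguished variable). One must also check that properness is preserved before invoking the inductive hypothesis --- so that $C_{y,I}$, $N_{y,I}$, and $J$ are indeed proper --- treating the degenerate case $C_{y,I}=\langle 1\rangle$ on its own; these verifications are routine.
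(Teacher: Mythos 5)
Your overall strategy coincides with the paper's: induction on the total number of variables, the fact that a union of Gr\"obner bases of $I$ and $J$ is a Gr\"obner basis of $I+J$ (justified exactly as you do, via $S$-pairs with coprime leading terms), and the resulting identities $C_{y,I+J}=C_{y,I}+J$ and $N_{y,I+J}=N_{y,I}+J$ in both directions. However, there is a genuine gap: you never verify the unmixedness requirement in the definition of geometric vertex decomposability. The definition demands that a geometrically vertex decomposable ideal be unmixed, so in the forward direction you must show that $I+J$ is unmixed when $I$ and $J$ are, and in the converse that $I$ and $J$ are unmixed when $I+J$ is. Neither transfer is automatic: it requires knowing the associated primes of $(R\otimes S)/(I+J)\cong R/I\otimes_{\mathbb{K}} S/J$, and the paper devotes a substantial part of its proof to this, invoking the result that these are exactly the sums $P+Q$ with $P\in\mathrm{Ass}_R(R/I)$ and $Q\in\mathrm{Ass}_S(S/J)$, together with $\dim((R\otimes S)/(P+Q))=\dim(R/P)+\dim(S/Q)$. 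Your flat-base-change facts (contraction recovering $A$, distribution of extension over intersection, the radical criterion) do not supply this; without it the inductive conclusion ``hence $I+J$ is geometrically vertex decomposable'' (and likewise for $I$ and $J$ in the converse) is not established.

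A secondary, easily repaired issue: you repeatedly ``pick a nondegenerate geometric vertex decomposition,'' but the definition only guarantees the existence of \emph{some} geometric vertex decomposition with geometrically vertex decomposable contractions, possibly degenerate. Nondegeneracy plays no role in the argument, so you should simply delete it; as written, the step ``pick a nondegenerate decomposition'' is not justified. In the converse direction you should also be explicit that the identities $C_{y,I+J}=C_{y,I}+J$ and $N_{y,I+J}=N_{y,I}+J$ hold for the \emph{given} $y$-compatible order on $R\otimes S$ (with $C_{y,I}$, $N_{y,I}$ computed from the induced order on $R$), not only for a block order you construct; your Gr\"obner-union lemma does give this, but the statement of your ``second compatibility fact'' is phrased only for block extensions of an order on $R$.
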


\noindent
Our result can be viewed as the ideal-theoretic version
of the fact that two simplicial complexes 
$\Delta_1$ and $\Delta_2$ are vertex decomposable 
if and only if
their join $\Delta_1 \star \Delta_2$ is vertex decomposable \cite[Propostion 2.4]{PB}.  Moreover, this result allows
us to reduce our study of toric ideals of graphs
to the case that the graph $G$ is connected (Theorem \ref{connected}).

When we restrict to toric ideals of graphs, we
show that the graph operation of ``gluing'' an even length cycle
onto a graph preserves the geometric vertex decomposability property:

\begin{theorem} [Theorem \ref{gluetheorem}]
Let $G$ be a finite simple graph 
with toric ideal $I_G$.
Let $H$ be obtained from $G$ by gluing a cycle of even length to $G$ along a single edge. If $I_G$ is geometrically vertex decomposable, then $I_H$ is also geometrically vertex decomposable.
\end{theorem}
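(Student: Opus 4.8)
The plan is to exhibit one explicit geometric vertex decomposition of $I_H$ and then finish using Theorem~\ref{tensorproduct}. Set up notation as follows. Let $e=\{u,v\}$ be the edge of $G$ along which the $2k$-cycle is glued, and list the edges of that cycle cyclically as $f_0 = e, f_1, f_2, \dots, f_{2k-1}$, so that $H$ is obtained from $G$ by adjoining new vertices $w_2, \dots, w_{2k-1}$ and new edges $f_1 = \{v,w_2\}$, $f_2 = \{w_2,w_3\}$, \dots, $f_{2k-1} = \{w_{2k-1},u\}$. In particular $f_1, \dots, f_{2k-1}$ form a path of odd length $2k-1$ from $v$ to $u$, and each $w_i$ has degree $2$ in $H$. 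The glued cycle contributes the binomial
\[
\beta \;=\; f_0 f_2 f_4 \cdots f_{2k-2} \;-\; f_1 f_3 f_5 \cdots f_{2k-1} \;\in\; I_H .
\]

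The first, and hardest, step is to describe $I_H$. Using the description of the universal Gr\"obner basis of a toric ideal of a graph via primitive even closed walks (Theorem~\ref{generatordescription}), I would argue that every primitive even closed walk of $H$ either lies entirely in $G$ or is the glued $2k$-cycle itself: since each $w_i$ with $2 \le i \le 2k-1$ has degree $2$, a walk that uses any $f_j$ must traverse the whole path $f_1, \dots, f_{2k-1}$, and a direct computation --- substituting the relation $\beta$ together with a binomial coming from a closed even walk of $G$ through $e$ --- shows that rerouting an even closed walk of $G$ along this path (or traversing the path more than once) produces a binomial already in $I_G\,\mathbb{K}[E(H)] + (\beta)$. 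Hence $I_H = I_G\,\mathbb{K}[E(H)] + (\beta)$. Moreover, equipping $\mathbb{K}[E(H)]$ with the lexicographic order in which $f_{2k-1} > f_{2k-2} > \cdots > f_1$ are all larger than the edge variables of $G$, a Gr\"obner basis of $I_H$ is $\mathcal{G}\cup\{\beta\}$, where $\mathcal{G}$ is a Gr\"obner basis of $I_G$ for the induced order on $\mathbb{K}[E(G)]$: the leading term $f_1 f_3 \cdots f_{2k-1}$ of $\beta$ is a monomial in the new variables only, while the leading terms of elements of $\mathcal{G}$ are monomials in the variables of $G$, so they are pairwise coprime and Buchberger's criterion applies. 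Establishing this structural description --- and in particular ruling out primitive binomials that combine the new path with odd substructures of $G$ --- is the principal obstacle.

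Given this, I would perform the geometric vertex decomposition of $I_H$ with respect to $y = f_{2k-1}$. Writing $\beta = y\cdot(-f_1 f_3 \cdots f_{2k-3}) + f_0 f_2 \cdots f_{2k-2}$, which has $y$-degree $1$, and noting that no element of $\mathcal{G}$ involves $y$, the recipe of Klein and Rajchgot \cite{KR} for reading $C_{y,I_H}$ and $N_{y,I_H}$ off a Gr\"obner basis gives
\[
N_{y,I_H} = I_G\,\mathbb{K}[E(H)\setminus\{f_{2k-1}\}], \qquad C_{y,I_H} = I_G\,\mathbb{K}[E(H)\setminus\{f_{2k-1}\}] + (f_1 f_3 \cdots f_{2k-3}),
\]
and one checks that $\init_y(I_H) = C_{y,I_H}\cap(N_{y,I_H}+\langle y\rangle)$ --- working modulo $I_G$ this reduces to the monomial identity $(f_1 f_3 \cdots f_{2k-1}) = (f_1 f_3 \cdots f_{2k-3})\cap(f_{2k-1})$ --- so this is a geometric vertex decomposition. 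Finally, identifying $\mathbb{K}[E(H)\setminus\{f_{2k-1}\}]$ with $\mathbb{K}[E(G)]\otimes\mathbb{K}[f_1,\dots,f_{2k-2}]$, we have $N_{y,I_H} = I_G + (0)$ and $C_{y,I_H} = I_G + (f_1 f_3 \cdots f_{2k-3})$. The zero ideal is a base case of the definition, and $(f_1 f_3 \cdots f_{2k-3})$ is geometrically vertex decomposable in $\mathbb{K}[f_1,\dots,f_{2k-2}]$, being the Stanley--Reisner ideal of the join of the boundary of a simplex with a simplex (a vertex decomposable complex) --- alternatively, one peels off $f_1, f_3, \dots, f_{2k-3}$ by successive geometric vertex decompositions. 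Therefore, by Theorem~\ref{tensorproduct} and the hypothesis that $I_G$ is geometrically vertex decomposable, both $C_{y,I_H}$ and $N_{y,I_H}$ are geometrically vertex decomposable; since $I_H$ is prime, hence unmixed, it follows that $I_H$ is geometrically vertex decomposable.
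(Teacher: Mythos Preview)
Your overall strategy---perform one geometric vertex decomposition at an endpoint edge of the new path, then invoke Theorem~\ref{tensorproduct}---is exactly the paper's approach, and your computations of $N_{y,I_H}$ and $C_{y,I_H}$ agree with theirs (up to the symmetric choice of $f_{2k-1}$ versus $f_1$).

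There is, however, a misstatement you should fix. You assert that ``every primitive even closed walk of $H$ either lies entirely in $G$ or is the glued $2k$-cycle itself.'' This is false: if $G$ contains an odd walk from $u$ to $v$ not using $e$, concatenating it with the path $f_1,\dots,f_{2k-1}$ yields a closed even walk that can very well be primitive. The paper's proof explicitly records such binomials, of the form $f_1f_3\cdots f_{2k-1}\,e_{j_2}\cdots e_{j_{2\ell-2}} - f_2f_4\cdots f_{2k-2}\,e_{j_1}\cdots e_{j_{2\ell-1}}$. Fortunately your argument does not actually need the false claim: what you use is only the ideal equality $I_H = I_G\,\mathbb{K}[E(H)] + (\beta)$, which is \cite[Theorem~3.7]{FHKVT} (also invoked in the paper's proof of Corollary~\ref{glue=glicci}). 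Once that equality is in hand, your Buchberger argument that $\mathcal{G}\cup\{\beta\}$ is a Gr\"obner basis for your lex order is correct, since $\mathrm{in}_<(\beta)=f_1f_3\cdots f_{2k-1}$ is coprime to every leading term in $\mathcal{G}$.

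So the difference between your route and the paper's is this: the paper works with the full universal Gr\"obner basis $\mathcal{U}(I_H)$, keeps track of the extra primitive binomials above, and then observes that their contributions to $C_{y,I_H}$ are all divisible by $f_3\cdots f_{2k-1}$ and hence redundant; you instead front-load the work by invoking (or reproving) the ideal equality from \cite{FHKVT} and then use a much smaller Gr\"obner basis. Both lead to the same $C$ and $N$ and finish identically via Theorem~\ref{tensorproduct} and Lemma~\ref{simplecases}. Just replace the incorrect sentence about primitive walks with a citation of \cite[Theorem~3.7]{FHKVT}, or with the (correct) weaker statement that any binomial arising from a walk through the new path lies in $I_G + (\beta)$.
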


This gluing operation and its connection to toric ideals of graphs appears in work of Favacchio, Hofscheier, Keiper and Van Tuyl \cite{FHKVT}, while a similar construction
of using $H$-paths is employed by
Gitler, Reyes, and Villarreal \cite{GRV}
to characterize the toric ideals of bipartite 
graphs that are complete intersections.
By repeatedly applying
this operation, we can construct many toric
ideals of graphs that are geometrically vertex decomposable and glicci. 

Our gluing operation requires one to start with a graph
whose corresponding toric ideal is geometrically vertex decomposable.  It
is therefore desirable to identify families of graphs whose
toric ideals have this property.  Towards this end, we prove:

\begin{theorem}[Theorem \ref{thm: gvdBipartite}]\label{mainresult3}
Let $G$ be a finite simple graph with toric ideal $I_G$. If $G$ is bipartite, then $I_G$ is geometrically vertex decomposable.
\end{theorem}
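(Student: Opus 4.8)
The plan is to combine two inputs: the structure theory of toric ideals of bipartite graphs and the cited work of Constantinescu and Gorla on geometric vertex decomposability of ladder-type determinantal ideals. First I would recall that for a bipartite graph $G$ the toric ideal $I_G$ is generated by the binomials corresponding to even closed walks, and that after suitably ordering the variables this ideal has a particularly nice combinatorial description; in fact it is well known (going back to work on toric ideals of bipartite graphs, e.g. via the connection with the edge polytope and with Hibi-type or two-sided ladder determinantal ideals) that $I_G$ is, up to renaming variables, a ladder determinantal ideal of a generalized ladder built from the bipartition $V(G) = X \sqcup Y$. The key point is that Constantinescu and Gorla \cite{CG} establish that such (one- or two-sided) ladder determinantal ideals are geometrically vertex decomposable, so the theorem follows once the translation between $I_G$ and their setting is made precise.

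Concretely, the steps I would carry out are: (1) reduce to the connected case using Theorem \ref{connected} (equivalently, the tensor-product Theorem \ref{tensorproduct}), since a disjoint union of bipartite graphs is bipartite and $I_{G_1 \sqcup G_2} = I_{G_1} + I_{G_2}$ in the tensor product ring; (2) for $G$ connected bipartite, describe $I_G$ explicitly. Writing the bipartition as $X = \{x_1, \dots, x_p\}$, $Y = \{y_1, \dots, y_q\}$, the edge variables $e_{ij}$ (for $\{x_i, y_j\} \in E$) sit inside a $p \times q$ grid, and $I_G$ is precisely the ideal of $2\times 2$ minors of the corresponding (partially filled, i.e. ladder-shaped) generic matrix whose support is exactly $E$ — this is the standard identification of $I_G$ with a ladder determinantal ideal; (3) invoke \cite{CG} to conclude that this ladder determinantal ideal, hence $I_G$, is geometrically vertex decomposable. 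I would also note that geometric vertex decomposability is preserved under the ring inclusion that adds the "missing" edge variables as free variables, so there is no loss in passing between the polynomial ring $\mathbb{K}[E]$ on the actual edges and a larger polynomial ring on all of $X\times Y$.

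I expect the main obstacle to be step (2): making the dictionary between $I_G$ and a ladder determinantal ideal completely rigorous, including checking that the shape obtained from an arbitrary connected bipartite graph is genuinely one of the ladder shapes handled by Constantinescu and Gorla (their result may be stated for ladders that are "connected" or "co-connected" in a specific combinatorial sense, or for one-sided versus two-sided ladders), and ensuring the monomial order / choice of variable $y$ to project away matches the hypotheses of the geometric vertex decomposition machinery. A secondary subtlety is that the generators of $I_G$ coming from $2\times 2$ minors must be shown to be a Gröbner basis under the relevant lex order — but this is exactly the content that makes \cite{CG} applicable, and it is classical for ladder determinantal ideals, so I would cite it rather than reprove it. Once the identification is in place, the proof is essentially a one-line appeal to \cite{CG}, so the write-up will be short, with the bulk devoted to carefully stating the bipartite-graph-to-ladder correspondence.
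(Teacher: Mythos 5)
Your step (2) contains a genuine gap that the rest of the argument cannot survive. The toric ideal of a general connected bipartite graph is \emph{not} the ideal of $2\times 2$ minors of a ladder (or of any partially filled generic matrix supported on $E(G)$). By Theorem \ref{generatordescription}, $I_G$ is generated by the binomials of \emph{all} primitive closed even walks, and for a bipartite graph these are exactly the cycles; cycles of length $2\ell \geq 6$ contribute generators of degree $\ell \geq 3$ that are not $2\times 2$ minors. Already for $G = C_6$ one has $I_G = \langle e_1e_3e_5 - e_2e_4e_6\rangle$, which has no quadratic generators at all. The identification with (one-sided) ladder determinantal ideals is valid only for the special subclass of Ferrers graphs, which is precisely how it is used in Theorem \ref{families}(2) — and even there the GVD conclusion is drawn from Klein--Rajchgot's result on Schubert determinantal ideals, not from \cite{CG}. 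A second, independent problem is your characterization of what \cite{CG} proves: Constantinescu and Gorla do not establish geometric vertex decomposability of ladder determinantal ideals (the notion of GVD postdates their paper); they prove that toric ideals of bipartite graphs are glicci via basic double links organized by \emph{path ordered matchings}, and separately that the associated initial complexes are vertex decomposable. Glicci does not imply GVD, so even granting your dictionary there would be no one-line appeal available.

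What the paper actually does is adapt the inductive scaffolding of \cite{CG} rather than quote a finished result. It introduces the auxiliary ideals $I^G_{\bf e} = I_{G\setminus {\bf e}} + \langle M^G_{\bf e}\rangle$ indexed by path ordered matchings ${\bf e}$, uses the Gr\"obner basis statements of \cite[Lemma 2.6, Remark 2.7]{CG} to show these have square-free lex initial ideals, proves from scratch the geometric vertex decomposition $\init_{e_r}(I^G_{{\bf e'}}) = (I^{G\setminus e_r}_{\bf e'}+\langle e_r\rangle)\cap I^G_{\bf e}$ (Proposition \ref{lem:bipartiteGVDgeneral}), obtains unmixedness from the Cohen--Macaulayness result \cite[Theorem 2.8]{CG}, and closes the recursion by a double induction on $|E(G)|$ and on how far ${\bf e}$ is from being right-extendable, with separate treatment of leaves and of non-extendable matchings. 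If you want to salvage your outline, the reduction to connected graphs via Theorem \ref{connected} is fine but inessential; the real work is in building an induction in which both $C_{y,I}$ and $N_{y,I}$ remain within a class of ideals you can control, and that class is the family $I^G_{\bf e}$, not a family of determinantal ideals.
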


\noindent
Our proof of Theorem \ref{mainresult3} relies on work of Constantinescu and Gorla \cite{CG}.  For some families of
bipartite graphs, we give alternative proofs for the geometric
vertex decomposable property that exploit
the additional structure of the graph (see Theorem
\ref{families}).  These families
are also used to illustrate that in certain cases, the recursive definition of geometric vertex decomposability  easily lends itself to induction. 

Based on our results and computer experimentation in Macaulay2 \cite{M2}, we propose
the following conjecture:

\begin{conjecture}[Conjecture \ref{mainconjecture}]\label{conjecture}
Let $G$ be a finite simple graph with toric ideal $I_G \subseteq \mathbb{K}[e_1,\ldots,e_n]$.
If $\init_{<}(I_G)$ is square-free with respect to a lexicographic monomial order $<$, then $I_G$ is geometrically vertex decomposable, and thus glicci.
\end{conjecture}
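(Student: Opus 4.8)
Since this statement is a conjecture, I cannot offer a complete proof; instead I will describe the strategy I would follow, which is the one that yields the quadratic case proved later in the paper and which I expect can be pushed further.

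The plan is to argue by induction on the number $n$ of edges of $G$, following the recursive shape of the definition of geometric vertex decomposability. Fix a lexicographic order $<$ with $e_1 > e_2 > \cdots > e_n$ for which $\init_<(I_G)$ is square-free. Since $<$ is $e_1$-compatible and square-freeness forces the variable $e_1$ to occur to degree at most one in every minimal generator of $\init_<(I_G)$, hence in the leading term of every element of the reduced Gr\"obner basis $\mathcal{G}$ of $I_G$, the pair $(\mathcal{G}, e_1)$ determines a genuine geometric vertex decomposition of $I_G$, with ideals $C := C_{e_1, I_G}$ and $N := N_{e_1, I_G}$. First I would identify $C$ and $N$ combinatorially: using the description of the universal Gr\"obner basis of $I_G$ in terms of even closed walks (Theorem \ref{generatordescription}), I expect $N$ to be (a relabelling of) the toric ideal $I_{G'}$ of the graph $G'$ obtained by deleting the edge $e$ corresponding to $e_1$, and $C$ to be an ideal governing the walks that traverse $e$, built from $I_{G'}$ together with the ``coefficient'' binomials extracted from the members of $\mathcal{G}$ divisible by $e_1$.

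Next I would show that the induced lexicographic orders on the smaller polynomial rings still produce square-free initial ideals for $C$ and $N$. The key general input is that forming a geometric vertex decomposition commutes with passing to initial ideals, so that $\init_<(N)$ and $\init_<(C)$ are exactly the ideals obtained by performing the analogous splitting of the square-free monomial ideal $\init_<(I_G)$ with respect to $e_1$; square-freeness is then inherited, and the monomial statement becomes the assertion that the deletion and link of the vertex $e_1$ in the simplicial complex attached to $\init_<(I_G)$ again have square-free Stanley--Reisner ideals. With this in place I would invoke the inductive hypothesis on $C$ and $N$, broadening the inductive statement from ``toric ideals of graphs'' to ``ideals having a square-free lexicographic initial ideal'' (this is necessary because $C$ need not be prime), and then verify the remaining bookkeeping in the definition of geometric vertex decomposability, namely that the decomposition is degenerate or else that $N$ and $C$ are unmixed with the heights behaving correctly; these conditions translate into, and follow from, the corresponding properties of the deletion and link in a vertex decomposable complex. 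The base cases ($I_G = (0)$, $I_G$ generated by a subset of the variables, or $I_G = \mathbb{K}[e_1,\ldots,e_n]$) are immediate.

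The step I expect to be the genuine obstacle --- and the reason the statement is only a conjecture --- is showing that the simplicial complex $\Delta$ with $I_\Delta = \init_<(I_G)$ is actually \emph{vertex decomposable}, not merely that $I_\Delta$ is square-free: a square-free monomial ideal is geometrically vertex decomposable precisely when its complex is vertex decomposable, and this fails for general square-free monomial ideals (indeed some are not even Cohen--Macaulay, whereas a homogeneous geometrically vertex decomposable ideal is), so one must genuinely exploit the combinatorics of even closed walks in $G$ to rule out bad behaviour and to guarantee that the link and deletion inherit the required structure at every stage of the recursion. In the paper this is carried out when the universal Gr\"obner basis of $I_G$ consists of quadratic binomials, where the rigidity of the degree-two relations makes both the identification of $C$ and $N$ and the vertex decomposability of $\Delta$ tractable.
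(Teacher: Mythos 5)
Since the statement is an open conjecture, the paper does not prove it either; what it does (in Section 6) is exactly the reduction you sketch, so your outline is essentially the paper's framework. You correctly observe that a square-free lexicographic initial ideal forces the reduced Gr\"obner basis to be square-free in $e_1$, so that Lemma \ref{square-freey} gives a geometric vertex decomposition for free, that $N_{e_1,I_G}=I_{G\setminus e_1}$, and that the recursion must be run on a class larger than toric ideals because $C_{e_1,I_G}$ need not be prime. The paper formalizes this by introducing the family $I^G_{E,F}=I_{G\setminus(E\cup F)}+M^G_{E,F}$ (toric ideal of a subgraph plus a \emph{monomial} ideal --- note the extra generators of $C$ beyond $N$ are the monomial coefficients $q_i$ of $e_1$, not binomials as you write, since $e_1$ cannot divide both terms of a primitive binomial), and then proves unconditionally that every ideal in this family again has a square-free lex initial ideal, that $e_k$ always defines a geometric vertex decomposition whose $C$ and $N$ stay in the family (statement $(A)$, Theorem \ref{CN_Grobner}), and that the recursion terminates in $\langle 0\rangle$ or $\langle 1\rangle$ (statement $(B)$, Theorem \ref{statementb}). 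This isolates the sole remaining obstruction as statement $(C)$: unmixedness, equivalently (by radicality) equidimensionality, of each $I^G_{E,F}$ (Theorem \ref{framework}). Your identification of the obstacle as vertex decomposability of the initial complex is essentially equivalent --- purity of the iterated links and deletions is exactly what equidimensionality of the $I^G_{E,F}$ encodes --- but the paper's formulation is sharper, since it shows the entire shedding structure already exists and only the dimension count is missing. The quadratic case is then settled by proving the $\mathbb{K}[E(G)]/I^G_{E,F}$ are Cohen--Macaulay (Lemma \ref{CMspecialcase}), which is stronger than the needed equidimensionality.
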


\noindent 
We provide a framework to prove this conjecture.  In fact, we show
that the conjecture
is true if one can prove that a particular family
of ideals is equidimensional (see Theorem \ref{framework}).   As further evidence
for Conjecture \ref{conjecture}, we prove the following special case: 

\begin{theorem}[Theorem \ref{quadratic_GVD}]
Let $I_G$ be the toric ideal of a finite simple graph $G$. Assume that $I_G$ has a universal Gr\"obner basis consisting entirely of quadratic binomials. Then $I_G$ is geometrically vertex decomposable.
\end{theorem}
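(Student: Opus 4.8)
\emph{Strategy.} The plan is to invoke the framework of Theorem~\ref{framework}, so that the theorem reduces to the equidimensionality of an explicit family of ideals, and then to settle that equidimensionality using the combinatorics of $4$-cycles. The first thing I would record is a structural observation: a degree-two binomial of $I_G$ is automatically of the form $e_{ab}e_{cd}-e_{ac}e_{bd}$ associated with a $4$-cycle on vertices $\{a,b,c,d\}$, since $\varphi$ sends both of its monomials to $x_ax_bx_cx_d$ and no loops occur in a simple graph. Consequently every element of the universal Gröbner basis of $I_G$ (see Theorem~\ref{generatordescription}) has a \emph{square-free} leading term under \emph{every} term order, so $\init_<(I_G)$ is square-free for any lexicographic order $<$; thus $I_G$ meets the hypothesis of Conjecture~\ref{mainconjecture} and Theorem~\ref{framework} applies. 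I would also note that this ``quadratic universal Gröbner basis'' property is inherited by toric ideals of subgraphs (a primitive binomial supported on the edges of a subgraph is primitive there), and that, by Theorem~\ref{tensorproduct}/Theorem~\ref{connected}, it suffices to treat connected graphs; both facts are needed because the recursion below deletes edges and vertices and may disconnect the graph. The argument then runs by induction on the number of edges, with forests (where $I_G=0$) as the base case.

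Next I would make the ideals occurring in the recursion explicit. Fix a lexicographic order with largest variable $y=e_n=\{a,b\}$. The elements of the universal Gröbner basis involving $y$ are exactly the $4$-cycle binomials $y\,e_{cd}-e_{bc}e_{ad}$ running over $4$-cycles $a\text{-}b\text{-}c\text{-}d\text{-}a$ through $e_n$; each has $y$-degree one with $q$-polynomial $e_{cd}$. Hence $N_{y,I_G}=I_{G\setminus e_n}$, the toric ideal of the edge-deletion, while
\[
C_{y,I_G}\;=\;(S)+I_{G\setminus e_n},
\]
where $S$ is the set of edge-variables $e_{cd}$ lying opposite $e_n$ in some $4$-cycle. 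Iterating, every ideal produced by the recursion is a sum of a square-free monomial ideal and the toric ideal of a subgraph with quadratic universal Gröbner basis, and each such ideal is radical because it inherits a square-free lexicographic initial ideal. The $N$-side is then immediate by the inductive hypothesis (re-splitting into connected components as needed); the content of the proof is the $C$-side.

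The heart of the matter is to show that each $C_{y,I_G}=(S)+I_{G\setminus e_n}$, and more generally each ideal appearing in the recursion, is equidimensional. Writing $H=G\setminus e_n$ and parametrizing its toric variety by $e_{pq}=t_pt_q$, imposing $e_{cd}=0$ for every $cd\in S$ forces, for each such edge, the vanishing of $t_c$ or of $t_d$; so the minimal primes of $C_{y,I_G}$ are indexed by the minimal vertex covers $W$ of the graph $(V,S)$, the prime attached to $W$ being $I_{H-W}$ together with the variables of all edges of $H$ that meet $W$. Equidimensionality thus reduces to the purely combinatorial claim that $\dim\mathbb{K}[E(H-W)]/I_{H-W}$ is the same for every minimal vertex cover $W$ of $(V,S)$, and then to matching this common value with $\dim\mathbb{K}[e_1,\ldots,e_n]/I_G$ so that the geometric vertex decompositions come out non-degenerate (or, in edge cases, degenerate). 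I expect precisely this combinatorial lemma to be the main obstacle: one must use the rigid shape of $S$ — edges opposite a \emph{fixed} edge in a $4$-cycle, which is exactly where the quadraticity hypothesis enters — to prove that all its minimal vertex covers ``cost the same'' in toric dimension. Everything else (the Gröbner-basis bookkeeping, the reduction to connected graphs, the induction on edges) is routine by comparison. Should the framework prove awkward to apply directly, an alternative is to run the induction in the enlarged class of ideals ``square-free monomial ideal plus toric ideal of a quadratic subgraph,'' peeling off the variables of $S$ one at a time by (degenerate) geometric vertex decomposition steps; but checking non-degeneracy there leads back to the same equidimensionality question.
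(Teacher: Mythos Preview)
Your setup is right: you correctly observe that quadratic primitive binomials come from $4$-cycles and are doubly square-free, so Theorem~\ref{framework} applies, and you correctly identify that in the quadratic case every $M^G_{E,F}$ is generated by \emph{variables} (the opposite edges in $4$-cycles). But the proof is left incomplete precisely at the point you flag as the ``main obstacle'': you never prove the combinatorial lemma that all minimal vertex covers of $(V,S)$ yield toric pieces of the same dimension. As written, this is a plan with a genuine gap, not a proof.

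The paper bypasses your vertex-cover analysis entirely with a much shorter argument. The observation you are missing is that once $M^G_{E,F}=\langle e_{i_1},\ldots,e_{i_s}\rangle$ is generated by variables, one can rewrite
\[
I^G_{E,F} \;=\; I_{G\setminus (E_\ell\cup\{e_{i_1},\ldots,e_{i_s}\})} \;+\; \langle e_{i_1},\ldots,e_{i_s}\rangle,
\]
i.e., one may delete the edges of $S$ from the subgraph as well (this follows from an initial-ideal comparison using Lemma~\ref{universal_GEF}). Now the toric part does not involve any of the $e_{i_j}$, so $e_{i_1},\ldots,e_{i_s}$ is a regular sequence on $R/I_{G\setminus(E_\ell\cup S)}$; and that ring is Cohen--Macaulay because the smaller toric ideal still has a square-free initial ideal (Theorem~\ref{sqfree=>cm}). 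Hence each $R/I^G_{E,F}$ is Cohen--Macaulay, in particular equidimensional, and Theorem~\ref{framework} finishes the proof. Your attempt to compute minimal primes directly is unnecessary; the quadratic hypothesis is used exactly once, to force $M^G_{E,F}$ to be generated by variables rather than higher-degree monomials.
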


Finally, we prove that additional collections of toric ideals of graphs are glicci (though not necessarily geometrically vertex decomposable). Our first result in this direction relies on a very general result of Migliore and Nagel \cite[Lemma 2.1]{MN} from the liaison literature.

\begin{theorem}[Corollary \ref{glue=glicci}]
Let $G$ be a finite simple graph and let $I_G\subseteq R = \mathbb{K}[e_1,\dots, e_n]$ be its toric ideal. 
Let $H$ be obtained from $G$ by gluing a cycle of even length to $G$ along a single edge. If $R/I_G$ is Cohen-Macaulay, then $I_H$ is glicci. 
\end{theorem}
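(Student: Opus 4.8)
The plan is to recognize the geometric vertex decomposition of $I_H$ already used to prove Theorem \ref{gluetheorem} as a chain of basic double G-links, and to feed it into the liaison lemma of Migliore and Nagel. Write $R' = \mathbb{K}[e_1,\dots,e_n,f_1,\dots,f_{2k-1}]$ for the coordinate ring of $H$, where $e$ is the common edge and $f_1,\dots,f_{2k-1}$ are the remaining edges of the glued $2k$-cycle, listed consecutively around it. By \cite{FHKVT} (and the generator analysis underlying the proof of Theorem \ref{gluetheorem}) the ideal $I_H$ is built from the extension $I_GR'$ together with the even-cycle binomial $g = e\,f_2 f_4\cdots f_{2k-2} - f_1 f_3\cdots f_{2k-1}$, and $\operatorname{codim} I_H = \operatorname{codim} I_G + 1$. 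Two structural observations make the hypothesis usable: since $R/I_G$ is a domain, so is $R'/I_GR'$, which makes the relevant nonzerodivisor conditions automatic; and since $R/I_G$ is Cohen--Macaulay by hypothesis, so is every polynomial extension $(R/I_G)[f_{i_1},\dots,f_{i_j}]$.

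First I would revisit the geometric vertex decomposition(s) of $I_H$ taken with respect to the new edge variables, as in the proof of Theorem \ref{gluetheorem}: at each step the ideal $N_{y,I}$ is again (a polynomial extension of) $I_G$, so that $R/N_{y,I}$ is Cohen--Macaulay — this is the single place the hypothesis enters — while $C_{y,I}$ is $I_G$ (extended) together with a complete intersection of monomials, and the decomposition is nondegenerate. Then I would apply Migliore--Nagel \cite[Lemma 2.1]{MN}: a nondegenerate geometric vertex decomposition whose ideal $N_{y,I}$ is unmixed, here Cohen--Macaulay, realizes $I$ as a basic double G-link of $C_{y,I}$, and hence keeps it in the same Gorenstein liaison class. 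Iterating along the chain and then stripping the monomial part off one variable at a time, one arrives at an ideal generated by a regular sequence — a complete intersection, which is glicci — and since basic double G-linkage preserves glicci-ness, it follows that $I_H$ is glicci.

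The main obstacle is the bookkeeping in this descent: one must check that at every stage the ideal $N_{y,I}$ is unmixed Cohen--Macaulay, so that \cite[Lemma 2.1]{MN} applies, and that the procedure genuinely terminates at a complete intersection, all while invoking only the Cohen--Macaulay hypothesis on $R/I_G$ and never glicci-ness of $I_G$ itself. Controlling the ideals $C_{y,I}$ and $N_{y,I}$ that arise along the way is exactly what the explicit (universal) Gr\"obner basis of $I_G$ from Theorem \ref{generatordescription}, together with the description of how the glued even cycle interacts with it, is designed to supply; granting that structural input, the liaison conclusion is formal from \cite[Lemma 2.1]{MN}.
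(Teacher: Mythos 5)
Your proposal assembles the key ingredients of the paper's proof --- the identification $I_H = I_G R' + \langle F\rangle$ from \cite[Theorem 3.7]{FHKVT}, where $F$ is the binomial of the glued cycle, together with the observations that $R'/I_G R'$ is a Cohen--Macaulay domain --- but the descent you build on top of them has a genuine gap, and is also unnecessary. The gap: your chain of geometric vertex decompositions and basic double G-links strips off the new cycle variables one at a time, so it terminates at (a polynomial extension of) $I_G$ itself, possibly plus a few indeterminates --- \emph{not} at a complete intersection. The hypothesis is only that $R/I_G$ is Cohen--Macaulay; $I_G$ is not assumed to be a complete intersection, nor glicci, so ending the linkage chain there proves nothing. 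Relatedly, you misattribute to \cite[Lemma 2.1]{MN} the statement that a nondegenerate geometric vertex decomposition with $N_{y,I}$ Cohen--Macaulay realizes $I$ as a (bi)link of $C_{y,I}$; that is \cite[Corollary 4.3]{KR} (Lemma \ref{GVD_linkage}), and it moreover requires $N_{y,I}$ to be generically Gorenstein and $C_{y,I}$ unmixed.

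What \cite[Lemma 2.1]{MN} (Theorem \ref{MN_glicci}) actually says is precisely what closes the argument in a single step, with no iteration and no glicci-ness of $I_G$: if $S/I$ is Cohen--Macaulay and generically Gorenstein and $f$ is a homogeneous nonzerodivisor on $S/I$, then $I + \langle f\rangle$ is glicci. Apply this with $S = \mathbb{K}[E(H)]$, $I = I_G R'$ and $f = F$: the extension $I_G R'$ is prime, hence generically Gorenstein, and $F$ (which involves the new variables $f_i$) is a nonzerodivisor modulo it, while $R'/I_G R'$ is Cohen--Macaulay because it is a polynomial extension of $R/I_G$. Hence $I_H = I_G R' + \langle F\rangle$ is glicci. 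This is the paper's proof. Your multi-step descent could only be salvaged by invoking this same lemma at the bottom of the chain (applied to $I_G$ plus a monomial, as in the proof of Lemma \ref{MN-combined}), which is strictly more work for the same input.
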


We also show that many toric ideals of graphs which contain $4$-cycles are glicci. The following is a slightly weaker version of Corollary \ref{cor:square-freeDegenGlicciGraph}. 

\begin{theorem}[Corollary \ref{cor:square-freeDegenGlicciGraph}]
Let $G$ be a finite simple graph and suppose there is an edge $y\in E(G)$ contained in a $4$-cycle. If the initial ideal $\init_< I_G$ is a square-free monomial ideal for some lexicographic monomial order with $y>e$ for all $e\in E(G)$ with $e\neq y$, then $I_G$ is glicci.
\end{theorem}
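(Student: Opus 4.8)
The plan is to put $I_G$ through a single geometric vertex decomposition with respect to $y$ and then apply basic double $G$-linkage, in the spirit of Klein and Rajchgot's proof that homogeneous geometrically vertex decomposable ideals are glicci, thereby reducing the problem to the glicci-ness of the contraction ideal $C_{y,I_G}$.

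First I would record the standing facts. Since $\init_<(I_G)$ is a square-free monomial ideal, the quotient $R/I_G$ is normal, hence Cohen--Macaulay; the same holds for $R/I_{G'}$ whenever $G'$ is obtained from $G$ by deleting edges, since its initial ideal with respect to the restricted lexicographic order is again square-free. Because $<$ is lexicographic with $y$ the largest variable and $\init_<(I_G)$ is square-free, the criterion producing a geometric vertex decomposition applies, so $\init_y(I_G)=C_{y,I_G}\cap\big(N_{y,I_G}+(y)\big)$ is a geometric vertex decomposition; moreover $N_{y,I_G}=I_{G\setminus y}$, the (prime, Cohen--Macaulay) toric ideal of the graph obtained by deleting $y$, and both $C_{y,I_G}$ and $N_{y,I_G}$ again have square-free initial ideals under the restricted orders.

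The $4$-cycle is what makes this decomposition nondegenerate and useful. Writing the $4$-cycle through $y$ as $y,f,g,h$ with $f$ the edge opposite $y$, the binomial $yf-gh$ lies in $I_G$, and since $y$ is the largest variable of a lexicographic order, $\init_<(yf-gh)=yf$; hence the variable $f$ is a generator of $C_{y,I_G}$. As $N_{y,I_G}$ is prime and contains no variable, we get $C_{y,I_G}\supsetneq N_{y,I_G}$ with $\sqrt{C_{y,I_G}}\ne\sqrt{N_{y,I_G}}$, so the geometric vertex decomposition is nondegenerate. Granting that $R/C_{y,I_G}$ is Cohen--Macaulay of the same codimension as $R/I_G$, the (nondegenerate) geometric vertex decomposition realizes $\init_y(I_G)$ as a basic double $G$-link of $C_{y,I_G}$ along $N_{y,I_G}$ with linear form $y$, and the lifting results of \cite[Lemma 2.1]{MN} together with the geometric vertex decomposition technology of \cite{KR} put $R/I_G$ and $R/C_{y,I_G}$ in the same $G$-biliaison class; so it suffices to prove that $C_{y,I_G}$ is glicci.

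To finish I would analyze $C_{y,I_G}$ and induct on $|E(G)|$. Besides $I_{G\setminus y}$ and the variable(s) $f$ produced by the $4$-cycle(s) through $y$, this ideal picks up at worst products of edge variables coming from derived Gr\"obner basis elements of the form $y\cdot(\text{monomial})-(\text{monomial})$; when $f$ is disjoint from the remaining generators one peels it off with the tensor product theorem (Theorem~\ref{tensorproduct}), leaving the toric ideal of a graph with strictly fewer edges (a contraction or minor of $G$), to which one applies the inductive hypothesis, the bipartite case (Theorem~\ref{mainresult3}), or a further geometric vertex decomposition. The hard part is that $C_{y,I_G}$ is itself not the toric ideal of a graph, so one must pin down its structure precisely and, above all, verify the equidimensionality (indeed Cohen--Macaulayness) needed both for the basic double link above and throughout the recursion: this is exactly the obstruction isolated in Theorem~\ref{framework}, and the $4$-cycle hypothesis is what keeps it under control for one decisive linkage step.
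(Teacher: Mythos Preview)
Your setup matches the paper: one geometric vertex decomposition with respect to $y$, nondegeneracy from the $4$-cycle, an elementary $G$-biliaison from $I_G$ to $C_{y,I_G}$ via Lemma~\ref{GVD_linkage}, reducing the problem to showing that $C_{y,I_G}$ is glicci. You also correctly observe that the $4$-cycle forces the single variable $f$ (the edge opposite $y$) into $C_{y,I_G}$.

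The gap is in your last paragraph. The proposed induction on $|E(G)|$ does not go through: $C_{y,I_G}$ is not a toric ideal of a graph, and the variable $f$ is almost never disjoint from the remaining generators of $C_{y,I_G}$ (the edge $f$ typically lies on many closed even walks of $G\setminus y$, so it appears throughout $I_{G\setminus y}\subseteq C_{y,I_G}$), so Theorem~\ref{tensorproduct} does not apply. Even if you could peel off $f$, what remains is still not a toric ideal of a graph, and there is no reason the inductive hypothesis (a $4$-cycle through the new top edge together with a square-free lex initial ideal) would be satisfied. Your closing sentence essentially concedes that this is the unresolved obstruction.

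The paper's finish avoids induction entirely by applying the Migliore--Nagel criterion (Theorem~\ref{MN_glicci}) \emph{to $C_{y,I_G}$ itself}, exploiting precisely the linear element you found. Since $f\in C_{y,I_G}$ and the induced Gr\"obner basis of $C_{y,I_G}$ has square-free leading terms, take the element $f'$ of the reduced Gr\"obner basis with $\init_<(f')=f$ and write $C_{y,I_G}=\langle f'\rangle+C'$, where $f$ appears in no term of any generator of $C'$. Then $R/C_{y,I_G}\cong \mathbb{K}[E(G)\setminus\{f\}]/C'$, so $C'$ inherits Cohen--Macaulayness and radicality (hence generic Gorensteinness) from $C_{y,I_G}$, and $f'$ is a non-zero-divisor on $R/C'$. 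Theorem~\ref{MN_glicci} now gives that $C_{y,I_G}=C'+\langle f'\rangle$ is glicci, and the biliaison step you already established finishes the argument. The linear generator produced by the $4$-cycle is the entire mechanism; no recursion is needed.
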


As a corollary to this theorem, we show that the toric ideal of any \emph{gap-free} graph which contains a $4$-cycle is glicci. For the definition of gap-free graph and this result, see the end of Section \ref{sec:someGlicci}. 
 
\noindent{\bf Outline of the paper.} In the next section
we formally introduce geometrically vertex decomposable ideals, along with the required background and notation about Gr\"obner bases.
We also explain how geometrically vertex decomposable ideals
behave with respect to tensor products.  In Section 3 we provide the
needed background on toric ideals of graphs, and we explain how
a particular graph operation preserves the geometric
vertex decomposability property.  In Section 4, we focus on
the glicci property for toric ideals of graphs that can 
be deduced from the results of Section 3 together with general results from the liaison theory literature.  In Section 5 we 
prove that toric ideals of bipartite graphs are geometrically vertex decomposable.  In Section 6
we propose a conjecture on toric ideals with a square-free initial
ideal, 
describe a framework to prove this conjecture, and illustrate this framework by proving that toric ideals of graphs which have quadratic universal Gr\"obner bases are geometrically vertex decomposable.

\noindent{\bf Remark on the field $\mathbb{K}$. }Many of the arguments in this paper are valid over any infinite field. Indeed, the liaison-theoretic setup in Sections 2 and 4 requires an infinite field but is characteristic-free. Similarly, toric ideals of graphs can be defined combinatorially, and since the coefficients of their generators are $\pm 1$, defining such ideals in positive characteristic does not pose any issues. Nevertheless, we assume that $\mathbb{K}$ throughout this paper is algebraically closed of characteristic zero since some of the references that we use make this assumption (e.g. \cite[Proposition 13.15]{Sturm}, which is needed in the proof of Theorem \ref{sqfree=>cm}).

\noindent
{\bf Acknowledgments.} 
We thank Patricia Klein for some helpful conversations.
Cummings was partially supported by an NSERC USRA. Da Silva was partially supported by an NSERC postdoctoral fellowship.
Rajchgot's research is supported
by NSERC Discovery Grant 2017-05732.
 Van Tuyl’s research is supported by NSERC Discovery Grant 2019-05412.

\section{Geometrically vertex decomposable ideals}
 
In this paper $\mathbb{K}$ denotes 
 an algebraically closed field of 
characteristic zero and  $R = \mathbb{K}[x_1,\ldots,x_n]$ 
is the polynomial ring
in $n$ variables.
This section gives the required background on geometrically
vertex decomposable ideals, following \cite{KR}. 
We also examine how geometric vertex
decomposability behaves over tensor products.
 
 Fix a variable $y = x_i$ in $R$.   For any
 $f \in R$, we can write $f$ as $f = \sum_i \alpha_iy^i$, where $\alpha_i$
 is a polynomial only in the variables 
 $\{x_1,\ldots,\hat{x}_i,\ldots,x_n\}$. For $f\neq 0$, the
 {\it initial $y$-form} of $f$, denoted ${\rm in}_y(f)$, is the
 non-zero coefficient of the highest power of $y$ appearing
 in $\sum_i \alpha_iy^i$. That is, 
 if $\alpha_d \neq 0$, but $\alpha_t =0$ for all $t > d$, then
 ${\rm in}_y(f) = \alpha_dy^d$.  Note that if $y$ does not appear in any
 term of $f$, then ${\rm in}_y(f) = f$.  For any ideal $I$ of $R$,
 we set ${\rm in}_y(I) = \langle {\rm in}_y(f) ~|~ f \in I \rangle$ to
 be the ideal generated by all the initial $y$-forms in $I$.  A monomial
 order $<$ on $R$ is said to be {\it $y$-compatible} if the initial
 term of $f$ satisfies
${\rm in}_<(f) = {\rm in}_<({\rm in}_y(f))$ for all $f \in R$.  For
such an order, we have ${\rm in}_<(I) = {\rm in}_<({\rm in}_y(I))$, where ${\rm in}_<(I)$ is
the {\it initial ideal} of $I$ with respect to the
order $<$.
 
Given an ideal $I$ and a $y$-compatible monomial order $<$, let
$\mathcal{G}(I) = \{ g_1,\ldots,g_m\}$ be a Gr\"obner basis of $I$
with respect to this monomial order.  For $i=1,\ldots,m$, write $g_i$ as
$g_i = y^{d_i}q_i + r_i$, where $y$ does not divide any term of $q_i$; that is, ${\rm in}_y(g_i) = y^{d_i}q_i$.  It can then be shown that
${\rm in}_y(I) = \langle y^{d_1}q_1,\ldots,y^{d_m}q_m \rangle$ (see 
\cite[Theorem 2.1(a)]{KMY}).
 
 Given this setup, we define two ideals:
 $$C_{y,I} = \langle q_1,\ldots,q_m \rangle ~~\mbox{and}~~
 N_{y,I} = \langle q_i ~|~ d_i = 0 \rangle.$$ 
Recall that an ideal $I$ is {\it unmixed} if the
ideal $I$ 
satisfies $\dim(R/I) = \dim(R/P)$ for
all associated primes $P \in {\rm Ass}_R(R/I)$.  We come to our
main definition:

\begin{definition}\label{gvd}
 An ideal $I$ of $R = \mathbb{K}[x_1,\ldots,x_n]$ is {\it geometrically
 vertex decomposable} if $I$ is unmixed and 
 \begin{enumerate}
     \item $I = \langle 1 \rangle$, or $I$ is generated by
     a (possibly empty) subset of variables of $R$, or 
     \item there is a variable $y = x_i$ in $R$ and a
     $y$-compatible monomial order $<$ such that 
     $${\rm in}_y(I) = C_{y,I} \cap (N_{y,I} + \langle y \rangle),$$
     and the contractions of the
     ideals $C_{y,I}$ and $N_{y,I}$ to the ring
     $\mathbb{K}[x_1,\ldots,\hat{x}_i,\ldots,x_n]$ are geometrically
     vertex decomposable.
 \end{enumerate}
We make the convention that the two ideals 
$\langle 0 \rangle$ and $\langle 1 \rangle$ of the ring 
$\mathbb{K}$ are also geometrically
vertex decomposable.
\end{definition}

\begin{remark}\label{gvdremark}
For any ideal $I$ of $R$, if there exists a variable $y = x_i$ in $R$ and a
     $y$-compatible monomial order $<$ such that 
     ${\rm in}_y(I) = C_{y,I} \cap (N_{y,I} + \langle y \rangle)$, then this
     decomposition is called a {\it geometric vertex decomposition of $I$ with
     respect to $y$}.  This decomposition was first defined in
     \cite{KMY}.
     Consequently, Definition \ref{gvd} (2) says that there
     is a variable $y$ such that $I$ has a geometric vertex decomposition with respect
     to this variable.
     
     We say that a geometric vertex decomposition is \textit{degenerate} if either $C_{y,I}=\langle 1\rangle$ or $\sqrt{C_{y,I}}=\sqrt{N_{y,I}}$ (see \cite[Section 2.2]{KR} for further details and results). Otherwise, we call a geometric vertex decomposition \textit{nondegenerate}.
     \end{remark}
 
 If elements in our Gr\"obner basis are
 square-free in $y$, i.e., if ${\rm in}_y(g_i) = y^{d_i}q_i$ with
 $d_i=0$ or $1$ for all $g_i \in \mathcal{G}(I)$, then
 Knutson, Miller, and Yong note that we get the
 geometric vertex decomposition of $I$ with respect
 to $y$ for ``free":
 
 \begin{lemma}[{\cite[Theorem 2.1 (a), (b)]{KMY}}]\label{square-freey}
 Let  $I$  be an ideal of $R$ and let $<$ be a $y$-compatible monomial order.  Suppose that 
$\mathcal{G}(I) = \{ g_1,\ldots,g_m\}$ is a Gr\"obner basis of $I$ with respect to $<$, and also suppose that 
${\rm in}_y(g_i) = y^{d_i}q_i$ with $d_i=0$ or $1$ for all
$i$. Then
\begin{enumerate}
    \item $\{q_1,\ldots,q_m \}$ is a Gr\"obner basis of $C_{y,I}$
    and $\{q_i ~|~ d_i = 0 \}$ is a Gr\"obner basis of $N_{y,I}$.
\item ${\rm in}_y(I) = C_{y,I} \cap (N_{y,I} + \langle y \rangle)$, i.e.,
$I$ has a geometric vertex decomposition with respect to $y$.
\end{enumerate}
 \end{lemma}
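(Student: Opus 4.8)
The plan is to deduce both parts from the single input that, for a $y$-compatible order $<$, the set $\{y^{d_1}q_1,\dots,y^{d_m}q_m\}$ is a Gr\"obner basis of $\mathrm{in}_y(I)$ with respect to $<$ --- this is exactly \cite[Theorem 2.1(a)]{KMY}, already recorded above, and is also an instance of the standard fact that passing to a weight-initial ideal along a monomial order that refines the weight sends Gr\"obner bases to Gr\"obner bases. I will use two elementary consequences of $y$-compatibility: a monomial of strictly larger $y$-degree is strictly larger in $<$, and $\mathrm{in}_<$ is multiplicative; and also that, since no $q_i$ involves $y$, one has $\mathrm{in}_<(y^{d_i}q_i)=y^{d_i}\mathrm{in}_<(q_i)$.

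For part (1) I would verify Buchberger's criterion for $\{q_1,\dots,q_m\}$. A one-line computation, using that the $q_i$ are $y$-free, gives $S(y^{d_j}q_j,\,y^{d_k}q_k)=y^{e}\,S(q_j,q_k)$ with $e=\max(d_j,d_k)\in\{0,1\}$. Since $\{y^{d_i}q_i\}$ is a Gr\"obner basis, fix a standard representation $S(y^{d_j}q_j,y^{d_k}q_k)=\sum_i h_i\,y^{d_i}q_i$ with $\mathrm{in}_<(h_iy^{d_i}q_i)\le y^{e}\mathrm{in}_<(S(q_j,q_k))$ for every $i$ with $h_i\neq 0$. The $y$-degree remark forces $\deg_y h_i\le e-d_i$, so extracting the coefficient of $y^{e}$ from both sides yields $S(q_j,q_k)=\sum_i \tilde h_i q_i$, where $\tilde h_i$ is the coefficient of $y^{e-d_i}$ in $h_i$; using $y$-compatibility and multiplicativity of $\mathrm{in}_<$ one checks $\mathrm{in}_<(\tilde h_i q_i)\le\mathrm{in}_<(S(q_j,q_k))$, so this is a standard representation with respect to $\{q_1,\dots,q_m\}$ and $<$. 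Hence $\{q_1,\dots,q_m\}$ is a Gr\"obner basis of $C_{y,I}$. Running the same argument only over pairs with $d_j=d_k=0$ (so $e=0$, and $\tilde h_i=0$ whenever $d_i=1$) shows simultaneously that $\{q_i\mid d_i=0\}$ is a Gr\"obner basis of $N_{y,I}$.

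For part (2), the inclusion $\mathrm{in}_y(I)\subseteq C_{y,I}\cap(N_{y,I}+\langle y\rangle)$ is immediate from $\mathrm{in}_y(I)=\langle y^{d_1}q_1,\dots,y^{d_m}q_m\rangle$: each $y^{d_i}q_i$ lies in $\langle q_i\rangle\subseteq C_{y,I}$, and it lies in $N_{y,I}$ if $d_i=0$ and in $\langle y\rangle$ if $d_i=1$. For the reverse inclusion, given $f$ in the right-hand side write $f=f_0+yf_1$ with $f_0=f|_{y=0}$. Writing $f=\sum_i a_iq_i$ and setting $y=0$ gives $f_0=\sum_i a_i|_{y=0}\,q_i$, hence $f_1=(f-f_0)/y=\sum_i\big((a_i-a_i|_{y=0})/y\big)q_i\in C_{y,I}$; writing instead $f=\sum_{d_i=0}b_iq_i+yg$ and setting $y=0$ gives $f_0=\sum_{d_i=0}b_i|_{y=0}\,q_i\in N_{y,I}$. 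Expanding $f_1=\sum_i c_iq_i$ and using $d_i\le 1$ gives $yf_1=\sum_{d_i=1}c_i(yq_i)+\sum_{d_i=0}(yc_i)q_i\in\mathrm{in}_y(I)$; since $N_{y,I}\subseteq\mathrm{in}_y(I)$ we conclude $f=f_0+yf_1\in\mathrm{in}_y(I)$. This last step is the only place the hypothesis $d_i\le 1$ is genuinely used: for larger $d_i$ one only gets $yf_1\in y\,C_{y,I}$, which need not lie in $\mathrm{in}_y(I)$.

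The step I expect to require the most care is the transfer of standard representations in part (1): one must confirm that passing to the $y^{e}$-coefficient of the standard representation of $S(y^{d_j}q_j,y^{d_k}q_k)$ cannot enlarge any leading term, and this is precisely where $y$-compatibility (higher $y$-degree is $<$-larger) and multiplicativity of $\mathrm{in}_<$ do the work; everything else is formal, reducing in part (2) to the identity $\langle yq_i: d_i=1\rangle+\langle q_i:d_i=0\rangle=\langle q_1,\dots,q_m\rangle\cap(\langle q_i:d_i=0\rangle+\langle y\rangle)$.
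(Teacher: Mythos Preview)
The paper does not give its own proof of this lemma; it is stated as a citation of \cite[Theorem 2.1(a),(b)]{KMY} and used as a black box throughout. So there is nothing in the paper to compare your argument against.

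That said, your argument is correct and self-contained. The verification of part (1) via Buchberger's criterion is clean: the identity $S(y^{d_j}q_j,y^{d_k}q_k)=y^{e}S(q_j,q_k)$ holds because the $q_i$ are $y$-free, and your extraction of the $y^e$-coefficient from a standard representation is justified precisely by $y$-compatibility (which guarantees $\deg_y h_i\le e-d_i$) together with the cancellativity of multiplication by $y^e$ in a monomial order. The specialization to $N_{y,I}$ (taking $e=0$, which forces $h_i=0$ whenever $d_i=1$) is handled correctly. Your part (2) argument is the expected one: splitting $f=f_0+yf_1$, showing $f_0\in N_{y,I}$ and $f_1\in C_{y,I}$ separately, and then using $d_i\le 1$ to rewrite $yf_1$ as a combination of the $y^{d_i}q_i$. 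Your closing remark correctly identifies this last step as the one place where the square-free-in-$y$ hypothesis is essential.
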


 \begin{remark}
 If $I$ is a square-free monomial ideal in $R$, then $I$ is geometrically
 vertex decomposable if and only if the simplicial complex $\Delta$
 associated with $I$ via the Stanley-Reisner correspondence is a
 vertex decomposable simplicial complex;  see \cite[Proposition 2.8]{KR} for more details.  As a consequence, we can view Definition \ref{gvd} as a generalization of the notion of vertex decomposability.   When $I$ is a square-free
 monomial ideal with associated
 simplicial complex $\Delta$, then $C_{y,I}$ is the Stanley-Reisner
 ideal of the star of $y$, i.e., ${\rm star}_\Delta(y)
 = \{F \in \Delta ~|~ F \cup \{y\} \in \Delta \}$ and $N_{y,I}+\langle y\rangle$ corresponds
 to the deletion of $y$ from $\Delta$, that is, 
 ${\rm del}_\Delta(y) = \{F \in \Delta ~|~ 
 y \not\in F \}$ (see \cite[Remark 2.5]{KR}).
 \end{remark}
 
 If $I$ has a geometric vertex decomposition with respect to 
 a variable $y$, we can determine some additional
 information about a reduced Gr\"obner
 basis of $I$ with respect to any $y$-compatible monomial
 order.  In the following statement, $I$
 is {\it square-free in $y$} if there is a generating
 set $\{g_1,\ldots,g_s\}$ of $I$ such that no
 term of $g_1,\ldots,g_s$ is divisible by $y^2$.
 
 \begin{lemma}[{\cite[Lemma 2.6]{KR}}]\label{square-freeiny}
 Suppose that the ideal $I$ of $R$ has a geometric vertex
 decomposition with respect to the variable $y =x_i$.
 Then $I$ is square-free in $y$.  Moreover, for any
 $y$-compatible term order, the reduced
 Gr\"obner basis of $I$ with respect to this order
 has the form $\{yq_1+r_1,\ldots,yq_k+r_k,h_1,\ldots,h_t\}$
 where $y$ does not divide any term of $q_i,r_i,h_j$
 for $i \in \{1,\ldots,k\}$ and $j\in\{1,\ldots,t\}$.
 \end{lemma}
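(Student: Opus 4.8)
The plan is to first pin down the structure of $\mathrm{in}_y(I)$ and then read the shape of the reduced Gr\"obner basis off of it. Fix a geometric vertex decomposition of $I$ with respect to $y$ and write $C=C_{y,I}$ and $N=N_{y,I}$ for the associated ideals, so that $\mathrm{in}_y(I)=C\cap(N+\langle y\rangle)$, where $N\subseteq C$ and both $C$ and $N$ are generated by elements of $R'=\mathbb{K}[x_1,\ldots,\hat{x}_i,\ldots,x_n]$; recall also that every element of $R$ has a unique expansion $\sum_{j\ge 0}y^j a_j$ with $a_j\in R'$. The key claim is the identity $\mathrm{in}_y(I)=yC+N$, together with the following sharper fact, which I will use again below: every $f\in\mathrm{in}_y(I)$ has all of its $y$-coefficients $a_j$ in $C$, and has its constant term $a_0$ in $N$.

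To prove this, the inclusion $yC+N\subseteq C\cap(N+\langle y\rangle)$ is immediate, since $yC\subseteq C\cap\langle y\rangle$ and $N\subseteq C\cap(N+\langle y\rangle)$. For the reverse inclusion, let $f\in C\cap(N+\langle y\rangle)$ and expand $f=\sum_j y^j a_j$ with $a_j\in R'$. Writing $f=\sum_i c_i q_i$ with the $q_i\in R'$ a generating set of $C$ and expanding each $c_i$ in powers of $y$, comparison of $y$-coefficients shows $a_j\in C$ for every $j$. Projecting the relation $f\in N+\langle y\rangle$ modulo $y$ and using that $N$ is generated by elements of $R'$ shows $a_0\in N$. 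Then $f=a_0+y\sum_{j\ge 1}y^{j-1}a_j\in N+yC$, which gives the reverse inclusion; the sharper fact is exactly the coefficient computation just carried out.

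Now let $<$ be any $y$-compatible term order and let $\{g_1,\ldots,g_m\}$ be the reduced Gr\"obner basis of $I$ with respect to $<$, with $\mathrm{in}_y(g_i)=y^{d_i}q_i$ and $q_i\in R'$ nonzero. I claim $d_i\le 1$ for all $i$. Suppose instead $d_i\ge 2$. Then $y^{d_i}q_i=\mathrm{in}_y(g_i)\in\mathrm{in}_y(I)$, so the sharper fact above gives $q_i\in C$, and hence $yq_i\in yC\subseteq\mathrm{in}_y(I)$. Applying $\mathrm{in}_<$ and using $y$-compatibility, $y\,\mathrm{in}_<(q_i)\in\mathrm{in}_<(\mathrm{in}_y(I))=\mathrm{in}_<(I)$, while $\mathrm{in}_<(g_i)=\mathrm{in}_<(\mathrm{in}_y(g_i))=y^{d_i}\mathrm{in}_<(q_i)$. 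Since $d_i\ge 2$, the monomial $y\,\mathrm{in}_<(q_i)$ lies in $\mathrm{in}_<(I)$ and is a proper divisor of $\mathrm{in}_<(g_i)$, contradicting that the leading terms of a reduced Gr\"obner basis minimally generate the initial ideal. Hence every $d_i\in\{0,1\}$. After relabelling so that $d_1=\cdots=d_k=1$ and $d_{k+1}=\cdots=d_m=0$, each $g_i$ with $i\le k$ has the form $yq_i+r_i$ where $y$ divides no term of $q_i$ and, since $\mathrm{in}_y(g_i)=yq_i$, no term of $r_i$; and each $g_i$ with $i>k$ has $y$-degree $0$, hence lies in $R'$. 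Writing $h_1,\ldots,h_t$ for the latter gives exactly the asserted form $\{yq_1+r_1,\ldots,yq_k+r_k,h_1,\ldots,h_t\}$, and since no term of any of these generators is divisible by $y^2$, $I$ is square-free in $y$.

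The substance of the argument lies in the identity $\mathrm{in}_y(I)=yC+N$, and specifically in the elementary observation that membership in this ideal can be detected $y$-coefficient by $y$-coefficient; this is the only place where the hypothesis is really used (that $I$ has a geometric vertex decomposition with respect to $y$, so that $\mathrm{in}_y(I)=C\cap(N+\langle y\rangle)$ with $C,N$ generated over $R'$ and $N\subseteq C$). Once that is established, the bound $d_i\le 1$ and the resulting normal form of the reduced Gr\"obner basis follow formally from $y$-compatibility and the minimality of the leading terms of a reduced Gr\"obner basis.
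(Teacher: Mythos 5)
Your proposal is correct. Note that the paper does not prove this lemma itself --- it is quoted verbatim from \cite[Lemma 2.6]{KR} --- so there is no in-paper argument to compare against; your proof is a valid, self-contained reconstruction along the standard lines. The two key steps both check out: (i) the identity $\mathrm{in}_y(I)=N_{y,I}+yC_{y,I}$, together with the coefficientwise membership statement, follows exactly as you say because $C_{y,I}$ and $N_{y,I}$ are generated by elements of $R'=\mathbb{K}[x_1,\ldots,\hat{x}_i,\ldots,x_n]$ and $N_{y,I}\subseteq C_{y,I}$; and (ii) if some element of the reduced Gr\"obner basis had $d_i\geq 2$, then $yq_i\in yC_{y,I}\subseteq \mathrm{in}_y(I)$ would produce the monomial $y\,\mathrm{in}_<(q_i)\in\mathrm{in}_<(I)$ properly dividing $\mathrm{in}_<(g_i)=y^{d_i}\mathrm{in}_<(q_i)$, contradicting minimality of the leading terms of a reduced Gr\"obner basis. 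The use of $\mathrm{in}_<(\mathrm{in}_y(I))=\mathrm{in}_<(I)$ for an arbitrary $y$-compatible order is exactly what makes the conclusion hold for every such order, not just the one furnished by the hypothesis.
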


 The following lemma and its proof helps to illustrate some
 of the above ideas.   Furthermore, since the
 definition of geometrically vertex decomposable lends
 itself to proof by induction, the following facts
 are sometimes useful for the base cases of our induction.

 \begin{lemma}  \label{simplecases}
\begin{enumerate} 
\item An an ideal $I$ of $R = \mathbb{K}[x]$ is geometrically
 vertex decomposable if and only if $I = \langle ax +b \rangle$ for
 some $a,b \in \mathbb{K}$.
 \item Let $f = c_1m_1+\cdots + c_sm_s$ be any
 polynomial in $R = \mathbb{K}[x_1,\ldots,x_n]$ with $c_i \in \mathbb{K}$ and
 $m_i$ a monomial.  If each $m_i$ is square-free, then
 $I = \langle f \rangle$ is geometrically vertex decomposable.
 In particular, if $m$ is a square-free monomial, then
 $\langle m \rangle$ is geometrically vertex decomposable.
 
\end{enumerate}
 \end{lemma}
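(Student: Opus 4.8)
The plan is to prove part (2) first, since it subsumes the substantive direction of (1). For (2) I would induct on the number $k$ of variables of $R$ that occur in $f$. The base case $k=0$ is immediate: then $f$ is a scalar, so $I=\langle f\rangle$ is $\langle 0\rangle$ or $\langle 1\rangle$, both of which are geometrically vertex decomposable by Definition \ref{gvd}(1) (the zero ideal being generated by the empty set of variables, and unmixed since $R$ is a domain).

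For the inductive step, suppose $f$ involves the variable $y=x_i$. Since each $m_j$ is square-free, no term of $f$ is divisible by $y^2$, so I can write $f=yq+r$ with $y$ dividing no term of $q$ or of $r$, with $q\neq 0$, and with $q,r$ again $\mathbb{K}$-combinations of square-free monomials, each involving only variables occurring in $f$ other than $y$ (hence at most $k-1$ variables). In particular $\mathrm{in}_y(f)=y^1q$. Now fix any $y$-compatible monomial order, e.g.\ a lexicographic order with $y$ largest; since $\langle f\rangle$ is principal, $\{f\}$ is automatically a Gr\"obner basis of $I$ for this order (the only $S$-polynomial being trivial), and its unique element has $y$-degree $1$. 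Lemma \ref{square-freey} then applies and shows that $I$ has a geometric vertex decomposition with respect to $y$, with $C_{y,I}=\langle q\rangle$ and $N_{y,I}=\langle 0\rangle$. The contraction of $C_{y,I}$ to $\mathbb{K}[x_1,\dots,\widehat{x_i},\dots,x_n]$ is $\langle q\rangle$, which is geometrically vertex decomposable by the inductive hypothesis, and the contraction of $N_{y,I}$ is $\langle 0\rangle$, which is geometrically vertex decomposable. Finally $I=\langle f\rangle$ is unmixed, since any principal ideal of a polynomial ring has no embedded primes and its minimal primes all have height one (for instance, using that $R$ is a UFD, $\mathrm{Ass}(R/\langle f\rangle)$ consists of the height-one primes generated by the irreducible factors of $f$). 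Hence Definition \ref{gvd}(2) holds and $I$ is geometrically vertex decomposable. The last sentence of (2) is the case $s=1$, $c_1=1$, $m_1=m$.

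For part (1), the backward implication is immediate from (2): if $I=\langle ax+b\rangle$ then $I=\langle f\rangle$ with $f=ax+b$ a $\mathbb{K}$-combination of the square-free monomials $1$ and $x$ (degenerating to $\langle 0\rangle$ or $\langle 1\rangle$ when $a=0$). For the forward implication, I would use that every ideal of $\mathbb{K}[x]$ is $\langle 0\rangle$, $\langle 1\rangle$, or $\langle p\rangle$ with $p$ nonconstant, and that these, together with the $\deg p=1$ case, are exactly the ideals of the form $\langle ax+b\rangle$; so it suffices to show that $I=\langle p\rangle$ with $d:=\deg p\ge 2$ is \emph{not} geometrically vertex decomposable. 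Condition (1) of Definition \ref{gvd} fails, as $I$ is neither $\langle 1\rangle$ nor generated by a subset of $\{x\}$. For condition (2), the only available variable is $y=x$ and $\mathbb{K}[x]$ has a unique (hence $y$-compatible) monomial order; taking $\{p\}$ as the Gr\"obner basis and writing $p=cx^d+(\text{lower-degree terms})$ gives $\mathrm{in}_y(p)=cx^d$, so $q_1=c$ is a unit, whence $C_{y,I}=\langle 1\rangle$ and $N_{y,I}=\langle 0\rangle$. Then $C_{y,I}\cap(N_{y,I}+\langle y\rangle)=\langle x\rangle$, while $\mathrm{in}_y(I)=\langle x^d\rangle\subsetneq\langle x\rangle$, so $I$ admits no geometric vertex decomposition with respect to $x$. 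Hence $I$ is not geometrically vertex decomposable.

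The bookkeeping around the decomposition $f=yq+r$ and the unmixedness of a principal ideal are routine; the one point requiring a bit of care is the forward direction of (1), where one must rule out \emph{every} geometric vertex decomposition. But because $\mathbb{K}[x]$ has only one variable and only one monomial order, this collapses to the single explicit computation above in which $C_{y,I}$ becomes the unit ideal, so the obstacle is mild.
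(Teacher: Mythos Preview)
Your proof is correct and follows essentially the same approach as the paper: induction together with Lemma~\ref{square-freey}, obtaining $C_{y,I}=\langle q\rangle$ and $N_{y,I}=\langle 0\rangle$ at each step. The only notable difference is in the forward direction of~(1): the paper invokes Lemma~\ref{square-freeiny} (a geometric vertex decomposition forces $I$ to be square-free in $y$, hence $\deg p\le 1$), whereas you carry out the explicit computation $\mathrm{in}_x(I)=\langle x^d\rangle\neq\langle x\rangle=C_{x,I}\cap(N_{x,I}+\langle x\rangle)$; your argument is more self-contained, the paper's is shorter.
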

 
 \begin{proof} (1)
 $(\Leftarrow)$  If $a =0$, or $b =0$, or both $a=b=0$, the ideal 
 $I = \langle ax+b \rangle$ satisfies Definition \ref{gvd} (1).  So, suppose
 $a,b \neq 0$.  The ideal $I$ is prime,
 so it is unmixed.  Since $x$ is the only 
 variable of $R$, and because
 there is only one monomial order on this ring, it is easy to see
 that this monomial order is $x$-compatible, and that $\{ax+b\}$ is
 a Gr\"obner basis of $I$.  So, $C_{x,I} = \langle a \rangle = \langle 1 \rangle$ and 
 $N_{x,I} = \langle 0 \rangle$.  It is straightforward to check 
 that we have
 a geometric vertex decomposition of $I$ with respect to $x$.  Furthermore,
 as ideals in $\mathbb{K}[\hat{x}] = \mathbb{K}$, $C_{x,I} = \langle 1 \rangle$ and 
 $N_{x,I} = \langle 0 \rangle$ are geometrically
 vertex decomposable by definition.  So, $I$ is geometrically vertex decomposable.
 
 $(\Rightarrow)$ Since $R = \mathbb{K}[x]$ is a principal ideal domain,
 $I = \langle f \rangle$ for some $f \in R$, i.e., 
 $f = a_dx^d + \cdots + a_1x + a_0$ with $a_i \in \mathbb{K}$. 
 Since $I$ is geometrically vertex decomposable, and because
 $x$ is the only variable of $R$, by Lemma \ref{square-freeiny},
 the ideal $I$ is square-free in $x$.  This fact then forces
 $d \leq 1$, and thus $I = \langle a_1x+a_0\rangle$ as desired.
 
 (2)  We proceed by induction on the number of variables in
 $R=\mathbb{K}[x_1,\ldots,x_n]$.  The base case $n=1$ follows
 from statement (1).    Because $I = \langle f \rangle$ is
 principal, $f$ is a Gr\"obner basis with respect
 to any monomial order.  In particular, let 
 $>$ be the lexicographic
 order on $R$ with $x_1 > \cdots > x_n$, and 
 assume $m_1 > \cdots > m_s$.  Let $y$ be the largest variable
 dividing $m_1$.  
  Then
 we can write $f$ as $f = y(c_1m_1'+\cdots + c_im'_i)
 + c_{i+1}m_{i+1}+ \cdots + c_sm_s$ for some $i$ such that
 $y$ does not divide $m_{i+1},\ldots,m_s$.  Note that $>$ is
 a $y$-compatible monomial order, and so 
 by Lemma \ref{square-freey} we 
 have ${\rm in}_y(I) = C_{y,I} \cap (N_{y,I}+\langle y \rangle)$
 with  $C_{y,I} = \langle c_1m'_1+\cdots + c_im'_i\rangle$ and
 $N_{y,I} = \langle 0 \rangle$. The ideal $N_{y,I}$ is 
 geometrically vertex decomposable in $\mathbb{K}[x_1,\ldots,\hat{y},
 \ldots,x_n]$ by definition, and $C_{y,I}$ is geometrically
 vertex decomposable in the same ring by induction.  Observe that
 $I, C_{y,I}$ and $N_{y,I}$ are also unmixed since they are principal.
 \end{proof}
 
 Theorem \ref{tensorproduct}, which is of independent interest, shows how we can treat ideals whose generators lie in different sets of variables.  We require a lemma about Gr\"obner
 bases in tensor products.  For completeness, we give
 a proof, although it follows easily from standard facts about Gr\"obner bases.

 We first need to recall a characterization of
 Gr\"obner bases using standard representations.
 Fix a monomial order $<$ on $R =\mathbb{K}[x_1,\ldots,x_n]$.
 Given $G = \{g_1,\ldots,g_s\}$ in $R$, we say $f$ {\it reduces to
 zero modulo $G$} if 
  $f$ has a {\it standard representation}
 $$f =f_1g_1+\cdots +f_sg_s ~~\mbox{with $f_i \in R$}$$
 with ${\rm multidegree}(f) \geq 
 {\rm multidegree}(f_ig_i)$
 for all $i$ with $f_ig_i \neq 0$. Here 
 $${\rm multidegree}(h) = 
 \max\{\alpha \in \mathbb{N}^n ~|~ 
 \mbox{$x^\alpha$ is a term of $h$}\},$$
 where we use the monomial order $<$ to order $\mathbb{N}^n$.  We then have the following result.
 
 \begin{theorem}[{\cite[Chapter 2.9, Theorem 3]{CLO}}]\label{gbchar}
  Let $R =\mathbb{K}[x_1,\ldots,x_n]$ with fixed
 monomial order $<$.
 A basis $G = \{g_1,\ldots,g_s\}$ of an ideal $I$ in $R$
 is a Gr\"obner basis for $I$ if and only if
 each $S$-polynomial $S(g_i,g_j)$ reduces to zero modulo
 $G$.
 \end{theorem}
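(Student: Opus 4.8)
The plan is to run the classical ``minimal representation'' argument, since the assertion is precisely Buchberger's $S$-pair criterion in its standard-representation form. The forward implication is quick: if $G$ is a Gr\"obner basis of $I$, then the division algorithm applied to any $f \in I$ returns remainder $0$ together with quotients $f_1,\dots,f_s$ satisfying ${\rm multidegree}(f) \ge {\rm multidegree}(f_ig_i)$ for every $i$ with $f_ig_i \ne 0$; applying this to $f = S(g_i,g_j) \in I$ shows that each $S$-polynomial reduces to zero modulo $G$. For the converse, assume every $S(g_i,g_j)$ reduces to zero modulo $G$ and fix $0 \ne f \in I$. Since $G$ generates $I$, we may write $f = \sum_{i=1}^{s} h_ig_i$ with $h_i \in R$, and I would choose such an expression so that $\delta := \max_i {\rm multidegree}(h_ig_i)$ is minimal with respect to $<$. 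Because ${\rm multidegree}(f) \le \delta$ in any such expression, it suffices to prove ${\rm multidegree}(f) = \delta$: granting this, put $T = \{\, i : {\rm multidegree}(h_ig_i) = \delta \,\} \ne \emptyset$; then the part of $f$ of multidegree $\delta$ equals $\sum_{i \in T}{\rm in}_<(h_i){\rm in}_<(g_i)$, a nonzero scalar multiple of the monomial $x^{\delta}$, which is divisible by ${\rm in}_<(g_i)$ for each $i \in T$. Hence ${\rm in}_<(f) \in \langle {\rm in}_<(g_1),\dots,{\rm in}_<(g_s)\rangle$, and as $f$ was arbitrary, $G$ is a Gr\"obner basis of $I$.

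To establish ${\rm multidegree}(f) = \delta$, I would argue by contradiction. If ${\rm multidegree}(f) < \delta$, then the multidegree-$\delta$ terms of $\sum_{i \in T}h_ig_i$ cancel, so the polynomial $\sum_{i\in T}{\rm in}_<(h_i)g_i$ has multidegree $< \delta$ even though each of its summands has multidegree exactly $\delta$. At this point I would invoke the \emph{cancellation lemma}, which rewrites such a ``collapsing'' sum as a $\mathbb{K}$-linear combination $\sum_{j,k} c_{jk}\, x^{\delta - \gamma_{jk}}\,S(g_j,g_k)$, where $\gamma_{jk}=\mathrm{lcm}({\rm in}_<(g_j),{\rm in}_<(g_k))$, in which every summand has multidegree strictly below $\delta$. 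Now feed in the hypothesis: each $S(g_j,g_k)$ has a standard representation $S(g_j,g_k) = \sum_l a_{jkl}g_l$ with ${\rm multidegree}(a_{jkl}g_l) \le {\rm multidegree}(S(g_j,g_k)) < \gamma_{jk}$, so every $x^{\delta-\gamma_{jk}}a_{jkl}g_l$ still has multidegree $< \delta$. Substituting these into
\[
f \;=\; \sum_{i\in T}{\rm in}_<(h_i)g_i \;+\; \Big(\,\sum_{i\in T}\big(h_i - {\rm in}_<(h_i)\big)g_i \;+\; \sum_{i\notin T}h_ig_i\,\Big),
\]
in which both parenthesized sums already have multidegree $< \delta$, produces a new representation $f = \sum_i \widetilde{h}_ig_i$ with $\max_i{\rm multidegree}(\widetilde{h}_ig_i) < \delta$, contradicting the minimality of $\delta$.

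The hard part is the cancellation lemma; everything else --- the division-algorithm multidegree estimates, and tracking multidegrees through the substitutions --- is routine bookkeeping. The lemma is exactly where the definition of the $S$-polynomial earns its keep: for a single cancelling pair ${\rm in}_<(h_j){\rm in}_<(g_j) + {\rm in}_<(h_k){\rm in}_<(g_k) = 0$, one checks by direct computation that ${\rm in}_<(h_j)g_j + {\rm in}_<(h_k)g_k$ differs from a scalar multiple of $x^{\delta-\gamma_{jk}}S(g_j,g_k)$ only in terms of multidegree $< \delta$, and the general case of several simultaneously cancelling terms indexed by $T$ is reduced to this by a telescoping-sum argument that pairs those terms consecutively. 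Since the statement is classical, in practice one simply cites it from \cite{CLO}; the outline above records the argument behind it.
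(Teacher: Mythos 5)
Your proposal is correct: it is precisely the standard proof of Buchberger's $S$-pair criterion (forward direction via the division algorithm, converse via a minimal-multidegree representation together with the cancellation lemma rewriting a collapsing top-degree sum as a combination of $S$-polynomials), which is exactly the argument in the source \cite{CLO} that the paper cites for this theorem without reproducing a proof. No gaps; nothing further is needed.
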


 For the
 lemma below, note that if $R =\mathbb{K}[x_1,\ldots,x_n]$
 and $S = \mathbb{K}[y_1,\ldots,y_m]$, and if 
 $<$ is a monomial order on $R \otimes S :=R \otimes_{\mathbb{K}} S$, then
 $<$ induces a monomial order $<_R$ on $R$ where
 $m_1 <_R m_2$ if and only if $m_1 < m_2$, where
 we view $m_1,m_2$ as monomials of both $R$ and $R \otimes S$. 
 Here,``viewing $f\in R$ as an element of $R\otimes S$" means writing $\varphi_R(f)$ as $f$ where $\varphi_R:R\rightarrow R\otimes S$ is the natural inclusion $f\mapsto f\otimes 1$. Similarly, we let $<_S$ denote the induced
 monomial order on $S$.

 \begin{lemma}\label{grobnertensorproduct}
 Let $I \subseteq R =\mathbb{K}[x_1,\ldots,x_n]$ and 
 $J \subseteq S=\mathbb{K}[y_1,\ldots,y_m]$ be ideals.
 For any monomial order $<$ on $R \otimes S$, there exists a  
 Gr\"obner basis of $I+J$ in $R \otimes S$ which has the form
 $\mathcal{G}(I+J) = \mathcal{G}_1 \cup \mathcal{G}_2$,
 where $\mathcal{G}_1$ is a Gr\"obner basis of
 $I$ in $R$ with respect to $<_R$ but viewed as
 elements of $R \otimes S$, and $\mathcal{G}_2$
 is a Gr\"obner basis of $J$ in $S$ with 
 respect to $<_S$
 but viewed as elements of $R \otimes S$.
 \end{lemma}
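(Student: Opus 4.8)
The plan is to choose $\mathcal{G}_1$ to be any Gr\"obner basis of $I$ in $R$ with respect to $<_R$ and $\mathcal{G}_2$ to be any Gr\"obner basis of $J$ in $S$ with respect to $<_S$, to regard both as subsets of $R\otimes S$ via the natural inclusions $\varphi_R\colon f\mapsto f\otimes 1$ and $\varphi_S\colon g\mapsto 1\otimes g$, and to prove that $\mathcal{G}_1\cup\mathcal{G}_2$ is a Gr\"obner basis of $I+J$ with respect to $<$. That $\mathcal{G}_1\cup\mathcal{G}_2$ generates $I+J$ in $R\otimes S$ is immediate, since a generating set of an ideal generates its extension along $R\hookrightarrow R\otimes S$ (resp. $S\hookrightarrow R\otimes S$); so the real content is the verification of Buchberger's criterion, Theorem \ref{gbchar}: it suffices to show that every $S$-polynomial $S(g,g')$ with $g,g'\in\mathcal{G}_1\cup\mathcal{G}_2$ reduces to zero modulo $\mathcal{G}_1\cup\mathcal{G}_2$. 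I would split this into three cases.

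First, if $g,g'\in\mathcal{G}_1$, then $S(g,g')$ is a polynomial in the variables $x_1,\dots,x_n$ only, and since the restriction of $<$ to monomials in these variables is exactly $<_R$, a standard representation of $S(g,g')$ modulo $\mathcal{G}_1$ over $R$ — which exists because $\mathcal{G}_1$ is a Gr\"obner basis of $I$, again by Theorem \ref{gbchar} — remains a standard representation over $R\otimes S$: every polynomial that appears involves only the $x$-variables, so the relevant multidegree inequalities are unaffected by the passage from $\mathbb{N}^n$ to $\mathbb{N}^{n+m}$. Hence $S(g,g')$ reduces to zero modulo $\mathcal{G}_1$, and a fortiori modulo $\mathcal{G}_1\cup\mathcal{G}_2$. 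The case $g,g'\in\mathcal{G}_2$ is handled symmetrically, using that $<$ restricts to $<_S$ on monomials in the $y$-variables.

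The remaining case is $g\in\mathcal{G}_1$ and $g'\in\mathcal{G}_2$. Here $\init_<(g)$ is a monomial in $x_1,\dots,x_n$ and $\init_<(g')$ is a monomial in $y_1,\dots,y_m$, so the two leading monomials are relatively prime; by the standard fact that an $S$-polynomial of two elements whose leading monomials are relatively prime reduces to zero modulo those two elements (\cite[Chapter 2.9, Proposition 4]{CLO}), $S(g,g')$ reduces to zero modulo $\{g,g'\}\subseteq\mathcal{G}_1\cup\mathcal{G}_2$. With all three cases established, Theorem \ref{gbchar} yields that $\mathcal{G}_1\cup\mathcal{G}_2$ is a Gr\"obner basis of $I+J$, as desired. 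I do not expect a genuine obstacle here — the argument is a routine application of Buchberger's criterion — but the one point that deserves care is the claim in the first case that a standard representation over $R$ is still a standard representation over $R\otimes S$, which is precisely where the compatibility of $<$ with $<_R$ (and $<_S$) is used.
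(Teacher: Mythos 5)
Your proof is correct and follows essentially the same route as the paper's: generate $I+J$ by the union of the two Gr\"obner bases and verify Buchberger's criterion in three cases, with the mixed case handled by the relatively-prime-leading-monomials criterion. Your added care in checking that a standard representation over $R$ persists over $R\otimes S$ is a point the paper passes over more quickly, but the arguments are the same.
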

 
 \begin{proof}
 Given $<$, select a Gr\"obner basis $\mathcal{G}_1$ of $I$ and $\mathcal{G}_2$ of $J$ with respect to the induced monomial orders $<_R$ and $<_S$ on $R$ and $S$ respectively. Since $\mathcal{G}_1$ generates $I$ and $\mathcal{G}_2$
 generates $J$, the set $\mathcal{G}_1 \cup \mathcal{G}_2$ generates $I+J$ as an ideal of $R \otimes
 S$.  To prove that $\mathcal{G}_1 \cup \mathcal{G}_2$
 is a Gr\"obner basis of $I+J$, by 
 Theorem \ref{gbchar} it suffices to show
 that for any $g_i,g_j \in \mathcal{G}_1 \cup \mathcal{G}_2$, the 
 $S$-polynomial $S(g_i,g_j)$ reduces
 to zero modulo this set.
 
 If $g_i,g_j \in \mathcal{G}_1$, then since $g_i,g_j \in R$, and since $\mathcal{G}_1$ is a Gr\"obner basis
 of $I$ in $R$, by Theorem \ref{gbchar}, the
 $S$-polynomial $S(g_i,g_j)$ reduces to zero modulo
 $\mathcal{G}_1$.  But then in the larger ring 
 $R \otimes S$, the $S$-polynomial
 $S(g_i,g_j)$ also reduces to zero modulo 
 $\mathcal{G}_1 \cup \mathcal{G}_2$.  A similar result
 holds if $g_i,g_j \in \mathcal{G}_2$.

So, suppose $g_i \in \mathcal{G}_1$ and $g_j \in \mathcal{G}_2$.  Note that the leading monomial of 
$g_i$ is only in the variables $\{x_1,\ldots,x_n\}$,
while the leading monomial of $g_j$ is only in
the variables $\{y_1,\ldots,y_m\}$.  Consequently,
their leading monomials are relatively prime.  Thus, by
\cite[Chapter 2.9, Proposition 4]{CLO}, the
$S$-polynomial $S(g_i,g_j)$ reduces to zero modulo
$\mathcal{G}_1 \cup \mathcal{G}_2$.
 \end{proof}
 
 \begin{theorem}\label{tensorproduct}
 Let $I \subsetneq R =\mathbb{K}[x_1,\ldots,x_n]$ and 
 $J \subsetneq S=\mathbb{K}[y_1,\ldots,y_m]$ be proper ideals. Then $I$ and $J$ are geometrically
 vertex decomposable if and only if $(I+J)$ is geometrically vertex decomposable in $R \otimes S =\mathbb{K}[x_1,\ldots,x_n,y_1,\ldots,y_m]$.
 \end{theorem}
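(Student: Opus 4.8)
The plan is to induct on $n+m$; when $R=\mathbb K$ (forcing $I=\langle 0\rangle$) or $S=\mathbb K$ the assertion is immediate, so assume $n,m\geq 1$. The engine is an elementary decomposition: choose a $\mathbb K$-basis $\{s_\lambda\}_{\lambda\in\Lambda}$ of $S$ containing $1=s_{\lambda_0}$ and whose subset $\{s_\lambda\}_{\lambda\in\Lambda_J}$ is a basis of the proper subspace $J$ (so $\lambda_0\notin\Lambda_J$). Viewing $R\otimes S=\bigoplus_\lambda R\,s_\lambda$ as a free $R$-module, one has, for every ideal $A\subseteq R$,
$$A(R\otimes S)+J(R\otimes S)=\Big(\bigoplus_{\lambda\notin\Lambda_J}A\,s_\lambda\Big)\oplus\Big(\bigoplus_{\lambda\in\Lambda_J}R\,s_\lambda\Big).$$
Two consequences are used repeatedly (together with their mirror images swapping $R$ and $S$): \emph{distributivity}, namely $(A_1(R\otimes S)+J(R\otimes S))\cap(A_2(R\otimes S)+J(R\otimes S))=(A_1\cap A_2)(R\otimes S)+J(R\otimes S)$ for ideals $A_1,A_2\subseteq R$; and \emph{cancellation}, namely that comparing $s_{\lambda_0}$-components, $A_1(R\otimes S)+J(R\otimes S)=A_2(R\otimes S)+J(R\otimes S)$ forces $A_1=A_2$.

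Next I would transport the geometric vertex decomposition data across the tensor product. Fix $y=x_i$ and a $y$-compatible order $<_R$ on $R$, with associated ideals $C_{y,I},N_{y,I}$. Let $<$ be the product order on $R\otimes S$ comparing the $x$-part by $<_R$ and breaking ties by a fixed order $<_S$ on $S$; since $<_R$ is $y$-compatible, so is $<$. By Lemma \ref{grobnertensorproduct} there is a Gröbner basis of $I+J$ of the form $\mathcal G_1\cup\mathcal G_2$ with $\mathcal G_1$ a Gröbner basis of $I$ and $\mathcal G_2$ one of $J$; since the elements of $\mathcal G_2$ do not involve $x_i$, the description of initial $y$-ideals in \cite[Theorem 2.1(a)]{KMY} gives
$$\init_y(I+J)=\init_y(I)+J,\qquad C_{y,I+J}=C_{y,I}+J,\qquad N_{y,I+J}=N_{y,I}+J$$
as ideals of $R\otimes S$ (extending ideals of $R$ and $S$), and the contractions of $C_{y,I+J},N_{y,I+J}$ to $\mathbb K[x_1,\dots,\hat{x}_i,\dots,x_n,y_1,\dots,y_m]$ are $C_{y,I}+J$ and $N_{y,I}+J$. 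Feeding these into distributivity and cancellation, the identity $\init_y(I)=C_{y,I}\cap(N_{y,I}+\langle y\rangle)$ holds in $R$ if and only if $\init_y(I+J)=C_{y,I+J}\cap(N_{y,I+J}+\langle y\rangle)$ holds in $R\otimes S$; and by the inductive hypothesis applied in the displayed smaller ring (using $J\subsetneq S$, and setting aside the degenerate cases where $C_{y,I}$ or $N_{y,I}$ equals $\langle 1\rangle$, which are geometrically vertex decomposable by fiat), $C_{y,I}$ and $N_{y,I}$ are geometrically vertex decomposable if and only if $C_{y,I}+J$ and $N_{y,I}+J$ are.

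Both directions now follow. For $(\Rightarrow)$: if $I$ and $J$ are both generated by subsets of variables then so is $I+J$, satisfying Definition \ref{gvd}(1); otherwise one of them, say $I$, has a geometric vertex decomposition with respect to some $y=x_i$ with geometrically vertex decomposable contractions, and the previous paragraph promotes this to the same for $I+J$. For $(\Leftarrow)$: first, $I+J\neq\langle 1\rangle$ since $R\otimes S/(I+J)\cong(R/I)\otimes_{\mathbb K}(S/J)\neq 0$; if $I+J$ is generated by a subset of variables then applying the $\mathbb K$-algebra surjection $R\otimes S\to R$ sending each $y_j\mapsto 0$ (and its mirror) shows $I$ and $J$ are themselves generated by subsets of variables; and if $I+J$ has a geometric vertex decomposition with respect to a variable — which after possibly swapping $R$ and $S$ we may take to be some $x_i$ — then reading the equivalences of the previous paragraph from right to left gives a geometric vertex decomposition of $I$ with geometrically vertex decomposable $C_{y,I},N_{y,I}$, and geometric vertex decomposability of $J$. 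Finally the unmixedness clauses match up: since $\mathbb K$ is algebraically closed, for primes $P\subseteq R$ and $Q\subseteq S$ the ideal $P(R\otimes S)+Q(R\otimes S)$ is prime with $\dim(R\otimes S)/(P+Q)=\dim R/P+\dim S/Q$, and $\mathrm{Ass}(R\otimes S/(I+J))$ is precisely $\{P(R\otimes S)+Q(R\otimes S):P\in\mathrm{Ass}(R/I),\ Q\in\mathrm{Ass}(S/J)\}$; hence $I+J$ is unmixed if and only if both $I$ and $J$ are.

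I expect the main obstacle to be the bookkeeping rather than a single hard step: checking that the product order stays $y$-compatible, deriving the three displayed identities for $\init_y$, $C$ and $N$ from Lemma \ref{grobnertensorproduct} and \cite[Theorem 2.1(a)]{KMY}, isolating the degenerate recursion branches where a $C$- or $N$-ideal equals $\langle 1\rangle$, and invoking the structure of associated primes of $(R/I)\otimes_{\mathbb K}(S/J)$ over an algebraically closed field to transfer unmixedness in both directions.
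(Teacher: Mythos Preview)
Your proof is correct and follows essentially the same approach as the paper: induction on $n+m$, the product order extending a $y$-compatible order, Lemma~\ref{grobnertensorproduct} to obtain a split Gr\"obner basis, the identities $\init_y(I+J)=\init_y(I)+J$, $C_{y,I+J}=C_{y,I}+J$, $N_{y,I+J}=N_{y,I}+J$, and the associated-primes description of $\mathrm{Ass}(R/I\otimes S/J)$ for unmixedness. The one organizational difference is that you package the intersection-with-$+J$ manipulations into explicit ``distributivity'' and ``cancellation'' principles via the free $R$-module decomposition of $R\otimes S$, whereas the paper performs these computations inline; your formulation makes the converse step $\init_y(I)=C_{y,I}\cap(N_{y,I}+\langle y\rangle)$ in $R$ slightly more transparent than the paper's one-line assertion, but the underlying argument is the same.
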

 
 \begin{proof}
 First suppose that $I\subsetneq R$ and $J\subsetneq S$ are geometrically vertex decomposable. Since neither ideal
 contains $1$, we have $I+J \neq \langle 1 \rangle$.   By \cite[Corollary 2.8]{HNTT},
 the set of associated primes of $(R \otimes S)/(I+J)
 \cong R/I \otimes S/J$ satisfies
 \begin{equation}\label{assprimes}
 {\rm Ass}_{R \otimes S}(R/I \otimes S/J)
 = \{P+Q ~|~ P \in {\rm Ass}_R(R/I) ~~
 \mbox{and} ~ Q \in {\rm Ass}_S(S/J)\}.
 \end{equation}
 Thus any associated prime
 $P+Q$ of $(R\otimes S)/(I+J)$ satisfies
\begin{eqnarray*}\dim((R\otimes S)/(P+Q)) &=
&\dim(R/P)+ \dim(S/Q) \\
&=& \dim(R/I)+\dim(S/J) \\
&=&
\dim((R \otimes S)/(I+J))
\end{eqnarray*}
where we are using the fact that $I$ and $J$
are unmixed for the second equality.  So,
$I+J$ is also unmixed.
 
 To see that $I+J\subseteq R\otimes S$ is geometrically vertex
 decomposable, we proceed by induction on the number of variables $\ell = n+m$ in $R\otimes S$. The base case $\ell= 0$ is trivial.
 Assume now that $\ell>0$. 
 If both $I$ and $J$ are generated by indeterminates, then $I+J$ is too and so is geometrically vertex decomposable. Thus, without loss of generality, suppose that $I$ is not generated by indeterminates (note that $I \neq \langle 1 \rangle$ by assumption).
 
 Because $I$ is geometrically vertex decomposable in $R$,
 there is a variable $y = x_i$ in $R$ such that $\text{in}_y (I) = C_{y,I}\cap (N_{y,I}+\langle y\rangle)$ is a geometric vertex decomposition and the contractions of $C_{y,I}$ and $N_{y,I}$ to $R' = \mathbb{K}[x_1,\ldots, \hat{y},\ldots, x_n]$ are geometrically vertex decomposable.   Extend the $y$-compatible
 monomial order $<$ on $R$ to a $y$-compatible
 monomial order on $R \otimes S$ by taking any monomial
 order  on $S$, and let our new monomial
 order $\prec$  be the product order of these two monomial orders (where
 $x_i \succ y_j$ for all $i,j$).

If we write $K^e$ to denote the extension of an ideal
$K$ in $R$ into the ring $R \otimes S$, then
one checks that with respect to this new
$y$-compatible order
 \begin{eqnarray*}
 \text{in}_y(I+J) &=& (\text{in}_y (I))^e+ J = [C_{y,I}\cap (N_{y,I}+\langle y\rangle)]^e+J \\
 &=& ((C_{y,I})^e+J)\cap ((N_{y,I})^e+J + \langle y\rangle).
 \end{eqnarray*}
 Using the identities
 $$(C_{y,I})^e+J = C_{y,I+J} ~~\mbox{and}~~ (N_{y,I})^e +J
 = N_{y,I+J}$$
(note that $\prec$ is being used to define $C_{y,I+J}$ and
$N_{y,I+J}$ and $<$ is being used to define $C_{y,I}$ and
$N_{y,I}$),
 we have a geometric vertex decomposition of $I+J$
 with respect to $y$ in $R \otimes S$:
$$ \text{in}_y(I+J) =  C_{y,I+J} \cap (N_{y,I+J} + \langle y \rangle).$$
 
Now let $C'$ and $N'$ denote the contractions of $C_{y,I}$ and $N_{y,I}$ to $R'$. First assume that $C'$ and $N'$ are both proper ideals. Then, since $C'$ and $N'$ are geometrically vertex decomposable, we may apply induction to see that $C'+J$ and $N'+J$ in $R'\otimes S$ are geometrically vertex decomposable. 
In particular, as $C'+J$ and $N'+J$ are the contractions of $(C_{y,I})^e+J$ and $(N_{y,I})^e+J$ to $R'\otimes S$, we have that $I+J$ is geometrically vertex decomposable by induction. If either $C'$ or $N'$ is
the ideal $\langle 1 \rangle$, the same would be true
for the contractions of $(C_{y,I})^e+J$ or $(N_{y,I})^e+J$ because
the contraction of $(C_{y,I})^e+J$, respectively $(N_{y,I})^e+J$,
contains $C'$, respectively $N'$.  So $I+J$ is geometrically vertex
decomposable.

 For the converse, we proceed by induction on the number of variables $\ell$ in $R\otimes S$. The base case is $\ell=0$, which is trivial.
 So suppose $\ell>0$.  We first show that $I$
 is unmixed. Suppose that $I$ is not unmixed; that is, there are associated primes 
 $P_1$ and $P_2$ of ${\rm Ass}(R/I)$
 such that $\dim(R/P_1) \neq \dim(R/P_2)$.  For any
 associated prime $Q$ of $S/J$, we know
 by \eqref{assprimes} that $P_1+Q$
 and $P_2+Q$ are associated primes of
 $(R \otimes S)/(I+J)$.  Since
 $I+J$ is unmixed, we can
 derive the contradiction
\begin{eqnarray*}
\dim((R\otimes S)/(I+J))& =& \dim((R \otimes S)/(P_1+Q)) \\
&=& \dim (R/P_1) + \dim (S/Q) \\ 
&\neq& \dim(R/P_2) + \dim(S/Q) \\
&=& \dim((R\otimes S)/(P_2+Q)) = \dim((R\otimes S)/(I+J)).
\end{eqnarray*}
 So, $I$ is unmixed (the proof for $J$ is similar).
 
 If $I+J$ is generated by indeterminates, then so are $I$ and $J$, hence they are geometrically vertex decomposable. 
 So, suppose that there is a variable $y$ in
$R \otimes S$ and a $y$-compatible monomial order
$<$ such that 
$${\rm in}_y(I+J) = C_{y,I+J} \cap (N_{y,I+J} + \langle y \rangle).$$
Without loss of generality, assume that
$y \in \{x_1,\ldots,x_n\}$.   So
$C_{y,I+J}$ and $N_{y,I+J}$ are geometrically
vertex decomposable in $\mathbb{K}[x_1,\ldots,\hat{y},\ldots,x_n,
y_1,\ldots,y_m]$.

 By Lemma \ref{grobnertensorproduct}, we can construct a Gr\"obner basis $\mathcal{G}$ of $I+J$ with
respect to $<$ such that 
$$\mathcal{G} = \{g_1,\ldots,g_s\} \cup \{h_1,\ldots,h_t\}$$
where $\{g_1,\ldots,g_s\}$ is a Gr\"obner basis 
of $I$ with respect to the order $<_R$ in $R$, and $\{h_1,\ldots,h_t\}$ is a Gr\"obner basis of
$J$ with respect to $<_S$ in $S$.  Since $y$ can only appear among the $g_i$'s,
we have 
$$C_{y,I+J} = (C_{y,I}) +J~~\mbox{and}~~ N_{y,I+J} = (N_{y,I})+J$$
where $C_{y,I}$, respectively $N_{y,I}$, denote the
ideals constructed from the Gr\"obner basis $\{g_1,\ldots,g_s\}$ of $I$ in $R$ using the monomial
order $<_R$.  Note that in $R$, $<_R$ is still $y$-compatible.

Since the ideals
$(C_{y,I})+J$ and $(N_{y,I})+J$ are geometrically
vertex decomposable in the ring  $\mathbb{K}[x_1,\ldots,\hat{y},\ldots,x_n,
y_1,\ldots,y_m]$, by induction, $C_{y,I}$ and $N_{y,I}$ 
are geometrically vertex decomposable in 
$\mathbb{K}[x_1,\ldots,\hat{y},\ldots,x_n]$ and $J$ is geometrically vertex decomposable in $S$.
To complete the proof, note that in $R$, we 
have ${\rm in}_y(I) = C_{y,I} \cap (N_{y,I}+\langle y \rangle)$.  Thus $I$ is also geometrically
vertex decomposable in $R$.
 \end{proof}
 
 \begin{remark}
 If we weaken the hypotheses in Theorem \ref{tensorproduct}
 to allow $I$ or $J$ to be $\langle 1 \rangle$, then only
 one direction remains true.  In particular, if $I$ and $J$
 are geometrically vertex decomposable, then so is $I+J$.  However, the
 converse statement would no longer be true.  To see why,
 let $I = \langle 1 \rangle$ and let $J$ to be any ideal which is not  geometrically vertex decomposable.  Then $I+J = \langle
 1 \rangle$ is geometrically vertex decomposable
in $R \otimes S$, but we do not have that both $I$ and $J$ are 
geometrically vertex decomposable.
 \end{remark}
 
 \begin{remark}\label{join}
Theorem \ref{tensorproduct} is an algebraic
generalization
of \cite[Proposition 2.4]{PB} which showed that if $\Delta_1$ and $\Delta_2$ were simplicial complexes on different sets of variables,
then the join $\Delta_1 \star \Delta_2$ is vertex
decomposable if and only if $\Delta_1$ and $\Delta_2$
are vertex decomposable.
 \end{remark}
 
\begin{corollary}\label{monomialcor}
Let $I \subseteq R =\mathbb{K}[x_1,\ldots,x_n]$ be a square-free monomial ideal.
If $I$ is a complete intersection, then $I$ is geometrically vertex decomposable.
\end{corollary}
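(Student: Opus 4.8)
The plan is to combine the classical structure of square-free monomial complete intersections with Theorem~\ref{tensorproduct}. Recall the well-known fact that monomials $m_1,\dots,m_c$ generate a complete intersection if and only if they are pairwise coprime; for square-free monomials this says precisely that $m_1,\dots,m_c$ have pairwise disjoint sets of variables. So I would begin by writing $I = \langle m_1,\dots,m_c\rangle$ with each $m_i$ a square-free monomial and with the variable supports of the $m_i$ pairwise disjoint.

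Next I would split the variables of $R$ accordingly: for $1 \le i \le c$ let $R_i$ be the polynomial ring on the variables dividing $m_i$, and let $R_0$ be the polynomial ring on the remaining variables (if any). Then $R \cong R_0 \otimes R_1 \otimes \cdots \otimes R_c$, and, identifying each ideal with its extension to $R$, we have $I = \langle 0\rangle + \langle m_1\rangle + \cdots + \langle m_c\rangle$, where $\langle 0\rangle \subseteq R_0$ and $\langle m_i\rangle \subseteq R_i$. Here $\langle 0\rangle \subseteq R_0$ is geometrically vertex decomposable by Definition~\ref{gvd} (it is generated by the empty subset of variables), and each $\langle m_i\rangle \subseteq R_i$ is geometrically vertex decomposable by Lemma~\ref{simplecases}(2); moreover, each of these ideals is proper in its own polynomial ring.

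Finally I would build the sum up one summand at a time and invoke Theorem~\ref{tensorproduct} at each step: a sum of proper monomial ideals is again a proper ideal (it contains no unit), so at every stage we are adding a proper, geometrically vertex decomposable ideal in a fresh set of variables to a proper, geometrically vertex decomposable ideal, and Theorem~\ref{tensorproduct} then gives that the partial sum is geometrically vertex decomposable. Iterating over $i = 1,\dots,c$ and then adjoining $\langle 0\rangle \subseteq R_0$ yields that $I$ is geometrically vertex decomposable. I do not expect any genuine obstacle here beyond bookkeeping — one only needs to check that the properness hypothesis of Theorem~\ref{tensorproduct} is met at each step, and to note that the degenerate cases $c = 0$ (so $I = \langle 0 \rangle$) and $R_0 = \mathbb{K}$ (no leftover variables) are handled trivially.
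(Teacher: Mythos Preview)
Your proposal is correct and follows essentially the same approach as the paper: both arguments use that a square-free monomial complete intersection has generators with pairwise disjoint variable supports, split $R$ as the corresponding tensor product, invoke Lemma~\ref{simplecases}(2) for each principal factor $\langle m_i\rangle$, and then apply Theorem~\ref{tensorproduct} iteratively. Your version is slightly more explicit about the properness hypothesis and the degenerate cases, but the proof strategy is identical.
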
 
 
 \begin{proof}
 Suppose $I = \langle m_1,\ldots,m_t\rangle$, where $m_1,\ldots,m_t$ are the minimal square-free monomial generators. Because $I$ is a complete
 intersection, the ideal is unmixed.  Furthermore,
 because $I$ is a complete
 intersection, the support of each monomial is pairwise disjoint.  So,
 after a relabelling, we can assume, $m_1 = x_1x_2\cdots x_{a_1}$,
 $m_2 = x_{a_1+1}\cdots x_{a_2}, \ldots, m_t = x_{a_{t-1}+1}\cdots x_{a_t}$.
 Then
 $$R/I \cong \mathbb{K}[x_1,\ldots,x_{a_1}]/\langle m_1 \rangle \otimes \cdots \otimes
\mathbb{K}[x_{a_{t-1}+1},\ldots,x_{a_t}]/\langle m_t \rangle \otimes \mathbb{K}[x_{a_{t+1}},\ldots,x_n].$$
By Lemma \ref{simplecases}, the ideals $\langle m_i \rangle$ 
are geometrically vertex decomposable for $i=1,\ldots,t$.
Now repeatedly apply Theorem \ref{tensorproduct}.
 \end{proof}
 
 \begin{remark}
 Corollary \ref{monomialcor} can also be deduced via results from Stanley-Reisner
 theory, which we sketch out.  
 One proceeds by induction on the number of generators of the complete intersection
 $I$.  If $I = \langle x_{1}\cdots x_k \rangle$ 
 has one generator, then one can prove directly from the definition of
 a vertex decomposable simplicial complex (e.g. see \cite{PB}), that 
 the simplicial complex associated with $I$, denoted by $\Delta  = \Delta(I)$, is vertex 
 decomposable.  For the induction step, note that if $I = \langle m_1,\ldots, m_t
 \rangle$, then  $I = I_1 + I_2 = \langle m_1,\ldots,m_{t-1}\rangle 
 + \langle m_t\rangle$.  If $\{w_1,\ldots,w_m\}$ are variables that
 appear in the generator $m_t$ and $\{x_1,\ldots,x_\ell\}$ are the other variables,
 then we have 
 $$R/I \cong \mathbb{K}[x_1,\ldots,x_\ell]/I_1 \otimes \mathbb{K}[w_1,\ldots, w_m]/I_2.$$
 By induction, the simplicial complexes $\Delta_1$ and $\Delta_2$ defined by $I_1$ and $I_2$ are vertex decomposable.  As noted in Remark \ref{join}, the join $\Delta_1 \star \Delta_2$ is also vertex decomposable.  So, the ideal $I$ is a square-free monomial ideal whose associated simplicial complex is vertex decomposable.  The result now
 follows from 
 \cite[Theorem 4.4]{KR} which implies that the ideal $I$ is also
 geometrically vertex decomposable.
 \end{remark}

\section{Toric ideals of graphs}

This section initiates a study of the geometric
vertex decomposability of toric ideals of graphs.  We have
subdivided this section into three parts:  (a) a review
of the needed background on toric ideals, (b) an 
analysis of the ideals $C_{y,I}$ and $N_{y,I}$ when
$I$ is the toric ideal of a graph, 
and (c) an explanation of
how the graph operation of ``gluing'' a cycle to a graph preserves geometric vertex decomposability.

We will study some specific families of graphs whose toric ideals are geometrically vertex decomposable in Sections \ref{sec:bipartite} and  \ref{section_square-free}.

\subsection{Toric ideals of graphs} We review the relevant
background on toric ideals of graphs.
Our main references for this material are \cite{Sturm,V}.

Let $G = (V(G),E(G))$ be a finite simple graph with 
vertex set $V(G) =\{x_1,\ldots,x_n\}$ and edge
set $E(G) = \{e_1,\ldots,e_t\}$ where each $e_i = \{x_j,x_k\}$.
Let $\mathbb{K}[E(G)] = \mathbb{K}[e_1,\ldots,e_t]$ be a polynomial
ring, where we treat the $e_i$'s as indeterminates.  Similarly,
let $\mathbb{K}[V(G)] = \mathbb{K}[x_1,\ldots,x_n]$.  Consider the 
$\mathbb{K}$-algebra homomorphism $\varphi_G:\mathbb{K}[E(G)] \rightarrow \mathbb{K}[V(G)]$ given
by
$$\varphi_G(e_i) = x_jx_k  ~~\mbox{where $e_i = \{x_j,x_k\}$ for
all $i \in \{1,\ldots,t\}$}.$$
The {\it toric ideal of the graph $G$}, denoted $I_G$,
is the kernel of the homomorphism $\varphi_G$.

While the generators of $I_G$ are defined implicitly, 
these generators (and a Gr\"obner basis) of $I_G$ can be
described in terms  of the graph $G$, specifically,
the walks in $G$.  A {\it walk} of length $\ell$ is an alternating sequence of 
vertices and edges $$\{x_{i_0},e_{i_1},x_{i_1},e_{i_2},\cdots,e_{i_\ell},
x_{i_{\ell}}\}$$ 
such that $e_{i_j} = \{x_{i_{j-1}},x_{i_j}\}$. The walk
is {\it closed} if $x_{i_\ell} = x_{i_0}$.
When the vertices are clear, we simply write the walk as $\{e_{i_1},\ldots,e_{i_\ell}\}$.  It straightforward
to check that every closed walk of even length,
say $\{e_{i_1},\ldots,e_{i_{2\ell}}\}$, 
results in an element of $I_G$;  indeed
$$\varphi_G(e_{i_1}e_{i_3}\cdots e_{i_{2\ell-1}} - 
e_{i_2}e_{i_4}\cdots e_{2\ell}) =
x_{i_0}x_{i_1}\cdots x_{2\ell-1} - x_{i_1}x_{i_2}\cdots x_{i_{2\ell}} =0
$$
since $x_{i_{2\ell}}=x_{i_0}$.  Note that
$e_{i_1}e_{i_3}\cdots e_{i_{2\ell-1}} - 
e_{i_2}e_{i_4}\cdots e_{i_{2\ell}}$ is a binomial.
 For any $\alpha = (a_1,\ldots,a_t) \in \mathbb{N}^t$, let
$e^\alpha = e_1^{a_1}e_2^{a_2}\cdots e_t^{a_t}$.  A binomial
$e^\alpha-e^\beta \in I_G$ is {\it primitive} if there
is no other binomial $e^\gamma-e^\delta \in I_G$ such
that $e^\gamma|e^\alpha$ and $e^\delta|e^\beta$.
We can now describe generators and a universal Gr\"obner
basis of $I_G$.

\begin{theorem}\label{generatordescription}
Let $G$ be a finite simple graph.  
\begin{enumerate}
    \item {\cite[Proposition 10.1.5]{V}}  The ideal
    $I_G$ is generated by the set of binomials 
$$ \{e_{i_1}e_{i_3}\cdots e_{i_{2\ell-1}} - 
e_{i_2}e_{i_4}\cdots e_{i_{2\ell}} ~~|~~ \mbox{$\{e_{i_1},\ldots,e_{i_{2\ell}}\}$ is a closed even walk of $G$}\}.$$
\item {\cite[Proposition 10.1.9]{V}}  The set of all primitive
binomials that also correspond to closed even walks in $G$
is a universal Gr\"obner basis of $I_G$.
\end{enumerate}
\end{theorem}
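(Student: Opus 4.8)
The plan is to prove both parts by combining the general structure theory of toric ideals with a combinatorial analysis of the monomial map $\varphi_G$. Throughout, I would identify a monomial $e^\alpha$ with the multiset of edges it records, and observe that, via $\varphi_G$, the condition $e^\alpha - e^\beta \in I_G$ is equivalent to the two edge-multisets $\alpha$ and $\beta$ inducing the same degree at every vertex of $G$; call such a pair \emph{balanced}. For part (1), I would first invoke the standard fact that a toric ideal (the kernel of a monomial map on a polynomial ring) is generated, as an ideal, by the pure-difference binomials $e^\alpha - e^\beta$ it contains, so that it suffices to express each such binomial as a $\mathbb{K}[E(G)]$-combination of walk binomials. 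I would do this by induction on $\deg(e^\alpha) = \deg(e^\beta) =: \ell$, with $\ell = 0$ trivial. Given a nontrivial balanced pair with, say, $e_{i_1} = \{x_{i_0},x_{i_1}\}$ in the support of $\alpha$, balancedness forces $x_{i_1}$ to lie on some edge $e_{i_2}$ of $\beta$, which forces a vertex lying on some edge $e_{i_3}$ of $\alpha$, and so on, producing a walk whose odd-position edges come from $\alpha$ and whose even-position edges come from $\beta$. Since $V(G)$ is finite, a vertex repeats, and from the repetition one extracts a closed walk of even length $2s$ respecting the alternation (this parity bookkeeping is the one delicate point here). The associated walk binomial $b = e_{j_1}\cdots e_{j_{2s-1}} - e_{j_2}\cdots e_{j_{2s}}$ then satisfies $e_{j_1}\cdots e_{j_{2s-1}}\mid e^\alpha$ and $e_{j_2}\cdots e_{j_{2s}}\mid e^\beta$, so writing $e^\alpha = e_{j_1}\cdots e_{j_{2s-1}}\,e^{\alpha'}$ and $e^\beta = e_{j_2}\cdots e_{j_{2s}}\,e^{\beta'}$ yields
$$e^\alpha - e^\beta = e^{\alpha'} b + (e_{j_2}\cdots e_{j_{2s}})(e^{\alpha'} - e^{\beta'}),$$
where $e^{\alpha'} - e^{\beta'}$ is again a pure-difference binomial in $I_G$ of strictly smaller degree, so induction applies.

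For part (2), I would use that the \emph{Graver basis} of $I_G$ — the set of all primitive binomials of $I_G$ — is a universal Gr\"obner basis: this is a general fact about toric ideals, since for every monomial order the reduced Gr\"obner basis of a toric ideal consists of primitive binomials, whence the Graver basis, being a superset of each reduced Gr\"obner basis, is itself a Gr\"obner basis with respect to every order (see \cite{Sturm}). It then remains to identify the Graver basis of $I_G$ combinatorially as the set of binomials attached to closed even walks of $G$. One inclusion amounts to unwinding definitions: the binomial of a \emph{primitive} closed even walk is a primitive binomial. For the reverse inclusion, given a primitive binomial $e^\alpha - e^\beta$, I would form the multigraph on $V(G)$ with the $\alpha$-edges colored blue and the $\beta$-edges colored red; balancedness says every vertex has equal blue- and red-degree, so (by an Eulerian-type argument) this multigraph decomposes into closed walks that alternate blue and red, hence of even length. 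If more than one such walk appeared, one of them would give a binomial $e^\gamma - e^\delta \in I_G$ with $e^\gamma \mid e^\alpha$ and $e^\delta \mid e^\beta$ properly, contradicting primitivity; so $e^\alpha - e^\beta$ arises from a single closed even walk, as claimed.

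The main obstacle I anticipate is the reverse inclusion in part (2): making the blue/red alternating decomposition precise as an Eulerian-type statement for a $2$-edge-colored balanced multigraph, and, simultaneously, pinning down the bookkeeping that matches a \emph{primitive walk} — which may traverse an edge of $G$ several times — with a \emph{primitive binomial}. This involves the same parity subtlety encountered in part (1) when extracting a closed even walk with the correct alternation from a longer walk, so I would isolate that extraction as a separate combinatorial lemma and reuse it in both parts.
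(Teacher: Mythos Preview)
The paper does not give its own proof of this theorem: both parts are quoted directly from Villarreal's book, with explicit citations to \cite[Propositions~10.1.5 and~10.1.9]{V}, and no argument is supplied. So there is nothing in the paper to compare your proposal against beyond ``see the reference.''

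That said, your sketch is essentially the standard argument one finds in \cite{V} and \cite[Chapter~4]{Sturm}, and it is sound. Two remarks on the point you yourself flag as delicate. First, the parity bookkeeping in part~(1) is genuine: the naive ``a vertex repeats'' step can produce an odd closed segment, which does not alternate correctly. A clean fix is to observe that the alternating walk can always be extended until it gets stuck, and a short degree count shows it can only get stuck back at the starting vertex $x_{i_0}$ after a \emph{red} edge (arriving via blue would leave at least one unused red edge at $x_{i_0}$), hence after an even number of steps; this yields the desired closed even walk with $e_{j_1}e_{j_3}\cdots\mid e^\alpha$ and $e_{j_2}e_{j_4}\cdots\mid e^\beta$. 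Second, for part~(2) you should note explicitly that a primitive binomial $e^\alpha-e^\beta$ has $\gcd(e^\alpha,e^\beta)=1$ (otherwise primality of $I_G$ produces a smaller binomial in $I_G$), so the blue and red edge-multisets are disjoint; this is what makes the alternating Eulerian decomposition, and hence the ``more than one closed walk contradicts primitivity'' step, go through cleanly.
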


\noindent Going forward, we will write $\mathcal{U}(I_G)$ to denote
this universal Gr\"obner basis of $I_G$.

The next two results allow us to make some additional
assumptions on $G$ when studying $I_G$.
First, we can ignore leaves in $G$
when studying $I_G$.
Recall that the degree of a vertex $x \in V(G)$ is the number of edges $e \in E(G)$ that contain $x$.   An edge $e= \{x,y\}$ is a {\it leaf} of $G$  if either $x$ or $y$ has degree one.   In the statement below, if $e \in E(G)$, then
by $G\setminus e$ we mean the graph formed by removing the 
edge $e$ from $G$; note $V(G \setminus e) = V(G)$. 
We include a proof for completeness.

\begin{lemma}\label{removeleaves}
Let $G$ be a finite simple graph.  If $e$ is a leaf of $G$, then $I_{G} = I_{G \setminus e}$.
\end{lemma}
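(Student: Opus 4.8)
The plan is to show that the map $\varphi_G$ and $\varphi_{G\setminus e}$ have the same kernel by exhibiting, for every even closed walk of $G$, an element of $I_{G\setminus e}$; by Theorem~\ref{generatordescription}(1) this suffices. Write $e = \{x,y\}$ where, say, $y$ has degree one in $G$. The key observation is that $y$ appears in exactly one edge, namely $e$, so any walk of $G$ that traverses $e$ must also traverse $e$ immediately before or after (since the only way to leave $y$ is along $e$); more precisely, in a \emph{closed} walk, each visit to the vertex $y$ forces the walk to arrive along $e$ and depart along $e$, so occurrences of the edge $e$ come in consecutive pairs $\ldots, e, e, \ldots$ in the edge sequence of the walk.

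First I would set up notation: let $G' = G\setminus e$, so $\mathbb{K}[E(G')] = \mathbb{K}[e_1,\dots,e_{t-1}]$ if $e = e_t$, and $\varphi_{G'}$ is the restriction of $\varphi_G$ to this subring (composed with the inclusion of $\mathbb{K}[V(G')]$ into $\mathbb{K}[V(G)]$, noting $y$ may disappear as a vertex but that does not affect the kernel computation). The inclusion $I_{G'}\subseteq I_G$ is immediate since $\varphi_G$ restricted to $\mathbb{K}[E(G')]$ agrees with $\varphi_{G'}$ followed by an inclusion of polynomial rings, which is injective on the target side. For the reverse inclusion, I would take a binomial generator $f = e_{i_1}e_{i_3}\cdots e_{i_{2\ell-1}} - e_{i_2}e_{i_4}\cdots e_{i_{2\ell}}$ coming from a closed even walk $W = \{e_{i_1},\dots,e_{i_{2\ell}}\}$, and argue that $f$ lies in $I_{G'}$ by showing it can be rewritten using only edges of $G'$. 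If $e = e_t$ does not appear in $W$ at all, then $f\in\mathbb{K}[E(G')]\cap I_G$ and we need only check $f\in I_{G'}$, which is clear since $\varphi_{G'}(f) = \varphi_G(f) = 0$. If $e$ does appear in $W$, then by the consecutive-pairs observation the edge $e$ appears an even number of times, and each consecutive pair $e_{i_j} = e_{i_{j+1}} = e$ contributes a factor of $e_t$ to one monomial of $f$ and a factor of $e_t$ to the other (one pair lands in an odd position and the adjacent even position). Hence $e_t$ divides both monomials of $f$ with equal multiplicity, so $f = e_t^k\, g$ for some binomial $g$; but then $g$ corresponds to the shorter closed even walk obtained by deleting the "spur" excursions $x \to y \to x$ from $W$, so $g$ is a generator-type binomial of $I_{G'}$ and $f = e_t^k g \in I_{G'}$.

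The step I expect to require the most care is the combinatorial claim that deleting the back-and-forth excursions along the leaf $e$ from a closed even walk $W$ of $G$ yields a genuine closed even walk of $G'$ (in particular, of even length, and nonempty only when the resulting monomial is nontrivial). This needs a careful bookkeeping of how removing a pair $\ldots x, e, y, e, x\ldots$ shortens the walk by $2$ and preserves the alternating vertex/edge structure and the closedness, and one must check the parity of positions so that the two removed copies of $e_t$ really come one from an odd-indexed slot and one from an even-indexed slot of $f$. An alternative, cleaner route that avoids walk surgery: use that $I_G$ and $I_{G'}$ are both prime (kernels of ring maps), $I_{G'}\subseteq I_G$, and both have the same height — namely $t - (\text{number of vertices}) + (\text{number of bipartite-free components})$, or more simply $\operatorname{ht} I_G = t - \operatorname{rank}(A_G)$ where $A_G$ is the incidence-type matrix — and adding a leaf edge increases $t$ by $1$ and the rank of the relevant matrix by $1$ (the new column for $e$ is not in the span of the others, as it is the only one involving the new vertex $y$), so $\operatorname{ht} I_{G'} = \operatorname{ht} I_G$; two primes, one contained in the other, of the same height, must be equal. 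I would likely present the walk-surgery argument as the main proof and mention the dimension argument as a remark, since the former is self-contained given Theorem~\ref{generatordescription}.
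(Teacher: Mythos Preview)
Your proposal is correct and follows essentially the same line as the paper: both arguments rest on the observation that in a closed walk a leaf edge $e$ must occur in consecutive pairs, so $e$ divides both monomials of the associated binomial. The one difference worth noting is that the paper sidesteps the walk-surgery bookkeeping you flagged as delicate by invoking primeness of $I_G$: from $b_1-b_2 = e(b_1'-b_2')\in I_G$ and $e\notin I_G$ one gets $b_1'-b_2'\in I_G$ directly, so no minimal generator of $I_G$ can involve $e$ and hence all minimal generators already lie in $I_{G\setminus e}$---this replaces your explicit parity/closedness verification with a one-line algebraic step. Your height-counting alternative is also valid and is a genuinely different (and clean) route that the paper does not take.
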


\begin{proof}
For the containment $I_{G\setminus e} \subseteq I_G$, 
observe that any closed even walk in $G\setminus e$ is also a closed
even walk in $G$.
For the reverse containment, if a closed even
walk $\{e_{i_1}, \ldots,e,\ldots,e_{i_{2\ell}}\}$ contains the leaf $e$, then $e$ must be repeated, i.e.,  $\{e_{i_1}, \ldots,e,e,\ldots,e_{i_{2\ell}}\}$.  The corresponding
binomial $b_1-b_2$ is divisible by $e$, 
i.e., $b_1-b_2 = e(b_1'-b_2') \in I_G$. But since $I_G$ is a prime binomial ideal, this forces $b'_1-b'_2 \in I_G$. Thus every minimal generator of $I_G$ corresponds
to a closed even walk that does not go through $e$,
and thus is an element of $I_{G\setminus e}$.
\end{proof}

A graph $G$ is {\it connected} if for any two pairs
of vertices in $G$, there is a walk in $G$ between these two
vertices.  A connected component of $G$ is a subgraph of $G$ that is
connected, but it is not contained in any larger connected subgraph.
To study the geometric vertex decomposability of $I_G$, we may always assume
that $G$ is connected.

\begin{theorem}\label{connected}
Suppose that $G = H \sqcup K$ is the disjoint union of 
two finite simple graphs.  Then $I_G$ is geometrically vertex decomposable
in $\mathbb{K}[E(G)]$
if and only if $I_H$, and respectively $I_K$, is geometrically
vertex decomposable in $\mathbb{K}[E(H)]$, and respectively $\mathbb{K}[E(K)]$.
\end{theorem}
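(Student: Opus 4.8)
The plan is to reduce Theorem~\ref{connected} to Theorem~\ref{tensorproduct} by observing that when $G = H \sqcup K$ is a disjoint union, the toric ideal $I_G$ decomposes as a sum of ideals living in disjoint sets of variables. First I would note that $E(G) = E(H) \sqcup E(K)$, so $\mathbb{K}[E(G)] = \mathbb{K}[E(H)] \otimes_\mathbb{K} \mathbb{K}[E(K)]$. The key combinatorial observation is that every closed even walk of $G$ lies entirely within one connected component; that is, it is either a closed even walk of $H$ or a closed even walk of $K$. This is immediate because a walk traverses edges consecutively sharing vertices, and $H$ and $K$ share no vertices. Combined with the generating set description in Theorem~\ref{generatordescription}(1), this shows $I_G = I_H^e + I_K^e$ inside $\mathbb{K}[E(G)]$, where $I_H^e$ and $I_K^e$ denote the extensions of $I_H \subseteq \mathbb{K}[E(H)]$ and $I_K \subseteq \mathbb{K}[E(K)]$ to the tensor product.

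Next I would address the hypotheses needed to invoke Theorem~\ref{tensorproduct}, which requires $I_H$ and $I_K$ to be \emph{proper} ideals of their respective rings. Since $I_H = \ker(\varphi_H)$ and $\varphi_H$ is a nonzero ring homomorphism, $1 \notin I_H$, so $I_H \subsetneq \mathbb{K}[E(H)]$ is proper, and likewise for $I_K$. (One should also handle the degenerate edge cases: if $H$ or $K$ has no edges, then $\mathbb{K}[E(H)] = \mathbb{K}$ and $I_H = \langle 0 \rangle$, which is geometrically vertex decomposable by the convention in Definition~\ref{gvd}, and the statement still follows, perhaps by treating it separately or by noting Theorem~\ref{tensorproduct} still applies with the zero ideal.) With $I_G = I_H^e + I_K^e$ established and both summands proper, Theorem~\ref{tensorproduct} gives exactly the stated equivalence: $I_G$ is geometrically vertex decomposable in $\mathbb{K}[E(G)]$ if and only if both $I_H$ and $I_K$ are geometrically vertex decomposable in their respective rings.

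Finally, for the general case of $G$ being an arbitrary disjoint union (not just two pieces), I would either state the two-component version as written and note it extends by induction to any finite number of components, or simply iterate: writing $G$ as a disjoint union of its connected components $G_1, \ldots, G_r$, repeated application shows $I_G$ is geometrically vertex decomposable if and only if each $I_{G_i}$ is. This is the precise sense in which, as claimed in the introduction, one may reduce the study of toric ideals of graphs to connected graphs.

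I do not anticipate a serious obstacle here; the main point requiring care is the clean verification that $I_G = I_H^e + I_K^e$, which hinges on the walk-based generating set of Theorem~\ref{generatordescription}(1) rather than on the implicit kernel definition — working directly with $\ker(\varphi_G)$ versus $\ker(\varphi_H)$ and $\ker(\varphi_K)$ would be more awkward since one would need to argue about which monomials map to the same image. Using the explicit binomial generators sidesteps this entirely. A secondary bookkeeping point is making sure the "viewing $f \in R$ as an element of $R \otimes S$" conventions from the discussion preceding Lemma~\ref{grobnertensorproduct} are applied consistently when identifying $I_H$ with $I_H^e$.
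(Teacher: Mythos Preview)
Your proposal is correct and follows exactly the same approach as the paper: the paper's proof is the one-line ``Apply Theorem~\ref{tensorproduct} to $I_G = I_H + I_K$ in $\mathbb{K}[E(G)] = \mathbb{K}[E(H)] \otimes \mathbb{K}[E(K)]$,'' and you have simply filled in the details (the decomposition of $I_G$ via closed even walks, properness, and the edge cases) that the paper leaves implicit.
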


\begin{proof}
Apply Theorem \ref{tensorproduct}
to $I_G = I_H+I_K$ in $\mathbb{K}[E(G)] = \mathbb{K}[E(H)] \otimes \mathbb{K}[E(K)]$.
\end{proof}
 
The  well-known result below gives a condition
for $\mathbb{K}[E(G)]/I_G$ to be  Cohen-Macaulay.

\begin{theorem}\label{sqfree=>cm}
Let $G$ be a finite simple graph with toric ideal $I_G \subseteq \mathbb{K}[E(G)]$.  Suppose
that there is a monomial order $<$ such that
${\rm in}_<(I_G)$ is a square-free monomial ideal.
Then $\mathbb{K}[E(G)]/I_G$ is Cohen-Macaulay.
\end{theorem}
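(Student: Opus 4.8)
The plan is to deduce Cohen-Macaulayness from the square-free initial ideal by way of the associated simplicial complex and a known result on vertex-decomposability-free Cohen-Macaulayness of square-free initial ideals. First I would invoke the standard fact that $\mathbb{K}[E(G)]/I_G$ and $\mathbb{K}[E(G)]/\init_<(I_G)$ share the same Hilbert function, and that $\mathbb{K}[E(G)]/I_G$ is Cohen-Macaulay as soon as $\mathbb{K}[E(G)]/\init_<(I_G)$ is (this is the usual semicontinuity/flat-degeneration argument: a Gr\"obner degeneration preserves depth from below, and depth cannot increase under specialization, so equality of dimensions forces equality of depths). Thus the problem reduces to showing that the square-free monomial ideal $\init_<(I_G)$ is Cohen-Macaulay, i.e. that the Stanley--Reisner complex $\Delta$ with $I_\Delta = \init_<(I_G)$ is a Cohen-Macaulay complex over $\mathbb{K}$.

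For the second step I would appeal to the result of Sturmfels that governs this situation: by \cite[Proposition 13.15]{Sturm} (the reference flagged in the ``Remark on the field $\mathbb{K}$''), when a toric ideal has a square-free initial ideal with respect to some term order, the corresponding Stanley--Reisner complex is shellable, hence Cohen-Macaulay. (The content here is that the simplicial complex of the initial ideal of a toric ideal is a regular triangulation of the associated configuration, and regular triangulations are shellable.) Shellability gives Cohen-Macaulayness over any field, so in particular over $\mathbb{K}$; then combining with the first step yields that $\mathbb{K}[E(G)]/I_G$ is Cohen-Macaulay.

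Concretely the write-up would be: (i) cite or recall the Gr\"obner-degeneration principle ``if $\init_<(I)$ defines a Cohen-Macaulay ring then so does $I$''; (ii) cite Sturmfels to get that $\init_<(I_G)$ square-free implies its Stanley--Reisner complex is shellable; (iii) conclude. Each step is a citation plus a one-line justification, so the proof is short.

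The main obstacle — really the only non-formal point — is making sure the hypotheses of the cited Sturmfels result are met and stated correctly: that proposition concerns initial ideals of toric ideals being radical/square-free and producing regular (unimodular) triangulations, and one must be slightly careful that ``square-free initial ideal'' is exactly the hypothesis under which shellability is asserted, and that no positivity or normality assumption sneaks in. This is also precisely why the paper restricts to $\mathbb{K}$ algebraically closed of characteristic zero. Once that citation is pinned down, the rest is routine.
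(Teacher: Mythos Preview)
Your argument is correct in outline but takes a genuinely different route from the paper's. The paper's proof is a two-line citation chain that never passes through the initial ideal's Cohen--Macaulayness at all: square-free initial ideal implies the semigroup ring $\mathbb{K}[E(G)]/I_G$ is \emph{normal} (this is the actual content of \cite[Proposition~13.15]{Sturm}), and then Hochster's theorem \cite{H} says normal affine semigroup rings are Cohen--Macaulay. There is no Gr\"obner degeneration step and no shellability.

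Your route---show $\init_<(I_G)$ is Cohen--Macaulay via shellability of the initial complex, then transfer to $I_G$ by Gr\"obner degeneration---also works, but note that your citation is off: \cite[Proposition~13.15]{Sturm} asserts normality, not shellability. The fact you want (the initial complex of a toric ideal is a regular triangulation of the point configuration, and regular triangulations are shellable) lives in Chapter~8 of Sturmfels, around Theorem~8.3 and Corollary~8.9. You correctly flagged pinning down the citation as the main obstacle, and indeed it needs adjustment. Once fixed, your argument has the mild advantage of being characteristic-free at every step except possibly the shellability reference; the paper's route through Hochster is more direct but relies on the deeper structural result that normal semigroup rings are Cohen--Macaulay.
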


\begin{proof}
If $\init_{<}(I_G)$ is a square-free monomial ideal, 
then $I_G$ is normal by \cite[Proposition 13.15]{Sturm}. Thus, by Hochster \cite{H}, $\mathbb{K}[E(G)]/I_G$ is  Cohen-Macaulay. 
\end{proof}

\subsection{Structure results about $N_{y,I}$ and $C_{y,I}$}
To study the geometric vertex decomposability of $I_G$,
we need access to both $N_{y,I_G}$ and $C_{y,I_G}$.  While
determining $C_{y,I_G}$ in terms of $G$ will prove to be subtle, the ideal $N_{y,I_G}$ has a straightforward
description. 

\begin{lemma}\label{linktoricidealgraph}
Let $G$ be a finite simple graph with toric ideal $I_G \subseteq \mathbb{K}[E(G)]$.
Let $<$ by any $y$-compatible monomial order with $y = e$ for some
edge $e$ of $G$.  Then
$$N_{y,I_G} = I_{G\setminus e}.$$
In particular, a universal Gr\"obner basis 
of $N_{y,I_G}$ consists of all the binomials in the universal
Gr\"obner basis $\mathcal{U}(I_G)$ of $I_G$ where neither term is divisible by $y$.
\end{lemma}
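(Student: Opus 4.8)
The plan is to identify $N_{y,I_G}$ directly using the description of the universal Gröbner basis of $I_G$ in terms of closed even walks (Theorem \ref{generatordescription}). First I would fix a $y$-compatible monomial order $<$ with $y = e$, and recall that for any Gröbner basis $\mathcal{G}(I_G) = \{g_1,\dots,g_m\}$ of $I_G$ with respect to $<$, writing $\init_y(g_i) = y^{d_i} q_i$, one has $N_{y,I_G} = \langle q_i \mid d_i = 0\rangle$; moreover, by Lemma \ref{square-freeiny} (or directly, since the universal Gröbner basis of $I_G$ is square-free in every variable — each binomial comes from a primitive closed even walk, so no term is divisible by $e^2$), the relevant $q_i$'s are the Gröbner basis elements in which $y$ does not appear at all. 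The key observation is that for a toric ideal of a graph, a binomial $e^\alpha - e^\beta$ in which the variable $y = e$ does not appear corresponds to a closed even walk not using the edge $e$, and hence lies in $I_{G\setminus e}$; conversely every element of $\mathcal{U}(I_{G\setminus e})$ is a primitive binomial from a closed even walk of $G\setminus e$, which is also a closed even walk of $G$ not using $e$, hence an element of $\mathcal{U}(I_G)$ free of $y$.

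The cleanest route is to take $\mathcal{G}(I_G)$ to be the reduced Gröbner basis of $I_G$ with respect to $<$; since $<$ is $y$-compatible, Lemma \ref{square-freeiny} gives $\mathcal{G}(I_G) = \{yq_1+r_1,\dots,yq_k+r_k, h_1,\dots,h_t\}$ with $y$ dividing no term of the $q_i, r_i, h_j$. By definition $N_{y,I_G} = \langle h_1,\dots,h_t\rangle$. Each $h_j$ is a binomial (being an element of a Gröbner basis of the binomial ideal $I_G$, or reducing to one) with no occurrence of $y=e$, so $h_j \in \mathbb{K}[E(G\setminus e)]$ and $h_j \in I_G \cap \mathbb{K}[E(G\setminus e)] = I_{G\setminus e}$ (the last equality because $I_{G\setminus e}$ is the kernel of the restriction of $\varphi_G$ to $\mathbb{K}[E(G\setminus e)]$); this gives $N_{y,I_G} \subseteq I_{G\setminus e}$. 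For the reverse inclusion, I would run the Buchberger-type argument implicitly: the set $\{h_1,\dots,h_t\}$ is in fact a Gröbner basis of $I_{G\setminus e}$ with respect to $<_{R'}$ on $R' = \mathbb{K}[E(G\setminus e)]$. To see this, note that the reduced Gröbner basis of $I_{G\setminus e}$ consists of binomials supported away from $e$, and by the universal Gröbner basis property (Theorem \ref{generatordescription}(2)), comparing leading terms shows $\init_{<}(I_{G\setminus e}) \subseteq \init_<(N_{y,I_G})$; combined with the containment $N_{y,I_G}\subseteq I_{G\setminus e}$ already established and a dimension/initial-ideal comparison, equality follows. Finally, the ``in particular'' clause is immediate: since $\mathcal{U}(I_G)$ is a universal Gröbner basis, it is a Gröbner basis with respect to $<$, so we may take $\mathcal{G}(I_G) = \mathcal{U}(I_G)$; the elements with $d_i = 0$ are exactly the binomials in $\mathcal{U}(I_G)$ in which $y$ appears in neither term, and these form a Gröbner basis — indeed the universal Gröbner basis — of $N_{y,I_G} = I_{G\setminus e}$ by Theorem \ref{generatordescription}(2) applied to $G\setminus e$.

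I expect the main obstacle to be the reverse inclusion $I_{G\setminus e} \subseteq N_{y,I_G}$: one must argue that the part of a Gröbner basis of $I_G$ that is free of $y$ already generates all of $I_{G\setminus e}$, not merely a subideal. The slick way around this is to avoid generic Gröbner bases entirely and work with $\mathcal{U}(I_G)$: by Theorem \ref{generatordescription}(2) the primitive closed-even-walk binomials of $G\setminus e$ are precisely those primitive closed-even-walk binomials of $G$ avoiding $e$, so the $y$-free part of $\mathcal{U}(I_G)$ literally equals $\mathcal{U}(I_{G\setminus e})$ as a set; since $\mathcal{U}(I_G)$ is a Gröbner basis with respect to the chosen $y$-compatible order, Lemma \ref{square-freey} (applicable because every binomial is square-free in $y$) identifies $N_{y,I_G}$ with the ideal generated by this $y$-free part, namely $I_{G\setminus e}$, and simultaneously yields the ``in particular'' statement about universal Gröbner bases.
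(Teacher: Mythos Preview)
Your ``slick way'' via $\mathcal{U}(I_G)$ is precisely the paper's approach: identify the $y$-free part of $\mathcal{U}(I_G)$ with $\mathcal{U}(I_{G\setminus e})$ and conclude. There is, however, a genuine error in your write-up that you should fix. You assert that ``the universal Gr\"obner basis of $I_G$ is square-free in every variable --- each binomial comes from a primitive closed even walk, so no term is divisible by $e^2$.'' This is false: primitive closed even walks can repeat edges. The paper itself provides a counterexample (Figure~\ref{fig:2triangles}), where $I_G = \langle e_1e_4^2e_6e_7 - e_2e_3e_5^2e_8\rangle$. Because of this, your invocation of Lemma~\ref{square-freeiny} is unjustified (that lemma presupposes a geometric vertex decomposition, which you have not yet established), and your invocation of Lemma~\ref{square-freey} is unjustified whenever $y$ appears to a power $\geq 2$ in some element of $\mathcal{U}(I_G)$.

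The good news is that neither lemma is needed. Writing $\mathcal{U}(I_G) = \{y^{d_1}q_1+r_1,\dots,y^{d_k}q_k+r_k,\,g_1,\dots,g_r\}$ with $d_i>0$ and the $g_j$ free of $y$, the \emph{definition} of $N_{y,I}$ gives $N_{y,I_G}=\langle g_1,\dots,g_r\rangle$ directly, with no square-freeness hypothesis. The paper argues exactly this way. For the reverse inclusion $\mathcal{U}(I_{G\setminus e})\subseteq\{g_1,\dots,g_r\}$, you assert it but should spell out the one-line primitivity check the paper gives: if $u-v\in\mathcal{U}(I_{G\setminus e})$ were not primitive in $I_G$, a witnessing primitive binomial $u'-v'\in I_G$ with $u'\mid u$, $v'\mid v$ would be $y$-free (since $u,v$ are) and hence lie in $I_G\cap\mathbb{K}[E(G\setminus e)] = I_{G\setminus e}$, contradicting primitivity of $u-v$ there. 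You already recorded this elimination identity, so the fix is minor.
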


\begin{proof}
By Theorem \ref{generatordescription} (2), $I_G$ has a universal Gr\"obner basis $\mathcal{U}(I_G)$
of primitive binomials
associated to closed even walks of $G$. 
Write this basis as $\mathcal{U}(I_G)  = \{ y^{d_1}q_1 +r_1,\ldots, y^{d_k}q_k +r_k, g_1,\ldots, g_r \}$, where $d_i>0$ and where $y$ does not divide any term of $g_i$ and $q_i$. By definition \[N_{y,I_G} = \langle  g_1,\ldots, g_r \rangle.\]

In particular, $N_{y,I_G}$ is generated by primitive binomials in $\mathcal{U}(I_G)$ which do not include the variable $y$.  These
primitive binomials correspond to closed even walks in $G$ which do not pass through the edge $e$. In particular, they are also closed even walks in $G\setminus e$, so $ \{ g_1,\ldots, g_r\}\subset  \mathcal{U}(I_{G\setminus e})$, the universal Gr\"obner basis of $I_{G\setminus e}$ from Theorem \ref{generatordescription} (2).

To show the reverse containment $\mathcal{U}(I_{G\setminus e})\subseteq \{ g_1,\ldots, g_r\}$, suppose that there is some binomial $u-v\in \mathcal{U}(I_{G\setminus e})$ which is not in $\mathcal{U}(I_G)$.  Then there would be some closed even walk of $G$ which is not primitive, but becomes primitive after deleting the edge $e$. For $u-v$ to not be primitive means that there is some primitive 
binomial $u'-v' \in \mathcal{U}(I_G)$ such that $u' | u$ and $v'|v$. Since $y$ does not divide $u$ or $v$, we must have $u'-v' \in  \mathcal{U}(I_{G\setminus e})$, a contradiction to $u-v$ being primitive. Therefore $\mathcal{U}(I_{G\setminus e})=\{ g_1,\ldots, g_r\}$.  Since $\{g_1,\ldots,g_r\}$ generates $I_{G\setminus e}$, we have $I_{G\setminus e}= \langle g_1,\ldots,g_r \rangle
= N_{y,I_G} $, thus proving the result.
\end{proof}

It is more difficult to give a similar description for $C_{y,I_G}$. For example, $C_{y,I_G}$ may not be prime, and thus, it may not be the toric ideal of any graph.  
If we make the extra assumption that the binomial generators in $\mathcal{U}(I_G)$ are 
{\it doubly square-free} (i.e., each binomial is the difference of two square-free monomials), then it is possible to give a slightly more concrete description of $C_{y,I_G}$.  We work out these details below.

Fix a variable $y$ in $\mathbb{K}[E(G)]$, and write the elements
of $\mathcal{U}(I_G)$ as  $\{y^{d_1}q_1 +r_1,\ldots, y^{d_k}q_k +r_k, g_1,\ldots, g_r \}$, where $d_i>0$ and where $y$ does not divide $q_i$ or any term of $g_i$.  Since we are assuming
the elements in $\mathcal{U}(I_G)$ are doubly square-free, we have $d_i = 1$
for $i=1,\ldots,k$ and $q_1,\ldots,q_k$ are square-free monomials.
Consequently
 \[{\rm in}_y(I_G)=\langle yq_1,\ldots,yq_k, g_1,\ldots,g_r\rangle\]
 is generated by doubly square-free binomials and square-free monomials.
 Let $\bigcap_j Q_j$ be the primary decomposition of $\langle yq_1,
 \ldots, yq_k \rangle$.  Each $Q_j$ is an ideal generated by
 variables since $\langle yq_1,\ldots,yq_k \rangle$ is a
 square-free monomial ideal.  Thus
 $${\rm in}_y(I_G) = \left( \bigcap_j Q_j \right) + \langle g_1,\ldots,g_r
 \rangle = \bigcap_j (Q_j+ \langle g_1,\ldots,g_r \rangle).$$
 If there is a $g_l=u_l-v_l$ with either $u_l$ or $v_l\in Q_j$, then $Q_j +\langle g_1,\ldots,g_r\rangle$ can be further decomposed
into an intersection of ideals generated by variables and square-free binomials.

 Continuing this process, we can 
 write ${\rm in}_y(I_G)=\bigcap_i P_i$, where
 each $P_i = M_i+T_i$, with $M_i$ an ideal generated by a subset of indeterminates in $\{e_1,\ldots,e_t\}$, and $T_i\subseteq\mathcal{U}(I_G)$ is an ideal of binomials generated by $g_l = u_l-v_l$ where $u_l,v_l\notin M_i$.   Again, we point
 out that each binomial is a doubly square-free binomial
 by our assumption on $\mathcal{U}(I_G)$. As the next result shows, the binomial
 ideal $T_i$ is a toric ideal corresponding to
 a subgraph of $G$.

\begin{theorem}\label{link_components}
Let $G$ be a finite simple graph with toric ideal $I_G \subseteq \mathbb{K}[E(G)]$, and suppose that the elements of 
$\mathcal{U}(I_G)$ are doubly square-free.  For a fixed
variable $y$ in $\mathbb{K}[E(G)]$, suppose that 
$${\rm in}_y(I_G) = \bigcap_i P_i ~~\mbox{with
$P_i = M_i +T_i$},$$
using the notation as above.  Let $E_i \subseteq E(G)$
be the set of edges that correspond to the variables in $M_i + \langle y \rangle$,
and let $G \setminus E_i$ be the graph $G$ with all the edges of
$E_i$ removed.
Then $T_i = I_{G\setminus E_i}$.
\end{theorem}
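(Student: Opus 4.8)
The plan is to show the two binomial ideals $T_i$ and $I_{G\setminus E_i}$ coincide by comparing their generating sets, using the combinatorial description of toric ideals of graphs from Theorem \ref{generatordescription} together with the primitivity/divisibility bookkeeping already used in the proof of Lemma \ref{linktoricidealgraph}. First I would recall how the $P_i$ arise: starting from $\init_y(I_G) = \langle yq_1,\dots,yq_k\rangle + \langle g_1,\dots,g_r\rangle$, the decomposition is built by repeatedly intersecting with primary components $Q_j$ of the square-free monomial ideal $\langle yq_1,\dots,yq_k\rangle$ and then, whenever some $g_l = u_l - v_l$ has a term lying in the current monomial piece $M$, splitting $M + \langle g_1,\dots,g_r\rangle$ as $(M + \langle u_l\rangle) \cap (M + \langle v_l\rangle + \langle g_1,\dots,g_r\rangle)$ (using that $\langle u_l - v_l, u_l\rangle = \langle u_l, v_l\rangle$ modulo the rest, since $u_l-v_l$ and $u_l$ generate $\langle u_l,v_l\rangle$). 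At the end each $T_i$ is generated precisely by those $g_l = u_l - v_l$ with $u_l, v_l \notin M_i$; equivalently, $T_i$ is generated by all binomials $b$ in the universal Gröbner basis $\mathcal{U}(I_G)$ such that $y \nmid b$ and no variable of $b$ lies in $M_i$, i.e.\ no edge used by the corresponding closed even walk lies in $E_i$ (noting $y$ corresponds to an edge in $E_i$ by our convention that $E_i$ is the edge set of $M_i + \langle y\rangle$).

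Granting that reformulation, the containment $T_i \subseteq I_{G\setminus E_i}$ is the easy direction: each generator $g_l$ of $T_i$ corresponds to a closed even walk of $G$ none of whose edges lie in $E_i$, hence a closed even walk of $G\setminus E_i$, so $g_l \in I_{G\setminus E_i}$ by Theorem \ref{generatordescription}(1). For the reverse containment $I_{G\setminus E_i} \subseteq T_i$, I would run the same primitivity argument as in Lemma \ref{linktoricidealgraph}. Take a primitive binomial $u - v \in \mathcal{U}(I_{G\setminus E_i})$, coming from a primitive closed even walk $W$ in $G\setminus E_i$. Since $W$ is also a closed even walk in $G$, the binomial $u-v$ lies in $I_G$, so there is a primitive binomial $u' - v' \in \mathcal{U}(I_G)$ with $u' \mid u$ and $v' \mid v$. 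Because $u,v$ involve only edges of $G\setminus E_i$ (in particular no edge of $E_i$, and not the edge $y$), the divisors $u', v'$ also involve only such edges; hence $u'-v'$ is one of the generators $g_l$ with $u_l,v_l \notin M_i$, i.e.\ $u' - v' \in T_i$. Primitivity of $u-v$ (viewed in $I_{G\setminus E_i}$) forces $u' - v' = u - v$ up to sign — otherwise $u'-v'$ would be a strictly smaller binomial dividing it inside $I_{G\setminus E_i}$, contradicting primitivity — so $u-v \in T_i$. Since $\mathcal{U}(I_{G\setminus E_i})$ generates $I_{G\setminus E_i}$, we get $I_{G\setminus E_i} \subseteq T_i$, and combined with the first containment, $T_i = I_{G\setminus E_i}$.

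The main obstacle, and the step I would be most careful about, is the bookkeeping in the splitting process that produces the $P_i$: I need it to be genuinely true that the binomial part $T_i$ of $P_i$ is exactly the set of $g_l$ avoiding $M_i$, with no extra binomials created and none spuriously dropped. Concretely, when splitting $M + \langle g_1,\dots,g_r\rangle = (M+\langle u_l\rangle) \cap (M + \langle v_l\rangle + \langle g_1,\dots,g_r\rangle)$, one must check the distributivity is valid — this uses that $\init_y(I_G)$ is radical (the whole construction rests on $I_G$ having a square-free, hence radical, initial ideal, guaranteed by the doubly square-free hypothesis via Lemma \ref{square-freey}), so that $\init_y(I_G)$ is the intersection of its minimal primes and each $P_i$ may be taken to be one of those primes, automatically of the form $M_i + T_i$ with $T_i$ prime (a toric ideal of a graph). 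Once that structural claim about the shape of the $P_i$ is in hand — which is really where the ``using the notation as above'' is doing its work — the identification $T_i = I_{G\setminus E_i}$ follows from the two-containment argument above, which is a direct adaptation of Lemma \ref{linktoricidealgraph}.
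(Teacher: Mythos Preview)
Your proposal is correct and follows essentially the same two-containment strategy as the paper: the forward inclusion $T_i \subseteq I_{G\setminus E_i}$ via Theorem \ref{generatordescription}(1), and the reverse inclusion by the primitivity argument adapted from Lemma \ref{linktoricidealgraph}. In fact you are more careful than the paper on the reverse direction---the paper simply asserts that an element of $\mathcal{U}(I_{G\setminus E_i})$ corresponds to a primitive closed even walk of $G$, whereas you explicitly justify this by pulling back to a divisor $u'-v' \in \mathcal{U}(I_G)$, observing it avoids $E_i$, and invoking primitivity in $I_{G\setminus E_i}$ to force equality; your final paragraph of caveats about the shape of the $P_i$ is unnecessary, since that is precisely what the discussion preceding the theorem (the ``notation as above'') establishes.
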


\begin{proof}
The generators of $T_i$ are those elements of $\mathcal{U}(I_G)$ whose terms  are not divisible by any variable contained in $M_i + \langle y \rangle$. So a generator 
of $T_i$ corresponds
to a primitive closed even walk that does not contain any
of the edges in $E_i$. Therefore, each generator of $T_i$ is a closed
even walk in $G \setminus E_i$, and thus
 $T_i\subset I_{G\setminus E_i}$ by 
Theorem \ref{generatordescription} (1). Conversely, suppose that $\Gamma \in \mathcal{U}(I_{G\setminus E_i})$. Then by Theorem  \ref{generatordescription} (2), $\Gamma$ corresponds to some primitive closed even walk of $G$ not passing through any edge of $E_i$. These are exactly the generators in $T_i$.
\end{proof}

We now arrive at a primary decomposition of ${\rm in}_y(I_G)$.

\begin{corollary}\label{decomposition1}
Let $G$ be a finite simple graph with toric ideal $I_G \subseteq \mathbb{K}[E(G)]$, and suppose that the elements of 
$\mathcal{U}(I_G)$ are doubly square-free.
For a fixed
variable $y$ in $\mathbb{K}[E(G)]$, suppose that 
$${\rm in}_y(I_G) = \bigcap_i P_i ~~\mbox,$$
using the notation as above.
Then each $P_i$ is a prime ideal, and after removing redundant components, this intersection defines a primary
decomposition of ${\rm in}_y(I_G)$.
\end{corollary}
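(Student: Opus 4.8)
The plan is to verify the two assertions separately: first that each $P_i = M_i + T_i$ is prime, and then that the intersection $\bigcap_i P_i$ is irredundant after discarding superfluous components. For primality, I would argue as follows. Each $T_i$ equals the toric ideal $I_{G \setminus E_i}$ by Theorem \ref{link_components}, and toric ideals of graphs are prime (they are kernels of ring homomorphisms, hence prime). The ideal $M_i$ is generated by a subset of the variables $\{e_1,\ldots,e_t\}$, say the variables corresponding to edges in $E_i \setminus \{e\}$ (recalling $y = e$). Since $T_i = I_{G\setminus E_i}$ lives, up to extension, in the polynomial subring on the variables \emph{not} in $M_i + \langle y\rangle$, the quotient $\mathbb{K}[E(G)]/(M_i + T_i)$ is isomorphic to $\mathbb{K}[E(G\setminus E_i)]/I_{G\setminus E_i}$, which is a domain. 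Hence $P_i$ is prime. (Concretely: modding out by the variables in $M_i+\langle y\rangle$ just deletes those variables, and what remains is the toric ring of $G \setminus E_i$.)

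For the claim that $\bigcap_i P_i$ is a primary decomposition of $\init_y(I_G)$: I would recall from the construction preceding Theorem \ref{link_components} that the equality $\init_y(I_G) = \bigcap_i (M_i + T_i)$ was obtained by iteratively applying the distributive identity $Q_j + \langle g_1,\ldots,g_r\rangle = \bigcap (\text{smaller ideals})$ whenever some binomial $g_\ell = u_\ell - v_\ell$ has a term lying in the monomial ideal being split off. This identity — that if $u \in Q$ (a monomial ideal generated by variables) then $Q + \langle u-v, \ldots\rangle = (Q + \langle v, (\text{rest})\rangle) \cap (Q + \langle u-v \text{ omitted}, \ldots\rangle)$ type splitting — needs to be stated carefully, but it is the standard fact used to decompose ideals of this mixed monomial-plus-binomial shape, and it preserves the actual ideal (it is a genuine equality, not just a containment). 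So $\bigcap_i P_i = \init_y(I_G)$ holds on the nose. Since each $P_i$ is prime (in particular primary), this exhibits $\init_y(I_G)$ as a finite intersection of primary ideals, i.e.\ a primary decomposition; discarding any $P_i$ that contains $\bigcap_{j \neq i} P_j$ makes it irredundant.

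The main obstacle I anticipate is bookkeeping rather than conceptual: one must be certain that the splitting process described informally before Theorem \ref{link_components} actually terminates and yields an honest equality $\init_y(I_G) = \bigcap_i (M_i + T_i)$, with each $T_i$ of the claimed form (generated only by those $g_\ell$ with both terms outside $M_i$), so that Theorem \ref{link_components} applies. Once that structural claim is pinned down, primality of each $P_i$ reduces to the two clean facts that toric ideals of graphs are prime and that quotienting a polynomial ring by a subset of its variables again gives a polynomial ring — after which "primary decomposition" is immediate since primes are primary. I would also note, for irredundancy, that the $P_i$ need not be pairwise incomparable as produced, which is exactly why the statement only claims a primary decomposition \emph{after removing redundant components}.
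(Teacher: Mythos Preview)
Your proposal is correct and follows the same approach as the paper: invoke Theorem~\ref{link_components} to identify $T_i = I_{G\setminus E_i}$, then use that toric ideals are prime together with the fact that the generators of $T_i$ involve none of the variables in $M_i$, so that $P_i = M_i + T_i$ is prime; the primary decomposition claim is then immediate from the construction. One small slip: your claimed isomorphism $\mathbb{K}[E(G)]/(M_i+T_i)\cong \mathbb{K}[E(G\setminus E_i)]/I_{G\setminus E_i}$ forgets the variable $y$ when $y\notin M_i$ (which is typically the case for $i\neq 0$), so the quotient is really a polynomial ring in $y$ over the toric ring---still a domain, so the conclusion is unaffected.
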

\begin{proof}
By the previous result,  $P_i = M_i + I_{G\setminus E_i}$ for every $i$.  
So the fact that $P_i$ is a prime ideal 
immediately follows from the fact that any toric ideal is prime, and that no cancellation occurs between variables in $M_i$ and elements of 
$T_i = I_{G\setminus E_i}$.
\end{proof}

If $I_G$ is generated by a doubly square-free universal Gr\"obner basis, choosing any $y=e_i$ defines a geometric vertex decomposition of $I_G$ with respect to $y$ by Lemma~\ref{square-freey}. Note that $\langle y \rangle$ appears in the primary decomposition of
$\langle yq_1,\ldots,yq_k\rangle$, 
so one prime ideal in the decomposition given in
Corollary \ref{decomposition1}
${\rm in}_y(I_G)$ will always be $\langle y \rangle + \langle g_1,\ldots,g_r\rangle$.  But this is exactly $\langle y\rangle +N_{y,I_G} = \langle y \rangle + I_{G\setminus e}$, by Theorem \ref{linktoricidealgraph}.  As the next theorem
shows, if we omit this prime ideal, the remaining prime ideals form
a primary decomposition of $C_{y,I_G}$.

\begin{theorem}\label{toric_GVD}
Let $G$ be a finite simple graph with toric ideal $I_G \subseteq \mathbb{K}[E(G)]$, and suppose that the elements of 
$\mathcal{U}(I_G)$ are doubly square-free.
Fix any  variable $y=e_i$.  Suppose that
after relabelling
the primary decomposition 
${\rm in}_y(I_G)$ of Corollary \ref{decomposition1} 
we have 
\begin{equation}\label{square-freedecomp}{\rm in}_y(I_G)=\bigcap_{i=0}^d(M_i+I_{G\setminus E_i}) = (\langle y \rangle + I_{G\setminus e_i}) \cap
\bigcap_{i=1}^d 
(M_i+I_{G\setminus E_i}).
\end{equation}
Then $$C_{y,I_G} = \bigcap_{i=1}^d(M_i+I_{G\setminus E_i})$$ is a primary decomposition for $C_{y,I_G}$. Furthermore, if $<$ is a $y$-compatible monomial order,
then \eqref{square-freedecomp} is a geometric vertex decomposition for $I_G$ with respect to $y$. 
\end{theorem}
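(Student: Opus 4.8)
The plan is to bootstrap off the decomposition machinery already in place. Since $\mathcal{U}(I_G)$ is doubly square-free, Lemma~\ref{square-freey} applies with any $y = e_i$: the set $\{q_1,\dots,q_k,g_1,\dots,g_r\}$ is a Gr\"obner basis of $C_{y,I_G}$, and $I_G$ automatically has a geometric vertex decomposition with respect to $y$, namely $\init_y(I_G) = C_{y,I_G}\cap(N_{y,I_G}+\langle y\rangle)$. This already gives the final sentence of the statement (the ``furthermore'' claim), so the real content is identifying $C_{y,I_G}$ with the displayed intersection $\bigcap_{i=1}^d(M_i+I_{G\setminus E_i})$ and verifying this is a primary decomposition.

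First I would recall from Corollary~\ref{decomposition1} that $\init_y(I_G) = \bigcap_{i=0}^d P_i$ is a primary (indeed prime) decomposition, where after relabelling $P_0 = \langle y\rangle + I_{G\setminus e_i} = N_{y,I_G}+\langle y\rangle$ using Theorem~\ref{linktoricidealgraph}. Now intersect the geometric vertex decomposition $\init_y(I_G) = C_{y,I_G}\cap(N_{y,I_G}+\langle y\rangle)$ with the primary decomposition. Since both $C_{y,I_G}$ and each $P_i$ ($i\geq 1$) are ideals not containing $y$ while $P_0$ does contain $y$, one expects the component $P_0$ to be ``responsible'' exactly for the $N_{y,I_G}+\langle y\rangle$ factor, and $C_{y,I_G}$ to be $\bigcap_{i=1}^d P_i$. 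To make this precise, I would argue as follows: the ideal $J := \bigcap_{i=1}^d(M_i+I_{G\setminus E_i})$ satisfies $J \cap (N_{y,I_G}+\langle y\rangle) = \bigcap_{i=0}^d P_i = \init_y(I_G) = C_{y,I_G}\cap(N_{y,I_G}+\langle y\rangle)$. One then needs to cancel the common factor $N_{y,I_G}+\langle y\rangle$. This is where a degree argument enters: by Lemma~\ref{square-freey}(1), $\{q_1,\dots,q_k\}$ together with $\{g_1,\dots,g_r\}$ is a Gr\"obner basis of $C_{y,I_G}$ in $\mathbb{K}[x_1,\dots,\hat{y},\dots,x_n]$, and since none of the $q_i$, $g_j$ involves $y$, we may also regard $C_{y,I_G}$ as an ideal of $\mathbb{K}[E(G)]$; its generators involve no $y$, so $y$ is a nonzerodivisor modulo (the extension of) $C_{y,I_G}$. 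I would then show that $J$ likewise has all generators free of $y$ and conclude, via the fact that intersecting two $y$-free ideals with $N_{y,I_G}+\langle y\rangle$ and getting equal results forces the ideals to be equal — formally, contract both sides back to $\mathbb{K}[x_1,\dots,\hat y,\dots,x_n]$ and use that for an ideal $K$ not involving $y$, one has $\big(K \cap (L+\langle y\rangle)\big) \cap \mathbb{K}[x_1,\dots,\hat y,\dots, x_n] = K \cap L'$ appropriately, or more directly note that $K = (K\cap(L+\langle y\rangle)) : y^\infty$ is recovered as a saturation. Concretely, $C_{y,I_G} = \init_y(I_G):y^\infty$ (saturating kills the $\langle y\rangle$-containing component $P_0$ and leaves the others, since each $P_i$ for $i\geq 1$ contains no power of $y$), and likewise $J = \init_y(I_G):y^\infty$ because saturation commutes with finite intersection and $P_i : y^\infty = P_i$ for $i\geq 1$ while $P_0:y^\infty = \langle 1\rangle$. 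Hence $C_{y,I_G} = J$.

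Finally, $\bigcap_{i=1}^d P_i$ is a primary decomposition of $C_{y,I_G}$ because each $P_i = M_i + I_{G\setminus E_i}$ is prime (Corollary~\ref{decomposition1}), and after deleting $P_0$ from a minimal primary decomposition of $\init_y(I_G)$ we retain a minimal (hence primary) decomposition of the saturation — redundant components, if any, can be dropped, exactly as in the statement (``after removing redundant components''). The main obstacle I anticipate is the bookkeeping in the cancellation step: making rigorous that $C_{y,I_G} = \init_y(I_G):y^\infty$ and that this saturation is computed componentwise requires care about whether we work in $\mathbb{K}[E(G)]$ or its localization away from $y$, and about the primeness of $P_0$ versus the other $P_i$ (one must confirm $y \notin P_i$ for $i\geq 1$, which follows since $M_i$ is generated by a subset of $\{e_1,\dots,e_t\}$ not including $y$, and $I_{G\setminus E_i}$ is prime and $y$-free). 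Everything else is a direct assembly of Lemma~\ref{square-freey}, Theorem~\ref{linktoricidealgraph}, Theorem~\ref{link_components}, and Corollary~\ref{decomposition1}.
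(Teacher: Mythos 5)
Your proposal is correct, but the way you establish the main identity $C_{y,I_G}=\bigcap_{i=1}^d(M_i+I_{G\setminus E_i})$ is genuinely different from the paper's. The paper simply observes that $C_{y,I_G}=\langle m_1,\dots,m_k,g_1,\dots,g_r\rangle$ (by Lemma \ref{square-freey}, as you also note) and then re-runs the combinatorial splitting procedure described before Theorem \ref{link_components} on this generating set, checking that it produces exactly the components $P_1,\dots,P_d$. You instead cancel the component $P_0=N_{y,I_G}+\langle y\rangle$ algebraically, by showing that both $C_{y,I_G}$ and $\bigcap_{i=1}^d P_i$ equal the saturation $\operatorname{in}_y(I_G):y^\infty$: saturation commutes with finite intersections, $P_0:y^\infty=\langle 1\rangle$, $P_i:y^\infty=P_i$ for $i\geq 1$ since each $P_i$ is prime and does not contain $y$, and $C_{y,I_G}:y^\infty=C_{y,I_G}$ since its generators are $y$-free. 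This is a clean and fully rigorous route; its one point of care, which you correctly flag, is verifying $y\notin P_i$ for $i\geq 1$ --- your stated reason is slightly circular as written, but the fact does hold because $\langle y\rangle$ is the unique minimal prime of $\langle yq_1,\dots,yq_k\rangle$ containing $y$ and the subsequent refinements only introduce variables appearing in the $g_l$, none of which involve $y$. The trade-off: the paper's argument stays entirely within the combinatorial decomposition machinery it has just built (and so also exhibits the components of $C_{y,I_G}$ constructively), while yours isolates the cancellation as a general saturation fact that would apply to any geometric vertex decomposition whose $\operatorname{in}_y$ has a prime decomposition with a single $y$-containing component.
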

\begin{proof}
The fact about the geometric vertex decomposition follows from Lemma \ref{square-freey}.  

Since $\mathcal{U}(I_G)$ contains doubly square-free binomials, we can write 
\begin{eqnarray*}
{\rm in}_y(I_G) &=& \langle ym_1,\ldots,ym_k, g_1,\ldots,g_r\rangle 
 = \langle y,g_1\ldots,g_r \rangle \cap \langle m_1,\ldots,m_k,g_1,\ldots,g_r \rangle
\end{eqnarray*}where $y$ does not divide any $m_i$ or any term of any $g_i$. By definition, 
\[N_{y,I_G} = \langle g_1,\ldots, g_r \rangle \hspace{4mm} 
~\mbox{and}~C_{y,I_G} =  \langle  m_1,\ldots, m_k, g_1,\ldots, g_r \rangle.\] 
Applying the process described before Theorem \ref{link_components} to $\langle  m_1,\ldots, m_k, g_1,\ldots, g_r \rangle$ proves the first claim. 
\end{proof}

\begin{remark}\label{structure_remark}
Let $M$ be a square-free monomial ideal and $I_H$ a toric ideal of a graph $H$ where elements of $\mathcal{U}(H)$ are doubly square-free. The arguments presented above can be adapted to prove that $M+I_H$ has a primary decomposition into prime ideals of the form $M_i+T_i$ as in Theorem~\ref{link_components}. 
\end{remark}

\subsection{Geometric vertex decomposability under graph operations}
Given a graph $G$ whose toric ideal $I_G$ is 
geometrically vertex decomposable, it is natural
to ask if there are any graph operations we can
perform on $G$ to make a new graph $H$ so that
the associated toric ideal $I_H$ is also
geometrically vertex decomposable.   We show that the operation
of ``gluing'' an  even cycle onto a graph
$G$ is one such operation.

We make this more precise.  Given a graph $G = (V(G),E(G))$ and
a subset $W \subseteq V(G)$, the {\it induced graph} 
of $G$ on $W$,
denoted $G_W$, is the graph $G_W = (W,E(G_W))$ where 
$E(G_W) = \{ e\in E(G) ~|~ e \subseteq W\}$.   A graph $G$ is
a {\it cycle} (of length $n$) if $V(G) = \{x_1,\ldots,x_n\}$ and
$E(G) = \{\{x_1,x_2\},\{x_2,x_3\},\ldots,\{x_{n-1},x_n\},\{x_n,x_1\}\}.$

Following \cite[Construction 4.1]{FHKVT}, we define the {\it gluing}
of two graphs as follows.  Let $G_1$ and $G_2$ be two graphs,
and suppose that $H_1 \subseteq G_1$ and $H_2 \subseteq G_2$ are
induced subgraphs of $G_1$ and $G_2$ that are isomorphic.  If
$\varphi:H_1 \rightarrow H_2$ is the corresponding graph isomorphism,
we let $G_1 \cup_\varphi G_2$ denote the disjoint union $G_1 \sqcup G_2$
with the associated edges and vertices of $H_1 \cong H_2$ being identified.
We may say $G_1$ and $G_2$ are {\it glued along $H$} if both the induced
subgraphs $H_1 \cong H_2 \cong H$ and $\varphi$ are clear.

\begin{example} Figure \ref{fig:onecycle} (which is adapted from \cite{FHKVT})
shows
the gluing of a cycle $C$ of even length onto
a graph $G$ to make a new graph $H$.  The labelling is included to help
illuminate the proof of the next theorem. In this figure, the cycle $C$ has
edges $f_1,f_2,\ldots,f_{2n}$.  The edge $e$ is part of the graph $G$.  We have
glued $C$ and $G$ along the edge $e \cong f_{2n}$. 
 \begin{figure}[!ht]
    \centering
    \begin{tikzpicture}[scale=0.45]
      \draw[dotted] (6,0) circle (3cm) node{$G$};
      \draw[thin,dashed] (4,1) -- (5,2);
      \draw[thin,dashed] (4,-1) -- (5,-2);
      \draw (4,-1) -- (3.3,-2.25) node[midway,left] {\footnotesize{$f_{2n-1}$}};
      \draw (4,1) -- (4,-1) node[midway, right] {$e$};
      \draw (4,1) -- (4,-1) node[midway,left] {\footnotesize{$f_{2n}$}};
      \draw (4,1) -- (3.3,2.25) node[midway,left] {\footnotesize{$f_{1}$}};

      \node at (0,0) {$C$};
      \draw[loosely dotted] (3.5,2) edge[bend right=10] (2,3.5);
      \draw[dashed] (1,4) -- (2,3.5);
      \draw (-1,4) -- (1,4);
      \draw[dashed] (-1,4) -- (-2,3.5);
      \draw[loosely dotted] (-2,3.5) edge[bend right=10] (-3.5,2);
      \draw[dashed] (-4,1) -- (-3.5,2);
      \draw (-4,1) -- (-4,-1) node[midway,left] {\footnotesize{$f_i$}};
      \draw[dashed](-4,-1) -- (-3.5,-2);
      \draw[loosely dotted] (-2,-3.5) edge[bend right=-10] (-3.5,-2);
      \draw[dashed] (-1,-4) -- (-2,-3.5);
      \draw (-1,-4) -- (1,-4);
      \draw[dashed] (1,-4) -- (2,-3.5);
      \draw[loosely dotted] (3.5,-2) edge[bend right=-10] (2,-3.5);
    
      \fill[fill=white,draw=black] (-4,1) circle (.1);
      \fill[fill=white,draw=black] (-4,-1) circle (.1);
      \fill[fill=white,draw=black] (4,1) circle (.1);
      \fill[fill=white,draw=black] (4,-1) circle (.1);
      \fill[fill=white,draw=black] (-1,4) circle (.1);
      \fill[fill=white,draw=black] (1,4) circle (.1);
      \fill[fill=white,draw=black] (-1,-4) circle(.1);
      \fill[fill=white,draw=black] (1,-4) circle (.1);
      \fill[fill=white,draw=black] (3.3,2.25) circle (.1);
      \fill[fill=white,draw=black] (3.3,-2.25) circle (.1);
    \end{tikzpicture}
    \caption{Gluing an even cycle  $C$ to a graph $G$ along an edge.}
    \label{fig:onecycle}
 \end{figure}
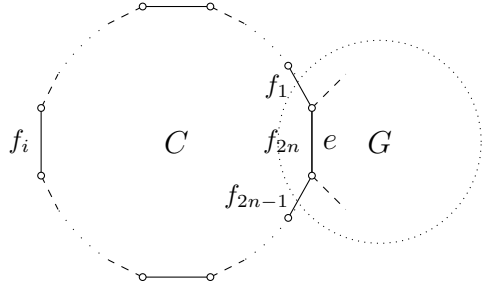
 \end{example}
 
 The geometric vertex decomposability property
 is preserved when an even cycle is glued along an edge of a graph whose toric ideal
 is geometrically vertex decomposable.

\begin{theorem}\label{gluetheorem}
Suppose that $G$ is a graph such that $I_G$ is
geometrically vertex decomposable
in $\mathbb{K}[E(G)]$.  Let $H$ be the 
graph obtained from $G$ by gluing a cycle of even length
onto an edge of $G$ (as in Figure \ref{fig:onecycle}).  Then $I_H$ is
geometrically vertex decomposable in $\mathbb{K}[E(H)]$.
\end{theorem}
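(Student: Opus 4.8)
The plan is to set up $H = G \cup_\varphi C$ where $C$ is a cycle of even length $2n$ with edges $f_1, \dots, f_{2n}$, glued to $G$ along the edge $e \cong f_{2n}$, and to produce a geometric vertex decomposition of $I_H$ with respect to the variable $y = f_1$ (or some edge of $C$ adjacent to the gluing edge). The first step is to understand $\mathcal{U}(I_H)$ in terms of $\mathcal{U}(I_G)$: by Theorem \ref{generatordescription}, the primitive closed even walks of $H$ are either walks entirely inside $G$, the walk around the cycle $C$ itself, or walks that combine a walk in $G$ through $e$ with the complementary path $f_1, \dots, f_{2n-1}$ around $C$. I would make this precise — this kind of analysis of walks in a glued graph is exactly what appeared in \cite{FHKVT}, so I expect to be able to cite or adapt their description.

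Next I would compute $N_{y, I_H}$ and $C_{y, I_H}$ for the chosen $y = f_1$. By Lemma \ref{linktoricidealgraph}, $N_{y, I_H} = I_{H \setminus f_1}$. Since removing $f_1$ from $H$ leaves $G$ with a path of length $2n-1$ attached (a sequence of non-leaf-free but eventually leaf-reducible edges), repeated application of Lemma \ref{removeleaves} should give $I_{H \setminus f_1} = I_G$, viewed in the larger ring $\mathbb{K}[E(H)] = \mathbb{K}[E(G)] \otimes \mathbb{K}[f_1, \dots, f_{2n-1}]$ — that is, $N_{y, I_H}$ is the extension of $I_G$. The subtle part is $C_{y, I_H}$: I would use the structure results of Section 3.2. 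One checks that the only elements of $\mathcal{U}(I_H)$ divisible by $y = f_1$ are the binomial for the cycle $C$ and the "combined" binomials pairing $G$-walks through $e$ with the path around $C$; in each such binomial $y$ appears to the first power (the cycle is even, so these are doubly square-free — this needs $I_G$ itself to behave well, which I would need to address). Stripping $y$ from these should express $C_{y, I_H}$ in terms of $I_G$ together with monomial and binomial relations forced by the path; I would aim to show $C_{y, I_H}$ decomposes, via Theorem \ref{toric_GVD} and Remark \ref{structure_remark}, into pieces of the form $M_i + I_{G \setminus E_i}$ where $G \setminus E_i$ ranges over $G$ itself and subgraphs of $G$ obtained by deleting $e$.

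The key reduction is then: each ideal $C_{y, I_H}$ and $N_{y, I_H}$, contracted to $\mathbb{K}[E(H) \setminus \{y\}]$, should be expressible as (a sum of a monomial ideal with) either $I_G$ or $I_{G \setminus e}$, possibly tensored with extra variables. Since $G \setminus e$ is obtained from $G$ by deleting an edge — and one checks $I_{G\setminus e}$ inherits geometric vertex decomposability, or better, $e$ can be chosen so that $G \setminus e$ is again covered by the inductive hypothesis or by Lemma \ref{removeleaves} — both are geometrically vertex decomposable by hypothesis on $I_G$. Then Theorem \ref{tensorproduct} (to absorb the extra path variables) together with the principle that $M_i + (\text{gvd toric ideal})$ is gvd (which would need its own short argument, or an appeal to the recursive structure and Corollary \ref{monomialcor}-style reasoning) finishes the verification of condition (2) of Definition \ref{gvd}. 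Unmixedness of $I_H$ follows since $I_H$ is prime (it is a toric ideal), hence it suffices to check the $C$ and $N$ ideals are unmixed, which follows from their explicit primary decompositions.

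\textbf{Main obstacle.} The hardest part will be pinning down $C_{y, I_H}$ precisely: showing that after removing $y$ the generators organize into a clean intersection whose components are all of the form $M_i + I_{G'}$ with $G' \in \{G, G \setminus e\}$, and confirming that the "combined" cycle-and-$G$-walk binomials do not introduce components that fail to be geometrically vertex decomposable. A secondary technical point is establishing that we may take $I_G$ (and the relevant sub-ideals) to have doubly square-free universal Gröbner basis so that the machinery of Section 3.2 applies — if that is not automatic, I would instead argue directly from Lemma \ref{square-freey} using an explicit $y$-compatible order and a carefully chosen Gröbner basis of $I_H$, bypassing the doubly-square-free hypothesis.
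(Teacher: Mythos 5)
Your overall strategy matches the paper's: take $y=f_1$, use Lemma \ref{linktoricidealgraph} plus repeated leaf removal (Lemma \ref{removeleaves}) to get $N_{y,I_H}=I_G$ (extended to $\mathbb{K}[E(H)]$), obtain the geometric vertex decomposition from Lemma \ref{square-freey}, and finish with Theorem \ref{tensorproduct}. The classification of primitive walks through $f_1$ is also correct: since the internal vertices of the path $f_1,\dots,f_{2n-1}$ have degree $2$ in $H$, every such walk traverses the whole path exactly once and then closes up either through $e$ or through an odd walk in $G$ joining the endpoints of $e$.

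The gap is in your treatment of $C_{y,I_H}$, which you correctly flag as the main obstacle but then route through the wrong machinery. The Section 3.2 results (Theorem \ref{toric_GVD}, Remark \ref{structure_remark}) require $\mathcal{U}(I_H)$ to be doubly square-free, which is not a hypothesis here and need not hold; and your proposed decomposition into components $M_i+I_{G\setminus E_i}$ with $G'\in\{G,G\setminus e\}$ would be fatal if $I_{G\setminus e}$ actually appeared, since deleting an arbitrary edge of $G$ (the gluing edge is given, not chosen) is not known to preserve geometric vertex decomposability. The resolution is a single divisibility observation that makes all of this unnecessary: the coefficient of $y=f_1$ in the pure cycle binomial is $f_3f_5\cdots f_{2n-1}$, and the coefficient of $f_1$ in every combined binomial is $f_3\cdots f_{2n-1}e_{j_2}\cdots e_{j_{2k-2}}$, which is divisible by $f_3\cdots f_{2n-1}$. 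Hence
$$C_{y,I_H}=\langle f_3f_5\cdots f_{2n-1}\rangle+I_G,$$
a single ideal (no intersection, no primary decomposition, and no $I_{G\setminus e}$), living in $\mathbb{K}[f_2,\dots,f_{2n}]\otimes\mathbb{K}[E(G)]$ with the two summands in disjoint variable sets. Lemma \ref{simplecases}(2) handles the square-free monomial factor and Theorem \ref{tensorproduct} then gives geometric vertex decomposability of $C_{y,I_H}$; the needed square-freeness in $y$ is automatic (each $d_i=1$ by primitivity and the degree-$2$ argument above), so Lemma \ref{square-freey} applies without any doubly-square-free assumption. Your stated fallback (``argue directly from Lemma \ref{square-freey} with an explicit $y$-compatible order'') is exactly the paper's argument; you just need to carry it out and notice the common factor $f_3\cdots f_{2n-1}$.
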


\begin{proof}
The ideal $I_H$ is clearly unmixed since $I_H$ is a prime ideal.  Now
let $E(G) = \{e_1,\ldots,e_s\}$ denote the edges of $G$ and 
let $E(C) = \{f_1,\ldots,f_{2n}\}$ denote the edges of the
even cycle $C$.  Let $e$ be any edge of $G$, and after relabelling
the $f_i$'s
we can assume that $C$ is glued to $G$ along $f_{2n}$ and $e$ (see
Figure \ref{fig:onecycle}). Consequently,
$$E(H) = E(G) \cup \{f_1,\ldots,f_{2n-1}\}.$$

Let $e = f_{2n} = \{a,b\}$, and suppose that $a \in f_1$ and $b \in f_{2n-1}$, i.e.,
$a$ is the vertex that $f_1$ shares with $f_{2n}$, and $b$ is the vertex of $f_{2n-1}$
shared with $f_{2n}$.
By Theorem \ref{generatordescription} (2), a universal Gr\"obner 
basis of $I_H$ is given by the primitive binomials that 
correspond to even closed walks. {\color{blue} Consider a 
primitive closed even walk that passes through $f_1$.  It will have
one of the following forms:

\begin{enumerate}
    \item $(f_1,f_2,\ldots,f_{2n-1},e)$, or
    \item $(f_1,f_2,\ldots,f_{2n-1},e_{j_1},\ldots,e_{j_{2k-1}})$ for some odd walk $(e_{j_1},\ldots,e_{j_{2k-1}})$ in $G$ that connects the vertex $a$ of $f_1$ with the vertex $b$ of $f_{2n-1}$, or
    \item $(f_1,f_2,\ldots,f_{2n-1},e_{j_1},\ldots,e_{j_{2k-1}},f_{2n-1},f_{2n-2},\ldots,f_1,e_{i_1},\ldots,e_{i_{2r-1}})$ for some closed odd walk $(e_{j_1},\ldots,e_{j_{2k-1}})$ in $G$ that starts and ends at vertex $b$, and some closed odd walk $(e_{i_1},\ldots,e_{i_{2r-1}})$ in $G$ that starts and ends at vertex $a$.
\end{enumerate}

\noindent Thus, any primitive binomial involving the variable $f_1$ has the form:

\begin{enumerate}
    \item $f_1f_3\cdots f_{2n-1} - ef_2\cdots f_{2n-2}$, or
    \item $f_1f_3\cdots f_{2n-1}e_{j_2}e_{j_4}\cdots e_{j_{2k-2}} - 
f_2f_4\cdots f_{2n-2}e_{j_1}e_{j_3}\cdots e_{j_{2k-1}}$, or
\item {\small $f_1^2f_3^2\cdots f_{2n-1}^2e_{j_2}e_{j_4}\cdots e_{j_{2k-2}}e_{i_2}e_{i_4}\cdots e_{i_{2r-2}} - 
f_2^2f_4^2\cdots f_{2n-2}^2e_{j_1}e_{j_3}\cdots e_{j_{2k-1}}e_{i_1}e_{i_3}\cdots e_{i_{2r-1}}.$}
\end{enumerate}

Notice that for any $f_1$-compatible monomial order, the initial term of each binomial of type (2) or (3) can be divided by the initial term of the walk of type (1). That is, $f_1f_3\cdots f_{2n-1}$ divides the initial term of any binomial of type (2) or (3). Thus, all walks of type (2) and (3) are not part of a reduced Gr\"obner basis of $I_H$ with respect to this monomial order and are therefore not needed in the geometric vertex decomposition computation presented below.

Let $y = f_1$ and let $<$ be a $y$-compatible monomial order. Consider the reduced Gr\"obner basis of $I_H$, which by the above can be written
as $$\mathcal{G} =\{f_1f_3\cdots f_{2n-1} - ef_2\cdots f_{2n-2}, g_1,\ldots, g_r\}$$ where $y$ does not divide any term of $g_i$.} Each $g_1,\ldots,g_r$ corresponds to a primitive closed even walk that does not pass through $f_1$.  Consequently, each
$g_i$ corresponds to a primitive closed even walk in $G$.  Thus
$\langle g_1,\ldots,g_r \rangle = I_G$ (we abuse notation and write $I_G$ for the induced ideal $I_G\mathbb{K}[E(H)]$).

Additionally, by Lemma \ref{linktoricidealgraph} we have 
$N_{y,I_H} =\langle g_1, \ldots,g_r \rangle = I_{H\setminus f_1}$.
But note that in $H \setminus f_1$, the edge $f_2$ is a leaf.  Removing
$f_2$ from $(H \setminus f_1)$ makes $f_3$ a leaf, and so on.  So, by repeatedly
applying Lemma \ref{removeleaves}, we have 
$$N_{y,I_H} = \langle g_1,\ldots,g_r \rangle = I_{H\setminus f_1} = I_{(H\setminus f_1)\setminus f_2} =
\cdots = I_{(\cdots (H\setminus f_1) \cdots )\setminus f_{2n-1}} = I_G.$$

{\color{blue}
\noindent Similarly, since $f_1f_3\cdots f_{2n-1} - ef_2\cdots f_{2n-2}$ is the only element of $\mathcal{G}$ containing a term divisible by $y =f_1$, we must have 
$$C_{y,I_H} = \langle  f_3\cdots f_{2n-1}, g_1,\ldots,g_r\rangle 
= \langle f_3 \cdots f_{2n-1} \rangle + I_G.$$}
\noindent It is now straightforward to check that 
$${\rm in}_y(I_H) = \langle f_1f_3\cdots f_{2n-1} \rangle+ I_G = C_{y,I_H} \cap (N_{y,I_H} + \langle y \rangle),$$
thus giving a geometric vertex decomposition of $I_H$ with respect to $y$.  (We could also deduce this 
from Lemma \ref{square-freey} since each $d_i =1$ in 
our description of $\mathcal{G}$ above.)

To complete the proof, the contraction of $N_{y,I_H}$
to $\mathbb{K}[f_2,\ldots,f_{2n},e_1,\ldots,e_s]$ satisfies
$$N_{y,I_H} = \langle 0 \rangle + I_G \subseteq \mathbb{K}[f_2,\ldots,f_{2n}]
\otimes \mathbb{K}[E(G)].$$
So $N_{y,I_H}$ is geometrically vertex decomposable by Theorem \ref{tensorproduct}
since $I_G$ is geometrically vertex decomposable in $\mathbb{K}[E(G)]$, and similarly 
for $\langle 0 \rangle$ in $\mathbb{K}[f_2,\ldots,f_n]$.  The ideal
$C_{y,I_H}$ contracts to 
$$C_{y,I_H} = \langle f_3\cdots f_{2n-1} \rangle + I_G \subseteq 
\mathbb{K}[f_2,\ldots,f_{2n}] \otimes \mathbb{K}[E(G)].$$
Since $\langle f_3f_5\cdots f_{2n-1} \rangle \subseteq \mathbb{K}[f_2,\ldots,f_{2n}]$
is geometrically vertex decomposable by Lemma \ref{simplecases} (2), and $I_G$ is geometrically vertex decomposable in $\mathbb{K}[E(G)]$ by hypothesis,
the ideal
$C_{y,I_H}$ is geometrically vertex decomposable by again appealing to
Theorem \ref{tensorproduct}.   Thus $I_H$ is geometrically
vertex decomposable, as desired.
\end{proof}

\begin{example}
Let $G$ be a cycle of even length, i.e., $G$ has edge set $e_1,\ldots,e_{2n}$
with $(e_1,\ldots,e_{2n})$ a closed even walk. The ideal
$I_G = \langle e_1e_3 \cdots e_{2n-1} - e_2e_4\cdots e_{2n} \rangle$ is geometrically vertex decomposable
by Lemma \ref{simplecases} (2).
By repeatedly 
applying Theorem \ref{gluetheorem}, we can glue on even cycles to
make new graphs whose toric ideals are geometrically vertex decomposable.  Note that
by Lemma \ref{removeleaves}, we can also add leaves (and leaves to leaves, and so on) and not destroy
the geometrically vertex decomposability property.   
These constructions allow us to build many
graphs whose toric ideal is   geometrically vertex decomposable.

As a specific example,  the graph in Figure \ref{gluingexample} 
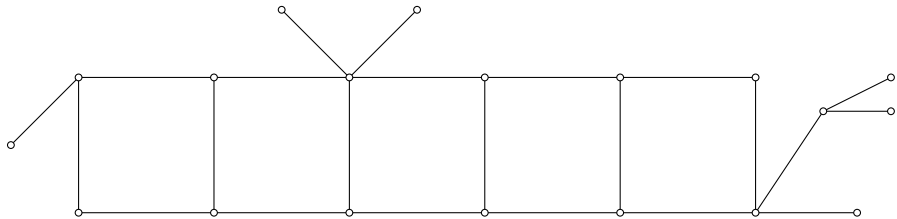
\begin{figure}[!ht]
    \centering
    \begin{tikzpicture}[scale=0.45]
      \draw (-4,0) -- (0,0);
      \draw (0,0) -- (4,0);
      \draw (4,0) -- (8,0);
      \draw (8,0) -- (12,0);
      \draw (12,0) -- (16,0);
       \draw (-4,4) -- (0,4);
      \draw (0,4) -- (4,4);
      \draw (4,4) -- (8,4);
      \draw (8,4) -- (12,4);
      \draw (12,4) -- (16,4);
      \draw (-4,0) -- (-4,4);
      \draw (0,0) -- (0,4);
      \draw (4,0) -- (4,4);
      \draw (8,0) -- (8,4);
      \draw (12,0) -- (12,4);
      \draw (16,0) -- (16,4);
      \draw (16,0) -- (18,3);
      \draw  (16,0) -- (19,0);
      \draw  (18,3) -- (20,4);
      \draw  (18,3) -- (20,3);
      \draw  (4,4) -- (2,6);
      \draw   (4,4) -- (6,6);
      \draw (-6,2) -- (-4,4);
   
      \fill[fill=white,draw=black] (-4,0) circle (.1);
       \fill[fill=white,draw=black] (0,0) circle (.1);
       \fill[fill=white,draw=black] (4,0) circle (.1);
       \fill[fill=white,draw=black] (8,0) circle (.1);
       \fill[fill=white,draw=black] (12,0) circle (.1);
       \fill[fill=white,draw=black] (16,0) circle (.1);
       \fill[fill=white,draw=black] (-4,4) circle (.1);
       \fill[fill=white,draw=black] (0,4) circle (.1);
       \fill[fill=white,draw=black] (4,4) circle (.1);
       \fill[fill=white,draw=black] (8,4) circle (.1);
       \fill[fill=white,draw=black] (12,4) circle (.1);
       \fill[fill=white,draw=black] (16,4) circle (.1);
       \fill[fill=white,draw=black] (18,3) circle (.1);
        \fill[fill=white,draw=black] (20,4) circle (.1);
  \fill[fill=white,draw=black] (20,3) circle (.1);
  \fill[fill=white,draw=black] (19,0) circle (.1);
   \fill[fill=white,draw=black] (2,6) circle (.1);
    \fill[fill=white,draw=black] (6,6) circle (.1);
     \fill[fill=white,draw=black] (-6,2) circle (.1);
    \end{tikzpicture}
    \caption{A graph whose toric ideal is geometrically vertex decomposable}
    \label{gluingexample}
 \end{figure}
is geometrically vertex decomposable
since we have repeatedly glued cycles of length four along edges,
and then added some leaves.  This
bipartite graph 
is also an example of what Gitler, Reyes, and
Villarreal call a {\it ring graph} \cite[Definition 2.5]{GRV}.
\end{example}

\section{Toric ideals of graphs and the glicci property}

In this section  we recall some of the basics of Gorenstein liaison (Section \ref{sec:Gliaison}) and then show that some large classes of toric ideals of graphs are glicci (Section \ref{sec:someGlicci}). 
This section is partly motivated by a result of Klein and Rajchgot \cite[Theorem 4.4]{KR}, which says that geometrically vertex decomposable ideals are 
glicci.   We note that while geometrically vertex decomposable ideals are glicci, glicci ideals need not be geometrically vertex decomposable. Indeed, we do not know if the toric ideals of graphs proven to be glicci in this section are also geometrically vertex decomposable.  However, the results
of this section make use of the geometric vertex decomposition language
of Remark \ref{gvdremark}. For the remainder of the section, we will let $S=\mathbb{K}[x_0,\ldots,x_n]$ denote the graded polynomial ring with respect to the standard grading.

\subsection{Gorenstein liaison preliminaries}\label{sec:Gliaison}

We provide a quick review of the basics of Gorenstein liaison;  our main
references for this material are \cite{MN1,MN}.

\begin{definition}
Suppose that $V_1,V_2,X$ are subschemes of $\mathbb{P}^n$ defined by saturated ideals $I_{V_1},I_{V_2}$ and $I_X$ of $S =\mathbb{K}[x_0,\ldots,x_n]$, respectively. Suppose also that $I_X\subset I_{V_1}\cap I_{V_2}$ and $I_{V_1}=I_X:I_{V_2}$ and $I_{V_2}=I_X:I_{V_1}$. We say that $V_1$ and $V_2$ are \textit{directly algebraically $G$-linked} 
if $X$ is Gorenstein. 
In this case we write
$V_1 \stackrel{X}{\sim} V_2$.
\end{definition}

We can now define an equivalence relation using the notion
of algebraically $G$-linked.

\begin{definition}
Let $V_1,\ldots,V_k$ be subschemes of $\mathbb{P}^n$ defined by the
saturated ideals $I_{V_1},\ldots,I_{V_k}$.  If there are 
Gorenstein varieties $X_1,\ldots,X_{k-1}$ such that 
$V_1 \stackrel{X_1}{\sim} V_2
\stackrel{X_2}{\sim} \cdots \stackrel{X_{k-1}}{\sim} V_k$,
then we say $V_1$ and $V_k$ are in the same {\it Gorenstein liaison class} (or
{\it $G$-liaison class}) and $V_1$ and $V_k$ are {\it $G$-linked}
in $k-1$ steps.   If $V_k$ is a complete intersection, then
we say $V_1$ is in the {\it Gorenstein liaison class of a complete 
intersection} or {\it glicci}.
\end{definition}

In what follows, we say a homogeneous saturated ideal $I$ is glicci if the subscheme defined by
$I$ is glicci.

\begin{example}
Consider the twisted cubic $V_1\subset \mathbb{P}^3$ with 
\[I_{V_1} = 
\langle xz-y^2, xw-z^2, xw-yz \rangle \subseteq \mathbb{K}[x,y,z].  \]
Choose two of these quadrics, and let $X$ be subscheme defined by their intersection. It is an exercise to check that $X$ is the union of $V_1$ and a line, which we denote by $V_2$. Therefore, $V_1 \stackrel{X}{\sim} V_2$. Furthermore, since $X$ is a complete intersection, and thus
Gorenstein, the twisted cubic and a line are directly $G$-linked.
\end{example}

\begin{remark}
One of the main open questions in liaison theory asks if every arithmetically
Cohen-Macaulay subscheme of $\mathbb{P}^n$ is glicci (see \cite[Question 1.6]{KMMNP}).
\end{remark}

While it can be difficult in general to find a sequence of $G$-links between two varieties, there is a tool called an elementary $G$-biliaison which simplifies the process when it exists. 

\begin{definition}
Let $S=\mathbb{K}[x_0,\ldots,x_n]$ with the standard grading. Let $C$ and $I$ be homogeneous, saturated, and unmixed ideals of $S$ such that ${\rm ht}(C)={\rm ht}(I)$. Suppose that there is some $d\in\mathbb{Z}$ and Cohen-Macaulay homogeneous ideal $N\subset C\cap I$ with ${\rm ht}(N)={\rm ht}(I)-1$ such that $I/N$ is
isomorphic to $[C/N](-d)$ as an $R/N$-module. If $N$ is generically Gorenstein, then $I$ is obtained from $C$ via an \textit{elementary $G$-biliaison of height $d$}.
\end{definition}

In fact, suppose that $V$ and $W$ are two subschemes of $\mathbb{P}^n$ such that $I_{V}$ and $I_{W}$ are homogeneous, saturated and unmixed ideals. If $I_{V}$ is obtained from  $I_{W}$ by an elementary $G$-biliaison, then $V$ and $W$ are $G$-linked in two steps \cite[Theorem 3.5]{Ha}.  Moreover, elementary
$G$-biliaisons preserve the Cohen-Macaulay property. This and other properties of $G$-linked varieties can be found in \cite{MN1}. Indeed, we will use the following:

\begin{lemma}\label{linkage_CM}\cite[Corollary 5.13]{MN1}
Let $I$ and $J$ be homogeneous, saturated ideals in $S$ and assume that $I$ and $J$ are directly $G$-linked. Then $S/I$ is Cohen-Macaulay if and only $S/J$ is Cohen-Macaulay.
\end{lemma}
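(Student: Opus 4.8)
The plan is to deduce the statement from the duality that a $G$-link induces on local cohomology (equivalently, on the deficiency modules of the schemes involved), which is the standard mechanism in the liaison literature being cited. Write $\mathfrak{c} = I_X$ for the Gorenstein ideal realizing the link and set $A = S/\mathfrak{c}$. Since $X$ is arithmetically Gorenstein, $A$ is Cohen--Macaulay with $\omega_A \cong A(a)$ for some $a \in \mathbb{Z}$, and since linkage through a Cohen--Macaulay ideal preserves height, $\h(I) = \h(J) = \h(\mathfrak{c})$; as $I$ and $J$ are unmixed, $S/I$, $S/J$, and $A$ are all equidimensional of one common dimension $d := \dim A$. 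The two short exact sequences
$$0 \to I/\mathfrak{c} \to A \to S/I \to 0, \qquad 0 \to J/\mathfrak{c} \to A \to S/J \to 0$$
will drive the argument.

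The key structural step is to identify $I/\mathfrak{c}$ and $J/\mathfrak{c}$ with twisted canonical modules. From $I = \mathfrak{c} : J$ one has $I/\mathfrak{c} \cong \operatorname{Hom}_S(S/J, S/\mathfrak{c}) = \operatorname{Hom}_A(S/J, A)$, and symmetrically $J/\mathfrak{c} \cong \operatorname{Hom}_A(S/I, A)$; using $\omega_A \cong A(a)$ and local duality over the Gorenstein ring $A$ then gives $I/\mathfrak{c} \cong \omega_{S/J}(-a)$ and $J/\mathfrak{c} \cong \omega_{S/I}(-a)$, where $\omega_{(-)}$ denotes the graded canonical module.

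To finish, assume $S/I$ is Cohen--Macaulay; by the symmetry of the hypothesis this suffices. Then $\omega_{S/I}$, and hence $J/\mathfrak{c}$, is Cohen--Macaulay of dimension $d$. Feeding this and the Cohen--Macaulayness of $A$ into the long exact local cohomology sequence of $0 \to J/\mathfrak{c} \to A \to S/J \to 0$, and using $H^0_{\mathfrak{m}}(S/J) = 0$ (as $J$ is saturated), one reads off $H^i_{\mathfrak{m}}(S/J) = 0$ for all $i < d-1$; for the last index $i = d-1$ the sequence produces an injection $H^{d-1}_{\mathfrak{m}}(S/J) \hookrightarrow H^d_{\mathfrak{m}}(J/\mathfrak{c}) \to H^d_{\mathfrak{m}}(A)$ whose second arrow, after graded Matlis duality, becomes the natural surjection $A \twoheadrightarrow S/I$ (here one uses that a maximal Cohen--Macaulay $A$-module is reflexive with respect to $\omega_A$), so that arrow is injective and $H^{d-1}_{\mathfrak{m}}(S/J) = 0$. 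Thus $S/J$ is Cohen--Macaulay.

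I expect the boundary index $i = d-1$ to be the only delicate point: away from it the vanishing is a dimension count, but at $i = d-1$ one genuinely needs the reflexivity and duality input above. This, together with making the isomorphisms in the second paragraph precise (grading shifts included), is exactly the content packaged in \cite[Corollary 5.13]{MN1} and the duality results preceding it in that paper, so in the write-up I would either reproduce that short argument or simply invoke it as stated.
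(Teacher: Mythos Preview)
The paper itself gives no proof of this lemma: it is simply quoted from \cite[Corollary 5.13]{MN1}, exactly as you suggest doing in your final sentence. So there is nothing to compare against beyond the cited source.

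Your sketch is correct and is the standard argument from the liaison literature. The identifications $I/\mathfrak{c}\cong\operatorname{Hom}_A(S/J,A)\cong\omega_{S/J}(-a)$ (and symmetrically) are right, using that $A$ is Gorenstein of the same codimension as $S/J$. One small streamlining: rather than handling the boundary index $i=d-1$ via Matlis duality and reflexivity, you can avoid local cohomology entirely. From $0\to I/\mathfrak{c}\to A\to S/I\to 0$ with $S/I$ Cohen--Macaulay, both $I/\mathfrak{c}$ and $S/I$ are maximal Cohen--Macaulay over $A$; applying $\operatorname{Hom}_A(-,A)$ keeps the sequence exact (since $\operatorname{Ext}^1_A(S/I,A)=0$), yielding $0\to (S/I)^*\to A\to (I/\mathfrak{c})^*\to 0$. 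The first term is $J/\mathfrak{c}$, so comparison with $0\to J/\mathfrak{c}\to A\to S/J\to 0$ gives $S/J\cong(I/\mathfrak{c})^*$, which is maximal Cohen--Macaulay as the $A$-dual of one. This is equivalent to your argument but sidesteps the delicate step you flagged. Either way, simply citing \cite{MN1} as the paper does is entirely appropriate here.
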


Migliore and Nagel have given a criterion for an ideal to be glicci.

\begin{theorem}\cite[Lemma 2.1]{MN}\label{MN_glicci}
Let $I\subset S$ be a homogeneous ideal such that $S/I$ is Cohen-Macaulay and
generically Gorenstein. If $f\in S$ is a homogeneous non-zero-divisor of $S/I$, then the ideal $I + \langle f \rangle \subset S$ is glicci.
\end{theorem}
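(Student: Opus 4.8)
The plan is to connect $J := I + \langle f\rangle$ to a complete intersection by a chain of direct Gorenstein links, by induction on the complexity of $I$. Since being glicci depends only on the Gorenstein liaison class and direct $G$-links are the steps of that equivalence relation, it suffices to reach an ideal that is visibly a complete intersection.

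First I would check that $J$ is a legitimate input for the liaison machinery. Because $f$ is a homogeneous non-zero-divisor on $S/I$, the quotient $S/J \cong (S/I)/f(S/I)$ is Cohen--Macaulay of dimension $\dim(S/I)-1$ and ${\rm ht}(J) = {\rm ht}(I)+1 =: c+1$; in particular it is unmixed, and (assuming $\dim S/J \ge 1$, the remaining case being the empty subscheme, which is trivially a complete intersection) $J$ is saturated. So $J$ is homogeneous, saturated, and unmixed. For the base case of the induction, if $I$ is itself a complete intersection then so is $J$ --- being a complete intersection modulo the non-zero-divisor $f$ --- and hence $J$ is glicci.

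For the inductive step, suppose $I$ is not a complete intersection. Choose a general complete intersection $\mathfrak a = \langle a_1,\dots,a_c\rangle \subseteq I$ with ${\rm ht}(\mathfrak a) = c = {\rm ht}(I)$ such that $f$ is a non-zero-divisor on $S/\mathfrak a$ and such that the directly linked ideal $I^{\sharp} := \mathfrak a : I$ satisfies: $S/I^{\sharp}$ is Cohen--Macaulay with $\mathfrak a : I^{\sharp} = I$ and ${\rm ht}(I^{\sharp}) = c$ (standard liaison theory), $f$ is a non-zero-divisor on $S/I^{\sharp}$, $S/I^{\sharp}$ is generically Gorenstein, and $I^{\sharp}$ is strictly simpler than $I$. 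The complete intersection $\mathfrak a + \langle f\rangle$ has codimension $c+1$, hence is Gorenstein, and it is contained in both $J = I + \langle f\rangle$ and $I^{\sharp} + \langle f\rangle$; standard liaison-theoretic computations with colon ideals (using $f \in \mathfrak a + \langle f\rangle$ and that $f$ is a non-zero-divisor modulo $\mathfrak a$) identify the directly linked ideal of $J$ through $\mathfrak a + \langle f\rangle$ as exactly $I^{\sharp} + \langle f\rangle$. Thus $J$ and $I^{\sharp} + \langle f\rangle$ are directly $G$-linked. Since $S/I^{\sharp}$ is Cohen--Macaulay and generically Gorenstein and $f$ is a non-zero-divisor on it, the inductive hypothesis applies to the pair $(I^{\sharp}, f)$, giving that $I^{\sharp} + \langle f\rangle$ is glicci; hence $J$ is glicci. (Equivalently, one may phrase the passage $J \rightsquigarrow I^{\sharp} + \langle f\rangle$ as a basic double $G$-link, an instance of elementary $G$-biliaison, and invoke the two-step $G$-linkage it provides.)

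The main obstacle is twofold. First --- and this is the heart of the argument --- one must guarantee that the linked ideal $I^{\sharp}$ again has $S/I^{\sharp}$ \emph{generically Gorenstein}: Cohen--Macaulayness is preserved under $G$-linkage (cf.\ Lemma~\ref{linkage_CM}), but being generically Gorenstein is not automatic, and establishing it requires using the genericity of the linking complete intersection $\mathfrak a$ together with the hypothesis that $S/I$ is generically Gorenstein. We note that in the application made of this theorem in the present paper (Corollary~\ref{glue=glicci}), $S/I$ is a domain, so ``generically Gorenstein'' holds automatically for $S/I$ and for every ideal that arises, and this difficulty vanishes. Second, one must isolate a genuine complexity invariant of $I$ (controlled, say, by Castelnuovo--Mumford regularity or the Hilbert function, and possibly requiring a more refined Gorenstein link than a complete-intersection link, exploiting the generically Gorenstein structure) that strictly decreases under $I \rightsquigarrow I^{\sharp}$ for suitably general choices, so that the induction terminates at a complete intersection.
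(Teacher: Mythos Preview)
The paper does not prove this statement at all; it is quoted verbatim from \cite[Lemma 2.1]{MN} and used as a black box. So there is no ``paper's own proof'' to compare against --- the comparison is necessarily with Migliore--Nagel's argument.

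Your outline has the right architecture: pass to a hypersurface section, link through $\mathfrak a+\langle f\rangle$, and induct. The observation that a CI-link $I\leadsto I^{\sharp}$ through $\mathfrak a$ induces, after adding the non-zero-divisor $f$, a CI-link $I+\langle f\rangle\leadsto I^{\sharp}+\langle f\rangle$ through $\mathfrak a+\langle f\rangle$ is correct and is the basic mechanism. You are also right that the phrasing via basic double $G$-linkage is the cleaner one.

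That said, what you have written is an outline, not a proof, and you say so yourself. The two obstacles you flag are precisely where the content lies:
\begin{itemize}
\item \emph{Termination.} You need an invariant that strictly drops under the link $I\leadsto I^{\sharp}$. Complete-intersection linkage alone does not obviously decrease anything; in fact, CI-liaison classes of Cohen--Macaulay ideals can be infinite. Migliore--Nagel do not induct on a na\"ive complexity of $I$; their argument is organized so that one peels off generators via carefully chosen basic double $G$-links, and the Hilbert function (or an equivalent datum) is what decreases. Without specifying this invariant and the choice of link that decreases it, the induction is not grounded.
\item \emph{Preservation of generically Gorenstein.} You correctly note that Cohen--Macaulayness transfers under $G$-linkage, but generic Gorensteinness need not, and your induction hypothesis requires it for $I^{\sharp}$. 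This is a genuine issue and is not resolved by saying ``general $\mathfrak a$''. In Migliore--Nagel's actual proof, the structure of the argument avoids having to re-verify generic Gorensteinness at every step in the way your sketch would require.
\end{itemize}

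In short: the strategy is sound, the gaps you name are the real ones, and closing them is essentially the content of \cite[Lemma 2.1]{MN}. For the purposes of this paper, citing that lemma --- as the authors do --- is appropriate; if you want to supply a self-contained proof, you should follow Migliore--Nagel's basic-double-link reduction rather than an unspecified induction on CI-links.
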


Another criterion for an ideal to be glicci is geometric vertex decomposability. In fact a geometric vertex decomposition gives rise to an elementary $G$-biliaison of height 1.

\begin{lemma}\cite[Corollary 4.3]{KR}\label{GVD_linkage}
Let $I$ be a homogeneous, saturated, unmixed ideal of $S$ and $\text{in}_y I = C_{y,I} \cap (N_{y,I}+\langle y \rangle)$ a nondegenerate geometric vertex decomposition with respect to some variable $y = x_i$ of $S$.  Assume that $N_{y,I}$ is Cohen--Macaulay and generically Gorenstein and that $C_{y,I}$ is also unmixed.  Then $I$ is obtained from $C_{y,I}$ by an elementary $G$-biliaison of height $1$.
\end{lemma}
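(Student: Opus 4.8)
\emph{Proof proposal.} The plan is to verify, one at a time, the four ingredients in the definition of an elementary $G$-biliaison of height $1$, with $C := C_{y,I}$ and $N := N_{y,I}$ regarded as ideals of $S$. By Lemma~\ref{square-freeiny} I fix a $y$-compatible term order and a reduced Gr\"obner basis of $I$ of the shape $\{yq_1+r_1,\dots,yq_k+r_k,h_1,\dots,h_t\}$, in which $y$ divides no term of any $q_i$, $r_i$ or $h_j$; then $C=\langle q_1,\dots,q_k,h_1,\dots,h_t\rangle$ and $N=\langle h_1,\dots,h_t\rangle$ by Lemma~\ref{square-freey}, so $N\subseteq C\cap I$ is immediate. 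Writing $S=S'[y]$ with $S'=\mathbb{K}[x_0,\dots,\widehat{x_i},\dots,x_n]$ and $N=N'S$, $C=C'S$ for ideals $N',C'\subseteq S'$, one sees that $y$ is a nonzerodivisor on $S/N$ and on $S/C$. The hypotheses already supply: $I$ and $C$ unmixed; $N$ Cohen--Macaulay (hence unmixed) and generically Gorenstein; homogeneity of all three ideals (working with a graded $y$-compatible order); and saturatedness, which for an unmixed homogeneous ideal of positive dimension is automatic. So only the heights and the module isomorphism remain.

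For the heights I would first record the identity $\init_y(I)=C\cap(N+\langle y\rangle)=N+yC$: ``$\supseteq$'' is clear, and for ``$\subseteq$'' one writes an element of $C\cap(N+\langle y\rangle)$ as $n+yg$ with $n\in N\subseteq C$, so $yg\in C$ and hence $g\in C$ since $y$ is a nonzerodivisor on $S/C$. Thus $\init_y(I)$ is a basic double $G$-link of $C$ along $N$, and $V(\init_y I)=V(C)\cup V(N+\langle y\rangle)$. Since $\init_y(I)$ is a flat (weight) degeneration of $I$ we have $\h(\init_y I)=\h(I)$, while $\h(N+\langle y\rangle)=\h(N)+1$ because $y$ is a nonzerodivisor on $S/N$. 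Nondegeneracy of the decomposition — concretely $C\neq\langle 1\rangle$ and $\sqrt{C}\neq\sqrt{N}$ — guarantees that $V(N+\langle y\rangle)\not\subseteq V(C)$, so it is a genuine component of $V(\init_y I)$; combined with the unmixedness of $I$ this yields $\h(N)=\h(I)-1$ and $\h(C)=\h(I)$ (these basic facts about nondegenerate geometric vertex decompositions are established in \cite[Section~2.2]{KR}).

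The heart of the matter is the isomorphism of $S/N$-modules $I/N\cong(C/N)(-1)$. Over $S/N$ the module $C/N$ is generated by $\bar q_1,\dots,\bar q_k$ and $I/N$ by $\overline{yq_1+r_1},\dots,\overline{yq_k+r_k}$, and the two generating tuples carry the same degrees after a uniform shift by $1$. Hence it suffices to prove that their syzygy modules over $S/N$ coincide as submodules of $(S/N)^k$; the two presentations then agree up to the shift and the isomorphism follows. One inclusion is clean: if $\sum b_iq_i\in N$ then $\sum b_i r_i=\sum b_i(yq_i+r_i)-y\sum b_iq_i$ lies in $I$, and — reducing against the Gr\"obner basis of $I$, and using that $y$-compatibility makes the leading term of $yq_i+r_i$ equal to $y$ times that of $q_i$ — for $y$-free $b_i$ the successive remainders stay $y$-free, so only the $h_j$ are ever used and $\sum b_i r_i\in N$; the general case follows by expanding the $b_i$ in powers of $y$. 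Consequently $\sum b_i(yq_i+r_i)=y\sum b_iq_i+\sum b_ir_i\in N$. For the reverse inclusion one runs a symmetric leading-term analysis, now using $\init_y(I)=N+yC$ and $\init_y(I):y=C$ to propagate $N$-membership back from the $yq_i+r_i$ to the $q_i$. \emph{This transfer of $N$-membership across the correspondence $q_i\leftrightarrow yq_i+r_i$ is the step I expect to be the main obstacle}: it is exactly where the full Gr\"obner-basis hypothesis (rather than merely the ideal identity $\init_y I=C\cap(N+\langle y\rangle)$) is doing real work.

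With $C$ and $I$ homogeneous, saturated, unmixed and of the same height; with $N\subseteq C\cap I$ Cohen--Macaulay, generically Gorenstein and of height $\h(I)-1$; and with $I/N\cong(C/N)(-1)$ as $S/N$-modules — this is precisely the statement that $I$ is obtained from $C=C_{y,I}$ by an elementary $G$-biliaison of height $1$, completing the proof.
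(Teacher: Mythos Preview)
The paper does not supply its own proof of this lemma: it is quoted verbatim as \cite[Corollary~4.3]{KR} and used as a black box. So there is nothing in the paper to compare your argument against.

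That said, your outline tracks the proof in \cite{KR} closely. The structure---verify $N\subseteq C\cap I$, read off the heights from nondegeneracy and the unmixedness hypotheses via \cite[Section~2.2]{KR}, and then build the graded isomorphism $I/N\cong (C/N)(-1)$ by matching syzygies of $(\bar q_1,\dots,\bar q_k)$ with syzygies of $(\overline{yq_1+r_1},\dots,\overline{yq_k+r_k})$ over $S/N$---is exactly the line taken there. Your treatment of the forward inclusion is correct: for $y$-free coefficients $b_i$, the element $\sum b_i r_i$ is $y$-free and lies in $I$, hence reduces to zero using only the $h_j$ because every leading term of a $yq_i+r_i$ is divisible by $y$; the general case follows by expanding in powers of $y$ and using that $N=N'[y]$ so membership in $N$ can be tested coefficientwise in $y$.

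The step you flag as the main obstacle---the reverse inclusion---can in fact be deduced from the forward one by a downward induction on the $y$-degree. Writing $b_i=\sum_d b_{i,d}y^d$ and comparing $y$-coefficients in $\sum_i b_i(yq_i+r_i)\in N$ gives $\sum_i b_{i,e-1}q_i+\sum_i b_{i,e}r_i\in N$ for every $e$; the top equation yields $\sum_i b_{i,D}q_i\in N$, the forward direction then gives $\sum_i b_{i,D}r_i\in N$, and one descends. So your proposal is correct and essentially the argument of \cite{KR}, though the paper under review does not reproduce it.
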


\begin{theorem}\cite[Theorem 4.4]{KR}\label{gvd=>glicci} If the saturated homogeneous ideal $I \subseteq S$ is geometrically vertex
decomposable, then $I$ is glicci.
\end{theorem}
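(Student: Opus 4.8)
The plan is to prove Theorem~\ref{gvd=>glicci} by induction on the number of variables $n+1$ of $S=\mathbb{K}[x_0,\ldots,x_n]$, using the recursive structure of Definition~\ref{gvd} together with Lemma~\ref{GVD_linkage} as the engine that produces the $G$-links. The base case is when $I$ is generated by a (possibly empty) subset of the variables, or $I = \langle 1 \rangle$; such an ideal is a complete intersection (or the unit ideal), hence trivially glicci, and this disposes of Definition~\ref{gvd}(1).

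For the inductive step, suppose $I$ is geometrically vertex decomposable and is not of the form in~(1). Then there is a variable $y = x_i$ and a $y$-compatible order giving a geometric vertex decomposition $\mathrm{in}_y(I) = C_{y,I} \cap (N_{y,I} + \langle y \rangle)$, with the contractions of $C_{y,I}$ and $N_{y,I}$ to $\mathbb{K}[x_0,\ldots,\hat{x}_i,\ldots,x_n]$ again geometrically vertex decomposable. By induction, those two contracted ideals are glicci; in particular (since geometrically vertex decomposable ideals are unmixed by definition, and one knows from \cite{KR} that homogeneous ones are also Cohen--Macaulay and radical, hence generically Gorenstein) $N_{y,I}$ is Cohen--Macaulay and generically Gorenstein, and $C_{y,I}$ is unmixed. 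The key case split is whether the geometric vertex decomposition is degenerate or nondegenerate. In the nondegenerate case, Lemma~\ref{GVD_linkage} applies directly: $I$ is obtained from $C_{y,I}$ by an elementary $G$-biliaison of height $1$, so $I$ and $C_{y,I}$ are $G$-linked in two steps (via \cite[Theorem 3.5]{Ha}); since $C_{y,I}$ is glicci by induction, so is $I$. In the degenerate case one has either $C_{y,I} = \langle 1 \rangle$, which forces $\mathrm{in}_y(I) = N_{y,I} + \langle y \rangle$ and hence $I$ itself is closely tied to $N_{y,I}$, or $\sqrt{C_{y,I}} = \sqrt{N_{y,I}}$; here one argues (as in \cite[Section 2.2]{KR}) that $I$ is, up to the radical/unmixedness bookkeeping, essentially the contraction $N_{y,I}$ (or $C_{y,I}$) itself, which is glicci by the inductive hypothesis.

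The step I expect to be the main obstacle is the degenerate case: one must carefully check that when $\sqrt{C_{y,I}} = \sqrt{N_{y,I}}$ (or $C_{y,I} = \langle 1\rangle$), the ideal $I$ genuinely inherits the glicci property from the smaller ring, rather than merely having an initial ideal that looks like a cone over something glicci. This requires knowing that geometric vertex decomposition interacts well with degeneracy — precisely the content developed in \cite[Section 2.2]{KR} — so that in the degenerate case $I$ can be identified with the extension of $N_{y,I}$ (equivalently $C_{y,I}$) to $S$, and glicci-ness is preserved under adjoining a free variable. The nondegenerate case, by contrast, is essentially immediate once the hypotheses of Lemma~\ref{GVD_linkage} are verified, and those verifications (unmixedness of $C_{y,I}$, Cohen--Macaulayness and generic Gorensteinness of $N_{y,I}$) follow from the inductive hypothesis combined with the already-cited structural facts about homogeneous geometrically vertex decomposable ideals. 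A secondary bookkeeping point is that Lemma~\ref{GVD_linkage} requires $I$ to be \emph{saturated}; one should observe that saturation is preserved appropriately through the recursion, or restrict attention to the homogeneous saturated setting throughout.
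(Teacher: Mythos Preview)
The paper does not prove this theorem at all: it is stated as a citation of \cite[Theorem~4.4]{KR} and used as a black box. So there is no ``paper's own proof'' to compare against; your proposal is a reconstruction of the argument in \cite{KR}, and your outline (induction on the number of variables, base case generated by variables, nondegenerate step via Lemma~\ref{GVD_linkage} and elementary $G$-biliaison, degenerate step via the structural results in \cite[Section~2.2]{KR}) is indeed the strategy of the original reference.

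One caution on your phrasing: you justify that $N_{y,I}$ is Cohen--Macaulay by invoking the general fact from \cite{KR} that homogeneous geometrically vertex decomposable ideals are Cohen--Macaulay. That fact is a \emph{corollary} of the very theorem you are proving (glicci implies Cohen--Macaulay), so citing it here is circular. The correct logic, which your ``By induction, those two contracted ideals are glicci; in particular\ldots'' almost says, is: the inductive hypothesis gives that $N_{y,I}$ is glicci in the smaller ring, and glicci ideals are Cohen--Macaulay (via Lemma~\ref{linkage_CM} applied along the chain of links back to a complete intersection). Radicality of $N_{y,I}$, needed for generic Gorensteinness, is established separately in \cite{KR} and is not circular. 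With that adjustment, your sketch matches the proof in \cite{KR}.
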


As noted in the introduction of the paper, the previous result partially motivates
our interest in developing a deeper understanding of
geometrically vertex decomposable ideals.

\subsection{Some toric ideals of graphs which are glicci}\label{sec:someGlicci}

In this section we use Migliore and Nagel's result \cite[Lemma 2.1]{MN} (see Theorem \ref{MN_glicci} above) to show that some classes of toric ideals of graphs are glicci. We begin with a straightforward consequence of this theorem together with \cite[Theorem 3.7]{FHKVT}.

\begin{theorem}\label{glue=glicci}
Let $G$ be a finite simple graph such that $\mathbb{K}[E(G)]/I_G$ is Cohen-Macaulay. Let $H$ be the graph obtained by gluing an even cycle $C$ to $G$ along any edge. Then $I_H$ is glicci. 
\end{theorem}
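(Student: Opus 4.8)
The plan is to apply Theorem \ref{MN_glicci} to the toric ideal $I_G$, together with the explicit description of $I_H$ coming from the gluing construction. First I would recall from \cite[Theorem 3.7]{FHKVT} the precise relationship between $I_H$ and $I_G$: when an even cycle $C$ with edges $f_1,\dots,f_{2n}$ is glued to $G$ along an edge $e$ (say $e = f_{2n}$), one has, after possibly passing through the leaf-removal of Lemma \ref{removeleaves}, that $I_H$ is generated by $I_G$ (extended to $\mathbb{K}[E(H)]$) together with the single binomial $B = f_1 f_3 \cdots f_{2n-1} - e f_2 f_4 \cdots f_{2n-2}$ coming from the unique even closed walk around $C$; in other words $I_H = I_G + \langle B \rangle$ inside $\mathbb{K}[E(H)] = \mathbb{K}[E(G)] \otimes \mathbb{K}[f_1,\dots,f_{2n-1}]$. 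This is essentially the content already used in the proof of Theorem \ref{gluetheorem}.

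Next I would verify the hypotheses of Theorem \ref{MN_glicci} for the ideal $I := I_G \cdot \mathbb{K}[E(H)]$ and the element $f := B$. Since $\mathbb{K}[E(G)]/I_G$ is Cohen-Macaulay by assumption, and $\mathbb{K}[E(H)]/I = \bigl(\mathbb{K}[E(G)]/I_G\bigr) \otimes_{\mathbb{K}} \mathbb{K}[f_1,\dots,f_{2n-1}]$ is a polynomial extension of it, the quotient $\mathbb{K}[E(H)]/I$ is again Cohen-Macaulay. It is moreover a domain (the extension of a prime ideal in a polynomial ring over that ring is prime), hence in particular reduced and generically Gorenstein (a domain is generically a field). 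Then I must check that $B$ is a non-zero-divisor on $\mathbb{K}[E(H)]/I$: since that ring is a domain and $B \notin I$ — indeed $B$ is a binomial involving the variables $f_1,\dots,f_{2n-1}$ which do not appear in $I_G$, so its image is nonzero — this is immediate. Applying Theorem \ref{MN_glicci} gives that $I + \langle B \rangle = I_H$ is glicci.

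The main obstacle, such as it is, lies not in any deep argument but in being careful about two bookkeeping points. The first is confirming that $I_H$ really equals $I_G + \langle B\rangle$ rather than merely containing it: this requires the structure of the universal Gröbner basis (Theorem \ref{generatordescription}(2)) plus the leaf-removal reductions, exactly as carried out in the proof of Theorem \ref{gluetheorem}, where it is shown that every primitive binomial of $I_H$ through $f_1$ is a multiple of $B$ modulo $I_G$ — though one should note that $I_H = I_G + \langle B\rangle$ as ideals can be extracted more directly from \cite[Theorem 3.7]{FHKVT}. The second is ensuring the homogeneity and saturation hypotheses: $I_G$ is homogeneous with respect to the standard grading and is prime, hence saturated, and $B$ is homogeneous, so $I_H$ is a homogeneous saturated ideal and Theorem \ref{MN_glicci} applies verbatim. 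I expect the entire argument to be short once the description of $I_H$ from \cite{FHKVT} is invoked.
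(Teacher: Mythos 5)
Your proposal is correct and follows essentially the same route as the paper: the paper's proof also writes $I_H = I_G + \langle F\rangle$ with $F = f_1f_3\cdots f_{2n-1} - ef_2f_4\cdots f_{2n-2}$ via \cite[Theorem 3.7]{FHKVT}, notes that $\mathbb{K}[E(H)]/I_G$ is Cohen--Macaulay, a domain, and generically Gorenstein, and that $F$ is a homogeneous non-zero-divisor, and then applies Theorem \ref{MN_glicci}. The only difference is presentational: your writeup spells out the polynomial-extension and saturation bookkeeping that the paper leaves implicit.
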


\begin{proof}
As in the proof of Theorem \ref{gluetheorem}, let $E(G) = \{e_1,\ldots, e_s\}$ denote the edges of $G$ and $E(C) = \{f_1,\ldots, f_{2n}\}$ denote the (consecutive) edges of the even cycle $C$. Assume that $C$ is glued to $G$ along $f_{2n}$ and $e$. Then $\mathbb{K}[E(H)] = \mathbb{K}[E(G)]\otimes \mathbb{K}[f_1,\dots,f_{2n-1}]$. For convenience, write $I_G$ for the induced ideal $I_G\mathbb{K}[E(H)]$.

Let $F = f_1f_3\cdots f_{2n-1}-f_2f_4\cdots f_{2n-2}e$ be the primitive binomial associated to the even cycle $C$. By \cite[Theorem 3.7]{FHKVT}, $I_H = I_G+\langle F\rangle$. As $I_G$ is prime, we have that $F$ is a homogeneous non-zero-divisor on $\mathbb{K}[E(H)]/I_G$ and $\mathbb{K}[E(H)]/I_G$ is generically Gorenstein. As $\mathbb{K}[E(H)]/I_G$ is Cohen-Macaulay by assumption, Theorem \ref{MN_glicci} implies that $I_H$ is glicci.
\end{proof}

We can combine a one step geometric vertex decomposition with Theorem \ref{MN_glicci} to see that many toric ideals of graphs which contain $4$-cycles are glicci. Our main theorem in this direction is Theorem \ref{thm:gapFreeGlicci}, which says that the toric ideal of a \emph{gap-free} graph containing a $4$-cycle is glicci. We begin with a general lemma which is not necessarily about toric ideals of graphs.

 \begin{lemma}\label{MN-combined}
 Let $S =\mathbb{K}[x_0,\ldots, x_n]$ with the standard grading, and $I\subset S$ be a homogeneous, saturated ideal such that $S/I$ is Cohen-Macaulay. Assume the following conditions are satisfied:
 \begin{enumerate}
 \item $I$ is square-free in $y$ with a nondegenerate geometric vertex decomposition \[\init_y(I) = C_{y,I}\cap (N_{y,I}+\langle y \rangle);\] 
 \item $I$ contains a homogeneous polynomial $Q$ of degree $2$ such that $y$ divides some term of $Q$; and
 \item $S/N_{y,I}$ is Cohen-Macaulay and generically Gorenstein, and $C_{y,I}$ is radical. 
 \end{enumerate}
 Then $I$ is glicci.
 \end{lemma}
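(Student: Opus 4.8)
The plan is to produce the glicci conclusion by chaining together one elementary $G$-biliaison (coming from the geometric vertex decomposition) with one application of Migliore and Nagel's criterion (Theorem \ref{MN_glicci}). First I would use Lemma \ref{GVD_linkage}: condition (1) gives a nondegenerate geometric vertex decomposition, condition (3) supplies that $N_{y,I}$ is Cohen-Macaulay and generically Gorenstein, and I will need to observe that $C_{y,I}$ is unmixed. Unmixedness of $C_{y,I}$ should follow from condition (3) (that $C_{y,I}$ is radical) together with the Cohen-Macaulayness of $S/I$: since $S/I$ is Cohen-Macaulay, $I$ is unmixed, and for a geometric vertex decomposition one has $\h(C_{y,I}) = \h(I)$; combined with $C_{y,I}$ radical and a standard dimension comparison (each minimal prime of $C_{y,I}$ has height at most that of $N_{y,I}+\langle y\rangle$, which has height $\h(I)$), radicality forces unmixedness. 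With these hypotheses in hand, Lemma \ref{GVD_linkage} gives that $I$ is obtained from $C_{y,I}$ by an elementary $G$-biliaison of height $1$; in particular $I$ and $C_{y,I}$ are $G$-linked in two steps, and since elementary $G$-biliaisons preserve the Cohen-Macaulay property (via Lemma \ref{linkage_CM}, applied along the two direct links), $S/C_{y,I}$ is Cohen-Macaulay.

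Next I would analyze the shape of $C_{y,I}$ using condition (2). Write $Q = y\,\ell_1 + \ell_2$ with $\ell_1$ linear (nonzero, since $y$ divides a term of $Q$) and $\ell_2$ homogeneous of degree $2$ not involving the relevant $y$-term in the obvious way; more precisely, decompose $Q$ according to powers of $y$ as in the definition of $\init_y$. Because $I$ is square-free in $y$, $Q$ contributes to a Gröbner basis element of the form $y q + r$ with $q = \ell_1$ linear, so $\ell_1 \in C_{y,I}$. Thus $C_{y,I}$ contains a nonzero linear form $f := \ell_1$. Now I want to present $C_{y,I}$ as $C' + \langle f\rangle$ where $C'$ lives in one fewer variable and $S/C'$ is still Cohen-Macaulay and generically Gorenstein, so that Theorem \ref{MN_glicci} applies to conclude $C_{y,I}$ is glicci, and hence — being in the same $G$-liaison class as $I$ via the biliaison above — $I$ is glicci.

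The cleanest way to run the last step: after a linear change of coordinates we may assume $f = x_j$ for some variable $x_j$ distinct from $y$. Then $C_{y,I} = x_j\,S + (C_{y,I} \cap \mathbb{K}[\text{other variables}])\cdot S$; set $C'$ to be the contraction of $C_{y,I}$ to $\mathbb{K}[x_0,\dots,\widehat{x_j},\dots,x_n]$, so $C_{y,I} = C' + \langle x_j\rangle$ in $S$ (up to extension). Since $S/C_{y,I} \cong (\mathbb{K}[x_0,\dots,\widehat{x_j},\dots,x_n]/C')$ is Cohen-Macaulay, so is the latter ring; radicality of $C_{y,I}$ passes to $C'$, and a reduced ring that is Cohen-Macaulay is generically Gorenstein (generically reduced $+$ $S_2$). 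Therefore $x_j$ is a homogeneous non-zero-divisor on $S/C'\!\cdot\!S$, and Theorem \ref{MN_glicci} gives that $C' + \langle x_j\rangle = C_{y,I}$ is glicci. Finally, $I$ is $G$-linked to $C_{y,I}$, and the glicci property is preserved under $G$-linkage, so $I$ is glicci.

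\textbf{Main obstacle.} I expect the delicate point to be verifying that $C_{y,I}$ is unmixed (needed for Lemma \ref{GVD_linkage}) and correctly handling the bookkeeping when I pass $f = \ell_1$ to a coordinate and peel it off — specifically, making sure the contraction $C'$ really satisfies $C_{y,I} = C' + \langle x_j\rangle$ after the linear change of variables, that this change of variables does not disturb the gradings or the already-established Cohen-Macaulayness, and that $C'$ inherits both reducedness and generic Gorensteinness. A subtler issue is confirming that $\ell_1$ is genuinely nonzero and linear: this relies on condition (2) guaranteeing $y$ divides \emph{some} term of $Q$ (so the $y$-coefficient polynomial $q$ in $\init_y(Q) = y^{d}q$ has $d = 1$ because $I$ is square-free in $y$, and $q$ has degree $\deg Q - 1 = 1$), which I would need to state carefully rather than wave at.
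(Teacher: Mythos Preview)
Your proposal is correct and follows essentially the same two-stage strategy as the paper: first obtain an elementary $G$-biliaison from $I$ to $C_{y,I}$ via Lemma~\ref{GVD_linkage}, transfer Cohen--Macaulayness to $S/C_{y,I}$, then peel off the linear form coming from the degree-$2$ element $Q$ and apply Theorem~\ref{MN_glicci}. The only substantive difference is cosmetic: where you perform a linear change of coordinates to turn $\ell_1$ into a variable $x_j$ and then contract, the paper instead lets $z = \init_<(\ell_1)$ and takes the reduced Gr\"obner basis $\{f',t_1,\dots,t_s\}$ of $C_{y,I}$ with $\init_<(f') = z$, setting $C' = \langle t_1,\dots,t_s\rangle$; both routes yield the same isomorphism $S/C_{y,I}\cong S_{\hat z}/C'$ and the same non-zero-divisor conclusion. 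For the unmixedness of $C_{y,I}$ that you flag as the main obstacle, the paper simply invokes \cite[Lemma~2.8]{KR} (unmixedness of $I$ implies $C_{y,I}$ is equidimensional for a nondegenerate decomposition), then uses radicality to upgrade equidimensional to unmixed---exactly the content of your sketch, so you need not work harder there.
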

 
 \begin{proof}
By assumption (1), we have a nondegenerate geometric vertex decomposition $\init_y(I) = C_{y,I}\cap (N_{y,I}+\langle y \rangle)$. 
 Since $I$ is Cohen-Macaulay and hence unmixed, we can conclude that $C_{y,I}$ is equidimensional by \cite[Lemma 2.8]{KR}. Since $C_{y,I}$ is also radical by assumption (3), $C_{y,I}$ must be unmixed. Furthermore, because $S/N_{y,I}$ is Cohen-Macaulay and generically Gorenstein by assumption (3), we may use Lemma  \ref{GVD_linkage} to see that the geometric vertex decomposition gives rise to an elementary $G$-biliaison of height 1 from $I$ to $C_{y,I}$. Hence $S/I$ being  Cohen-Macaulay implies that $S/C_{y,I}$ is too by Lemma \ref{linkage_CM}. 
 
 Let $<$ be a $y$-compatible monomial order. By assumptions (1) and (2), $I$ contains a degree $2$ form which can be written as $Q = yf+R$ where $y$ does not divide any term in $f$ or $R$.
 Thus, $f\in C_{y,I}$. 
 Let $z = \init_<(f)$. 
 Since the geometric vertex decomposition $\init_y(I) = C_{y,I}\cap (N_{y,I}+\langle y \rangle)$ is nondegenerate, we have that $C_{y,I}\neq \langle 1\rangle$. Hence $C_{y,I}$ has a reduced Gr\"obner basis of the form
 $\{f',t_1,\dots, t_s\}$ where $\init_<(f') = z$ and $z$ does not divide any term of any $t_i$, $1\leq i\leq s$. Let $C' = \langle t_1,\dots, t_s\rangle$ so that $C_{y,I} = \langle f'\rangle+ C'$. With this set-up, we see that $f'\neq 0$ is a non-zero-divisor on $S/C'$.
 
Let $S_{\hat{z}} = \mathbb{K}[x_1,\dots,\hat{z},\dots, x_n]$. Then $S/C_{y,I}\cong S_{\hat{z}}/ C'$. 
Thus, $S_{\hat{z}}/C'$ (and hence $S/C'$ after extending $C'$ to $S$) is Cohen-Macaulay because $S/C_{y,I}$ is Cohen-Macaulay. Similarly, $C_{y,I}$ being radical implies that $C'$ (viewed in $S_{\hat{z}}$ or $S$) is radical. Thus, by \cite[Lemma 2.1]{MN} (see Theorem \ref{MN_glicci}), we conclude that $C_{y,I}$ is glicci. 
  
 By applying the elementary $G$-biliaison between $I$ and $C_{y,I}$ once more, we conclude that $I$ is also glicci.
 \end{proof}

We will now apply Lemma \ref{MN-combined} to see that certain classes of toric ideals of graphs are glicci. In what follows, let $y = x_i$ be an indeterminate in $S = \mathbb{K}[x_1,\dots, x_n]$ and let $<$ be a $y$-compatible monomial order. Let $M^G_{y}$ be the ideal generated by all monomials $m\in S$ such that $ym-r\in \mathcal{U}(I_G)$ and $\init_< (ym-r) = ym$. Observe that $M^G_{y}$ does not depend on the choice of $y$-compatible monomial order. Furthermore, since $I_G$ is prime and $ym-r$ is primitive, $y$ cannot appear in both terms of the binomial. We will consider generalizations of $M^G_y$ in Section \ref{section_square-free}.

\begin{theorem}\label{prop:glicciGraphs}
Let $G$ be a finite simple graph where $\mathbb{K}[E(G)]/I_G$ is Cohen-Macaulay. 
Suppose that there exists an edge $y\in E(G)$ such that $y$ is contained in a 4-cycle of $G$, and a $y$-compatible monomial order $<_y$ such that $\init_y(I_G)$ is square-free in $y$. 
Suppose also that $I_{G\setminus y}$ is Cohen-Macaulay 
and $I_{G\setminus y}+M^G_{y}$ is radical. 
Then $I_G$ is glicci.
\end{theorem}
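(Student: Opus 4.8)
The plan is to verify the three hypotheses of Lemma~\ref{MN-combined} for the ideal $I = I_G$ with the distinguished variable $y$, so that the conclusion (that $I_G$ is glicci) follows immediately. We are given that $\mathbb{K}[E(G)]/I_G$ is Cohen-Macaulay, which is the standing hypothesis of that lemma, so it remains to check conditions (1), (2), and (3).

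For condition (2): since $y$ lies in a $4$-cycle of $G$, say on edges $y, f, g, h$ forming a closed even walk $(y,f,g,h)$, the corresponding primitive binomial $yg - fh$ is an element of $\mathcal{U}(I_G)$ (by Theorem~\ref{generatordescription}), it is homogeneous of degree $2$, and $y$ divides one of its terms. This gives the degree-$2$ form $Q$ required in assumption (2) of Lemma~\ref{MN-combined}. For condition (1): by hypothesis there is a $y$-compatible order $<_y$ with $\init_y(I_G)$ square-free in $y$, so by Lemma~\ref{square-freey} we obtain the geometric vertex decomposition $\init_y(I_G) = C_{y,I_G}\cap(N_{y,I_G}+\langle y\rangle)$ for free; I would then argue this decomposition is \emph{nondegenerate}. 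Degeneracy would mean either $C_{y,I_G} = \langle 1\rangle$ or $\sqrt{C_{y,I_G}} = \sqrt{N_{y,I_G}}$. The first is ruled out because the binomial $Q = yg-fh$ contributes $g \in C_{y,I_G}$ (a proper monomial), while more carefully one checks $1 \notin C_{y,I_G}$ since $C_{y,I_G}$ is contained in the contraction of a proper monomial ideal plus binomials. The second fails because $C_{y,I_G}$ strictly contains $N_{y,I_G} = I_{G\setminus y}$ (by Lemma~\ref{linktoricidealgraph}): indeed $g\in C_{y,I_G}$, but $g$ is not in the prime toric ideal $I_{G\setminus y}$, and comparing heights via $\h(C_{y,I_G}) = \h(I_G)$ and $\h(N_{y,I_G}) = \h(I_G)-1$ (which holds in the nondegenerate situation, cf. the setup of Lemma~\ref{GVD_linkage}) shows the radicals cannot coincide.

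For condition (3): we must check that $\mathbb{K}[E(G)]/N_{y,I_G}$ is Cohen-Macaulay and generically Gorenstein and that $C_{y,I_G}$ is radical. By Lemma~\ref{linktoricidealgraph}, $N_{y,I_G} = I_{G\setminus y}$, which is Cohen-Macaulay by hypothesis, and since $I_{G\setminus y}$ is prime (being a toric ideal), the quotient is a domain, hence generically Gorenstein (indeed, generically a field). Finally, $C_{y,I_G}$ is radical: by Theorem~\ref{toric_GVD} and Remark~\ref{structure_remark}, $C_{y,I_G}$ agrees with $I_{G\setminus y} + M^G_y$ up to the process described before Theorem~\ref{link_components} (which only intersects with further prime components and does not change the radical), so $C_{y,I_G}$ is radical exactly when $I_{G\setminus y}+M^G_y$ is, which is our hypothesis. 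Here I should be slightly careful: the hypothesis that $\init_y(I_G)$ is square-free in $y$ need not mean the full universal Gröbner basis is doubly square-free, so I may need to either strengthen the argument or observe that the relevant generators $ym-r$ with $\init_<(ym-r) = ym$ already have $m$ square-free (from primitivity and the $4$-cycle structure) to legitimately invoke the $C_{y,I_G} = I_{G\setminus y}+M^G_y$ identification.

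With conditions (1), (2), (3) of Lemma~\ref{MN-combined} verified, that lemma yields directly that $I_G$ is glicci. The main obstacle I anticipate is the bookkeeping around condition (3): cleanly identifying $C_{y,I_G}$ with $I_{G\setminus y} + M^G_y$ (or at least showing they have the same radical) when we only know $\init_y(I_G)$ is square-free in $y$ rather than the stronger doubly-square-free condition used in Section~3, and correspondingly being careful that the nondegeneracy argument in condition (1) does not secretly rely on that stronger hypothesis. The $4$-cycle hypothesis is what makes condition (2) available, and it is essential: it is precisely the source of the degree-$2$ form $Q$ that drives the second application of Theorem~\ref{MN_glicci} inside the proof of Lemma~\ref{MN-combined}.
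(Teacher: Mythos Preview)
Your approach is correct and matches the paper's proof: both verify the three hypotheses of Lemma~\ref{MN-combined}. A couple of minor points to tighten. First, your argument that $C_{y,I_G}\neq\langle 1\rangle$ is vague, and the height comparison you invoke for $\sqrt{C_{y,I_G}}\neq\sqrt{N_{y,I_G}}$ is circular (it presupposes nondegeneracy); the paper instead notes that $I_G$ is generated in degree $\geq 2$ so $C_{y,I_G}$ is generated in degree $\geq 1$, and then appeals to \cite[Proposition~2.4]{KR} together with radicality of $I_G$, $N_{y,I_G}$, $C_{y,I_G}$ and the fact that $y$ appears in the reduced Gr\"obner basis---your direct observation that the monomial $g\in C_{y,I_G}\setminus I_{G\setminus y}$ (toric ideals contain no nonzero monomials) already suffices for the second point without the height detour. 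Second, your concern about identifying $C_{y,I_G}$ with $I_{G\setminus y}+M^G_y$ is unnecessary: under the square-free-in-$y$ hypothesis the \emph{reduced} Gr\"obner basis has $y$-degree $\leq 1$ (Lemma~\ref{square-freeiny}), and since the reduced Gr\"obner basis of a toric ideal is contained in the universal one, each $q_i$ arising in $C_{y,I_G}$ already lies in $M^G_y$; the doubly-square-free hypothesis from Section~3 is not needed here.
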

\begin{proof}
We will show that the three assumptions of Lemma \ref{MN-combined} hold. Let $<$ be a $y$-compatible monomial order.

Since $I_G$ is square-free in $y$, there exists a geometric vertex decomposition 
\[\init_y(I_G) = C_{y,I_G}\cap (N_{y,I_G}+\langle y \rangle)\] by  Lemma \ref{square-freey}. Then $N_{y,I_G} = I_{G\setminus y}$ and $C_{y,I_G}=I_{G\setminus y}+M^G_{y}$. 
Since $I_G$ is a toric ideal of a graph, and  hence generated in degree $2$ or higher, we do not have that $C_{y,I} = \langle 1\rangle$. Furthermore, $I_G$ and $N_{y,I_G}$ are each the toric ideal of a graph, hence radical (and therefore saturated since $I_G$ is not the irrelevant ideal), and $C_{y,I_G}$ is radical by assumption. Thus, by \cite[Proposition 2.4]{KR}, we conclude that the geometric vertex decomposition $\init_y(I_G) = C_{y,I_G}\cap (N_{y,I_G}+\langle y \rangle)$ is nondegenerate since the reduced Gr\"obner basis of $I_G$ involves $y$ by assumption. Thus, assumption (1) of Lemma \ref{MN-combined} holds.

Assumption (2) of Lemma \ref{MN-combined} holds because there exists an edge $y\in E(G)$ such that $y$ is contained in a 4-cycle of $G$. Assumption (3) of Lemma \ref{MN-combined} holds by the assumption that $I_{G\setminus y}$ is Cohen-Macaulay and $I_{G\setminus y}+M^G_{y}$ is radical.
\end{proof}

Recall from Theorem \ref{sqfree=>cm} that if $I_G\subseteq \mathbb{K}[E(G)]$ is a toric ideal of a graph which has a square-free degeneration, then $\mathbb{K}[E(G)]/I_G$ is Cohen-Macaulay. We can use Theorem \ref{prop:glicciGraphs} to show that many toric ideals of graphs which have both square-free degenerations and $4$-cycles are glicci. Specifically, we have the following: 

\begin{corollary}\label{cor:square-freeDegenGlicciGraph}
Let $G$ be a finite simple graph and suppose that there exists an edge $y\in E(G)$ such that $y$ is contained in a $4$-cycle of $G$. Suppose also that there exists some $y$-compatible monomial order $<$ such that $\init_< (I_G)$ is a square-free monomial ideal. Then $I_G$ is glicci. 
\end{corollary}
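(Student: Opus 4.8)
The goal is to verify the hypotheses of Theorem~\ref{prop:glicciGraphs} under the weaker assumption that $\init_<(I_G)$ is a square-free monomial ideal for some $y$-compatible order $<$. First I would invoke Theorem~\ref{sqfree=>cm}: since $\init_<(I_G)$ is a square-free monomial ideal, $\mathbb{K}[E(G)]/I_G$ is Cohen-Macaulay, which is the first hypothesis of Theorem~\ref{prop:glicciGraphs}. The hypothesis that $y$ lies in a $4$-cycle is assumed directly. It remains to check: (i) $\init_y(I_G)$ is square-free in $y$; (ii) $I_{G\setminus y}$ is Cohen-Macaulay; and (iii) $I_{G\setminus y}+M^G_y$ is radical.

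For (i), since $<$ is $y$-compatible we have $\init_<(I_G) = \init_<(\init_y(I_G))$, so no term of $\init_<(I_G)$ is divisible by $y^2$; because the reduced Gr\"obner basis of $I_G$ has leading terms that are exactly the minimal generators of $\init_<(I_G)$, each such leading term is square-free in $y$, and hence (by $y$-compatibility applied to each basis element) $\init_y(g)$ is square-free in $y$ for each reduced Gr\"obner basis element $g$; this gives that $I_G$ is square-free in $y$. For (ii), note that $N_{y,I_G} = I_{G\setminus y}$ by Lemma~\ref{linktoricidealgraph}. One then needs a square-free degeneration of $I_{G\setminus y}$: restricting the Gr\"obner basis computation, $\init_<(N_{y,I_G})$ is generated by the leading terms of the reduced Gr\"obner basis elements not divisible by $y$, each of which is a square-free monomial, so $\init_<(I_{G\setminus y})$ is square-free and Theorem~\ref{sqfree=>cm} gives that $\mathbb{K}[E(G\setminus y)]/I_{G\setminus y}$ is Cohen-Macaulay.

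For (iii), the key point is that $C_{y,I_G} = I_{G\setminus y} + M^G_y$ (as established in the proof of Theorem~\ref{prop:glicciGraphs}, using Lemma~\ref{square-freey} together with Lemma~\ref{linktoricidealgraph}), and that $C_{y,I_G}$ is radical. Here I would argue that since $<$ is $y$-compatible, $\init_<(C_{y,I_G})$ divides into $\init_<(I_G)$ in the sense that it is generated by the images of the leading terms $q_i$ obtained by deleting $y$ from the leading terms $yq_i$ of the reduced Gr\"obner basis; each such $q_i$ is a square-free monomial, and similarly the remaining basis elements $g_j$ have square-free leading terms. Hence $\init_<(C_{y,I_G})$ is a square-free monomial ideal, so $C_{y,I_G}$ is radical (a standard fact: an ideal whose initial ideal is square-free, hence radical, is itself radical). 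This yields all three hypotheses of Theorem~\ref{prop:glicciGraphs}, so $I_G$ is glicci.

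The main obstacle I anticipate is the careful bookkeeping in step (iii): one must justify that the $q_i$ appearing in the Gr\"obner basis description of $C_{y,I_G}$ are genuinely the leading terms of a Gr\"obner basis of $C_{y,I_G}$ (which is exactly the content of Lemma~\ref{square-freey}(1)), and then that these leading terms are square-free, which requires tracing through the $y$-compatibility and the square-freeness of $\init_<(I_G)$. A subtle point is that $C_{y,I_G}$ lives in the smaller ring $\mathbb{K}[E(G)\setminus\{y\}]$ after contraction, and one should confirm the square-freeness of its initial ideal is inherited correctly; but since the relevant monomials simply omit the variable $y$, this is routine. Everything else is an assembly of the cited results.
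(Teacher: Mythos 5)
Your proposal is correct and takes essentially the same approach as the paper: both verify the hypotheses of Theorem \ref{prop:glicciGraphs} by passing to the reduced Gr\"obner basis of $I_G$ (whose leading terms are square-free, hence square-free in $y$ by $y$-compatibility) and applying Lemma \ref{square-freey} to conclude that $\init_<(N_{y,I_G})$ and $\init_<(C_{y,I_G})$ are square-free monomial ideals, so that $I_{G\setminus y}=N_{y,I_G}$ is Cohen--Macaulay and $I_{G\setminus y}+M^G_y=C_{y,I_G}$ is radical. The paper's proof is simply a terser version of your assembly, so no further changes are needed.
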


\begin{proof}
Since $\init_< (I_G)$ is a square-free monomial ideal, we have that $\mathbb{K}[E(G)]/I_G$ is Cohen-Macaulay. 
Furthermore, $I_G$ is square-free in $y$.

Let $\{yq_1+r_1,\dots, yq_s+ r_s, h_1,\dots, h_t\}$ be a reduced Gr\"obner basis for $I_G$ so that each $\init_< (yq_i)$, $1\leq i\leq s$, and each $\init_< (h_j)$, $1\leq j\leq t$ are square-free monomials. 
Consider the geometric vertex decomposition
\[
\init_y (I_G) = C_{y,I_G}\cap(N_{y,I_G}+\langle y\rangle).
\]
By \cite[Theorem 2.1]{KMY}, $\{h_1,\dots, h_t\}$ and $\{q_1,\dots, q_s, h_1,\dots, h_t\}$ are a Gr\"obner bases for $N_{y,I_G}$ and $C_{y,I_G}$ respectively. Thus, $\init_< (N_{y,I_G})$ and $\init_< (C_{y,I_G})$ are square-free monomial ideals. Since $N_{y,I_G} = I_{G\setminus y}$ is a toric ideal of a graph, it follows that $I_{G\setminus y}$ is Cohen-Macaulay. Since $C_{y,I_G} = I_{G\setminus y}+M^G_y$, it follows that $I_{G\setminus y}+M^G_y$ is radical. Thus, the assumptions of Theorem \ref{prop:glicciGraphs} hold and we conclude that $I_G$ is glicci.
\end{proof}

We end by proving that the toric ideal of a gap-free graph containing a $4$-cycle is glicci.
A graph $G$ is {\it gap-free} if for any two edges
$e_1 = \{u,v\}$ and $e_2 = \{w,x\}$ with $\{u,v\} \cap \{w,x\}
= \emptyset$, there is an edge $e \in E(G)$ that is
is adjacent to both $e_1$ and $e_2$, i.e., one of the edges
$\{u,w\}, \{u,x\}, \{v,w\}, \{v,x\}$ is also in $G$.  Note that
the name for this family is not standardized;  these 
graphs are sometimes called $2K_2$-free, or $C_4$-free, among other
names (see D'Al\`i \cite{DAli} for more).
Note that $G$ has a $4$-cycle if and only if the graph complement $\bar{G}$ is not gap-free. 

\begin{theorem}\label{thm:gapFreeGlicci}
Let $G$ be a gap-free graph such that the graph complement $\bar{G}$ is not gap-free. Then $I_G$ is glicci.
\end{theorem}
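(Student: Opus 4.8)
The plan is to deduce the result from Corollary~\ref{cor:square-freeDegenGlicciGraph}. First I would record the translation of the hypothesis: as noted just above the statement, $\bar G$ fails to be gap-free exactly when $G$ contains a $4$-cycle, so under our assumptions $G$ is gap-free and contains a $4$-cycle. Fix an edge $y$ lying on one such $4$-cycle. By Corollary~\ref{cor:square-freeDegenGlicciGraph} it then suffices to exhibit a lexicographic monomial order $<$ on $\mathbb{K}[E(G)]$ having $y$ as its largest variable for which $\init_<(I_G)$ is a square-free monomial ideal.

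The core of the argument is thus a purely combinatorial statement about gap-free graphs: a gap-free graph admits a square-free lexicographic degeneration of its toric ideal in which a prescribed edge is the largest variable. To prove it I would work with the description of $\mathcal{U}(I_G)$ from Theorem~\ref{generatordescription}(2) in terms of primitive even closed walks, and exploit how strongly gap-freeness restricts these walks. For instance $G$ automatically satisfies the odd cycle condition (two vertex-disjoint odd cycles contain a pair of vertex-disjoint edges, and gap-freeness forces an edge of $G$ between them), so that $\mathbb{K}[E(G)]/I_G$ is normal and $\init_<(I_G)$ is square-free for \emph{some} term order; the work is to upgrade this to a lexicographic order with $y$ on top. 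Concretely I would argue that gap-freeness leaves no long induced cycles and that every primitive even closed walk of length at least $6$ either carries a chord or decomposes into two odd cycles joined by a short bridge, so that, after ordering the remaining edges suitably below $y$ (demoting bridge edges and edges "cut off" by chords), each relevant binomial has a square-free leading monomial. One then checks, via the $S$-polynomial/standard-representation criterion of Theorem~\ref{gbchar}, that the resulting family is a Gröbner basis, whence $\init_<(I_G)$ is generated by square-free monomials. Applying Corollary~\ref{cor:square-freeDegenGlicciGraph} to this $y$ and $<$ finishes the proof.

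The hard part is precisely this combinatorial heart: controlling \emph{all} primitive even closed walks of a gap-free graph simultaneously, tightly enough to produce a single lexicographic order — with a $4$-cycle edge forced to be largest — that makes every leading term square-free. The reduction to Corollary~\ref{cor:square-freeDegenGlicciGraph} and the verification that the constructed order behaves as claimed are routine by comparison.
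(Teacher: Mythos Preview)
Your reduction is exactly the paper's: translate the hypothesis to ``$G$ is gap-free with a $4$-cycle,'' pick an edge $y$ on that cycle, and invoke Corollary~\ref{cor:square-freeDegenGlicciGraph} once you have a $y$-compatible order with square-free initial ideal. The divergence is entirely in how that order is obtained.

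The paper does not attempt your combinatorial argument. It simply cites D'Al\`i \cite[Theorem~3.9]{DAli}, which proves that for a gap-free graph there is a lexicographic order (in fact a family of them, parameterized by an ordering $\sigma$ of the vertices) for which $\init_{<_\sigma}(I_G)$ is square-free; the paper then observes that one can choose $\sigma$ so that the two endpoints of $y$ have smallest weight, which forces the resulting order to be $y$-compatible. So what you isolate as ``the hard part'' is already a theorem in the literature, and the paper's proof is two lines once you know it.

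Your proposed substitute for this citation is a sketch, and you candidly flag it as such. A few cautionary notes if you want to carry it out: gap-freeness does kill induced cycles of length $\geq 5$, but primitive even closed walks in a non-bipartite gap-free graph are not just cycles --- they can be pairs of odd cycles joined along a path or at a vertex, and the ``short bridge'' picture you describe needs to be made precise and shown to interact well with \emph{one global} lex order. The step where you ``check via Theorem~\ref{gbchar} that the resulting family is a Gr\"obner basis'' is also doing real work: you need to know that reducing $S$-pairs never introduces a squared variable into a leading term. D'Al\`i's argument handles all of this, so unless you specifically want an independent proof, you should cite it as the paper does.
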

\begin{proof}
Since $\bar{G}$ is not gap-free, $G$ must contain a 4-cycle. Pick any variable $y$ belonging to this cycle. By \cite[Theorem 3.9]{DAli}, since $G$ is gap-free, there exists a $y$-compatible order $<_y$ such that $\init_{<_y}(I_G)$ is square-free (we can ensure this by choosing $<_\sigma$ in \cite[Theorem 3.9]{DAli} so that the vertices defining $y$ have the smallest weight). The result now follows from Corollary \ref{cor:square-freeDegenGlicciGraph}.
\end{proof}

\section{Toric ideals of bipartite graphs}\label{sec:bipartite}

In this section, we show that toric ideals of bipartite graphs are geometrically vertex decomposable. In Section \ref{sect:generalCase}, we treat the general case, making use of results of Constantinescu and Gorla from \cite{CG}.  
Then, in Section \ref{sect:specialCases} we give alternate proofs of geometric vertex decomposibility in special cases.

\subsection{Toric ideals of bipartite graphs are geometrically vertex decomposable}\label{sect:generalCase}

Recall that a simple graph $G$ is \emph{bipartite} if its vertex set $V(G) = V_1\sqcup V_2$ is a disjoint union of two sets $V_1$ and $V_2$, such that every edge in $G$ has one of its endpoints in $V_1$ and the other endpoint in $V_2$. The purpose of this subsection is to prove Theorem \ref{thm: gvdBipartite} below, which says that the toric ideal of a bipartite graph is geometrically vertex decomposable. We will make use of the results and ideas in Constaninescu and Gorla's paper \cite{CG} on Gorenstein liaison of toric ideals of bipartite graphs.

Let $G$ be a bipartite graph. Following \cite[Definition 2.2]{CG}, we say that a subset ${\bf e} = \{e_1,\dots, e_r\}\subseteq E(G)$ is a \emph{path ordered matching} of length $r$ if the vertices of $G$ can be relabelled such that $e_i = \{i,i+r\}$ and
\begin{enumerate}
    \item $f_i = \{i,i+r+1\}\in E(G)$, for each $1\leq i\leq r-1$, 
    \item if $\{i,j+r\}\in E(G)$ and $1\leq i,j \leq r$, then $i\leq j$.
\end{enumerate}
The following is straightforward. It will be referenced later in the subsection.

\begin{lemma}\label{lem:techLemma2}
Let ${\bf e} = \{e_1,\dots, e_r\}$ be a path ordered matching. Then $\{e_1,\dots, e_{r-1}\}$ is a path ordered matching on $G\setminus e_r$.
\end{lemma}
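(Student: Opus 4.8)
The plan is to produce, by hand, a relabelling of the vertices of $G\setminus e_r$ that exhibits $\{e_1,\dots,e_{r-1}\}$ as a path ordered matching of length $r-1$, and then to read off conditions (1) and (2) from the corresponding conditions for ${\bf e}$. Fix the labelling of $V(G)=V(G\setminus e_r)$ for which $e_i=\{i,i+r\}$ and (1),(2) hold. I would define the new labels by leaving $1,\dots,r-1$ unchanged, sending the old vertex $r+i$ to the new label $(r-1)+i$ for $1\le i\le r-1$, and sending the remaining vertices (the old vertices $r$, $2r$, and every old vertex with label exceeding $2r$) bijectively onto the labels $2r-1,2r,2r+1,\dots$ in any order. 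With respect to the new labels one then has $e_i=\{i,i+(r-1)\}$ for $1\le i\le r-1$, and each $e_i$ is an edge of $G\setminus e_r$ because $e_i\neq e_r$.

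It then remains to check the two defining conditions for the length $r-1$ matching. For condition (1) I would observe that, for $1\le i\le r-2$, the edge $\{i,i+r\}$ written in the new labels is precisely the old edge $f_i=\{i,i+r+1\}$, which lies in $E(G)$ by condition (1) for ${\bf e}$; since its endpoint $i$ satisfies $i<r$, this edge is not $e_r$, so it survives in $G\setminus e_r$. For condition (2), an edge $\{i,j+(r-1)\}$ with $1\le i,j\le r-1$ is in the old labels the edge $\{i,j+r\}$; again $i<r$ forces it to be distinct from $e_r$, so it is an edge of $G$, and condition (2) for ${\bf e}$ (which applies since $1\le i,j\le r-1\le r$) yields $i\le j$. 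Together with the previous paragraph this gives the claim.

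There is essentially one thing to watch, and it is the only reason the statement is not completely immediate: deleting $e_r$ must not destroy any edge needed to witness the shorter matching. This is exactly why the relabelling above pushes the old vertex $r$ (one endpoint of $e_r$) out to a label $\ge 2r-1$ --- every edge that plays a role in verifying (1) or (2) for $\{e_1,\dots,e_{r-1}\}$ has its smaller-labelled endpoint among the new labels $1,\dots,r-1$, i.e.\ among the old vertices $1,\dots,r-1$, none of which is an endpoint of $e_r$. Hence no relevant edge coincides with $e_r$, and the verification goes through unobstructed. (Since removing an edge preserves bipartiteness, $G\setminus e_r$ is still bipartite, so the notion of path ordered matching applies to it.)
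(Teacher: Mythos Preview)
Your proposal is correct. The paper does not give a proof at all, simply noting before the statement that it ``is straightforward''; your explicit relabelling and direct verification of conditions (1) and (2) is exactly the routine check the paper leaves to the reader.
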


Given a subset ${\bf e} \subseteq E(G)$,
let $M^G_{\bf e}$ be the set of all monomials $m$ such that there is some non-empty subset ${\bf \tilde{e}}\subseteq {\bf e}$ where $m\left(\prod_{e_i\in {\bf \tilde{e}}}e_i\right)-n$ is the binomial associated to a cycle in $G$. 
Let
\begin{equation}\label{eq:IGe}
I^G_{\bf e} = I_{G\setminus {\bf e}}+\langle M^G_{\bf e}\rangle,  
\end{equation}
and observe that when $\bf{e} = \emptyset$, $I^G_{\bf e} = I_G$. 

Let $G$ be a bipartite graph and ${\bf e} = \{e_1,\dots, e_r\}$ a path ordered matching. 
Let $\prec$ be a lexicographic monomial order on $\mathbb{K}[E(G)]$ with $e_r>e_{r-1}>\cdots>e_1$ and $e_1>f$ for all $f\in E(G)\setminus {\bf e}$. 
Let $\mathcal{C}(G)$ denote the set of binomials associated to cycles in $G$. By \cite[Lemma 2.6]{CG}, $\mathcal{C}(G\setminus {\bf e})\cup M^G_{\bf e}$ is a Gr\"obner basis for $I^G_{\bf e}$ with respect to the term order $\prec$, and $\init_{\prec} (I^G_{\bf e})$ is a square-free monomial ideal. 

\begin{remark}\label{rmk:gbBipartite}
Let $\widetilde{M}^G_{\bf e}$ be the set of monomials $m$ such that there is some non-empty subset ${\bf \tilde{e}}\subseteq {\bf e}$ where $m\left(\prod_{e_i\in {\bf \tilde{e}}}e_i\right)-n$ is the binomial associated to a cycle in $G$ and $n$ is not divisible by any $e_i\in {\bf e}$. 
By \cite[Remark 2.7]{CG}, $\mathcal{C}(G\setminus {\bf e})\cup \widetilde{M}^G_{\bf e}$ is also a Gr\"obner basis for $I^G_{\bf e}$ with respect to $\prec$. Furthermore, observe that if $me_i\in \widetilde{M}^G_{\bf e}$ for some $e_i\in {\bf e}$, then $m$ is also an element of $\widetilde{M}^G_{\bf e}$. Hence, if we let $L^G_{\bf e}$ be the set of monomials in $\widetilde{M}^G_{\bf e}$ which are not divisible by any $e_i\in {\bf e}$, then $\mathcal{C}(G\setminus {\bf e})\cup L^G_{\bf e}$ is a Gr\"obner basis for $I^G_{\bf e}$ with respect to $\prec$.
\end{remark}

Using Remark \ref{rmk:gbBipartite}, we obtain the following lemma, which we will need when proving geometric vertex decomposability of toric ideals of bipartite graphs. 

\begin{lemma}\label{lem:gbBipartite2}
Let $G$ be a bipartite graph and let ${\bf e} = \{e_1,\dots, e_r\}$, $r\geq 1$, be a path ordered matching on $G$, and let ${\bf e'} = \{e_1,\dots, e_{r-1}\}$. Let $\prec$ be a lexicographic monomial order on $\mathbb{K}[E(G)]$ with $e_r>e_{r-1}>\cdots>e_1$ and $e_1>f$ for all $f\in E(G)\setminus {\bf e}$. 
The set $\mathcal{C}(G\setminus {\bf e'})\cup L^G_{\bf e'}$ is a Gr\"obner basis for $I^G_{\bf e'}$ with respect to $\prec$ and $\init_{\prec}(I^G_{\bf e'})$ is a square-free monomial ideal.
\end{lemma}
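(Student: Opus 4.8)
The plan is to reduce the claim to the results of Constantinescu and Gorla that were already recorded just before the lemma, namely the statement (from \cite[Lemma 2.6, Remark 2.7]{CG}) that $\mathcal{C}(G\setminus {\bf e})\cup L^G_{\bf e}$ is a Gr\"obner basis of $I^G_{\bf e}$ with square-free initial ideal, \emph{provided} one works with a lexicographic order that orders the edges of the path ordered matching ${\bf e}$ above everything else. The subtlety here is purely bookkeeping about monomial orders: we are given a path ordered matching ${\bf e} = \{e_1,\dots,e_r\}$ of length $r$, but we want a Gr\"obner basis statement for the shorter matching ${\bf e'} = \{e_1,\dots,e_{r-1}\}$, and we are using the order $\prec$ adapted to ${\bf e}$ (with $e_r$ largest), not an order adapted to ${\bf e'}$.

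First I would invoke Lemma \ref{lem:techLemma2} (with $G$ and the matching ${\bf e}$) — or rather its analogue — to note that ${\bf e'} = \{e_1,\dots,e_{r-1}\}$ is itself a path ordered matching on $G$ (not merely on $G\setminus e_r$): being a path ordered matching is a condition on the first $r-1$ edges and their companion edges $f_i$ which continues to hold, so ${\bf e'}$ is a path ordered matching of length $r-1$ on $G$. Next I would observe that the restriction $\prec'$ of $\prec$ to the subconfiguration relevant to ${\bf e'}$ is exactly a lexicographic order with $e_{r-1}>\cdots>e_1$ and $e_1 > f$ for all $f\in E(G)\setminus {\bf e'}$: indeed $E(G)\setminus{\bf e'} = (E(G)\setminus{\bf e})\cup\{e_r\}$, and by construction of $\prec$ we have $e_1 > e_r$ is \emph{false} — wait, here is the one point that needs care. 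In $\prec$ we chose $e_r > e_{r-1} > \cdots > e_1 > f$; so $e_r$ is \emph{larger} than $e_1$, which is the opposite of what the hypothesis ``$e_1 > f$ for all $f \in E(G)\setminus {\bf e'}$'' in \cite[Lemma 2.6]{CG} would require for $f = e_r$. So $\prec$ is \emph{not} literally of the form demanded for the matching ${\bf e'}$. The honest fix is to check that the Constantinescu–Gorla argument only uses that the matching edges are ordered among themselves as $e_{r-1}>\cdots>e_1$ and that everything not in ${\bf e}$ is smaller; since $e_r\in{\bf e}$, the generators of $I^G_{{\bf e'}}$ involving $e_r$ are handled by the same leading-term analysis. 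Alternatively — and this is cleaner — I would cite \cite[Lemma 2.6, Remark 2.7]{CG} applied to the pair $(G,{\bf e'})$ with \emph{its own} adapted lex order $\prec''$ (where $e_{r-1}>\cdots>e_1$ and $e_1$ beats all other edges including $e_r$), conclude $\mathcal{C}(G\setminus{\bf e'})\cup L^G_{{\bf e'}}$ is a Gr\"obner basis with square-free initial ideal with respect to $\prec''$, and then argue that the leading terms do not change when we pass from $\prec''$ to $\prec$: every element of $\mathcal{C}(G\setminus{\bf e'})\cup L^G_{{\bf e'}}$ is by definition a binomial or monomial none of whose terms is divisible by any $e_i\in{\bf e}$ (for $L^G_{{\bf e'}}$ this is the defining property; for $\mathcal{C}(G\setminus{\bf e'})$ the cycles avoid all of ${\bf e'}$, and we must additionally note they may pass through $e_r$, so this needs a small check), hence reordering only $e_r$ relative to $e_1,\dots,e_{r-1}$ cannot affect which term is the leader.

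Then, having fixed that the generating set $\mathcal{C}(G\setminus{\bf e'})\cup L^G_{{\bf e'}}$ has the same initial terms under $\prec$ as under the adapted order $\prec''$, I would conclude: (i) it generates $I^G_{{\bf e'}}$ (this is order-independent and comes from \eqref{eq:IGe} together with \cite[Lemma 2.6]{CG}); (ii) it is a Gr\"obner basis with respect to $\prec$ because the $S$-pair/leading-term criterion is controlled by the leading terms, which agree with those under $\prec''$ where it \emph{is} a Gr\"obner basis, so $\init_\prec(I^G_{{\bf e'}}) = \langle \init_{\prec''}(g) : g\in \mathcal{C}(G\setminus{\bf e'})\cup L^G_{{\bf e'}}\rangle = \init_{\prec''}(I^G_{{\bf e'}})$; and (iii) this common initial ideal is square-free by \cite[Lemma 2.6]{CG}. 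The main obstacle, as indicated, is precisely item (i) of this last paragraph's setup — verifying that cycles in $G\setminus{\bf e'}$ and monomials in $L^G_{{\bf e'}}$, which are allowed to involve the edge $e_r\in{\bf e}\setminus{\bf e'}$, still have their $\prec$-leading term insensitive to the position of $e_r$; this follows because for a binomial cycle generator the leading term is a product of alternate edges and the choice of which alternate set leads is governed by comparing at the largest edge on the cycle, and swapping $e_r$ from ``just below $e_1$'' to ``just above $e_{r-1}$'' still leaves $e_r$ below $e_{r-1},\dots$ only if... — in fact the safest route is to simply check directly, using that every such generator is square-free and its two monomials are coprime in the $e_i$'s, that the leading term is determined already by the order restricted to the edges actually appearing, and $e_r$ appears in a generator of $\mathcal{C}(G\setminus{\bf e'})\cup L^G_{{\bf e'}}$ only as a bare variable or not at all, so its global rank is immaterial. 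I would write this verification out as the one genuine computation in the proof.
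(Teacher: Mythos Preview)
Your approach is the same as the paper's: apply the Constantinescu--Gorla Gr\"obner basis statement to the pair $(G,{\bf e'})$ with its own adapted lexicographic order (call it $\prec''$, with $e_{r-1}>\cdots>e_1>e_r>f$ for $f\in E(G)\setminus{\bf e}$), and then argue that the leading terms of $\mathcal{G}:=\mathcal{C}(G\setminus{\bf e'})\cup L^G_{\bf e'}$ are the same under $\prec$ as under $\prec''$.

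However, your verification of this last step is tangled and ends with a false claim: it is \emph{not} true that ``$e_r$ appears in a generator of $\mathcal{C}(G\setminus{\bf e'})\cup L^G_{{\bf e'}}$ only as a bare variable or not at all.'' A cycle in $G\setminus{\bf e'}$ can certainly pass through $e_r$, giving a binomial of the form $e_r m_1 - m_2$ with $m_1,m_2$ products of edges in $E(G)\setminus{\bf e}$. The clean observation---which is all the paper uses---is that \emph{none of $e_1,\dots,e_{r-1}$ appear in any element of $\mathcal{G}$}: cycles in $G\setminus{\bf e'}$ avoid $e_1,\dots,e_{r-1}$ by definition, and monomials in $L^G_{\bf e'}$ are by construction not divisible by any $e_i\in{\bf e'}$. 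Hence the leading term of each $g\in\mathcal{G}$ depends only on the restriction of the order to the variables in $\{e_r\}\cup(E(G)\setminus{\bf e})$, and on those variables $\prec$ and $\prec''$ coincide (both put $e_r$ above everything else). This one sentence replaces your entire final paragraph of case analysis; with it, the proof is two lines, as in the paper.
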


\begin{proof}
By Remark \ref{rmk:gbBipartite}, $\mathcal{G}:=\mathcal{C}(G\setminus {\bf e'})\cup L^G_{\bf e'}$ is a Gr\"obner basis for $I^G_{\bf e'}$ with respect to the lexicographic term order $e_{r-1}>e_{r-2}>\cdots >e_1>e_r$ and $e_r>f$ for all $f\in E(G)\setminus {\bf e}$. 
Since none of $e_1,\dots, e_{r-1}$ appear in $\mathcal{G}$, we have that $\mathcal{G}$ is also a Gr\"obner basis for the lexicographic monomial order $\prec$. Furthermore, all terms of all elements in $\mathcal{G}$ are square-free, so $\init_{\prec} (I^G_{\bf e'})$ is a square-free monomial ideal.
\end{proof}

We now use Lemma \ref{lem:gbBipartite2} to obtain a geometric vertex decomposition of $I^G_{\bf e'}$:

\begin{proposition}\label{lem:bipartiteGVDgeneral}
Let $G$ be a bipartite graph and let ${\bf e} = \{e_1,\dots, e_r\}$ be a path ordered matching. Let ${\bf e'} = \{e_1,\dots, e_{r-1}\}$. Then there is a geometric vertex decomposition
\begin{equation}\label{eq:gvdEquation}
\init_{e_r}(I^G_{{\bf e'}}) = (I^{G\setminus e_r}_{\bf e'}+\langle e_r\rangle)\cap I^G_{\bf e}.
\end{equation}
\end{proposition}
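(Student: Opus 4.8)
The plan is to realize the claimed decomposition as a geometric vertex decomposition by computing a suitable Gröbner basis of $I^G_{\mathbf{e'}}$ with respect to an $e_r$-compatible monomial order and then applying Lemma~\ref{square-freey}. First I would fix the lexicographic order $\prec$ on $\mathbb{K}[E(G)]$ with $e_r > e_{r-1} > \cdots > e_1$ and $e_1 > f$ for all $f \in E(G)\setminus\mathbf{e}$; this order is $e_r$-compatible. By Lemma~\ref{lem:gbBipartite2}, $\mathcal{G} = \mathcal{C}(G\setminus\mathbf{e'}) \cup L^G_{\mathbf{e'}}$ is a Gröbner basis for $I^G_{\mathbf{e'}}$ with respect to $\prec$, and $\init_\prec(I^G_{\mathbf{e'}})$ is square-free. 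The key point is that $\mathcal{G}$ is in fact square-free in $e_r$: the monomials in $L^G_{\mathbf{e'}}$ are by construction not divisible by any $e_i \in \mathbf{e}$ (in particular not by $e_r$), and each binomial in $\mathcal{C}(G\setminus\mathbf{e'})$ associated to a cycle through the edge $e_r$ is linear in $e_r$ in each of its two terms (a primitive cycle traverses any given edge at most once, so $e_r$ appears to the first power). Hence every element $g$ of $\mathcal{G}$ has $\init_{e_r}(g) = e_r^{d}q$ with $d \in \{0,1\}$.

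Next I would invoke Lemma~\ref{square-freey}: with $g = e_r^{d}q + r$ for each $g\in\mathcal{G}$, the set $\{q : g\in\mathcal{G}\}$ is a Gröbner basis of $C_{e_r, I^G_{\mathbf{e'}}}$, the set $\{q : d = 0\}$ is a Gröbner basis of $N_{e_r, I^G_{\mathbf{e'}}}$, and we get the geometric vertex decomposition $\init_{e_r}(I^G_{\mathbf{e'}}) = C_{e_r, I^G_{\mathbf{e'}}} \cap (N_{e_r, I^G_{\mathbf{e'}}} + \langle e_r \rangle)$ for free. It then remains to identify the two ideals on the right with those in \eqref{eq:gvdEquation}. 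For $N_{e_r, I^G_{\mathbf{e'}}}$: the generators $q$ with $d = 0$ are precisely the elements of $\mathcal{G}$ in which $e_r$ does not appear, which means they come from cycles of $G\setminus\mathbf{e'}$ avoiding $e_r$, i.e.\ cycles of $(G\setminus e_r)\setminus\mathbf{e'}$, together with the monomials of $L^G_{\mathbf{e'}}$ not involving $e_r$. I would argue this generating set, after accounting for which monomials of $M^G_{\mathbf{e'}}$ survive, equals a Gröbner basis of $I^{G\setminus e_r}_{\mathbf{e'}}$ — combinatorially, deleting $e_r$ from the ambient graph and from no matching edge does not affect which cycles and which $\mathbf{e'}$-monomials appear — so $N_{e_r, I^G_{\mathbf{e'}}} = I^{G\setminus e_r}_{\mathbf{e'}}$, hence $N_{e_r, I^G_{\mathbf{e'}}} + \langle e_r\rangle = I^{G\setminus e_r}_{\mathbf{e'}} + \langle e_r\rangle$.

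For $C_{e_r, I^G_{\mathbf{e'}}}$: here I would use that $\mathbf{e} = \mathbf{e'}\cup\{e_r\}$ is a path ordered matching (this is the hypothesis), so $G\setminus\mathbf{e} = (G\setminus\mathbf{e'})\setminus e_r$. Splitting $\mathcal{G}$ into the part with $e_r$ in the leading term and the part without, the $q$'s from the first part are obtained from cycles through $e_r$ by deleting the $e_r$; these are exactly the monomials that, together with a subset of $\mathbf{e}$, form a cycle binomial — i.e.\ precisely the generators of $\langle M^G_{\mathbf{e}}\rangle$ not already accounted for — while the $q$'s from the second part generate $I_{G\setminus\mathbf{e}}$ together with the $L$-monomials. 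Matching this against the definition \eqref{eq:IGe} of $I^G_{\mathbf{e}} = I_{G\setminus\mathbf{e}} + \langle M^G_{\mathbf{e}}\rangle$ gives $C_{e_r, I^G_{\mathbf{e'}}} = I^G_{\mathbf{e}}$. Substituting both identifications into the decomposition from Lemma~\ref{square-freey} yields \eqref{eq:gvdEquation}. The main obstacle I anticipate is the bookkeeping in this last identification: one must carefully track how a monomial $m$ with $m\bigl(\prod_{e_i\in\tilde{\mathbf{e}}}e_i\bigr) - n$ a cycle binomial behaves when $e_r$ is or is not in $\tilde{\mathbf{e}}$, and confirm that passing from the $\widetilde{M}$/$L$ formulation of Remark~\ref{rmk:gbBipartite} to the plain $M^G_{\mathbf{e}}$ of the definition does not change the generated ideal — this is where the reduction "$me_i \in \widetilde{M}^G_{\mathbf{e}} \Rightarrow m \in \widetilde{M}^G_{\mathbf{e}}$" from Remark~\ref{rmk:gbBipartite} and Lemma~\ref{lem:techLemma2} do the real work.
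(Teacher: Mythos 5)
Your overall route is exactly the paper's: the same lexicographic order, the Gr\"obner basis $\mathcal{C}(G\setminus\mathbf{e'})\cup L^G_{\mathbf{e'}}$ from Lemma \ref{lem:gbBipartite2}, square-freeness in $e_r$, Lemma \ref{square-freey}, and then the identification of $C_{e_r,I^G_{\mathbf{e'}}}$ with $I^G_{\mathbf{e}}$ and of $N_{e_r,I^G_{\mathbf{e'}}}$ with $I^{G\setminus e_r}_{\mathbf{e'}}$. However, you have located the difficulty in the wrong place, and the step you dismiss as routine contains a genuine gap. Your justification for $N_{e_r,I^G_{\mathbf{e'}}}=I^{G\setminus e_r}_{\mathbf{e'}}$ is that ``deleting $e_r$ from the ambient graph \ldots{} does not affect which cycles and which $\mathbf{e'}$-monomials appear.'' This is false as stated. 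A monomial $b\in L^G_{\mathbf{e'}}$ with $e_r\nmid b$ is witnessed by a cycle binomial $b\bigl(\prod_{e_i\in\tilde{\mathbf{e}}}e_i\bigr)-n$ of $G$, and nothing prevents that cycle from passing through $e_r$ via the trailing term $n$ (note that $L^G_{\mathbf{e'}}$ only excludes divisibility by the $e_i\in\mathbf{e'}$, not by $e_r$ --- your parenthetical ``in particular not by $e_r$'' early in the proposal is also incorrect for this reason, though it does not damage the square-freeness claim, since a cycle traverses each edge at most once). For such a $b$ there is no a priori witnessing cycle in $G\setminus e_r$, so membership $b\in I^{G\setminus e_r}_{\mathbf{e'}}$ is not automatic. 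The paper closes this by invoking the proof of \cite[Remark 2.7]{CG}: using that the \emph{full} set $\mathbf{e}$ is a path ordered matching, one produces a different cycle of $G$ avoiding $e_r$ whose associated monomial divides $b$. This is the one place in the argument where the hypothesis that $\mathbf{e}$ (and not merely $\mathbf{e'}$) is a path ordered matching is genuinely needed, and your proposal does not supply it.

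By contrast, the identification $C_{e_r,I^G_{\mathbf{e'}}}=I^G_{\mathbf{e}}$, which you flag as the anticipated obstacle, is the easy half: stripping $e_r$ from the leading terms of the generators $e_rm_\ell-n_\ell$ and $e_ra_i$ directly produces elements of $M^G_{\mathbf{e}}$ (take $\tilde{\mathbf{e}}$ to be $\{e_r\}$, respectively $\tilde{\mathbf{e}}\cup\{e_r\}$), and the degree-zero part gives $\mathcal{C}(G\setminus\mathbf{e})$ together with monomials already in $M^G_{\mathbf{e}}$; the paper records this equality in a single line. So the skeleton of your proof is sound and matches the paper's, but the $N$-side identification needs the cycle-replacement argument, not the combinatorial shrug you offer for it.
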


\begin{proof}
Let $\prec$ be a lexicographic monomial order on $\mathbb{K}[E(G)]$ with $e_r>e_{r-1}>\cdots >e_1$ and $e_1>f$ for all $f\in E(G)\setminus {\bf e}$. This is an $e_r$-compatible monomial order. 
By Lemma \ref{lem:gbBipartite2}, 
$\mathcal{C}(G\setminus {\bf e'})\cup L^G_{\bf e'}$ is a Gr\"obner basis for $I^G_{\bf e'}$ with respect to $\prec$, and $\mathcal{C}(G\setminus {\bf e'})\cup L^G_{\bf e'}$ are square-free in $e_r$. We can write:
\[
\mathcal{C}(G\setminus {\bf e'}) = \{e_rm_1-n_1, e_rm_2-n_2,\dots, e_rm_q-n_q, h_1,\dots, h_t\}, \text{ and }
\]
\[
L^G_{\bf e'} = \{e_ra_1,\dots, e_ra_u, b_1,\dots, b_v\}
\]
where $e_r$ does divide any $m_\ell$, $n_\ell$, $1\leq \ell\leq q$, nor any term of $h_k$, $1\leq k\leq t$, nor any of the monomials $a_1,\dots, a_u, b_1,\dots, b_v$. We thus have the geometric vertex decomposition
\begin{align*}
\text{in}_{{e}_r}(I^G_{\bf e'}) &= (\langle h_1,\dots, h_t, b_1,\dots, b_v\rangle + \langle {e}_r\rangle) \cap \langle m_1,\dots, m_q, h_1,\dots, h_t, a_1,\dots, a_u, b_1,\dots, b_v\rangle\\
&= (\langle h_1,\dots, h_t, b_1,\dots, b_v\rangle + \langle {e}_r\rangle) \cap I^G_{\bf e}.
\end{align*}
It remains to show that $\langle h_1,\dots, h_t, b_1,\dots, b_v\rangle = I^{G\setminus e_r}_{\bf e'}$. 

By Lemma \ref{lem:techLemma2}, ${\bf e'}$ is a path ordered matching on $G\setminus e_r$. 
Thus, $I^{G\setminus e_r}_{\bf e'}$ is generated by \[\mathcal{C}((G\setminus e_r)\setminus {\bf e'})\cup L^{G\setminus e_r}_{\bf e'} = \mathcal{C}(G\setminus {\bf e})\cup L^{G\setminus e_r}_{\bf e'}.\]
Observe that $\{h_1,\dots, h_t \} = \mathcal{C}(G\setminus {\bf e})$. 
Also, it follows from the definitions that $L^{G\setminus e_r}_{\bf e'}\subseteq \{b_1,\dots, b_v\}$. Thus, we have the inclusion $I^{G\setminus e_r}_{\bf e'}\subseteq \langle h_1,\dots, h_t, b_1,\dots, b_v\rangle$.

For the reverse inclusion, fix some $b_j$, $1\leq j\leq v$. Then there is some non-empty subset ${\widetilde{\bf e}}\subseteq {\bf e'}$ and a binomial $b_j(\prod_{e_i\in {\widetilde{\bf e}}}e_i)-n$ associated to a cycle in $G$. 
If $e_r$ does not divide $n$ then $b_j\in M^{G\setminus e_r}_{\bf e'}$, and hence $b_j\in I^{G\setminus e_r}_{\bf e'}$. Otherwise, since ${\bf e}$ is also a path ordered matching, one can apply the proof of \cite[Remark 2.7]{CG} to find another cycle in $G$ which does not pass through $e_r$ and which gives rise to an element $c_j\in M^G_{\bf e}$ which divides $b_j$. Since the cycle does not pass through $e_r$, we have $c_j\in M^{G\setminus e_r}_{\bf e'}$. As $\mathcal{C}((G\setminus e_r)\setminus {\bf e'})\cup M^{G\setminus e_r}_{\bf e'}$ is a Gr\"obner basis for $I^{G\setminus e_r}_{\bf e'}$, we see that $c_j$, and hence $b_j$, is an element of $I^{G\setminus e_r}_{\bf e'}$. Thus, $\langle h_1,\dots, h_t, b_1,\dots, b_v\rangle\subseteq I^{G\setminus e_r}_{\bf e'}$.
\end{proof}

We say that a path ordered matching ${\bf e} = \{e_1,\dots, e_r\}$ is \emph{right-extendable} if there is some edge $e_{r+1}\in E(G)$ such that $\{e_1,\dots,e_r,e_{r+1}\}$ is also a path ordered matching.

\begin{lemma}\label{lem:techLemma1}
Let $G$ be a bipartite graph with no leaves and let ${\bf e} = \{e_1,\dots, e_r\}$ be a path ordered matching which is not right-extendable. Then, $M^G_{\bf e}$ contains an indeterminate $x\in E(G)$ and ${\bf e}$ is a path ordered matching on $G\setminus x$. Furthermore, $I^G_{\bf e} = I^{G\setminus x}_{\bf e}+ \langle x\rangle$.
\end{lemma}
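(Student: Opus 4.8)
The plan is to exploit the combinatorial meaning of a path ordered matching that fails to be right-extendable, and translate that failure into the existence of an edge $x$ whose associated cycle-binomial has the monomial form forcing $x \in M^G_{\bf e}$. Recall the vertex labelling: $e_i = \{i, i+r\}$ for $1 \le i \le r$, with $f_i = \{i, i+r+1\} \in E(G)$ for $1 \le i \le r-1$, and condition (2) controlling which ``backward'' edges $\{i, j+r\}$ can occur. The first step is to examine the vertex $r+1$, which is an endpoint of $e_1$. Since $G$ has no leaves, $\deg(r+1) \ge 2$, so $r+1$ lies on some edge other than $e_1 = \{1, r+1\}$; say $\{k, r+1\} \in E(G)$ with $k \ne 1$. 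By condition (2) applied with $j = 1$, any edge $\{i, 1+r\}$ forces $i \le 1$, so in fact $k$ cannot be among $\{1,\dots,r\}$; hence $k$ is a ``new'' vertex not in $\{1,\dots, 2r+1\}$ (after possibly using the $f_i$ structure, one argues $r+1$ has a neighbour forcing a concrete new edge). I would set $x$ to be an edge detected this way; the key point to pin down is that the failure of right-extendability means every candidate for $e_{r+1}$ is blocked, and blocked-ness of every candidate is exactly what produces a cycle through a bounded number of the $e_i$'s together with $x$ and the $f_i$'s — i.e. a cycle-binomial of the form $x\bigl(\prod_{e_i \in \widetilde{\bf e}} e_i\bigr) - n$ with $\widetilde{\bf e} \subseteq {\bf e}$ nonempty, which is precisely the condition $x \in M^G_{\bf e}$.

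Next, I would check that ${\bf e}$ remains a path ordered matching on $G \setminus x$. This is essentially automatic: the defining conditions (1) and (2) for ${\bf e}$ refer only to the edges $e_i$, the edges $f_i$, and the edges $\{i, j+r\}$ for $i,j \le r$. As long as $x$ is none of the $e_i$ (clear, since $x \in M^G_{\bf e}$ means $x$ is a monomial generator, and by primitivity $x \notin {\bf e}$) and removing $x$ does not delete any $f_i$ (which I'd need to verify — this should follow from how $x$ was chosen, since $x$ is detected as an ``extra'' edge at vertex $r+1$, disjoint from the $f_i$ structure), the same labelling witnesses the path ordered matching property on $G \setminus x$.

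Finally, for the identity $I^G_{\bf e} = I^{G\setminus x}_{\bf e} + \langle x \rangle$: by Lemma \ref{lem:gbBipartite2} (or directly \cite[Lemma 2.6]{CG}), $\mathcal{C}(G \setminus {\bf e}) \cup M^G_{\bf e}$ is a Gröbner basis of $I^G_{\bf e}$, and since $x \in M^G_{\bf e}$ we may split off $x$. The reduction of every other generator modulo $x$ then needs to land in $I^{G \setminus x}_{\bf e} + \langle x \rangle$: a cycle in $G \setminus {\bf e}$ either avoids $x$ (so lies in $\mathcal{C}((G\setminus x) \setminus {\bf e})$) or passes through $x$, in which case — just as in the proof of Proposition \ref{lem:bipartiteGVDgeneral} via \cite[Remark 2.7]{CG} — one replaces it by a cycle not through $x$, producing an element of $M^{G \setminus x}_{\bf e}$ dividing the relevant monomial, modulo $\langle x \rangle$. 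Conversely $I^{G\setminus x}_{\bf e} \subseteq I^G_{\bf e}$ since $G \setminus x \subseteq G$, and $x \in I^G_{\bf e}$. The main obstacle I anticipate is the first step: making fully rigorous the claim that non-right-extendability at vertex $r+1$, combined with the no-leaves hypothesis, \emph{forces} an edge $x$ whose cycle passes through some nonempty sub-collection of the $e_i$ — this requires a careful case analysis of why each potential extension $e_{r+1}$ is obstructed, and matching that obstruction to the existence of the desired cycle. The rest is bookkeeping that parallels arguments already carried out in the paper.
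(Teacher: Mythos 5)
The paper's own proof of Lemma \ref{lem:techLemma1} is a one-line deferral: it asserts the argument is identical to that of \cite[Lemmas 2.12 and 2.13]{CG} with ``maximal path ordered matching'' replaced by ``right-extendable path ordered matching.'' Your proposal instead tries to reconstruct the combinatorial core, and that reconstruction has a genuine gap which you yourself flag at the end: you never actually produce the indeterminate $x\in M^G_{\bf e}$. That existence statement is the entire content of \cite[Lemma 2.12]{CG}; without it the remaining two claims (that ${\bf e}$ survives as a path ordered matching on $G\setminus x$, and that $I^G_{\bf e}=I^{G\setminus x}_{\bf e}+\langle x\rangle$) have nothing to hang on.

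Moreover, your opening move examines the wrong end of the path. With the labelling $e_i=\{i,i+r\}$ and $f_i=\{i,i+r+1\}$, the path $e_1,f_1,e_2,\dots,f_{r-1},e_r$ has endpoints $r+1$ (attached to $e_1$) and $r$ (attached to $e_r$); right-extendability is the existence of a further $f_r=\{r,w\}$ and $e_{r+1}=\{u,w\}$ continuing the path at the vertex $r$, so its failure constrains the neighbourhoods of $r$ and of the new vertices $w$ adjacent to $r$ --- not the neighbourhood of $r+1$, which is where you invoke the no-leaves hypothesis. The correct line (following \cite{CG}) is: no-leaves gives a neighbour $w$ of $r$ other than $2r$; condition (2) of the definition forces $w\notin\{r+1,\dots,2r\}$; no-leaves again gives a neighbour $u\neq r$ of $w$; and non-right-extendability must then be used to show that every such choice is obstructed in a way that closes an even cycle through a nonempty subset of $\{e_1,\dots,e_r\}$, with the obstructing edge supplying the stray indeterminate $x$. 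That case analysis is exactly what is missing. The final part of your proposal (the Gr\"obner-basis bookkeeping for $I^G_{\bf e}=I^{G\setminus x}_{\bf e}+\langle x\rangle$) is plausible and parallels Proposition \ref{lem:bipartiteGVDgeneral}, but it too quietly requires knowing that $x$ is none of the $e_i$ or $f_i$, which again only becomes available once $x$ has actually been constructed.
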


\begin{proof}
The proof is identical to the proof of \cite[Lemmas 2.12 and 2.13]{CG} upon replacing maximal path ordered matchings in \cite[Lemmas 2.12 and 2.13]{CG} with right-extendable path ordered matchings.
\end{proof}

\begin{lemma}\label{lem:techLemma3}
Let $G$ be a bipartite graph and let ${\bf e} = \{e_1,\dots, e_r\}$ be a path ordered matching. 
Suppose that $G$ has a leaf $y$. Then:
\begin{enumerate}
    \item if $y\notin {\bf e}$, then ${\bf e}$ is a path ordered matching in $G\setminus y$ and $I^G_{\bf e} = I^{G\setminus y}_{\bf e}$;
    \item if $y\in {\bf e}$, then $y = e_1$ or $e_r$ and ${\bf e}\setminus y$ is a path ordered matching in $G\setminus y$ and  $I^G_{\bf e} = I^{G\setminus y}_{{\bf e}\setminus y}$.
\end{enumerate}
\end{lemma}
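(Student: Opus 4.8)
The plan is to reduce the lemma to the definition $I^G_{\bf e}=I_{G\setminus{\bf e}}+\langle M^G_{\bf e}\rangle$ together with two elementary facts: a leaf edge of $G$ lies on no cycle of $G$, and deleting an edge only removes edges and cycles, hence only relaxes the conditions defining a path ordered matching. Each claimed ideal equality will then follow by substituting the corresponding combinatorial identity between $G$ and the leaf-deleted graph into these definitions.

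For part (1), assume $y\notin{\bf e}$. Since $y$ is a leaf of $G$ not among the deleted edges, it is still a leaf of $G\setminus{\bf e}$, so Lemma~\ref{removeleaves} gives $I_{G\setminus{\bf e}}=I_{(G\setminus{\bf e})\setminus y}=I_{(G\setminus y)\setminus{\bf e}}$. Because a leaf edge is on no cycle, $G$ and $G\setminus y$ have the same cycles, hence the same cycle binomials, and so $M^G_{\bf e}=M^{G\setminus y}_{\bf e}$; substituting these two identities into the definitions gives $I^G_{\bf e}=I^{G\setminus y}_{\bf e}$. It remains to check that ${\bf e}$ is a path ordered matching on $G\setminus y$. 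Fixing the relabelling witnessing the structure on $G$, with $e_i=\{i,i+r\}$ and $f_i=\{i,i+r+1\}$, the edge $y$ is none of $e_1,\dots,e_r$ (as $y\notin{\bf e}$) and none of $f_1,\dots,f_{r-1}$, because vertex $i$ lies on both $e_i$ and $f_i$ while vertex $i+r+1$ lies on both $f_i$ and $e_{i+1}$, so no $f_i$ has a degree-one endpoint. Thus every structural edge survives, condition~(1) persists, and condition~(2) persists a fortiori, since deleting an edge only removes constraints.

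For part (2), assume $y\in{\bf e}$, say $y=e_i$. A degree count forces $i\in\{1,r\}$: for $2\le i\le r-1$ the vertex $i$ lies on $e_i$ and $f_i$ and the vertex $i+r$ lies on $e_i$ and $f_{i-1}$, so $e_i$ has no degree-one endpoint. If $y=e_r$, then Lemma~\ref{lem:techLemma2} shows ${\bf e}\setminus y=\{e_1,\dots,e_{r-1}\}$ is a path ordered matching on $G\setminus e_r=G\setminus y$; since $e_r$ is on no cycle, each binomial contributing to $M^G_{\bf e}$ uses a nonempty subset of ${\bf e}\setminus\{e_r\}$ and a cycle avoiding $e_r$, so $M^G_{\bf e}=M^{G\setminus y}_{{\bf e}\setminus y}$, and as $G\setminus{\bf e}=(G\setminus y)\setminus({\bf e}\setminus y)$ we get $I^G_{\bf e}=I^{G\setminus y}_{{\bf e}\setminus y}$. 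If $y=e_1$, the identical cycle-and-ideal bookkeeping applies once $\{e_2,\dots,e_r\}$ is shown to be a path ordered matching on $G\setminus e_1$, and I would prove this by the explicit relabelling that sends the left vertices $2,\dots,r$ to $1,\dots,r-1$ via $v\mapsto v-1$ and the right vertices $r+2,\dots,2r$ to $r,\dots,2r-2$ via $w\mapsto w-2$: under it $e_{k+1}$ becomes $\{k,k+(r-1)\}$, $f_{k+1}$ becomes the required edge $\{k,k+(r-1)+1\}$, and condition~(2) for $G$, applied to an edge of the form $\{i+1,(j+1)+r\}$, becomes condition~(2) for the shifted matching of length $r-1$.

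The step I expect to be the main obstacle is the subcase $y=e_1$ of part (2): unlike $y=e_r$, it is not covered by Lemma~\ref{lem:techLemma2}, and a path ordered matching is \emph{not} symmetric under merely reversing the vertex order, so one genuinely has to construct the shifted relabelling above and verify conditions~(1) and~(2) directly. Along the way I would also dispose of the degenerate case $r=1$, where ${\bf e}\setminus y=\emptyset$ and $M^G_{\bf e}=\emptyset$ (as $e_1$ is then a leaf, hence on no cycle), equivalently where $e_1=e_r$ so the $y=e_r$ argument applies verbatim.
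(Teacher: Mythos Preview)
Your proof is correct and follows essentially the same approach as the paper's: both argue that a leaf lies on no cycle, so the toric and monomial parts of $I^G_{\bf e}$ are unchanged upon deleting $y$, and both verify that the path-ordered-matching structure survives by checking that none of the structural edges $f_i$ can be the leaf. Your treatment is considerably more detailed than the paper's---in particular, you supply an explicit relabelling for the subcase $y=e_1$ and invoke Lemma~\ref{removeleaves} and Lemma~\ref{lem:techLemma2} where the paper simply asserts that ``each $f_i$ remains'' and passes through the intermediate equality $I^G_{\bf e}=I^G_{{\bf e}\setminus y}$---but the underlying argument is the same.
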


\begin{proof}
Since ${\bf e}$ is a path ordered matching, the vertices of $G$ can be labelled such that $e_i = \{i,i+r\}$, $1\leq i\leq r$. Let $f_i = \{i,i+r+1\}$, $1\leq i\leq r-1$ so that $$e_1,f_1,e_2,f_2,\dots,e_{r-1},f_{r-1}, e_r$$ is a path of consecutive edges in $G$. 
Since $y$ is a leaf, we see that $y\notin \{f_1,\dots, f_{r-1}\}$. If $y\notin {\bf e}$, then each $e_i, f_i$ remains and ${\bf e}$ is a path ordered matching in $G\setminus y$. 
Furthermore no cycle in $G$ passes through $y$, hence $I^G_{\bf e} = I^{G\setminus y}_{\bf e}$.

If $y\in {\bf e}$, then either $y = e_1$ or $y = e_r$. In either case, since each $f_i$ remains in $G\setminus y$, ${\bf e}\setminus y$ is still a path ordered matching in $G\setminus y$.
Since there is no cycle in $G$ which passes through $y$, we have $I^{G}_{\bf e} = I^G_{{\bf e}\setminus y} = I^{G\setminus y}_{{\bf e}\setminus y}$.
\end{proof}

We will need one more result from \cite{CG}: 

\begin{theorem}\cite[Theorem 2.8]{CG}\label{bipartite_CM}
Let $G$ be a bipartite graph and ${\bf e} = \{e_1,\dots, e_r\}$ a path ordered matching. Then $\mathbb{K}[E(G)]/I^G_{\bf e}$ is Cohen-Macaulay.
\end{theorem}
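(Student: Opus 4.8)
This statement is \cite[Theorem 2.8]{CG}, so one may simply cite it; here I outline how it can be recovered from the machinery assembled above. The plan is to induct on the pair $(r,|E(G)|)$ ordered lexicographically, taking the toric case $r=0$ as the base and using the geometric vertex decomposition of Proposition~\ref{lem:bipartiteGVDgeneral} to promote a path ordered matching of length $r-1$ to one of length $r$ by an elementary $G$-biliaison. Throughout, recall from \cite[Lemma 2.6]{CG} that $\init_\prec(I^G_{\bf e})$ is a square-free monomial ideal, so every ideal of the form $I^G_{\bf e}$ appearing in the argument is radical; in particular it is saturated and $\mathbb{K}[E(G)]/I^G_{\bf e}$ is generically Gorenstein.

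For $r=0$ we have $I^G_{\bf e}=I_G$, which has a square-free initial ideal, so $\mathbb{K}[E(G)]/I_G$ is Cohen-Macaulay by Theorem~\ref{sqfree=>cm}. Before the inductive step I would clear away two degeneracies. If $G$ has a leaf $y$, then Lemma~\ref{lem:techLemma3} identifies $\mathbb{K}[E(G)]/I^G_{\bf e}$ with a polynomial ring in the single variable $y$ over $\mathbb{K}[E(G\setminus y)]/I^{G\setminus y}_{\bf e}$ or over $\mathbb{K}[E(G\setminus y)]/I^{G\setminus y}_{{\bf e}\setminus y}$; in either case the smaller quotient is Cohen-Macaulay by induction and a polynomial extension of a Cohen-Macaulay ring is Cohen-Macaulay, so we may assume $G$ is leafless. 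Writing ${\bf e'}=\{e_1,\dots,e_{r-1}\}$, if the geometric vertex decomposition of $I^G_{\bf e'}$ with respect to $e_r$ is degenerate --- which happens precisely when $e_r$ does not occur in the Gr\"obner basis $\mathcal{C}(G\setminus{\bf e'})\cup L^G_{\bf e'}$ of $I^G_{\bf e'}$ --- then a reduction lemma (Lemma~\ref{lem:techLemma1}, or direct inspection identifying $I^G_{\bf e}$ with $I^G_{\bf e'}$) replaces the problem with a strictly smaller instance of $(r,|E(G)|)$; so we may also assume this decomposition is nondegenerate.

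For the main step, assume $G$ is leafless and the decomposition is nondegenerate. By the inductive hypothesis, $\mathbb{K}[E(G)]/I^G_{\bf e'}$ is Cohen-Macaulay, so $I^G_{\bf e'}$ is homogeneous, saturated, and unmixed. Proposition~\ref{lem:bipartiteGVDgeneral} gives
\[
\init_{e_r}(I^G_{\bf e'}) = \bigl(I^{G\setminus e_r}_{\bf e'}+\langle e_r\rangle\bigr)\cap I^G_{\bf e},
\]
so that $N_{e_r,I^G_{\bf e'}}=I^{G\setminus e_r}_{\bf e'}$ and $C_{e_r,I^G_{\bf e'}}=I^G_{\bf e}$. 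By Lemma~\ref{lem:techLemma2}, ${\bf e'}$ is a path ordered matching of length $r-1$ on $G\setminus e_r$, so $\mathbb{K}[E(G\setminus e_r)]/I^{G\setminus e_r}_{\bf e'}$ is Cohen-Macaulay by induction; hence $\mathbb{K}[E(G)]/N_{e_r,I^G_{\bf e'}}$ is a polynomial ring (in $e_r$) over a Cohen-Macaulay ring, and so is Cohen-Macaulay, and it is generically Gorenstein because it is radical. Since $I^G_{\bf e'}$ is unmixed, $C_{e_r,I^G_{\bf e'}}=I^G_{\bf e}$ is equidimensional by \cite[Lemma 2.8]{KR}, and being radical it is unmixed. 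Nondegeneracy follows from \cite[Proposition 2.4]{KR} (as in the proof of Theorem~\ref{prop:glicciGraphs}), since $I^G_{\bf e'}$ and $N_{e_r,I^G_{\bf e'}}$ are radical and saturated, $C_{e_r,I^G_{\bf e'}}$ is radical, and the reduced Gr\"obner basis of $I^G_{\bf e'}$ involves $e_r$. Hence Lemma~\ref{GVD_linkage} gives an elementary $G$-biliaison of height $1$ between $I^G_{\bf e}$ and $I^G_{\bf e'}$; as such a biliaison is a chain of two direct $G$-links \cite[Theorem 3.5]{Ha}, applying Lemma~\ref{linkage_CM} twice shows $\mathbb{K}[E(G)]/I^G_{\bf e}$ is Cohen-Macaulay, completing the induction.

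I expect the main obstacle to be the bookkeeping of the induction: isolating the degenerate configurations so that Lemmas~\ref{lem:techLemma1} and \ref{lem:techLemma3} genuinely decrease $(r,|E(G)|)$, and verifying in the main case that $e_r$ occurs in the Gr\"obner basis of $I^G_{\bf e'}$ so that the decomposition is nondegenerate. There is also a more direct, liaison-free route: by \cite[Lemma 2.6]{CG} the ideal $\init_\prec(I^G_{\bf e})$ is the Stanley-Reisner ideal of a simplicial complex, and since Cohen-Macaulayness is inherited by $I^G_{\bf e}$ from its initial ideal under Gr\"obner degeneration, it would suffice to prove that complex is Cohen-Macaulay (for instance, shellable or vertex decomposable); there the obstacle becomes the explicit combinatorial analysis of that complex.
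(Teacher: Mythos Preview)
The paper does not prove this statement; it records it as \cite[Theorem~2.8]{CG} and uses it as a black box in the proof of Theorem~\ref{thm: gvdBipartite}. You correctly note this at the outset, and your reconstruction via the paper's own machinery is a legitimate alternative. In fact your argument is essentially the Constantinescu--Gorla proof translated into geometric-vertex-decomposition language: in \cite{CG} the induction step is carried out by exhibiting an explicit Basic Double $G$-link between $I^G_{\bf e'}$ and $I^G_{\bf e}$, whereas you obtain the same elementary $G$-biliaison by feeding the geometric vertex decomposition of Proposition~\ref{lem:bipartiteGVDgeneral} into Lemma~\ref{GVD_linkage}. What your route buys is self-containment within the paper's framework and a uniform mechanism (the $N$/$C$ formalism) for producing the link; what citing \cite{CG} buys is brevity and avoidance of the degenerate-case bookkeeping.

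Two small repairs in your degenerate branch. First, Lemma~\ref{lem:techLemma1} is about path ordered matchings that are not right-extendable, which is not the same condition as the geometric vertex decomposition of $I^G_{\bf e'}$ at $e_r$ being degenerate, so that citation is misplaced. Second, the ``direct inspection'' you want is $I^G_{\bf e}=C_{e_r,I^G_{\bf e'}}=N_{e_r,I^G_{\bf e'}}=I^{G\setminus e_r}_{\bf e'}$ (not $I^G_{\bf e'}$); this identification does land you at the pair $(r-1,\lvert E(G)\rvert-1)$, so the induction decreases as required. With those corrections the argument goes through, and none of the ingredients you invoke (Proposition~\ref{lem:bipartiteGVDgeneral}, Lemmas~\ref{lem:techLemma2}, \ref{lem:techLemma1}, \ref{lem:techLemma3}, \ref{GVD_linkage}, \ref{linkage_CM}) depend on Theorem~\ref{bipartite_CM}, so there is no circularity.
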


We now adapt the proof of \cite[Corollary 2.15]{CG} on vertex decomposability of the simplicial complex associated to an initial ideal of $I^G_{\bf e}$ to prove the main theorem of this subsection.

\begin{theorem}\label{thm: gvdBipartite}
Let $G$ be a bipartite graph and ${\bf e} = \{e_1,\dots, e_r\}$ a path ordered matching. Then the ideal $I^G_{\bf e}$ is geometrically vertex decomposable. In particular, the toric ideal $I_G$ is geometrically vertex decomposable.
\end{theorem}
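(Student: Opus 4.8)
The plan is to induct on the length $r$ of the path ordered matching, and within that, on the number of edges of $G$; at each stage we produce an explicit geometric vertex decomposition and verify the unmixedness hypothesis using the Cohen--Macaulayness supplied by Theorem \ref{bipartite_CM}. First I would dispose of the degenerate situations: if $G$ has a leaf $y$, apply Lemma \ref{lem:techLemma3} to replace $(G,{\bf e})$ by $(G\setminus y,{\bf e})$ or $(G\setminus y,{\bf e}\setminus y)$ without changing the ideal $I^G_{\bf e}$, strictly decreasing the number of edges; so we may assume $G$ has no leaves. If ${\bf e}=\emptyset$ and $G$ has no leaves and no cycles, then $G$ is a forest with no leaves, hence empty, and $I^G_{\bf e}=\langle 0\rangle$ is geometrically vertex decomposable; more generally if $r=0$ the ideal is $I_G$ and we can still run the argument below provided ${\bf e}$ can be extended, or else fall through to the base case. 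The genuinely recursive case is when ${\bf e}=\{e_1,\dots,e_r\}$ with $r\geq 1$.

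For the inductive step, set ${\bf e'}=\{e_1,\dots,e_{r-1}\}$ and apply Proposition \ref{lem:bipartiteGVDgeneral} to obtain the geometric vertex decomposition
\[
\init_{e_r}\!\left(I^G_{\bf e'}\right) = \left(I^{G\setminus e_r}_{\bf e'}+\langle e_r\rangle\right)\cap I^G_{\bf e},
\]
so that $N_{e_r,\,I^G_{\bf e'}} = I^{G\setminus e_r}_{\bf e'}$ and $C_{e_r,\,I^G_{\bf e'}} = I^G_{\bf e}$ after contracting to $\mathbb{K}[E(G)\setminus e_r]$ (here I use that by Lemma \ref{lem:gbBipartite2} the Gröbner basis is square-free in $e_r$, so Lemma \ref{square-freey} identifies these ideals on the nose). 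By Lemma \ref{lem:techLemma2}, ${\bf e'}$ is a path ordered matching on $G\setminus e_r$, and ${\bf e}$ itself is a path ordered matching of length $r-1<r$... no: ${\bf e}$ has length $r$, but ${\bf e'}$ has length $r-1$. So the $N$-ideal $I^{G\setminus e_r}_{\bf e'}$ is handled by the inductive hypothesis on $r$ (shorter matching), and the $C$-ideal $I^G_{\bf e}$ is exactly the ideal we are trying to prove is geometrically vertex decomposable. To break this apparent circularity we instead induct as follows: prove the statement for all pairs $(G,{\bf e})$ by induction on $|E(G)|$, and for fixed $|E(G)|$ by downward induction on $r$ (equivalently, induct on $|E(G)|-r$, or on $|E(G)|$ with ties broken by $-r$). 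Then in the decomposition above $I^{G\setminus e_r}_{\bf e'}$ has fewer edges, so it is covered; and for the $C$-ideal we argue by cases on whether ${\bf e}$ is right-extendable: if it is, replace ${\bf e}$ by a longer path ordered matching ${\bf e}\cup\{e_{r+1}\}$ and note $I^G_{{\bf e}}= C_{e_{r+1}, I^G_{\bf e}}$ appears as the $C$-ideal of a decomposition of $I^G_{{\bf e}}$... hmm, this still needs care. The cleanest route: to show $I^G_{\bf e}$ is geometrically vertex decomposable, if ${\bf e}$ is right-extendable to ${\bf e}\cup\{e_{r+1}\}$, apply Proposition \ref{lem:bipartiteGVDgeneral} with $e_{r+1}$ as the splitting variable to $I^G_{\bf e}$ itself, giving $\init_{e_{r+1}}(I^G_{\bf e}) = (I^{G\setminus e_{r+1}}_{\bf e}+\langle e_{r+1}\rangle)\cap I^G_{{\bf e}\cup\{e_{r+1}\}}$; here $N = I^{G\setminus e_{r+1}}_{\bf e}$ has fewer edges (covered), and $C = I^G_{{\bf e}\cup\{e_{r+1}\}}$ has the same number of edges but a strictly longer path ordered matching, so it is covered by the $-r$ part of the induction. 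If ${\bf e}$ is \emph{not} right-extendable, apply Lemma \ref{lem:techLemma1}: $M^G_{\bf e}$ contains an indeterminate $x$, ${\bf e}$ is a path ordered matching on $G\setminus x$, and $I^G_{\bf e} = I^{G\setminus x}_{\bf e}+\langle x\rangle$; since $I^{G\setminus x}_{\bf e}$ is geometrically vertex decomposable by induction on edges, and adjoining a variable preserves geometric vertex decomposability (apply Theorem \ref{tensorproduct} with $J=\langle x\rangle\subsetneq \mathbb{K}[x]$, which is geometrically vertex decomposable by Lemma \ref{simplecases}), we are done.

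It remains, at every stage, to verify unmixedness of the ideals appearing as $I^G_{\bf e}$, $C_{y,I}$, $N_{y,I}$: but each of these is of the form $I^{H}_{\bf f}$ for a bipartite graph $H$ and path ordered matching ${\bf f}$ (possibly after the leaf reductions of Lemma \ref{lem:techLemma3} and possibly with a disjoint union of isolated edges/variables split off), and Theorem \ref{bipartite_CM} says $\mathbb{K}[E(H)]/I^H_{\bf f}$ is Cohen--Macaulay, hence unmixed; when a variable $x$ is adjoined, $\mathbb{K}[E(H)][x]/(I^H_{\bf f}+\langle x\rangle)\cong \mathbb{K}[E(H)]/I^H_{\bf f}$ is still Cohen--Macaulay, so still unmixed. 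Finally the base case: when $|E(G)|$ is minimal among graphs admitting the given data, the leaf reduction and the right-extendability dichotomy together force either $G$ empty (so $I^G_{\bf e}=\langle 0\rangle$, geometrically vertex decomposable by convention) or ${\bf e}=\emptyset$ with $I_G$ generated by a subset of variables or trivial, covered by Definition \ref{gvd}(1); in fact the induction terminates because each reduction step strictly decreases the pair $(|E(G)|, |E(G)|-r)$ in lexicographic order. The last sentence of the theorem, that $I_G$ itself is geometrically vertex decomposable, is the special case ${\bf e}=\emptyset$, since $I^G_\emptyset = I_G$.

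\textbf{Main obstacle.} The delicate point is organizing the induction so that the $C$-ideal $I^G_{\bf e}$ appearing in Proposition \ref{lem:bipartiteGVDgeneral} does not produce a vicious circle: this is resolved by the two-parameter induction on $(|E(G)|,\,|E(G)|-r)$ together with the right-extendable / non-right-extendable dichotomy, which guarantees that every ideal we recurse into is strictly smaller in this order. A secondary point requiring attention is checking that the equalities $N_{e_r,I^G_{\bf e'}}=I^{G\setminus e_r}_{\bf e'}$ and $C_{e_r,I^G_{\bf e'}}=I^G_{\bf e}$ really hold after contraction to $\mathbb{K}[E(G)\setminus e_r]$ and not merely up to radical or extension --- but this is exactly the content of Proposition \ref{lem:bipartiteGVDgeneral} combined with the square-free-in-$e_r$ property from Lemma \ref{lem:gbBipartite2} and Lemma \ref{square-freey}, so no new work is needed there.
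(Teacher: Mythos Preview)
Your proposal, once you arrive at ``the cleanest route'' in the second paragraph, is essentially the paper's proof: double induction on $|E(G)|$ and on the distance from ${\bf e}$ to a non-right-extendable matching, with Lemma \ref{lem:techLemma3} handling leaves, Lemma \ref{lem:techLemma1} handling the non-right-extendable case, Proposition \ref{lem:bipartiteGVDgeneral} producing the geometric vertex decomposition in the right-extendable case, and Theorem \ref{bipartite_CM} supplying unmixedness throughout. The paper's second induction parameter is $s-r$ (where $s$ is the length of a fixed non-right-extendable extension of ${\bf e}$) rather than your $|E(G)|-r$, but the two schemes are interchangeable.
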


\begin{proof}
Let $R = \mathbb{K}[E(G)]$. By Theorem \ref{bipartite_CM}, each $R/I^G_{\bf e}$ is Cohen-Macaulay, hence unmixed.

We proceed by double induction on $|E(G)|$ and $s-r$ where ${\bf {\tilde{e}}} = \{{\tilde{e}}_1,\dots, \tilde{e}_s\}$ is a path ordered matching that is not right-extendable and is such that $\tilde{e}_1 = e_1,\dots, \tilde{e}_r = e_r$.  

If $|E(G)|\leq 3$, then $I_G = \langle 0\rangle$ as there are no primitive closed even walks in $G$, so  the result holds trivially. 

If $G$ has a leaf, then by Lemma \ref{lem:techLemma3}, there is an edge $y$ and a path ordered matching ${\bf e'}$ in $G\setminus y$ such that $I^G_{\bf e} = I^{G\setminus y}_{{\bf e'}}$. By induction on the number of edges in the graph, $I^{G\setminus y}_{{\bf e'}}$ is geometrically vertex decomposable, hence so is $I^G_{\bf e}$. 

So, assume that $G$ has no leaves. If $s-r = 0$, then ${\bf e}$ is not right extendable. 
Then, by Lemma \ref{lem:techLemma1}, there is an indeterminate $z\in M^G_{\bf e}$ such that
\[
I^G_{\bf e} = I^{G\setminus z}_{\bf e}+\langle z\rangle.
\]
By Lemma \ref{lem:techLemma1}, {\bf e} is a path ordered matching on $G\setminus z$, so again by induction on the number of edges in the graph, we have the $I^{G\setminus z}_{\bf e}$ is geometrically vertex decomposable, hence so is $I^G_{\bf e}$.

Now suppose that ${\bf e}$ is right extendable, so that $s-r>0$ and ${\bf e^*} = \{e_1,\dots, e_{r+1}\}$ is a path ordered matching. By Lemma \ref{lem:bipartiteGVDgeneral}, we have the geometric vertex decomposition
\begin{equation*}
\init_{e_{r+1}}(I^G_{{\bf e}}) = (I^{G\setminus e_{r+1}}_{\bf e}+\langle e_{r+1}\rangle)\cap I^G_{{\bf e^*}}.
\end{equation*}
By Lemma \ref{lem:techLemma2}, ${\bf e}$ is a path ordered matching on $G\setminus e_{r+1}$. So, by induction on the number of edges, $I^{G\setminus e_{r+1}}_{\bf e}$ is geometrically vertex decomposable. By induction on $s-r$, $I^G_{{\bf e^*}}$ is geometrically vertex decomposable. Hence, $I^G_{\bf e}$ is geometrically vertex decomposable.

The final conclusion now
follows from the fact that $I_G = I_{\bf e}^G$ when
${\bf e} = \emptyset$.
\end{proof}

\subsection{Alternate proofs in special cases}\label{sect:specialCases}
In this section, we apply results from the literature to give alternate proofs of geometric vertex decomposability for some well-studied families of bipartite graphs.  These
proofs illustrate that in some cases, we 
can prove that a family of ideals is geometrically
vertex decomposable directly from the definition.
Moreover, these examples do not require the full strength
of the machinery of
Section 5.1; in particular, these families of examples have
the property that the ideals $C_{y,I}$ and $N_{y,I}$ usually
do  not leave the family of ideals we are considering, thus
giving us nice inductive proofs.

We define the relevant families
of graphs.  A {\it Ferrers graph} is 
a bipartite graph on the vertex set
$X = \{x_1,\ldots,x_n\}$ and $Y= \{y_1,\ldots,y_m\}$ such that 
$\{x_n,y_1\}$ and $\{x_1,y_m\}$ are edges, and if $\{x_i,y_j\}$ is an edge, then so 
are all the edges $\{x_k,y_l\}$ with
$1 \leq k \leq i$ and $1 \leq l \leq j$.
We associate a partition 
$\lambda = (\lambda_1,\lambda_2,\ldots,\lambda_n)$ with $\lambda_1 \geq \lambda_2 
\geq \cdots \geq \lambda_n$ to a Ferrers graph where $\lambda_i =
\deg x_i$.   Some of the properties of
the toric ideals of these graphs
were studied by Corso and Nagel \cite{CN}.  Following Corso and Nagel, we denote
a Ferrers graph as $T_\lambda$ where $\lambda$
denotes the associated partition.

As an example, consider the 
partition $\lambda = (5,3,2,1)$ which can be 
visualized as
\[
\begin{tabular}{cccccccc}
& $y_1$ &  $y_2$ &  $y_3$  & $y_4$ & $y_5$\\
$x_1$ & $\bullet$ & $\bullet$ & $\bullet$ & $\bullet$ & $\bullet$ \\
$x_2$ & $\bullet$ & $\bullet$ & $\bullet$ &  &  \\
$x_3$ & $\bullet$ & $\bullet$ &  & &\\
$x_4$ & $\bullet$ & & & & \\
\end{tabular}
\]
We have labelled the rows with the $x_i$ 
vertices and the columns with the $y_j$ vertices.  From this representation,
the graph $T_\lambda$ is the graph
on the vertex set $\{x_1,\ldots,x_4,y_1,\ldots,y_5\}$ where
$\{x_i,y_j\}$ is an edge if and only if 
there is dot in the row and column indexed
by $x_i$ and $y_j$ respectively.  Figure \ref{fig_ferrers} gives the corresponding
bipartite graph $T_\lambda$ for $\lambda$.
\begin{figure}[!ht]
    \centering
    \begin{tikzpicture}[scale=0.45]
      \draw (0,0) -- (0,5);
      \draw (5,0) -- (0,5);
       \draw (10,0) -- (0,5);
      \draw (15,0) -- (0,5);
       \draw (20,0) -- (0,5);
       \draw (0,0) -- (5,5);
      \draw (5,0) -- (5,5);
       \draw (10,0) -- (5,5);
      \draw (0,0) -- (10,5);
       \draw (5,0) -- (10,5);
       \draw (0,0) -- (15,5);
      
      \fill[fill=white,draw=black] (0,0) circle (.1) node[below]{$y_1$};
       \fill[fill=white,draw=black] (5,0) circle (.1) node[below]{$y_2$};
       \fill[fill=white,draw=black] (10,0) circle (.1) node[below]{$y_3$};
       \fill[fill=white,draw=black] (15,0) circle (.1) node[below]{$y_4$};
       \fill[fill=white,draw=black] (20,0) circle (.1) node[below]{$y_5$};
       \fill[fill=white,draw=black] (0,5) circle (.1) node[above]{$x_1$};
       \fill[fill=white,draw=black] (5,5) circle (.1) node[above]{$x_2$};
        \fill[fill=white,draw=black] (10,5) circle (.1) node[above]{$x_3$};
        \fill[fill=white,draw=black] (15,5) circle (.1) node[above]{$x_4$};
\end{tikzpicture}
    \caption{The graph $T_\lambda$ for $\lambda = (5,3,2,1)$}
    \label{fig_ferrers}
  \end{figure}
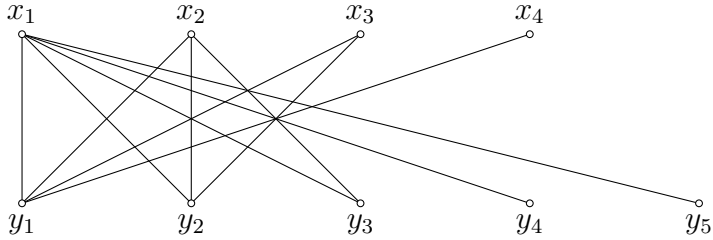

Next we consider the graphs studied in Galetto, 
{\it et al.} \cite{GHKKPVT} as our second family of graphs.  For integers $r \geq 3$ and $d \geq 2$,
we let $G_{r,d}$ be the graph with vertex set
$$V(G_{r,d}) = \{x_1,x_2,y_1,\ldots,y_d,z_1,\ldots,z_{2r-3}\}$$
and edge set
\begin{eqnarray*}
E(G_{r,d}) &= &\{\{x_i,y_j\} ~|~ 1 \leq i \leq 2,~~ 1 \leq j \leq d\} \cup \\
&  & \{\{x_1,z_1\},\{z_1,z_2\},\{z_2,z_3\},\ldots,\{z_{2r-4},z_{2r-3}\},
\{z_{2r-3},x_2\}\}.
\end{eqnarray*}
Observe that $G_{r,d}$ is the graph formed by taking
the complete bipartite graph $K_{2,d}$ (defined below), and then joining the
two vertices of degree $d$ by a path of length $2r-2$. 
As an example, see Figure \ref{fig_g35} for the graph $G_{3,5}$.
 \begin{figure}[!ht]
    \centering
    \begin{tikzpicture}[scale=0.45]
      \draw (0,0) -- (5,5);
      \draw (0,0) -- (15,5);
       \draw (5,0) -- (5,5);
      \draw (5,0) -- (15,5);
       \draw (10,0) -- (5,5);
      \draw (10,0) -- (15,5);
       \draw (15,0) -- (5,5);
      \draw (15,0) -- (15,5);
       \draw (20,0) -- (5,5);
      \draw (9,4.4) node{$a_5$};
      \draw (20,0) -- (15,5);
      \draw (2.2,3.5) node{$a_1$};
      \draw (2.4,1.5) node{$b_1$};
      \draw (4.3,3) node{$a_2$};
      \draw (6,3.1) node{$a_3$};
      \draw (8,3.2) node{$a_4$};
      \draw (6,1.2) node{$b_2$};
      \draw (11.6,.8) node{$b_3$};
      \draw (15.5,1) node{$b_4$};
      \draw (20,1)node{$b_5$};
      \draw (5,5) -- (5,8)node[midway, left] {$e_1$};
      \draw (5,8) -- (10,10)node[midway, above] {$e_2$};
      \draw (10,10) -- (15,8)node[midway, above] {$e_3$};
      \draw (15,8) -- (15,5)node[midway, right] {$e_4$};
   
      \fill[fill=white,draw=black] (0,0) circle (.1) node[below]{$y_1$};
       \fill[fill=white,draw=black] (5,0) circle (.1) node[below]{$y_2$};
       \fill[fill=white,draw=black] (10,0) circle (.1) node[below]{$y_3$};
       \fill[fill=white,draw=black] (15,0) circle (.1) node[below]{$y_4$};
       \fill[fill=white,draw=black] (20,0) circle (.1) node[below]{$y_5$};
       \fill[fill=white,draw=black] (5,5) circle (.1) node[left]{$x_1$};
       \fill[fill=white,draw=black] (15,5) circle (.1) node[right]{$x_2$};
       \fill[fill=white,draw=black] (5,8) circle (.1) node[left]{$z_1$};
       \fill[fill=white,draw=black] (15,8) circle (.1) node[right]{$z_3$};
       \fill[fill=white,draw=black] (10,10) circle (.1) node[above] {$z_2$};
     
    \end{tikzpicture}
    \caption{The graph $G_{3,5}$}
    \label{fig_g35}
 \end{figure}
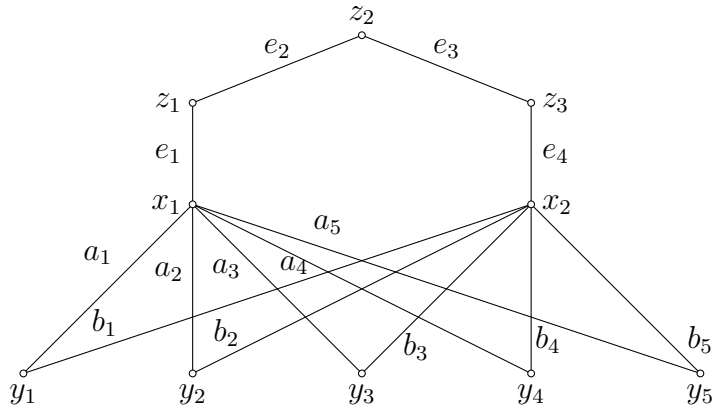
We label the edges so that $a_i = \{x_1,y_i\}$ and $b_i = \{x_2,y_i\}$ for $i=1,\ldots,d$, 
and $e_1 = \{x_1,z_1\}$, $e_{2r-2} = \{z_{2r-3},x_2\}$ and
$e_{i+1} = \{z_i,z_{i+1}\}$ for $1 \leq i \leq 2r-4$.

Using the above labelling, we can describe the universal
 Gr\"obner basis of $I_{G_{r,d}}$. 
 
 \begin{theorem}[{\cite[Corollary 3.3]{GHKKPVT}}]\label{universalGB}
 Fix  integers $r \geq 3$ and $d \geq 2$. A universal Gr\"obner  basis  for
 $I_{G_{r,d}}$ is given by
 $$\{a_ib_j - b_ia_j ~|~ 1\leq i < j\leq d\}\cup 
 \{a_ie_2e_4\cdots e_{2r-2} - b_ie_1e_3e_5 \cdots e_{2r-3} ~|~1\leq i \leq d \}.$$
 \end{theorem}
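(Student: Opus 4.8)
The plan is to combine the combinatorial description of the universal Gr\"obner basis in Theorem~\ref{generatordescription}(2) with the structure theory of primitive even closed walks, and then to carry out an elementary enumeration of the cycles of $G_{r,d}$. First I would record that $G_{r,d}$ is bipartite: since the path $x_1, z_1, z_2, \ldots, z_{2r-3}, x_2$ has even length $2r-2$, the vertices $x_1$ and $x_2$ lie in the same part, and one checks that $\{x_1, x_2, z_2, z_4, \ldots, z_{2r-4}\}$ together with $\{y_1,\ldots,y_d, z_1, z_3,\ldots, z_{2r-3}\}$ is a bipartition. In particular $G_{r,d}$ has no odd cycle, so by the classification of primitive even closed walks of a graph (see \cite{OH} and \cite[Ch.~10]{V}), in which every non-cycle case is built from a pair of odd cycles, the primitive even closed walks of $G_{r,d}$ are exactly its cycles, each contributing a primitive binomial. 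By Theorem~\ref{generatordescription}(2), $\mathcal{U}(I_{G_{r,d}})$ is therefore precisely the set of binomials attached to the cycles of $G_{r,d}$, and the problem is reduced to listing those cycles.

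For the enumeration I would exploit the fact that almost every vertex of $G_{r,d}$ has degree $2$. Each $y_i$ has neighbours exactly $x_1$ and $x_2$, and each $z_j$ has its two neighbours along the path. Hence any cycle using no vertex $z_j$ lies inside the complete bipartite subgraph on $\{x_1,x_2\}\cup\{y_1,\ldots,y_d\}$; as $x_1$ and $x_2$ each occur at most once in a cycle, such a cycle must be a $4$-cycle $x_1\, y_i\, x_2\, y_j\, x_1$ with $i<j$, giving $\binom{d}{2}$ cycles with associated binomials $a_i b_j - b_i a_j$. If instead a cycle uses some $z_j$, then following the forced degree-$2$ edges at $z_1, z_2, \ldots, z_{2r-3}$ forces the cycle to traverse the entire path, i.e.\ to use all of $e_1,\ldots,e_{2r-2}$ and to pass through $x_1$ and $x_2$; to close up it must then return from $x_2$ to $x_1$ through a single $y_i$ (again because $y_i$, $x_1$, $x_2$ each occur once). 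This yields, for each $1\le i\le d$, the $2r$-cycle $x_1, z_1, \ldots, z_{2r-3}, x_2, y_i, x_1$, and reading off its odd- and even-position edges with the labelling $e_1=\{x_1,z_1\}$, $e_{2r-2}=\{z_{2r-3},x_2\}$, $a_i=\{x_1,y_i\}$, $b_i=\{x_2,y_i\}$ gives the binomial $a_i e_2 e_4\cdots e_{2r-2} - b_i e_1 e_3\cdots e_{2r-3}$. The two families of cycles are disjoint (a cycle through some $z_j$ has at least $2r\ge 6$ edges and so is never a $4$-cycle), so combining them produces exactly the claimed set.

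The one genuinely nontrivial ingredient is the reduction in the first step: that in a bipartite graph the primitive even closed walks are precisely the even cycles. This is not elementary, since it rests on the full classification of primitive even closed walks of an arbitrary graph. Once that is granted, the remainder is a careful but routine degree-$2$ argument, and the only real care needed is in matching the edge-labelling conventions of \cite{GHKKPVT} and keeping the orientation of each cycle consistent so that the odd-indexed and even-indexed edges land in the correct terms of each binomial.
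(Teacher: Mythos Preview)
Your argument is correct. The paper itself does not prove this statement; it simply quotes it as \cite[Corollary 3.3]{GHKKPVT} and uses it as a black box in the proof of Theorem~\ref{families}(4). So there is no ``paper's own proof'' to compare against, and what you have supplied is a self-contained proof that the paper does not attempt.

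Your approach is sound: the observation that $G_{r,d}$ is bipartite (because the $z$-path has even length $2r-2$, placing $x_1$ and $x_2$ in the same colour class) reduces the determination of $\mathcal{U}(I_{G_{r,d}})$ to listing the cycles of $G_{r,d}$, since in a bipartite graph every primitive even closed walk is a cycle. Your enumeration of cycles via the degree-$2$ vertices $y_i$ and $z_j$ is complete and correctly matched to the edge labels. The one point worth flagging, which you already do, is that the reduction ``primitive even closed walk $=$ even cycle in the bipartite case'' relies on the structure theorem for primitive walks (each primitive walk is either an even cycle or is built from two odd cycles joined by a path or at a vertex); this is standard (see \cite{OH}, \cite[Ch.~10]{V}, or \cite{TT}) but not proved in the present paper, so your citation is appropriate.
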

 
The next result provides many examples of toric ideals
 which are geometrically vertex decomposable.
In the statement below, 
the {\it complete bipartite graph}
$K_{n,m}$ is the graph with vertex
set $V = \{x_1,\ldots,x_n,y_1,\ldots,y_m\}$ and edge set $\{\{x_i,y_j\} ~|~ 1 \leq i \leq n,~~ 1 \leq j \leq m \}$.

 \begin{theorem}\label{families}
 The toric ideals of the following
  families of graphs are geometrically
 vertex decomposable:
 \begin{enumerate}
    \item $G$ is a cycle;
    \item $G$ is a Ferrers graph $T_\lambda$
    for any partition $\lambda$;
    \item $G$ is a complete bipartite
    graph $K_{n,m}$; and 
    \item $G$ is the graph $G_{r,d}$
    for any $r\geq 3, d\geq 2$.
\end{enumerate}
\end{theorem}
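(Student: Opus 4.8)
The plan is to handle the four families separately. I would first observe that each of them is either covered outright by Theorem~\ref{thm: gvdBipartite} (even cycles, Ferrers graphs, complete bipartite graphs and the graphs $G_{r,d}$ are all bipartite) or has zero toric ideal (an odd cycle $C_{2k+1}$ has $I_{C_{2k+1}}=\langle 0\rangle$, which is geometrically vertex decomposable by convention); but the point of this subsection is to give self-contained arguments directly from Definition~\ref{gvd} that exploit the explicitly known (universal) Gr\"obner bases of these ideals, so that is the route I would follow. For (1), if $G$ is an odd cycle then $I_G=\langle 0\rangle$ as above, and if $G$ is an even cycle of length $2n$ then $I_G=\langle e_1e_3\cdots e_{2n-1}-e_2e_4\cdots e_{2n}\rangle$ is principal and generated by a difference of square-free monomials, hence geometrically vertex decomposable by Lemma~\ref{simplecases}(2) (as already noted in the example following Theorem~\ref{gluetheorem}).

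For (2) and (3), note that the complete bipartite graph $K_{n,m}$ is the Ferrers graph $T_{(m,\dots,m)}$, so it suffices to prove (2). I would induct on $|E(T_\lambda)|$, using that the elements of $\mathcal{U}(I_{T_\lambda})$ are doubly square-free (a consequence of bipartiteness; cf. Corso and Nagel \cite{CN}), so that the structure results of Section~3.2 apply. If $T_\lambda$ has a leaf (in particular if some part of $\lambda$ equals $1$), delete it via Lemma~\ref{removeleaves} and apply the inductive hypothesis. Otherwise choose $y$ to be the edge of a corner cell of $\lambda$; then Lemma~\ref{square-freey} and Theorem~\ref{toric_GVD} give a geometric vertex decomposition $\init_y(I_{T_\lambda})=C_{y,I}\cap(N_{y,I}+\langle y\rangle)$, where $N_{y,I}=I_{T_{\lambda'}}$ for a strictly smaller Ferrers graph $T_{\lambda'}$ by Lemma~\ref{linktoricidealgraph}, and $C_{y,I}$ is, by Theorem~\ref{toric_GVD}, the toric ideal of a subgraph of $T_\lambda$ together with a square-free monomial ideal. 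One then checks that $C_{y,I}$ is again geometrically vertex decomposable: in the complete bipartite case $C_{y,I}$ in fact collapses to an ideal generated by variables, while in general one iterates the same construction (or applies Theorem~\ref{tensorproduct} and Corollary~\ref{monomialcor} to split off the monomial part). Unmixedness of the ideals appearing follows from \cite[Lemma 2.8]{KR} (equidimensionality of $C_{y,I}$ when $I$ is unmixed) together with the radicality supplied by the primary decomposition of Corollary~\ref{decomposition1}.

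For (4), I would use the explicit universal Gr\"obner basis of $I_{G_{r,d}}$ from Theorem~\ref{universalGB}, whose elements are visibly doubly square-free. Taking $y=e_1$ and an $e_1$-compatible lexicographic order in which $e_1>e_3>e_5>\cdots>e_{2r-3}$ are the largest variables, Lemma~\ref{square-freey} yields $\init_{e_1}(I_{G_{r,d}})=C_{e_1,I}\cap(N_{e_1,I}+\langle e_1\rangle)$ with
\[
N_{e_1,I}=\langle a_ib_j-a_jb_i \mid 1\le i<j\le d\rangle=I_{K_{2,d}}
\]
(by Lemma~\ref{linktoricidealgraph}, since $e_2,\dots,e_{2r-2}$ become successive leaves in $G_{r,d}\setminus e_1$, so Lemma~\ref{removeleaves} applies) and
\[
C_{e_1,I}=I_{K_{2,d}}+\langle b_i\,e_3e_5\cdots e_{2r-3}\mid 1\le i\le d\rangle.
\]
Since $e_2,e_4,\dots,e_{2r-2}$ do not occur in $N_{e_1,I}$, part (3) and Theorem~\ref{tensorproduct} show $N_{e_1,I}$ is geometrically vertex decomposable. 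For $C_{e_1,I}$ I would perform successive geometric vertex decompositions with respect to $e_3$, then $e_5,\dots,e_{2r-3}$ (each time the relevant Gr\"obner basis is still square-free in the next variable, since $e_1$ has been eliminated): at every stage the ``$N$'' piece equals $I_{K_{2,d}}$, geometrically vertex decomposable by (3) and Theorem~\ref{tensorproduct}, and the ``$C$'' piece is $I_{K_{2,d}}+\langle b_i m'\rangle$ for a strictly shorter square-free monomial $m'$, until $m'=1$ and the ideal becomes $I_{K_{2,d}}+\langle b_1,\dots,b_d\rangle=\langle b_1,\dots,b_d\rangle$, generated by variables. Unmixedness at each step again follows from \cite[Lemma 2.8]{KR} together with Remark~\ref{structure_remark}. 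Hence $I_{G_{r,d}}$ is geometrically vertex decomposable.

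The step I expect to be the main obstacle is the bookkeeping of the unmixedness hypothesis as the induction descends through the auxiliary ideals $C_{y,I}$: each geometric vertex decomposition step requires the ideal being decomposed to be unmixed, and although \cite[Lemma 2.8]{KR} supplies equidimensionality from unmixedness of the parent ideal, one must separately secure radicality, which here rests on the doubly square-free structure of the relevant universal Gr\"obner bases via Theorem~\ref{toric_GVD} and Remark~\ref{structure_remark}. A related technical point is the explicit identification of $C_{y,I}$ in terms of the graph --- a step flagged as subtle in the discussion preceding Theorem~\ref{link_components} --- which is precisely why the concrete descriptions of $\mathcal{U}(I_{T_\lambda})$ from \cite{CN} and of $\mathcal{U}(I_{G_{r,d}})$ from \cite{GHKKPVT} are doing the essential work.
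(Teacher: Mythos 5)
Your opening observation is right and already disposes of the theorem: all four families are bipartite (odd cycles excepted, but there $I_G=\langle 0\rangle$), so Theorem \ref{thm: gvdBipartite} applies; the paper itself presents this subsection only as a collection of alternate, more self-contained proofs. Parts (1) and (3) of your argument coincide with the paper's. For part (4) you take a genuinely different route: you decompose at the path edge $e_1$ and then peel off $e_3,e_5,\dots,e_{2r-3}$ one at a time, with every ``$N$''-piece equal to $I_{K_{2,d}}$. The paper instead decomposes at $y=a_d$ and inducts on $d$, which makes $C_{y,I}=\langle b_1,\dots,b_{d-1},e_2e_4\cdots e_{2r-2}\rangle$ a monomial complete intersection (handled by Corollary \ref{monomialcor}) and $N_{y,I}=I_{G_{r,d-1}}$ in a single step. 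Your version is workable, but it forces you to carry the unmixedness and radicality bookkeeping through $r-2$ intermediate ideals of the form $I_{K_{2,d}}+\langle b_1m',\dots,b_dm'\rangle$, which the paper's choice of $y$ avoids entirely.

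The genuine gap is in your part (2). The paper does not run a direct induction on Ferrers graphs at all: it quotes \cite[Proposition 5.1]{CN} to identify $I_{T_\lambda}$ with the ideal of $2\times 2$ minors of a one-sided ladder, i.e., a Schubert determinantal ideal, and then invokes \cite[Proposition 5.2]{KR}. Your proposed induction is not closed under the operation that drives it: after one geometric vertex decomposition, $C_{y,I_{T_\lambda}}$ has the form $I_H+\langle \text{monomials}\rangle$ and is no longer the toric ideal of a Ferrers graph, so the inductive hypothesis does not apply to it. The suggested repair via Theorem \ref{tensorproduct} and Corollary \ref{monomialcor} fails because Theorem \ref{tensorproduct} requires the two summands to live in disjoint sets of variables, whereas the monomial generators of $C_{y,I}$ involve the same edge variables as the surviving binomials. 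To make a direct induction work one must enlarge the class of ideals to something closed under both the $C$ and $N$ operations --- exactly what Section \ref{sec:bipartite} does with the ideals $I^G_{\bf e}$ attached to path ordered matchings, and what the footnote to part (2) of the paper's proof alludes to via ladder determinantal ideals. As written, ``one iterates the same construction'' is the missing argument rather than a proof.
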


\begin{proof} 
(1) Suppose that $G$ is a cycle with $2n$ edges.  Then $I_G = \langle e_1e_3\cdots e_{2n-1} - e_2e_4\cdots e_{2n} \rangle$, so the result follows from Lemma \ref{simplecases} (2).  If $G$
is an odd cycle, then $I_G = \langle 0 \rangle$, and so it is geometrically
vertex decomposable by definition.

\noindent
(2)  As shown in the proof of 
\cite[Proposition 5.1]{CN}, the toric
ideal of $T_\lambda$ is generated by
the $2 \times 2$ minors of a one-sided ladder.
The ideal generated by the $2 \times 2$ minors of a one-sided
ladder is an example of Schubert determinantal ideal (e.g. see \cite{KMY}).   The conclusion now follows
from \cite[Proposition 5.2]{KR} which showed
that all Schubert determinantal ideals
are geometrically vertex decomposable.\footnote{It is not necessary to use the connection to Schubert determinantal ideals. Indeed, it is known from the ladder determinantal ideal literature that (mixed) ladder determinantal ideals from (two-sided) ladders possess initial ideals which are Stanley-Reisner ideals of vertex decomposable simplicial complexes (see \cite{GMN} and references therein). Then, an analogous proof to our proof of Theorem \ref{thm: gvdBipartite} can be given to show that these ideals are geometrically vertex decomposable.}

\noindent
(3) Apply the previous result using the partition
$\lambda = \underbrace{(m,m,\ldots,m)}_n$.

\noindent
(4)
Let $I = I_{G_{r,d}}$. Since it is a prime ideal, it is unmixed.
We first show that the statement holds if
$d =2$ and for any $r \geq 3$.
Let $y = a_2$, and consider the lexicographic order on 
 $\mathbb{K}[E(G_{r,d})] = \mathbb{K}[a_1,a_2,b_1,b_2,e_1,\ldots, e_{2r-2}]$
 with 
 $a_2 > a_1 > b_2 > b_1 > e_{2r-2} >\cdots  > e_1.$
This monomial order is $y$-compatible.

By using the universal Gr\"obner basis of 
Theorem \ref{universalGB}, we have
\begin{eqnarray*}
C_{y,I} &=& \langle b_1,e_2e_4\cdots e_{2r-2},a_1e_2\cdots e_{2r-2}
- b_1e_1e_3\cdots e_{2r-3} \rangle =  \langle b_1,e_2e_4\cdots e_{2r-2} \rangle
\end{eqnarray*}
and $N_{y,I} =  \langle a_1e_2\cdots e_{2r-2} - b_1e_1 \cdots e_{2r-3} \rangle.$   Note
that each binomial in $\mathcal{U}(I)$ is doubly
square-free, so we can use
Lemma \ref{square-freey} to deduce that
$${\rm in}_{y}(I) = C_{y,I} \cap (N_{y,I} + \langle y \rangle)$$ is a geometric vertex decomposition.   
To complete
this case, note that $C_{y,I}$ is a monomial
complete intersection in $\mathbb{K}[a_1,b_1,b_2,e_1,\ldots,e_{2r-2}],$
so this ideal is geometrically 
vertex decomposable by Corollary \ref{monomialcor}.
The ideal  $N_{y,I}$ is a principal ideal
generated by $a_1e_2\cdots e_{2r-2} - b_1e_1 \cdots e_{2r-3}$, so it is geometrically vertex decomposable
by Lemma \ref{simplecases} (2).  So, for
all $r \geq 3$, the toric ideal $I_{G_{r,2}}$ is
geometrically vertex decomposable.

We proceed by induction on $d$. Assume $d > 2$ and let 
$r \geq 3$.  Let $y = a_d$, and consider the lexicographic order on 
 $\mathbb{K}[E(G_{r,d})] = \mathbb{K}[a_1,\ldots,a_d,b_1,\ldots,b_d,e_1,\ldots, e_{2r-2}]$
 with 
 $a_d > \cdots > a_1 > b_d > \cdots > b_1 > e_{2r-2} >\cdots  > e_1.$
This monomial order is $y$-compatible.

By again appealing to Theorem \ref{universalGB}, we have
\begin{eqnarray*}
C_{y,I} &= &\langle b_1,\ldots,b_{d-1},
e_{2}e_4\cdots e_{2r-2} \rangle +\langle a_ib_j - b_ia_j ~|~ 1\leq i < j\leq d-1 \rangle + \\
&& \langle a_ie_2e_4\cdots e_{2r-2} - b_ie_1e_3e_5 \cdots e_{2r-3} ~|~1\leq i \leq d-1 \rangle \\
& = &  \langle b_1,\ldots,b_{d-1},e_2e_4\ldots e_{2r-2} \rangle,
\end{eqnarray*}
where the last equality comes from removing redundant generators.
On the other hand, by Lemma \ref{linktoricidealgraph}, $N_{y,I} = I_K$ where     $K = G_{r,d} \setminus a_d$.  Note that in this graph, the
edge $b_d$ is a leaf, and consequently, 
$N_{y,I} = I_{G_{r,d-1}}$ since $K \setminus b_d = G_{r,d-1}$.
     
We can again use Lemma \ref{square-freey} to deduce that
$${\rm in}_{y}(I) = C_{y,I} \cap (N_{y,I} + \langle y \rangle)$$ is a geometric vertex decomposition. 

To complete the proof, note that in the ring $\mathbb{K}[a_1,\ldots,a_{d-1},b_1,\ldots,b_d,e_1,\ldots,
e_{2r-2}]$, the ideal $C_{y,I}$ is geometrically vertex decomposable by Corollary \ref{monomialcor} since this ideal is a complete intersection monomial ideal. Also, the ideal $N_{y,I} = I_{G_{r,d-1}}$ is geometrically vertex decomposable by induction. Thus, $I_{G_{r,d}}$ is geometrically vertex decomposable for all $d \geq 2$ and $r \geq 3$.
\end{proof}

As we will see in the remainder of the paper, there are many non-bipartite graphs which have geometrically vertex decomposable toric ideals.

\section{Toric ideals with a square-free degeneration}\label{section_square-free}

As mentioned in the introduction, an important question in liaison
theory asks if every arithmetically Cohen-Macaulay subscheme of $\mathbb{P}^n$ is 
glicci (e.g. see \cite[Question 1.6]{KMMNP}).  
As shown by Klein and
Rajchgot (see Theorem \ref{gvd=>glicci}), if a homogeneous ideal
$I$ is a geometrically vertex decomposable ideal, then $I$ defines
an arithmetically Cohen-Macaulay subscheme, and furthermore, this scheme is glicci.  
It is therefore natural to ask if every toric ideal $I_G$
of a finite graph $G$ that has the property that $\mathbb{K}[E(G)]/I_G$ is Cohen-Macaulay is
also geometrically vertex decomposable.  If true, then
this would imply that the scheme defined by $I_G$ is glicci.  

Instead of considering all toric ideals of graphs such that $\mathbb{K}[E(G)]/I_G$ is Cohen-Macaulay, we restrict ourselves to ideals $I_G$ which possess a square-free Gr\"obner degeneration with respect to some monomial order $<$. 
By Theorem \ref{sqfree=>cm}, $\mathbb{K}[E(G)]/I_G$ is 
Cohen-Macaulay. Furthermore, if $\init_{<}(I_G)$ defines a vertex decomposable simplicial complex via the Stanley-Reisner correspondence, then $I_G$ would be geometrically 
vertex decomposable with respect to a \textit{lexicographic} monomial order $<$ (see \cite[Proposition 2.14]{KR}).  
We propose the conjecture below. Note that this conjecture
would imply that any toric ideal of a graph with a square-free initial ideal is 
glicci.  

\begin{conjecture}\label{mainconjecture}
Let $G$ be a finite simple graph with toric ideal $I_G \subseteq \mathbb{K}[E(G)]$.
If $\init_{<}(I_G)$ is square-free with respect to a lexicographic monomial order $<$, then $I_G$ is geometrically vertex decomposable.
\end{conjecture}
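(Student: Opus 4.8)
\emph{Sketch of a proposed proof.}
The plan is to argue by induction on $|E(G)|$, peeling off at each stage the largest variable of the lexicographic order, while enlarging the inductive class to all ideals of the form $M+I_H$, with $H$ a subgraph of $G$ and $M$ a square-free monomial ideal; this enlargement is forced because, exactly as in Theorem~\ref{toric_GVD} and Remark~\ref{structure_remark}, the ideal $C_{y,I_G}$ is typically of this mixed shape rather than toric. Fix a lexicographic order $<$ with $\init_<(I_G)$ square-free. By Theorem~\ref{sqfree=>cm}, $\mathbb{K}[E(G)]/I_G$ is Cohen--Macaulay, so $I_G$ is unmixed; this anchors the unmixedness bookkeeping. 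If the largest variable $e$ of $<$ does not occur in the reduced Gr\"obner basis of $I_G$ (for instance if $e$ is a leaf, by Lemma~\ref{removeleaves}), then $I_G=I_{G\setminus e}$ viewed in $\mathbb{K}[E(G)\setminus\{e\}]$, and we conclude by induction on the number of edges. So we may assume that the largest variable $y=e$ of $<$ occurs in a minimal generator of $I_G$; note $<$ is then $y$-compatible, which is where we use that it is lexicographic.

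Since $\init_<(I_G)$ is square-free and $<$ is $y$-compatible, the reduced Gr\"obner basis of $I_G$ is square-free in $y$, so Lemma~\ref{square-freey} produces the geometric vertex decomposition $\init_y(I_G)=C_{y,I_G}\cap(N_{y,I_G}+\langle y\rangle)$ together with Gr\"obner bases showing that $\init_<(N_{y,I_G})$ and $\init_<(C_{y,I_G})$ are again square-free monomial ideals for the induced lexicographic order. By Lemma~\ref{linktoricidealgraph}, $N_{y,I_G}=I_{G\setminus e}$ is the toric ideal of a graph on one fewer variable with a square-free lexicographic initial ideal, hence geometrically vertex decomposable by the induction hypothesis. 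On the other side, $C_{y,I_G}=I_{G\setminus e}+M^G_y$ with $M^G_y$ a square-free monomial ideal, and since $\init_<(C_{y,I_G})$ is square-free, $C_{y,I_G}$ is radical. Running the same argument with $C_{y,I_G}$ in place of $I_G$ keeps us inside the class $\{M'+I_{H'}\}$ and preserves square-freeness of the initial ideal; so the recursion closes \emph{provided one knows that each such $C$-ideal is unmixed}, at which point the induction hypothesis for the enlarged class applies and the geometric vertex decomposition above completes the inductive step.

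That last proviso is the crux, and it is exactly the equidimensionality hypothesis isolated in Theorem~\ref{framework}. Each $C$-ideal in the recursion is radical, so its unmixedness is equivalent to equidimensionality, and by \cite[Lemma~2.8]{KR} the ideal $C_{y,J}$ is equidimensional whenever $J$ is unmixed and has a geometric vertex decomposition with respect to $y$; the difficulty is that to feed this into the induction one would already need the preceding $C$-ideal to be unmixed, so the Cohen--Macaulayness of $I_G$ alone does not bootstrap through. The main obstacle is therefore to prove directly that \emph{every} member of the recursively generated family of ideals $M+I_H$ with square-free lexicographic initial ideal is equidimensional; this seems to require genuine combinatorial control of primitive even closed walks and of the explicit decompositions in Theorem~\ref{link_components} and Corollary~\ref{decomposition1}, rather than formal properties of geometric vertex decomposition --- and it is precisely where the quadratic case of Theorem~\ref{quadratic_GVD} is tractable. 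Once equidimensionality is granted, Lemma~\ref{GVD_linkage} shows that each step is an elementary $G$-biliaison of height one, so Lemma~\ref{linkage_CM} propagates the Cohen--Macaulay property down the recursion, and Theorem~\ref{gvd=>glicci} yields the final glicci assertion.
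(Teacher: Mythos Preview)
Your sketch is honest about its status: it is not a proof but a reduction to an unresolved hypothesis, and that is exactly the situation in the paper. The statement is a \emph{conjecture}, and the paper does not prove it in general. What the paper does in Section~6.1 is formalize the recursion you outline and isolate the obstruction, so your proposal and the paper's ``framework'' are essentially the same strategy, with one notable difference in bookkeeping.

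The difference is precision in the inductive class. You enlarge to ``all ideals of the form $M+I_H$ with $H$ a subgraph and $M$ a square-free monomial ideal,'' which is too loose to make the recursion close cleanly: an arbitrary such ideal need not have the natural generators as a Gr\"obner basis, and there is no reason the $C$- and $N$-operations should preserve an unstructured class. The paper instead introduces the indexed family $I^G_{E,F}=I_{G\setminus(E\cup F)}+M^G_{E,F}$, where the monomial part $M^G_{E,F}$ is defined via three explicit conditions on leading terms in $\mathcal{U}(I_G)$ relative to the partition $E\sqcup F=\{e_1,\dots,e_k\}$. With this tighter class the paper can actually \emph{prove} your claimed closure properties: Theorem~\ref{CN_Grobner} establishes that the natural generators of each $I^G_{E,F}$ form a Gr\"obner basis with square-free initial ideal, that $y=e_k$ gives a geometric vertex decomposition, and that $C_{y,I^G_{E,F}}=I^G_{E\cup\{e_k\},F}$ and $N_{y,I^G_{E,F}}=I^G_{E,F\cup\{e_k\}}$ remain in the family. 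Theorem~\ref{statementb} handles the base case. These are your statements about the recursion closing and about square-freeness propagating, but made rigorous.

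You correctly identify the genuine gap: unmixedness (equivalently, equidimensionality, since everything is radical) of the $C$-ideals at every stage. The paper names this as statement $(C)$ and Theorem~\ref{framework} records that $(C)$ is all that remains. Your observation that \cite[Lemma~2.8]{KR} does not bootstrap---because applying it requires the \emph{previous} ideal to already be unmixed---is exactly why the conjecture is open, and your remark that the quadratic case succeeds because equidimensionality can be checked directly there matches Lemma~\ref{CMspecialcase}. So there is no error in your reasoning, but there is also no proof: you have rediscovered the paper's reduction and its obstruction.
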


\noindent
By Theorem \ref{thm: gvdBipartite}, Conjecture \ref{mainconjecture} is true in the bipartite setting. In this section, we build a framework for proving Conjecture \ref{mainconjecture}. In particular, we reduce 
Conjecture \ref{mainconjecture} to checking whether certain related ideals are equidimensional, and we prove Conjecture \ref{mainconjecture} for the case where the generators in the universal Gr\"obner basis $\mathcal{U}(I_G)$ are quadratic. 

\subsection{Framework for the conjecture}
Suppose that $G$ is a labelled graph with $n$ edges $e_1,\dots, e_n$ and toric ideal $I_G\subseteq \mathbb{K}[E(G)]$. Let $<_G$ be the lexicographic monomial order induced from the ordering of the edges coming from the labelling.  That is, $e_1>e_2>\cdots >e_n$. 

We define a class of ideals of the form $I^G_{E,F}$ such that $E\cup F=E_k=\{e_1,\ldots,e_k\}$ for some $0\leq k \leq n$ with
$E \cap F = \emptyset$. Here $E_0 = \emptyset$. Define 
\[I^{G}_{E,F} := I_{G\setminus (E\cup F)} + M^{G}_{E,F}\] 
where $I_{G\setminus (E \cup F)}$ is the toric ideal of the graph $G$ with
the edges $E\cup F$ removed, and 
where $M^G_{E,F}$ is the ideal of $\mathbb{K}[e_1,\dots, e_n]$ generated by those monomials $m$ with $m\ell -p\in \mathcal{U}(I_G)$ such that:

\begin{enumerate}
    \item $\init_{<_G}(m\ell -p) = m\ell$,
    \item $\ell$ is a monomial only involving some non-empty subset of variables in $E$, and
    \item no $f\in F$ divides $m\ell$ and no $e\in E$ divides $m$.
\end{enumerate}
If there are no monomials $m$ which satisfy conditions (1), (2), and (3), we set $M^G_{E,F} = \langle 0 \rangle$.  
Therefore $M^G_{\emptyset,F} =\langle 0\rangle$ and $I^G_{\emptyset,F} = I_{G\setminus F}$ (which is generated by those primitive closed even walks in $G$ which do not pass through any edge of $F=E_k$). On the other hand, if there is an
$\ell -p \in \mathcal{U}(I_G)$ with ${\rm in}_{<_G}(\ell -p) = \ell$ 
where $\ell$ is a monomial only involving the variables in $E$, then we take
$m = 1$, and so $M_{E,F}^G = \langle 1 \rangle$.

There is a natural set of generators for $I^G_{E,F}$ using the primitive closed even walks of $I_G$. 
In particular, the ideal $I^G_{E,F}$ is generated by the
set
\[\mathcal{U}(I_{G\setminus (E\cup F)})\cup\mathcal{U}(M^G_{E,F}),\] 
where $\mathcal{U}(I_{G\setminus (F\cup E)})$ is the set of binomials defined by primitive closed even walks of the graph $G\setminus (E \cup F)$, and $\mathcal{U}(M^G_{E,F})$ are those monomials $m$ appearing in a generator of $\mathcal{U}(I_G)$ and satisfying conditions (1), (2), and (3) above.   Because $M^G_{E,F}$ is a monomial ideal,
its minimal generators form a universal Gr\"obner
basis, so our notation makes sense.
Going forward, we restrict our attention to the case where $\init_{<_G}(I_G)$ is square-free (this setting includes families of graphs like gap-free graphs \cite{DAli} for certain choices of $<_G$). 

To illustrate some of the above ideas, we consider the 
case that $E \cup F = E_1 = \{e_1\}$.  This example also
highlights a connection 
to the geometric vertex decomposition of
$I_G$ with respect to $e_1$.

\begin{example}\label{tree} Assume that $\init_{<_G}(I_G)$ is square-free. Then we can write \[\mathcal{U}(I_G) = \{e_1m_1-p_1,\ldots ,e_1m_r-p_r,t_1,\ldots, t_s\}\] where $e_1$ does not divide $m_i,p_i$ or any term of $t_i$. This set defines a universal Gr\"obner basis for $I_G = I^G_{\emptyset, \emptyset}$. Since $I_{G\setminus e_1}=\langle t_1,\ldots, t_s \rangle$ (by Lemma~\ref{linktoricidealgraph}), we can write 
\begin{align*}
\init_{e_1}(I^G_{\emptyset,\emptyset}) 
&=\langle e_1m_1,\ldots, e_1m_r, t_1,\ldots, t_s \rangle\\
&= \langle e_1,t_1,\ldots, t_s \rangle\cap \langle m_1,\ldots, m_r, t_1,\ldots, t_s \rangle\\
&= (\langle e_1\rangle + I_{G\setminus e_1})\cap (M^G_{\{e_1\},\emptyset} + I_{G\setminus e_1})\\
&= (\langle e_1\rangle + I_{G\setminus e_1} + M^G_{\emptyset,\{e_1\}} )\cap I^G_{\{e_1\},\emptyset}\\
&= (\langle e_1\rangle + I^G_{\emptyset,\{e_1\}})\cap I^G_{\{e_1\},\emptyset}. 
\end{align*}
Note that $I_{G\setminus e_1}  = I_{G\setminus e_1}+M^G_{\emptyset, \{e_1\}}$ since $M^G_{\emptyset, \{e_1\}} = \langle 0 \rangle$. 

Note that if we take $y=e_1$ and $I=I^G_{\emptyset,\emptyset}$, then we get $C_{y,I} = I^G_{\{e_1\},\emptyset}$ and $N_{y,I} = I^G_{\emptyset,\{e_1\}}$. That is, $y=e_1$ defines a geometric vertex decomposition of $I_G$. Therefore, when $E \cup F = E_1=\{e_1\}$, 
either $e_1\in E$ or $e_1\in F$, and each case appears in the geometric vertex decomposition. \qed
\end{example}

If we continue the process by taking $\init_{e_2}(\cdot)$ of $I^G_{\{e_1\},\emptyset}$ and of $I^G_{\emptyset,\{e_1\}}$, we get one of four possible $C_{y,I}$ and $N_{y,I}$ ideals, each corresponding to a possible distribution of $\{e_1,e_2\}$ into the disjoint sets $E$ and $F$ such that $E\cup F=E_2$.  Figure \ref{relationofideals} shows the relationship
between the ideals $I_{E,F}^G$ for the cases $E\cup F = E_i$ for $i=0,\ldots,3$.

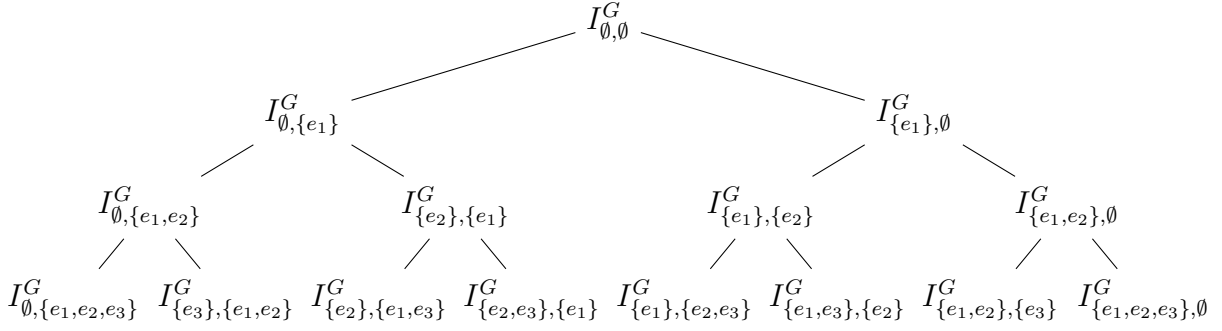
\begin{figure}
 \begin{tikzpicture}[sibling distance=24em, scale=0.82]
  \node {$I^G_{\emptyset, \emptyset}$}
    child { node {$I^G_{\emptyset,\{e_1\}}$}[sibling distance=12em]
      child { node {$I^G_{\emptyset, \{e_1,e_2\}}$}[sibling distance=6em]
        child {node {$I^G_{\emptyset, \{e_1,e_2,e_3\}}$}}
        child {node {$I^G_{\{e_3\}, \{e_1,e_2\}}$}}}
      child { node {$I^G_{\{e_2\}, \{e_1\}}$}[sibling distance=6em]
        child {node {$I^G_{\{e_2\}, \{e_1,e_3\}}$}}
        child {node {$I^G_{\{e_2,e_3\}, \{e_1\}}$}}}}
     child { node {$I^G_{\{e_1\},\emptyset}$}[sibling distance=12em]
      child { node {$I^G_{\{e_1\},\{e_2\}}$}[sibling distance=6em]
        child {node {$I^G_{\{e_1\},\{e_2,e_3\}}$}}
        child {node {$I^G_{\{e_1,e_3\},\{e_2\}}$}}}
      child { node {$I^G_{\{e_1,e_2\},\emptyset}$}[sibling distance=6em]
        child {node {$I^G_{\{e_1,e_2\},\{e_3\}}$}}
        child {node {$I^G_{\{e_1,e_2,e_3\},\emptyset}$}}}};
\end{tikzpicture}
\caption{The relation between the ideals $I^G_{E,F}$}
    \label{relationofideals}
\end{figure}

One strategy to verify Conjecture \ref{mainconjecture} is
to prove the following three statements:

\begin{itemize}
    \item[$(A)$] Given $I=I^G_{E,F}$ such that $E\cup F=E_{k-1}$ and $I\neq \langle 0\rangle$ or $\langle 1\rangle$, then $y=e_k$ defines a geometric vertex decomposition. Furthermore, $N_{y,I}$ and $C_{y,I}$ must also be of the form $I^G_{E',F'}$ where $E'\cup F' = E_k$.
    \item[$(B)$] If $E \cup F = E_n$, then $I^{G}_{E,F} = \langle 0 \rangle$ or $\langle 1 \rangle$.
    \item[$(C)$] For any $E\cup F=E_k$, the ideal $I^G_{E,F}$ must be unmixed.
\end{itemize}
Indeed, the next theorem verifies that proving
$(A), (B)$, and $(C)$ suffices to show that $I_G$
is geometrically vertex decomposable.

\begin{theorem}\label{thingstocheck}
Let $G$ be a finite simple graph with toric ideal $I_G \subseteq \mathbb{K}[E(G)]$, and suppose that $\init_{<}(I_G)$ is square-free with respect to a lexicographic monomial order $<$.
If statements $(A)$, $(B)$, and $(C)$ are true, then $I_G$ is geometrically vertex decomposable.
\end{theorem}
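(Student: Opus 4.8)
The plan is to prove Theorem~\ref{thingstocheck} by induction, walking down the binary tree of ideals $I^G_{E,F}$ depicted in Figure~\ref{relationofideals}. The root of the tree is $I_G = I^G_{\emptyset,\emptyset}$ (the case $E \cup F = E_0 = \emptyset$), and a node $I^G_{E,F}$ with $E \cup F = E_{k-1}$ has two children obtained by placing $e_k$ into either $E$ or $F$. Statement $(A)$ guarantees that for any node that is not $\langle 0 \rangle$ or $\langle 1 \rangle$, the variable $y = e_k$ defines a geometric vertex decomposition of $I = I^G_{E,F}$, and moreover that $C_{y,I}$ and $N_{y,I}$ are precisely the two children $I^G_{E',F'}$ with $E' \cup F' = E_k$. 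Statement $(C)$ gives unmixedness at every node. Statement $(B)$ provides the terminating condition: once $E \cup F = E_n$ has exhausted all edge-variables, the node is $\langle 0 \rangle$ or $\langle 1 \rangle$.

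Here is how I would organize the induction. I would prove, by descending induction on $n - k$ (equivalently, ascending induction on the ``depth'' $k$ measured from the leaves), the statement: \emph{for every pair $E, F$ with $E \cap F = \emptyset$ and $E \cup F = E_k$, the ideal $I^G_{E,F}$ is geometrically vertex decomposable in $\mathbb{K}[E(G)]$}. For the base case $k = n$, statement $(B)$ says $I^G_{E,F}$ is either $\langle 0 \rangle$ or $\langle 1 \rangle$; the ideal $\langle 1 \rangle$ satisfies Definition~\ref{gvd}(1), and $\langle 0 \rangle$ is generated by the empty set of variables, so both are geometrically vertex decomposable. For the inductive step, suppose $0 \le k < n$ and that every ideal $I^G_{E',F'}$ with $E' \cup F' = E_{k+1}$ is geometrically vertex decomposable; fix $E, F$ with $E \cup F = E_k$. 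If $I^G_{E,F} = \langle 0 \rangle$ or $\langle 1 \rangle$, we are done as in the base case (and more generally if it happens to be generated by a subset of the variables, Definition~\ref{gvd}(1) applies). Otherwise, statement $(A)$ applied with $y = e_{k+1}$ gives a geometric vertex decomposition $\init_y(I^G_{E,F}) = C_{y,I} \cap (N_{y,I} + \langle y \rangle)$ where $C_{y,I} = I^G_{E \cup \{e_{k+1}\}, F}$ and $N_{y,I} = I^G_{E, F \cup \{e_{k+1}\}}$ (in the appropriate order, matching Example~\ref{tree}), and both of these lie at level $E_{k+1}$. Contracting to $\mathbb{K}[e_1,\ldots,\widehat{e_{k+1}},\ldots,e_n]$ and invoking the inductive hypothesis, both contractions are geometrically vertex decomposable. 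Combined with unmixedness of $I^G_{E,F}$ from statement $(C)$, Definition~\ref{gvd}(2) is satisfied, so $I^G_{E,F}$ is geometrically vertex decomposable. Taking $k = 0$ then gives that $I_G = I^G_{\emptyset,\emptyset}$ is geometrically vertex decomposable.

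One technical point to handle carefully is that the ideals $C_{y,I^G_{E,F}}$ and $N_{y,I^G_{E,F}}$, as produced by statement $(A)$, live in $\mathbb{K}[E(G)]$, and their identification as ideals of the form $I^G_{E',F'}$ is also as ideals of $\mathbb{K}[E(G)]$; but Definition~\ref{gvd}(2) requires checking geometric vertex decomposability of the \emph{contractions} to $\mathbb{K}[e_1,\ldots,\widehat{e_{k+1}},\ldots,e_n]$. Since $I^G_{E',F'}$ is generated by elements of $\mathbb{K}[E(G)]$ in which $e_{k+1}$ does not appear (the binomials $\mathcal{U}(I_{G\setminus(E'\cup F')})$ involve only edges not in $E'\cup F' \ni e_{k+1}$, and the monomial generators $\mathcal{U}(M^G_{E',F'})$ satisfy condition (3), so $e_{k+1}$ divides neither $m$ nor $\ell$ when $e_{k+1} \in E'$, and analogously is excluded when $e_{k+1} \in F'$), the contraction is just the same generating set viewed in the smaller ring, so ``being of the form $I^G_{E',F'}$'' is stable under this contraction and the induction goes through cleanly. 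This bookkeeping — matching the ambient rings and confirming $e_{k+1}$ genuinely drops out — is the only real subtlety; the logical skeleton is a routine tree induction once $(A)$, $(B)$, $(C)$ are granted. (Of course, the genuine difficulty of the conjecture is pushed entirely into verifying $(A)$, $(B)$, and $(C)$ themselves, which this theorem does not attempt.)
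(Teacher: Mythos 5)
Your proof is correct and follows essentially the same route as the paper: a descending induction on $|E\cup F|$ with base case $E\cup F = E_n$ handled by $(B)$, the inductive step supplied by $(A)$ and $(C)$, and the conclusion read off at $I^G_{\emptyset,\emptyset}=I_G$. Your extra remark about the contraction to $\mathbb{K}[e_1,\ldots,\widehat{e_{k+1}},\ldots,e_n]$ addresses a bookkeeping point the paper leaves implicit, and it is handled correctly.
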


\begin{proof}
Let $n$ be the number of edges of $G$.   We  show that 
for all sets $E$ and $F$ such that  $E \cup F = E_k$, the ideal $I_{E,F}^G$ is geometrically vertex decomposable, and in particular, $I_{\emptyset,\emptyset}^G = I_G$ is geometrically vertex decomposable.  We do descending
induction on $|E \cup F|$.   If $|E \cup F| = n$, then
$E \cup F = E_n$, and so by statement $(B),$ $I^G_{E,F} = \langle 0 \rangle$ or $\langle 1 \rangle$, both of which are 
geometrically vertex decomposable by definition.

For the induction step, assume that all ideals of the form
$I^G_{E,F}$ with $E \cup F = E_\ell$ with $\ell \in \{k,\ldots,n\}$ are geometrically vertex decomposable.  
Suppose that $E$ and $F$ are two sets such that $E \cup F = E_{k-1}$.  The ideal $I^G_{E,F}$ 
is unmixed by 
statement $(C)$. If $I^G_{E,F}$ is $\langle 0\rangle$ or $\langle 1\rangle$, then it is geometrically vertex decomposable by definition. Otherwise, by statement $(A)$, 
the variable $y=e_k$ defines a geometric
vertex decomposition of $I =I_{E,F}^G$, i.e.,
$${\rm in}_{y}(I_{E,F}^G) = C_{y,I} \cap (N_{y,I} + \langle y \rangle).$$
Moreover, also by statement $(A)$, the ideals
$C_{y,I}$ and $N_{y,I}$ have the form $I^G_{E',F'}$ with
$E' \cup F'= E_k$.  By induction, these two ideals
are geometrically vertex decomposable.  So, $I^G_{E,F}$ is 
geometrically vertex decomposable.
\end{proof}

We now show that $(A)$ and $(B)$ are always true.  Thus,
to prove Conjecture \ref{mainconjecture}, one needs
to verify $(C)$.  In fact, we will show that it is enough
to show that $\mathbb{K}[E(G)]/I_{E,F}$ is equidimensional for
all ideals of the form $I^G_{E,F}$.

We begin by proving that statement
$(A)$ holds if ${\rm in}_{<_G}(I_G)$ is a square-free
monomial ideal.  In fact, we prove some additional
properties about the ideals $I^G_{E,F}$.

\begin{theorem}\label{CN_Grobner}
Let $I_G$ be the toric ideal of a finite simple graph $G$ such that $\init_{<_G}(I_G)$ is square-free.   
For each $k \in \{1,\ldots,n\}$, let
$E,F$ be  disjoint subsets of $\{e_1,\ldots,e_n\}$
such that $E \cup F  = E_{k-1} = \{e_1,\ldots,e_{k-1}\}$.
Then 
\begin{enumerate}
    \item The natural generators $\mathcal{U}(I_{G\setminus (E \cup F)}) \cup \mathcal{U}(M^G_{E,F})$ of $I^G_{E,F}$ form
    a Gr\"obner basis for $I^G_{E,F}$ with respect to
    $<_G$.   Furthermore, ${\rm in}_{<_G}(I^G_{E,F})$ is a square-free monomial ideal.
    \item $I^G_{E,F}$ is a radical ideal.
    \item The variable $y=e_k$ defines a geometric
    vertex decomposition of $I^{G}_{E,F}$.
    \item  If $I = I^{G}_{E,F}$ and $y=e_k$, then
    $C_{y,I} = I^G_{E \cup \{e_k\},F}$ and $N_{y,I} =
    I^G_{E,F\cup\{e_k\}}$;  in particular,
     $$ \init_{e_k}(I^G_{E,F}) = I^G_{E\cup\{e_k\},F}\cap ( I^G_{E,F\cup \{e_k\}} +\langle e_k \rangle).$$
\end{enumerate}
\end{theorem}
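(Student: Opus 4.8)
The plan is to prove the four statements essentially simultaneously, since they are intertwined: we will show (1) first, because the Gröbner-basis statement drives everything else, then deduce (2), (3), and (4) from it. The main technical input will be the argument used in Constantinescu–Gorla \cite[Lemma 2.6, Remark 2.7]{CG} and mirrored in our Lemma \ref{lem:gbBipartite2} and Proposition \ref{lem:bipartiteGVDgeneral}, generalized to arbitrary (not necessarily bipartite) graphs $G$ with a square-free lexicographic initial ideal.

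For part (1), I would argue as follows. Since $\init_{<_G}(I_G)$ is square-free and $<_G$ is the lex order with $e_1 > \cdots > e_n$, the reduced Gröbner basis of $I_G$ (equivalently, since $I_G$ is toric, we may work with $\mathcal{U}(I_G)$) consists of binomials $m\ell - p$ with $\init_{<_G}(m\ell - p) = m\ell$, and each such leading term is square-free. The generators $\mathcal{U}(I_{G\setminus(E\cup F)}) \cup \mathcal{U}(M^G_{E,F})$ of $I^G_{E,F}$ are then a subset of the "pieces" of $\mathcal{U}(I_G)$ (binomials from walks avoiding $E \cup F$, together with the monomial pieces $m$). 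The key point is a Buchberger-criterion check: $S$-polynomials among the binomials in $\mathcal{U}(I_{G\setminus(E\cup F)})$ reduce to zero because they already did in $I_{G\setminus(E\cup F)}$ (a toric ideal, so its closed-even-walk binomials are a Gröbner basis — or one invokes \cite[Remark 2.7]{CG}-style reasoning); $S$-polynomials involving a monomial generator $m \in M^G_{E,F}$ reduce to zero because $m$ being a monomial forces the $S$-polynomial to be a monomial multiple of one side of a binomial, and the combinatorics of walks (exactly as in \cite[Remark 2.7]{CG}) produces a smaller monomial in $M^G_{E,F}$ dividing it. I would lean heavily on the cited Constantinescu–Gorla arguments, noting that their bipartiteness hypothesis is not used in the relevant lemmas once one has the square-free lex initial ideal as a standing assumption. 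Then $\init_{<_G}(I^G_{E,F})$ is generated by the square-free monomials $m\ell$ and the square-free leading terms of the walk-binomials, hence is a square-free monomial ideal.

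Parts (2), (3), (4) then follow quickly. For (2): an ideal whose initial ideal (with respect to some order) is square-free is radical — this is a standard fact (a square-free monomial initial ideal forces the ideal itself to be radical), so (1) gives (2) immediately. For (3) and (4): by (1), the Gröbner basis $\mathcal{U}(I_{G\setminus(E\cup F)}) \cup \mathcal{U}(M^G_{E,F})$ of $I^G_{E,F}$ is square-free in $y = e_k$ — indeed each leading term is square-free, so every element $g$ of this basis has $\init_{e_k}(g) = e_k^{d}q$ with $d \in \{0,1\}$. Lemma \ref{square-freey} then applies verbatim: $e_k$ defines a geometric vertex decomposition, and $\{q_i\}$ (resp.\ $\{q_i : d_i = 0\}$) is a Gröbner basis of $C_{e_k, I}$ (resp.\ $N_{e_k,I}$). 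It remains to identify these two ideals with $I^G_{E\cup\{e_k\},F}$ and $I^G_{E,F\cup\{e_k\}}$. Splitting the generators by whether $e_k$ divides the leading term: the $g$ with $e_k \nmid \init(g)$ are exactly (i) the walk-binomials avoiding $e_k$, which together with those avoiding $E\cup F$ give the walk-binomials avoiding $(E\cup F)\cup\{e_k\}$, i.e.\ $\mathcal{U}(I_{G\setminus(E\cup F\cup\{e_k\})})$, plus (ii) the monomials $m \in M^G_{E,F}$ not divisible by $e_k$ — which, after absorbing the $e_k$-factor argument exactly as in the proof of Proposition \ref{lem:bipartiteGVDgeneral} and Remark \ref{rmk:gbBipartite} (if $me_k \in M^G_{E,F}$ then $m \in M^G_{E,F}$; and any $m\in M^G_{E,F}$ with $e_k \mid$ its hidden term can be replaced by a divisor lying in $M^G_{E,F\cup\{e_k\}}$), are precisely the generators of $M^G_{E,F\cup\{e_k\}}$. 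This shows $N_{e_k, I} = I^G_{E, F\cup\{e_k\}}$. For $C_{e_k,I}$: taking $q_i$ for every generator means keeping the walk-binomials avoiding $E\cup F$, the monomials of $M^G_{E,F}$, and the monomials $m$ obtained as the $e_k$-free part of each $e_k m\ell - p \in \mathcal{U}(I_G)$ — these last ones are exactly the new generators of $M^G_{E\cup\{e_k\},F}$ (condition (2) now allows $\ell$ to involve $e_k$). Hence $C_{e_k,I} = I_{G\setminus(E\cup F)} + M^G_{E\cup\{e_k\},F} = I^G_{E\cup\{e_k\},F}$, and the displayed decomposition in (4) is just the geometric vertex decomposition from Lemma \ref{square-freey} rewritten with these identifications.

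The main obstacle I anticipate is the careful bookkeeping in part (4) — precisely matching $C_{e_k,I}$ and $N_{e_k,I}$ with $I^G_{E\cup\{e_k\},F}$ and $I^G_{E,F\cup\{e_k\}}$, keeping track of which monomials $m$ satisfy which of conditions (1)–(3) before and after moving $e_k$ between $E$ and $F$, and handling the subtlety (already present in \cite[Remark 2.7]{CG}) that a generator $m\ell - p$ of $\mathcal{U}(I_G)$ whose \emph{trailing} monomial $p$ is divisible by $e_k$ must be traded for a different walk-binomial whose trailing term avoids $e_k$. Establishing (1) in full generality — i.e.\ verifying that the Constantinescu–Gorla $S$-polynomial reductions go through without the bipartiteness hypothesis, using only that $\init_{<_G}(I_G)$ is square-free — is the other place where care is needed, though I expect it to be essentially a transcription of their proof.
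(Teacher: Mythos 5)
Your overall architecture (establish the Gr\"obner basis statement, then read off radicality from the square-free initial ideal, then apply Lemma \ref{square-freey} and match generators to get (3) and (4)) is close in spirit to the paper's, and your bookkeeping for part (4) --- splitting the generators by whether $e_k$ divides the lead term and identifying the resulting sets with the natural generators of $I^G_{E\cup\{e_k\},F}$ and $I^G_{E,F\cup\{e_k\}}$ --- is essentially what the paper does. The genuine gap is in your proof of part (1). You propose a direct Buchberger-criterion verification and dispose of the $S$-pairs involving a monomial of $M^G_{E,F}$ by asserting that the Constantinescu--Gorla argument from \cite[Remark 2.7]{CG} ``goes through without the bipartiteness hypothesis.'' That assertion is not substantiated and is exactly where the difficulty lives: the CG argument trades a cycle whose trailing term meets the deleted edges for another \emph{cycle} avoiding them, and this surgery relies on the bipartite/path-ordered-matching structure. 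For general graphs the universal Gr\"obner basis consists of primitive closed even walks, which can repeat vertices and edges, and no analogous combinatorial replacement is exhibited. Moreover, you also need (but do not verify) that $\mathcal{U}(I_{G\setminus(E\cup F)})$ itself has a square-free initial ideal under $<_G$ --- square-freeness of $\init_{<_G}(I_G)$ does not obviously pass to the toric ideal of an edge-deleted subgraph by a one-line argument.

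The paper sidesteps all of this by proving (1) by induction on $|E\cup F|$: the base case is $I_G$ itself, where $\mathcal{U}(I_G)$ is a universal Gr\"obner basis and square-freeness is the hypothesis; in the inductive step, $I^G_{E,F}$ is realized as $C_{e_r,J}$ or $N_{e_r,J}$ for the parent ideal $J$ one level up, whose natural generators are already known to be a Gr\"obner basis with square-free lead terms. Lemma \ref{square-freey} (i.e., \cite[Theorem 2.1]{KMY}) then delivers a Gr\"obner basis of the child ideal for free, and the only work is matching its elements with the natural generators --- precisely the bookkeeping you carry out for part (4). In other words, your pieces are assembled in the wrong logical order: rather than proving (1) standalone by Buchberger and then deducing (3)--(4), you should run (3)--(4) inductively so that the Gr\"obner-basis claim in (1) is inherited level by level and no $S$-polynomial computation is ever needed.
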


\begin{proof}
$(1)$
We will proceed by induction on $|E\cup F| = r = k-1$. If $r=0$,
then $E \cup F = \emptyset$ and $I^G_{E,F} = I_G$.  In
this case the natural generators are $\mathcal{U}(I_G) \cup \mathcal{U}(M^G_{\emptyset,\emptyset}) = \mathcal{U}(I_G)$,
and this set defines a universal Gr\"obner basis consisting of primitive closed even walks of $G$. Its initial ideal is square-free by the assumption on $<_G$. 

Now suppose that $|E \cup F| = r \geq 1$ and assume the result holds for $r-1$.  There are two cases to consider:
\vspace{.1cm}

\noindent\underline{Case 1}: Assume that $e_r\in E$. By induction, the natural generators
$$\mathcal{U}(I_{G\setminus({(E\setminus\{e_r\}) \cup F})}) \cup
\mathcal{U}(M^G_{E\setminus\{e_r\},F})$$ of 
$I^G_{E\setminus \{e_r\},F}$ is a Gr\"obner basis with
respect to $<_G$ and has a square-free initial ideal with respect to $<_G$. For the computations that follow, we can restrict to a minimal Gr\"obner basis by removing elements of this generating set  which do not 
have a square-free lead term. 

Since $e_r$ cannot divide both terms of a binomial defined by a primitive closed even walk, we must have that this minimal Gr\"obner basis is square-free in $y=e_r$ (any $e_r$ that appears in a binomial 
must appear in the lead term by definition of $<_G$, because none of the generators of $I^G_{E\setminus \{e_r\},F}$ involve $e_1,\ldots,e_{r-1}$). Therefore, $I^G_{E\setminus \{e_r\},F}$ has a geometric vertex decomposition with respect to $y$ by Lemma \ref{square-freey} (2). 

The ideal $C_{y,I^G_{E\setminus \{e_r\},F}}$ is therefore generated by:

\begin{itemize}
    \item Binomials corresponding to primitive closed even walks not passing through any edge of $E_r$. That is, elements of $\mathcal{U}(I_{G\setminus E_r})$.
   \item Monomials $m$ which appear as the coefficient of $e_r$ in $me_r -p\in \mathcal{U}(I_{G\setminus E_{r-1}})$.
    \item Monomials $m$ which appear as the coefficient of $e_r$ in $\mathcal{U}(M^G_{E\setminus \{e_r\},F})$. In this case, $m$ is part of a binomial $me_r\prod\limits_{i\in \mathcal{I}} e_i-p \in \mathcal{U}(I_G)$, where $\mathcal{I}$ 
    indexes a subset of $E\setminus \{e_r\}$.
\end{itemize}

\noindent The last two types of monomials are exactly those monomials defining $\mathcal{U}(M^G_{E,F})$. Therefore 
\[C_{y,I^G_{E\setminus \{e_r\},F}} = I^G_{E,F}.\]

\noindent Furthermore, the generators listed above for $C_{y,I^G_{E\setminus \{e_r\},F}}$ are a Gr\"obner basis with respect to $<_G$ by Lemma \ref{square-freey} (1) and are a subset of the natural generators 
of $I^G_{E,F}$. Its initial ideal is also square-free since we restricted to a minimal 
Gr\"obner basis before computing $C_{y,I^G_{E\setminus \{e_r\},F}}$.
\vspace{.1cm}

\noindent\underline{Case 2}: Assume that $e_r\in F$. We argue similarly to Case 1 and omit the details. By induction $\mathcal{U}(I^G_{E,F\setminus \{e_r\}})$ is a Gr\"obner basis with respect to $<_G$ and defines a square-free initial ideal. We can once again restrict to a minimal Gr\"obner basis, both ensuring that all lead terms are square-free and that $y=e_r$ defines a geometric vertex decomposition. In this case, $N_{y,I^G_{E,F\setminus \{e_r\}}} = I^G_{E,F}$, and $\mathcal{U}(I^G_{E,F\setminus \{e_r\}})$ is a Gr\"obner basis by Lemma \ref{square-freey} (1) with respect to $<_G$.   As in Case 1,
the initial ideal of $I^G_{E,F}$ is square-free with respect to this 
monomial order since we restricted to a minimal Gr\"obner basis when
computing $N_{y,I^G_{E,F\setminus\{e_r\}}}$.

For statement $(2)$, the ideal $I^G_{E,F}$ is radical because it has a square-free degeneration.  Statements $(3)$ and $(4)$ were shown as 
part of the proof of statement $(1)$.
\end{proof}

We now verify that statement $(B)$ holds.
\begin{theorem} \label{statementb}
Let $I_G$ be the toric ideal of a finite simple graph $G$ such that $\init_{<_G}(I_G)$ is square-free. If $E \cup F = E_n$, then $I^G_{E,F} = \langle 0 \rangle$ or 
$\langle 1 \rangle$.
\end{theorem}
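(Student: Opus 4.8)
The plan is to reduce immediately to a statement about the monomial ideal $M^G_{E,F}$. When $E \cup F = E_n$, every edge of $G$ lies in $E \cup F$, so $G \setminus (E \cup F)$ is edgeless and $I_{G\setminus(E\cup F)} = \langle 0\rangle$; hence $I^G_{E,F} = M^G_{E,F}$, and it is enough to show that $M^G_{E,F}$ is either $\langle 0\rangle$ or $\langle 1\rangle$.

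Next I would suppose $M^G_{E,F} \neq \langle 0 \rangle$ and produce a generator. By definition there is then a binomial $m\ell - p \in \mathcal{U}(I_G)$ with $m$ a (minimal) generator of $M^G_{E,F}$ satisfying conditions (1)--(3) of the definition. I would then extract two facts from condition (3). First, no $f \in F$ divides $m\ell$; since $E \cup F$ exhausts the edges of $G$, equivalently the variables of $\mathbb{K}[E(G)]$, this forces $m\ell$, and therefore its divisor $m$, to be a monomial in the variables of $E$ alone. Second, no $e \in E$ divides $m$. Taken together, $m$ is divisible by no variable whatsoever, so $m = 1$. Consequently $1 \in M^G_{E,F}$, i.e.\ $M^G_{E,F} = \langle 1 \rangle$, which completes the argument.

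I do not expect a genuine obstacle here: the proof is a one-line observation about divisibility once the setup is unwound. The only point requiring care is the bookkeeping in the definition of $M^G_{E,F}$ --- that the factor $\ell$ of the lead term $m\ell$ is built entirely from variables in $E$ (so it introduces nothing outside $E$), and that condition (3) constrains $m$ and $m\ell$ precisely as used above. With that in hand, the conclusion, and hence the theorem, follows.
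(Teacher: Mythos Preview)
Your proof is correct and follows essentially the same approach as the paper: both arguments unwind condition~(3) of the definition of $M^G_{E,F}$ to see that when $E\cup F = E_n$ any generator $m$ must simultaneously avoid all variables of $F$ (forcing $m$ into the $E$-variables) and all variables of $E$ (forcing $m=1$). The paper organizes this as a two-case dichotomy on whether some lead term lies entirely in the $E$-variables, while you argue the contrapositive directly, but the content is identical.
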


\begin{proof}
Let $\mathcal{U}(I_G)$ be the universal Gr\"obner basis of $I_G$ defined in Theorem~\ref{generatordescription}.  Since
$\init_{<_G}(I_G)$ is square-free, we can take a minimal Gr\"obner basis where
each lead term is square-free.  We can write each element in our Gr\"obner basis
as a binomial of the form $m\ell -p$ with ${\rm in}_{<_G}(m\ell-p) = m\ell$ where
$\ell$ is a monomial only in the variables in $E$.  Suppose that
there is a binomial $m\ell -p \in \mathcal{U}(I_G)$ such that $m\ell = \ell$, i.e.,
the lead term only involves variables in $E$.  Then
$1 \in M^G_{E,F}$, and so $I^G_{E,F} = \langle 1 \rangle$, since the monomials
of $M^G_{E,F}$ form part of the generating set of $I^G_{E,F}$. 

Otherwise, for every $m\ell -p \in \mathcal{U}(I_G)$, there is a variable $e_j \not\in E$ such that
$e_j|m$.  Since $E \cup F = E_n$, we must have $e_j \in F$.  But then $m$
is not in $M^G_{E,F}$ since it fails to satisfy condition $(3)$ of being
a monomial in $M^G_{E,F}$, and thus $M^G_{E,F} = \langle 0 \rangle$.  Since
$G \setminus (E\cup F)$ is the graph $G$ with all of its
edges removed, $I_{G\setminus (E\cup F)} = \langle 0 \rangle$.  Thus
$I^G_{E,F} = \langle 0 \rangle$.
\end{proof}

To prove Conjecture \ref{mainconjecture}, it remains to verify
statement $(C)$; that is, each ideal $I^G_{E,F}$ must be  unmixed. This has proven difficult to show in general without specific restrictions on $G$. Nonetheless, the framework presented above leads to the next theorem 
which reduces statement
$(C)$ to showing that $\mathbb{K}[E(G)]/I^{G}_{E,F}$ is equidimensional.  Recall that a ring $R/I$ is {\it equidimensional} if  $\dim(R/I) = \dim(R/P)$ for all minimal primes $P$ of ${\rm Ass}_R(R/I)$.

\begin{theorem}\label{framework}
Let $I_G$ be the toric ideal of a finite simple graph $G$ such that $\init_{<_G}(I_G)$ is square-free.
If $\mathbb{K}[E(G)]/I^G_{E,F}$ is equidimensional for every choice of $E,F,\ell$ such that $E\cup F=E_\ell$ and $0\leq \ell \leq n$, then $I_G$ is geometrically
vertex decomposable.
\end{theorem}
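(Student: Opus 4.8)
The plan is to invoke Theorem \ref{thingstocheck}, which reduces the proof of Conjecture \ref{mainconjecture} to establishing statements $(A)$, $(B)$, and $(C)$. By Theorem \ref{CN_Grobner}, statement $(A)$ always holds when $\init_{<_G}(I_G)$ is square-free: indeed, parts $(3)$ and $(4)$ of that theorem say precisely that $y = e_k$ defines a geometric vertex decomposition of $I^G_{E,F}$ (for $E \cup F = E_{k-1}$) and that the resulting ideals $C_{y,I}$ and $N_{y,I}$ are again of the form $I^G_{E',F'}$ with $E' \cup F' = E_k$. By Theorem \ref{statementb}, statement $(B)$ always holds as well: when $E \cup F = E_n$, the ideal $I^G_{E,F}$ is either $\langle 0 \rangle$ or $\langle 1 \rangle$. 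So the only remaining obligation is statement $(C)$: every ideal $I^G_{E,F}$ with $E \cup F = E_\ell$, $0 \le \ell \le n$, is unmixed.

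The main point is therefore to show that, under the hypothesis of the theorem, statement $(C)$ follows. The hypothesis gives us that $\mathbb{K}[E(G)]/I^G_{E,F}$ is equidimensional for every such $E, F, \ell$. To upgrade equidimensionality to unmixedness, I would combine this with part $(2)$ of Theorem \ref{CN_Grobner}, which tells us that each $I^G_{E,F}$ is a radical ideal (because it has a square-free Gr\"obner degeneration with respect to $<_G$). For a radical ideal $I$, the associated primes of $R/I$ are exactly the minimal primes of $I$, since $R/I$ is reduced and hence has no embedded primes. Thus $\mathrm{Ass}_R(R/I^G_{E,F})$ consists solely of minimal primes, and equidimensionality — which asserts $\dim(R/I^G_{E,F}) = \dim(R/P)$ for every minimal prime $P$ — is then literally the statement that $\dim(R/I^G_{E,F}) = \dim(R/P)$ for every $P \in \mathrm{Ass}_R(R/I^G_{E,F})$, i.e., that $I^G_{E,F}$ is unmixed. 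This establishes statement $(C)$.

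With $(A)$, $(B)$, and $(C)$ all verified, Theorem \ref{thingstocheck} applies and yields that $I_G = I^G_{\emptyset,\emptyset}$ is geometrically vertex decomposable, completing the proof.

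There is no serious obstacle here: the theorem is essentially a bookkeeping consequence of Theorems \ref{thingstocheck}, \ref{CN_Grobner}, and \ref{statementb}, with the one genuine input being the elementary observation that a radical ideal has no embedded primes, so that ``equidimensional'' and ``unmixed'' coincide for such ideals. The only care needed is to make sure the radicality from Theorem \ref{CN_Grobner}$(2)$ is invoked for \emph{all} the ideals $I^G_{E,F}$ appearing in the induction of Theorem \ref{thingstocheck}, but this is exactly the range of $E, F, \ell$ over which Theorem \ref{CN_Grobner} is stated.
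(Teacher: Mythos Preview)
Your proposal is correct and follows essentially the same approach as the paper: invoke Theorems \ref{thingstocheck}, \ref{CN_Grobner}, and \ref{statementb} to reduce to verifying $(C)$, then use radicality of $I^G_{E,F}$ (Theorem \ref{CN_Grobner}(2)) to conclude that equidimensional implies unmixed. The paper's proof is simply a terser version of exactly this argument.
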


\begin{proof}
In light of Theorems \ref{thingstocheck}, \ref{CN_Grobner}, and \ref{statementb}, we only need to check that each $I^G_{E,F}$ is unmixed. However, by Theorem ~\ref{CN_Grobner} (3), each ideal $I^G_{E,F}$ is radical, so being unmixed is equivalent to being equidimensional.
\end{proof}

\begin{remark}
The definition of $I^G_{E,F}$ is an extension of the setup of Constantinescu and Gorla in \cite{CG} and is also used in Section 5. It is designed to utilize known results about geometric vertex decomposition. In \cite{CG}, $G$ is a bipartite graph, and techniques from liaison theory are employed to prove that $I_G$ is glicci. Using a similar argument for general $G$, we can use  \[\init_{<_G}(I^G_{E,F})= e_k\init_{<_G}(I^G_{E\cup\{e_k\},F}) + \init_{<_G}(I^G_{E,F\cup \{e_k\}}) \] to show that $\init_{<_G}(I^G_{E,F})$ can be obtained from $\init_{<_G}(I^G_{E\cup\{e_k\},F})$ via a Basic Double G-link  (see \cite[Lemma 2.1 and Theorem 2.8]{CG}), and so $\init_{<_G}(I^G_{E,F})$ being Cohen-Macaulay implies that $\init_{<_G}(I^G_{E\cup\{e_k\},F})$ is too (see Lemma \ref{linkage_CM}). Through induction, we could then prove that some (but not all) of the $I^G_{E,F}$ in the tree following Example~\ref{tree} are Cohen-Macaulay. 

On the other hand, to produce $G$-biliaisons as in \cite[Theorem 2.11]{CG}, we would need specialized information about the graph $G$, something which is not a straightforward extension of the bipartite case. 
\end{remark}

\subsection{Proof of the conjecture in the quadratic case}\label{quadratic}
In the case that $\mathcal{U}(I_G)$ contains only quadratic binomials, 
we are able to verify that Conjecture \ref{mainconjecture} is true,
that is, $I_G$ is geometrically vertex decomposable.  We first
show that when $\mathcal{U}(I_G)$ contains only quadratic binomials, it
has the property that ${\rm in}_{<_G}(I_G)$ is a square-free
monomial ideal for any monomial order.  In the statement
below, recall that a binomial $m_1-m_2$ is doubly square-free if both monomials
that make up the binomial are square-free.

\begin{lemma}\label{quadratic_square-free}
Suppose that $G$ is a graph such that $I_G$ has a universal
Gr\"obner basis $\mathcal{U}(I_G)$ of quadratic binomials.  
Then these generators are doubly square-free.
\end{lemma}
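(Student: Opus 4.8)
The plan is to argue directly from the structure of the universal Gröbner basis. Recall from Theorem~\ref{generatordescription} that each element of $\mathcal{U}(I_G)$ is a primitive binomial $e^\alpha - e^\beta$ arising from a closed even walk of $G$; since these binomials are assumed quadratic, each has the form $e_a e_b - e_c e_d$ (or $e_a^2 - e_c e_d$, etc.) with total degree $2$ in both terms. The only way such a binomial could fail to be doubly square-free is if one of the two monomials were a square, say $e_a^2$. So the entire content of the lemma is: no primitive quadratic binomial of $I_G$ can have $e_a^2$ as one of its terms.

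First I would suppose, for contradiction, that $e_a^2 - e^\beta \in \mathcal{U}(I_G)$ is primitive with $\deg e^\beta = 2$, so $e^\beta = e_b e_c$ for some edges $e_b, e_c$ (possibly equal). Apply $\varphi_G$: writing $e_a = \{x_i, x_j\}$, we get $\varphi_G(e_a^2) = x_i^2 x_j^2$, and this must equal $\varphi_G(e_b e_c)$. If $e_b = \{x_k, x_l\}$ and $e_c = \{x_p, x_q\}$, then $x_k x_l x_p x_q = x_i^2 x_j^2$ forces (as multisets of vertices) $\{x_k, x_l, x_p, x_q\} = \{x_i, x_i, x_j, x_j\}$. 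The combinatorial case analysis of how two edges can have vertex-multiset $\{x_i,x_i,x_j,x_j\}$ is short: either both $e_b$ and $e_c$ equal $\{x_i, x_j\} = e_a$ (forcing $e^\beta = e_a^2$, so the binomial is $0$, not a generator), or $e_b = e_c$ is a loop at $x_i$ paired with a loop at $x_j$ (impossible since $G$ is a simple graph with no loops), or one of them is $\{x_i, x_j\}$ and we are back to the first case. In every case we reach a contradiction with $e_a^2 - e^\beta$ being a nonzero primitive binomial of a simple graph. Hence no term of any generator is a square, and every generator is doubly square-free.

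Alternatively — and this may be cleaner to write — I would invoke primitivity more directly: if $e_a^2 - e_b e_c \in I_G$ with $e_a \neq e_b$ and $e_a \neq e_c$, then examining $\varphi_G$ as above shows the underlying closed even walk must traverse the edge $e_a$ twice, and a walk that repeats an edge is never primitive (one can split off that edge, exactly as in the proof of Lemma~\ref{removeleaves}: $e_a^2 - e_b e_c = $ is not even the right shape, but more to the point any primitive binomial corresponding to a walk repeating an edge would be properly divisible by a shorter binomial). So the primitive quadratic binomials correspond precisely to closed walks of length $4$ that use four distinct edges, i.e. to $4$-cycles of $G$, and these manifestly give doubly square-free binomials $e_a e_b - e_c e_d$.

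I do not expect a serious obstacle here; the only thing to be careful about is covering the degenerate possibilities (a term of the form $e_a^2$, or $e_a = e_b$ in the ``square-free'' monomial $e_a e_b$) and using the hypothesis that $G$ is simple (no loops, no multi-edges) together with primitivity to rule them out. The main step is the short vertex-multiset bookkeeping under $\varphi_G$ described above.
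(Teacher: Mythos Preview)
Your proposal is correct and your second approach is essentially the paper's own argument: the paper observes that a quadratic element of $\mathcal{U}(I_G)$ comes from a primitive closed walk of length four, that consecutive edges in such a walk cannot be equal (else the binomial is zero), and hence the walk is a genuine $4$-cycle with four distinct edges, giving a doubly square-free binomial.

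Your first approach, arguing directly via $\varphi_G$ and the vertex-multiset $\{x_i,x_i,x_j,x_j\}$, is a slightly different but equally short route; it avoids the walk language entirely and makes the use of ``$G$ simple'' (no loops, no multi-edges) more explicit. One small wording issue: in your second approach, the phrase ``properly divisible by a shorter binomial'' does not literally apply in degree two (there are no nonzero degree-one binomials in $I_G$ for a simple graph), so the contradiction there really comes from the binomial being zero rather than non-primitive --- exactly what your first approach shows cleanly.
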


\begin{proof}
By Theorem \ref{generatordescription}, 
a quadratic element of $\mathcal{U}(I_G)$ comes from a primitive closed walk of length 
four of $G$. Since consecutive edges cannot be equal, all primitive walks of length four are actually cycles, so no edge is repeated,
or equivalently, the generator is doubly square-free.
\end{proof}

As noted in the previous subsection, to verify the conjecture
in this case, it suffices to show that $\mathbb{K}[E(G)]/I^G_{E,F}$ is equidimensional for all $E,F,\ell$ with $E \cup F = E_\ell$.  In fact,
we will show a stronger result and show that all of these rings are 
Cohen-Macaulay.

We start with the useful observation that the natural set of generators of $I^G_{E,F}$ actually defines a universal Gr\"obner basis for the ideal.

\begin{lemma}\label{universal_GEF}
Under the assumptions of Theorem~\ref{CN_Grobner}, $\mathcal{U}(I_{G\setminus E_\ell})\cup\mathcal{U}(M^G_{E,F})$ is a universal Gr\"obner basis of $I^G_{E,F}$.
\end{lemma}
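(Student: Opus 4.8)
The plan is to prove Lemma~\ref{universal_GEF} by reducing it to Theorem~\ref{CN_Grobner}(1) together with the hypothesis that $\mathcal{U}(I_G)$ consists entirely of quadratic (hence doubly square-free) binomials. The key point is that under the quadratic hypothesis, $\init_{<}(I_G)$ is square-free \emph{for every} monomial order $<$, not just for $<_G$; so the combinatorial description of the generators of $I^G_{E,F}$ is available with respect to every order.

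First I would recall from Lemma~\ref{quadratic_square-free} that every element of $\mathcal{U}(I_G)$ is doubly square-free, and note that this forces $\init_{<}(I_G)$ to be a square-free monomial ideal for \emph{any} monomial order $<$: indeed, $\mathcal{U}(I_G)$ is a Gr\"obner basis with respect to every order by Theorem~\ref{generatordescription}(2), and each lead term is a square-free monomial since each binomial is doubly square-free. In particular, the hypothesis ``$\init_{<_G}(I_G)$ is square-free'' in Theorem~\ref{CN_Grobner} is automatically satisfied, and moreover remains satisfied if we replace $<_G$ by any other lexicographic order obtained by permuting the edge labels.

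Next I would fix an arbitrary monomial order $<$ on $\mathbb{K}[E(G)]$ and show that $\mathcal{G} := \mathcal{U}(I_{G\setminus E_\ell})\cup\mathcal{U}(M^G_{E,F})$ is a Gr\"obner basis of $I^G_{E,F}$ with respect to $<$. Since $\mathcal{U}(M^G_{E,F})$ is a set of monomials, its own lead terms are itself, independent of the order. For the binomial part $\mathcal{U}(I_{G\setminus E_\ell})$: these are primitive binomials associated to closed even walks of $G\setminus E_\ell$, hence form a subset of $\mathcal{U}(I_{G\setminus E_\ell})$'s universal Gr\"obner basis, and so they are a Gr\"obner basis of $I_{G\setminus E_\ell}$ with respect to every monomial order by Theorem~\ref{generatordescription}(2). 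It then remains to verify that the $S$-polynomials of pairs from $\mathcal{G}$ reduce to zero modulo $\mathcal{G}$: pairs within $\mathcal{U}(I_{G\setminus E_\ell})$ reduce to zero by the universal Gr\"obner basis property; pairs within $\mathcal{U}(M^G_{E,F})$ reduce to zero since the lead terms of a monomial ideal's minimal generators give Buchberger pairs that reduce to zero (monomial $S$-polynomials vanish); and for a mixed pair $g \in \mathcal{U}(I_{G\setminus E_\ell})$, $m \in \mathcal{U}(M^G_{E,F})$, I would argue using Theorem~\ref{CN_Grobner}(1), which tells us that $\mathcal{G}$ is a Gr\"obner basis with respect to $<_G$, so that $\init_{<_G}(I^G_{E,F})=\langle \init_{<_G}(\mathcal{G})\rangle$; combining this with the doubly square-free property (so that $\init_{<}(g)$ and $\init_{<}(m)$ are the same square-free monomials as $\init_{<_G}$, up to which term of the binomial is chosen), one reduces the mixed $S$-polynomial case to the already-handled cases, possibly after invoking Buchberger's criterion in the product-criterion form (relatively prime leading terms reduce to zero) as in the proof of Lemma~\ref{grobnertensorproduct}.

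The main obstacle I anticipate is the bookkeeping in the mixed $S$-polynomial case: when the order $<$ is not $<_G$, the \emph{choice of lead term} of a binomial $e_i m_i - p_i \in \mathcal{U}(I_{G\setminus E_\ell})$ may flip, and one must check that $\mathcal{U}(M^G_{E,F})$ --- whose definition was phrased in terms of $<_G$ --- still interacts correctly. The cleanest way around this is to observe that $M^G_{E,F}$ is genuinely a monomial ideal (so its minimal generating set is intrinsic and order-independent), and that $I^G_{E,F} = I_{G\setminus E_\ell} + M^G_{E,F}$ as ideals; then one applies the standard fact (e.g.\ Theorem~\ref{gbchar}) that a union of Gr\"obner bases of two ideals is a Gr\"obner basis of the sum provided all mixed $S$-polynomials reduce to zero, and verifies the latter by noting that the lead term of any binomial in $\mathcal{U}(I_{G\setminus E_\ell})$ is square-free (by the quadratic hypothesis) and either is coprime to the monomial generators of $M^G_{E,F}$ or else its reduction lands back in $I^G_{E,F}$ by a primitivity argument analogous to the one in the proof of Lemma~\ref{linktoricidealgraph}. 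I would then conclude that $\mathcal{G}$ is a Gr\"obner basis with respect to the arbitrary order $<$, and since $<$ was arbitrary, $\mathcal{G}$ is a universal Gr\"obner basis of $I^G_{E,F}$.
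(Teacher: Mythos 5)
There is a genuine gap, and it starts with the hypotheses. Lemma \ref{universal_GEF} is stated ``under the assumptions of Theorem \ref{CN_Grobner},'' which only require that $\init_{<_G}(I_G)$ be square-free for the \emph{single} lexicographic order $<_G$; the quadratic hypothesis on $\mathcal{U}(I_G)$ is not assumed (it enters only later, in Lemma \ref{CMspecialcase}). Your opening move --- that every element of $\mathcal{U}(I_G)$ is doubly square-free and hence $\init_<(I_G)$ is square-free for \emph{every} order $<$ --- is therefore unavailable: a single square-free lex degeneration does not propagate to all monomial orders, and a binomial in $\mathcal{U}(I_G)$ may well have a non-square-free trailing term that becomes the lead term under a different order. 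So your argument, even if completed, would prove only a special case of the lemma.

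Even within the quadratic case, the decisive step is missing. For a mixed pair $g=u-v\in\mathcal{U}(I_{G\setminus E_\ell})$ and $m\in\mathcal{U}(M^G_{E,F})$ with $\gcd(\init_<(g),m)\neq 1$, the $S$-polynomial is a monomial multiple of the trailing term $v$, and showing that this monomial reduces to zero modulo $\mathcal{G}$ is essentially equivalent to the statement you are trying to prove (that the lead terms of $\mathcal{G}$ generate $\init_<(I^G_{E,F})$); the appeal to ``a primitivity argument analogous to Lemma \ref{linktoricidealgraph}'' is a placeholder for precisely the hard part, and invoking Theorem \ref{CN_Grobner}(1) does not help because that result is tied to $<_G$ and the lead terms can flip under $<$. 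The paper's proof avoids Buchberger entirely: it inducts on $|E\cup F|$, notes that $I^G_{E,F}$ arises as $N_{y,\cdot}$ or $C_{y,\cdot}$ of an ideal $I^G_{E',F'}$ with $|E'\cup F'|$ smaller, and, given an arbitrary order $<$ on $\mathbb{K}[e_1,\dots,\hat{y},\dots,e_n]$, extends it to a $y$-compatible order $<_y$ on the full ring; the inductive hypothesis makes $\mathcal{U}(I^G_{E',F'})$ a Gr\"obner basis for $<_y$, and \cite[Theorem 2.1]{KMY} then hands back the Gr\"obner basis property of the natural generators of $C_{y,\cdot}$ and $N_{y,\cdot}$ with respect to $<_y$, hence with respect to $<$. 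You would do well to adopt that order-extension device; it is the idea your proposal is missing.
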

\begin{proof}
We will proceed by induction on $|E\cup F|$. The result is clear when $|E\cup F|=0$. For the induction step, observe that $I^G_{E,F}$ is either $N_{y,I^G_{E,F\setminus y}}$ or $C_{y,I^G_{E\setminus y,F}}$ for some variable $y=e_i$. Suppose towards a contradiction that there is some monomial order $<$ on $\mathbb{K}[e_1,\ldots,\hat{y},\ldots e_n]$ for which $\mathcal{U}(I^G_{E,F})$ is not a Gr\"obner basis. Extend $<$ to a monomial order $<_y$ on $\mathbb{K}[e_1,\ldots,e_n]$ which first chooses terms with the highest degree in $y$ and breaks ties using $<$. Clearly $<_y$ is a $y$-compatible order. By \cite[Theorem 2.1]{KMY}, $\mathcal{U}(I^G_{E,F})$ is a Gr\"obner basis with respect to $<_y$. But $<_y=<$ on $\mathbb{K}[e_1,\ldots,\hat{y},\ldots e_n]$, a contradiction.
\end{proof}

\begin{lemma}\label{CMspecialcase}
Let $R=\mathbb{K}[E(G)]$, and suppose that $G$ is finite simple graph such that $I_G$ has a universal Gr\"obner basis $\mathcal{U}(I_G)$ of quadratic binomials. Then $R/I^G_{E,F}$ is Cohen-Macaulay for every choice of $E,F$ and $\ell$ such that $E\cup F=E_\ell$.
\end{lemma}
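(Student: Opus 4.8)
The plan is to prove the statement by descending induction on $|E \cup F|$, mirroring the structure of Theorem~\ref{CN_Grobner} but now tracking the Cohen--Macaulay property rather than just radicality. First I would observe that by Lemma~\ref{quadratic_square-free} the universal Gr\"obner basis $\mathcal{U}(I_G)$ is doubly square-free, so in particular $\init_{<_G}(I_G)$ is a square-free monomial ideal and all the hypotheses of Theorem~\ref{CN_Grobner} are in force. By Lemma~\ref{universal_GEF}, each $I^G_{E,F}$ has $\mathcal{U}(I_{G\setminus E_\ell}) \cup \mathcal{U}(M^G_{E,F})$ as a \emph{universal} Gr\"obner basis, so its initial ideal with respect to $<_G$ is a square-free monomial ideal. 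The key point is that $R/I^G_{E,F}$ is Cohen--Macaulay if and only if $R/\init_{<_G}(I^G_{E,F})$ is, by standard Gr\"obner degeneration (flatness), so it suffices to show each such initial ideal defines a Cohen--Macaulay quotient.

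For the induction: the base case is $|E \cup F| = n$, where by Theorem~\ref{statementb} we have $I^G_{E,F} = \langle 0 \rangle$ or $\langle 1 \rangle$, both of which give Cohen--Macaulay (or zero) quotients. For the inductive step, suppose the claim holds for all $E', F'$ with $E' \cup F' = E_k$, and let $E \cup F = E_{k-1}$. By Theorem~\ref{CN_Grobner}(4), setting $y = e_k$ and $I = I^G_{E,F}$, we have the geometric vertex decomposition
$$\init_{e_k}(I^G_{E,F}) = C_{y,I} \cap (N_{y,I} + \langle e_k \rangle), \qquad C_{y,I} = I^G_{E \cup \{e_k\}, F}, \quad N_{y,I} = I^G_{E, F \cup \{e_k\}},$$
and by the inductive hypothesis $R/C_{y,I}$ and $R/N_{y,I}$ are both Cohen--Macaulay. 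Now I would invoke the machinery relating geometric vertex decompositions to $G$-biliaison: since $\init_{<_G}(I^G_{E,F})$ is square-free, hence radical, and since $C_{y,I}$ is unmixed (being Cohen--Macaulay) while $N_{y,I}$ is Cohen--Macaulay and generically Gorenstein (it is a radical monomial-type ideal, indeed the toric-ideal-plus-monomials structure makes $R/N_{y,I}$ reduced, so generically Gorenstein), Lemma~\ref{GVD_linkage} shows that $I^G_{E,F}$ is obtained from $C_{y,I}$ by an elementary $G$-biliaison of height $1$ — provided the decomposition is nondegenerate, which holds exactly as argued in the proof of Theorem~\ref{prop:glicciGraphs} (the reduced Gr\"obner basis of $I^G_{E,F}$ genuinely involves $e_k$ whenever $I^G_{E,F}$ is not already of the trivial form, and $C_{y,I} \neq \langle 1 \rangle$ in that case). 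Then Lemma~\ref{linkage_CM} (the Cohen--Macaulayness is preserved under direct $G$-links) yields that $R/I^G_{E,F}$ is Cohen--Macaulay. The degenerate cases — $C_{y,I} = \langle 1 \rangle$ or $\sqrt{C_{y,I}} = \sqrt{N_{y,I}}$ — must be handled separately: in the first, $\init_{e_k}(I^G_{E,F}) = N_{y,I} + \langle e_k \rangle$, so $R/\init_{e_k}(I^G_{E,F}) \cong (R/N_{y,I})/\langle e_k \rangle$ and one checks $e_k$ is a nonzerodivisor there (since $e_k$ does not appear in any generator of $N_{y,I}$), giving Cohen--Macaulayness; in the second, $C_{y,I}$ and $N_{y,I}$ have the same radical, hence $\init_{e_k}(I^G_{E,F})$ and $C_{y,I}$ agree up to radical and the conclusion again transfers.

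The main obstacle I anticipate is verifying the nondegeneracy and the generically-Gorenstein hypothesis cleanly enough to apply Lemma~\ref{GVD_linkage} uniformly, and in particular handling the degenerate geometric vertex decompositions — this is where one has to argue directly with the explicit monomial/binomial structure of $N_{y,I} = I^G_{E, F\cup\{e_k\}}$ and $C_{y,I} = I^G_{E\cup\{e_k\}, F}$ rather than black-boxing a lemma. An alternative route that sidesteps the generically-Gorenstein subtlety entirely, which I would fall back on, is to argue purely combinatorially on the square-free monomial ideals: $\init_{<_G}(I^G_{E,F})$ is the Stanley--Reisner ideal of a simplicial complex, and the identity $\init_{<_G}(I^G_{E,F}) = e_k \cdot \init_{<_G}(I^G_{E\cup\{e_k\},F}) + \init_{<_G}(I^G_{E,F\cup\{e_k\}})$ exhibits this complex as obtained by a gluing of the complexes for the two smaller ideals along a common face; one then checks directly (using that both pieces are Cohen--Macaulay of the same dimension, which follows because $I^G_{E,F}$ is unmixed/equidimensional by Theorem~\ref{CN_Grobner} combined with the height bookkeeping) that a Mayer--Vietoris / Reisner-criterion argument forces the glued complex to be Cohen--Macaulay, and then apply Theorem~\ref{sqfree=>cm}-style reasoning to lift back to $I^G_{E,F}$.
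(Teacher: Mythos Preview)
Your approach has a genuine circularity gap, and it misses the role of the quadratic hypothesis.

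To invoke Lemma~\ref{GVD_linkage} in the inductive step you need $I = I^G_{E,F}$ itself to be \emph{unmixed}. But unmixedness of $I^G_{E,F}$ is precisely statement~$(C)$ in the framework of Theorem~\ref{thingstocheck}, and Cohen--Macaulayness (which you are trying to prove) is how one obtains it here. Theorem~\ref{CN_Grobner} gives radicality and the Gr\"obner/GVD structure (statements $(A)$ and $(B)$), but it does \emph{not} give unmixedness; your fallback Stanley--Reisner argument makes the same slip when it says ``$I^G_{E,F}$ is unmixed/equidimensional by Theorem~\ref{CN_Grobner}.'' Without knowing in advance that $\h(C_{y,I}) = \h(N_{y,I})+1$, the gluing $\init_{e_k}(I) = C_{y,I}\cap(N_{y,I}+\langle e_k\rangle)$ need not be pure even if both pieces are Cohen--Macaulay. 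In fact, your argument never uses the quadratic hypothesis in an essential way, so if it worked it would prove Conjecture~\ref{mainconjecture} in full, which the paper leaves open.

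The paper's proof is completely different and is where the quadratic assumption does its work. Because every element of $\mathcal{U}(I_G)$ has degree~$2$, every monomial in $M^G_{E,F}$ has degree~$1$; hence $M^G_{E,F}$ is $\langle 0\rangle$, $\langle 1\rangle$, or generated by variables $e_{i_1},\dots,e_{i_s}$. In the nontrivial case one checks (via initial ideals) that $I^G_{E,F} = I_{G\setminus(E_\ell\cup\{e_{i_1},\dots,e_{i_s}\})} + \langle e_{i_1},\dots,e_{i_s}\rangle$. The toric part is Cohen--Macaulay by Theorem~\ref{sqfree=>cm}, and since the variables $e_{i_1},\dots,e_{i_s}$ do not appear in its generators they form a regular sequence on the quotient, so $R/I^G_{E,F}$ is Cohen--Macaulay. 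No induction, no liaison, and no appeal to unmixedness of $I^G_{E,F}$ is needed.
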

\begin{proof}
Fix some $E$ and $F$ such that $E\cup F=E_\ell$. By definition $I^G_{E,F} = I_{G\setminus E_\ell} + M^G_{E,F}$. Since $\mathcal{U}(I_G)$ consists of quadratic binomials, then $M^G_{E,F}$ is either $\langle 1\rangle, \langle 0\rangle,$ or $\langle e_{i_1},\ldots,e_{i_s}\rangle$ with $s>0$.

The statement of the theorem clearly holds if $M^G_{E,F}=\langle 1\rangle$.  If $M^G_{E,F}=\langle 0\rangle$, then $I^G_{E,F} = I_{G\setminus E_\ell}$. Then $I_{G\setminus E_\ell}$ is generated by quadratic primitive binomials and therefore possesses a square-free degeneration. By Theorem \ref{sqfree=>cm} these are toric ideals of graphs that are Cohen-Macaulay. We are left with the case that $M^G_{E,F}$ is generated by $s$ indeterminates. 

We first show that each $I^G_{E,F}$ is actually equal 
to $\widetilde{I}^G_{E,F}:= I_{G\setminus (E_\ell\cup\{e_{i_1},\ldots, e_{i_s})\}}+M^G_{E,F}$.  We certainly have $\widetilde{I}^G_{E,F} \subset I^G_{E,F}$. Let $<_{E,F}$ be the monomial order $e_{i_1}>\cdots >e_{i_s}$ and $e_{i_s}>f$ for all $f\in E(G)\setminus (E_\ell\cup\{e_{i_1},\ldots, e_{i_s})\}$. By Lemma \ref{universal_GEF}, $\mathcal{U}(I_{G\setminus E_\ell})\cup\mathcal{U}(M^G_{E,F})$ is a universal Gr\"obner basis for $I^G_{E,F}$. A similar statement holds for $\widetilde{I}^G_{E,F}$ since no variable of $\mathcal{U}(M^G_{E,F})$ is used to define $I_{G\setminus (E_\ell\cup\{e_{i_1},\ldots, e_{i_s})\}}$.

Clearly $\init_{<_{E,F}}(\widetilde{I}^G_{E,F}) \subset \init_{<_{E,F}}(I^G_{E,F})$. On the other hand, if there is some $u-v\in\mathcal{U}(I_{G\setminus E_\ell})$ where $u$ or $v$ is in the ideal $M^G_{E,F}$, then $\init_{<_{E,F}}(u-v)$ is a multiple of some $e_{i_j}$ for $j\in\{1,\ldots,s\}$. Therefore, $\init_{<_{E,F}}(\widetilde{I}^G_{E,F}) = \init_{<_{E,F}}(I^G_{E,F})$ which in turn implies that $\widetilde{I}^G_{E,F} =  I^G_{E,F}$ (e.g. see \cite[Problem 2.8]{EH}).

Therefore, we can show that $R/I^G_{E,F}$ is Cohen-Macaulay by proving that $R/\widetilde{I}^G_{E,F}$ is. Recall that if a ring $S$ is Cohen-Macaulay and graded and $x$ is a non-zero-divisor of $S$, then $S/\langle x \rangle$ is also Cohen-Macaulay.

Now it is easy to see that $e_{i_1} + I_{G\setminus (E_\ell\cup\{e_{i_1},\ldots, e_{i_s})\}},\ldots, e_{i_s} + I_{G\setminus (E_\ell\cup\{e_{i_1},\ldots, e_{i_s})\}}$
is a regular sequence on $R/I_{G\setminus (E_\ell\cup\{e_{i_1},\ldots, e_{i_s})\}}$. This follows from the fact that $I_{G\setminus (E_\ell\cup\{e_{i_1},\ldots, e_{i_s})\}}$
is Cohen-Macaulay since it possesses a square-free degeneration, and from the fact that $\mathcal{U}(I_{G\setminus (E_\ell\cup\{e_{i_1},\ldots, e_{i_s})\}})$ is not 
defined using the variables $\{e_{i_1},\ldots,e_{i_s}\}$.
\end{proof}

The previous lemma provides the unmixed condition needed to use  Theorem~\ref{framework}.  In summary, we have the following
result:

\begin{theorem}\label{quadratic_GVD}
Let $I_G$ be the toric ideal of a finite simple graph $G$ 
such that $\mathcal{U}(I_G)$ consists of quadratic binomials. Then $I_G$ is geometrically vertex decomposable and glicci.
\end{theorem}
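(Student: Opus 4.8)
The plan is to verify that $I_G$ satisfies the hypotheses of the framework established in Theorem~\ref{framework}, and then invoke the chain Theorem~\ref{thingstocheck} $\Rightarrow$ Theorem~\ref{gvd=>glicci} to conclude. First I would use Lemma~\ref{quadratic_square-free} to note that since $\mathcal{U}(I_G)$ consists of quadratic binomials, these binomials are doubly square-free; consequently, for \emph{any} monomial order (in particular any lexicographic order $<_G$ coming from a labelling of the edges), $\init_{<_G}(I_G)$ is generated by the quadratic square-free leading monomials of $\mathcal{U}(I_G)$, hence is a square-free monomial ideal. This places us squarely in the setting of Theorems~\ref{CN_Grobner}, \ref{statementb}, and \ref{framework}.

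Next I would recall that Theorem~\ref{CN_Grobner} already establishes statement $(A)$ (the variable $y = e_k$ gives a geometric vertex decomposition of $I^G_{E,F}$, with $C_{y,I}$ and $N_{y,I}$ again of the form $I^G_{E',F'}$) and that Theorem~\ref{statementb} establishes statement $(B)$ ($I^G_{E,F} = \langle 0 \rangle$ or $\langle 1\rangle$ once $E \cup F = E_n$). By Theorem~\ref{framework}, the only remaining obligation is to show that $\mathbb{K}[E(G)]/I^G_{E,F}$ is equidimensional for every admissible pair $(E,F)$; and in fact, since each $I^G_{E,F}$ is radical by Theorem~\ref{CN_Grobner}$(2)$, it suffices to prove that each $\mathbb{K}[E(G)]/I^G_{E,F}$ is unmixed. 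Here is where the quadratic hypothesis does the real work: Lemma~\ref{CMspecialcase} shows that $\mathbb{K}[E(G)]/I^G_{E,F}$ is in fact Cohen--Macaulay for every such pair, hence certainly unmixed. Thus all three of statements $(A)$, $(B)$, $(C)$ hold, and Theorem~\ref{thingstocheck} yields that $I_G$ is geometrically vertex decomposable.

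Finally, to get the \emph{glicci} conclusion, I would observe that $I_G$ is a prime (toric) ideal, hence saturated, and it is homogeneous with respect to the standard grading (toric ideals of graphs are generated by binomials of equal degree in each term — indeed by homogeneous binomials since both monomials in each generator have the same total degree coming from a closed even walk). Being geometrically vertex decomposable and homogeneous and saturated, $I_G$ is glicci by Theorem~\ref{gvd=>glicci}. This completes the argument.

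The main obstacle in this line of reasoning is the equidimensionality/unmixedness of the ideals $I^G_{E,F}$ — statement $(C)$ — which for general graphs is exactly the open part of Conjecture~\ref{mainconjecture}. The quadratic hypothesis circumvents this obstacle precisely because in that regime $M^G_{E,F}$ is forced to be one of $\langle 0 \rangle$, $\langle 1 \rangle$, or an ideal generated by a subset of the variables, and in each case a regular-sequence argument on top of a toric ideal $I_{G\setminus(\cdots)}$ with a square-free degeneration (Theorem~\ref{sqfree=>cm}, via Hochster) delivers Cohen--Macaulayness. So the entire proof reduces to assembling the already-proved lemmas in the correct order, with no new calculation required beyond what Lemma~\ref{CMspecialcase} supplies.
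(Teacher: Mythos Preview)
Your proposal is correct and follows essentially the same route as the paper: apply Lemma~\ref{quadratic_square-free} to get a square-free initial ideal, invoke Lemma~\ref{CMspecialcase} to obtain Cohen--Macaulayness (hence equidimensionality) of every $I^G_{E,F}$, conclude geometric vertex decomposability via Theorem~\ref{framework}, and then deduce glicci from Theorem~\ref{gvd=>glicci}. Your detour through statements $(A)$, $(B)$, $(C)$ and Theorem~\ref{thingstocheck} is harmless but unnecessary, since Theorem~\ref{framework} already packages those steps.
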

\begin{proof}
By Lemma~\ref{quadratic_square-free}, any lexicographic order on the variables will determine a square-free degeneration of $I_G$. By Lemma~\ref{CMspecialcase} the rings $\mathbb{K}[E(G)]/I^G_{E,F}$ are Cohen-Macaulay for
all $E,F,$ and $\ell$ such that $E \cup F = E_\ell$. 
In particular, all of these rings are equidimensional.  Thus,
by Theorem~\ref{framework}, $I_G$ is geometrically vertex decomposable, and therefore glicci by Theorem \ref{gvd=>glicci}.
\end{proof}

\begin{remark}
Although the condition that $\mathcal{U}(I_G)$ consists of quadratic binomials is restrictive, it is worth noting that there are families of graphs for which this is true (e.g. certain bipartite graphs). See \cite[Theorem 1.2]{OH} for a characterization of when $I_G$ can be generated by quadratic binomials, and \cite[Proposition 1.3]{HNOS} for the case where the Gr\"obner basis is quadratic.
\end{remark}

\end{document}